\numberwithin{equation}{section}
\newtheorem{theorem}{Theorem}[section]
\newtheorem{lemma}[theorem]{Lemma}
\newtheorem{cor}[theorem]{Corollary}
\newtheorem{sublem}[theorem]{Sublemma}
\newtheorem{proposition}[theorem]{Proposition}
\newtheorem{defin}[theorem]{Definition}
\newtheorem{remark}[theorem]{Remark}
\newcommand{\cA}{\mathcal{A}}
\newcommand\cB{{\mathcal B}}
\newcommand{\BB}{\mathcal{B}}
\newcommand{\cC}{\mathcal{C}}
\newcommand\cD{{\mathcal D}}
\newcommand{\cF}{\mathcal{F}}
\newcommand{\cG}{\mathcal{G}}
\newcommand{\cH}{\mathcal{H}}
\newcommand{\cI}{\mathcal{I}}
\newcommand{\cK}{\mathcal{K}}
\newcommand{\cL}{\mathcal{L}}
\newcommand{\LL}{\mathcal{L}}
\newcommand{\cM}{\mathcal{M}}
\newcommand{\cN}{\mathcal{N}}
\newcommand{\cO}{\mathcal{O}}
\newcommand{\cP}{\mathcal{P}}
\newcommand{\cQ}{\mathcal{Q}}
\newcommand\cR{{\mathcal R}}
\newcommand{\cS}{\mathcal{S}}
\newcommand{\cV}{\mathcal{V}}
\newcommand{\cW}{\mathcal{W}}
\newcommand{\cT}{\mathcal{T}}
\newcommand{\bH}{\mathbb{H}}
\newcommand{\bI}{\mathbb{I}}
\newcommand{\bL}{\mathbb{L}}
\newcommand{\bS}{\mathbb{S}}
\newcommand\bR{{\mathbb R}}
\newcommand\bZ{{\mathbb Z}}
\newcommand\integer{{\mathbb Z}}
\newcommand{\bV}{\mathbb{V}}
\newcommand{\bpsi}{\overline{\psi}}
\newcommand{\wW}{\widetilde{\cW}}
\newcommand{\tpsi}{\widetilde{\psi}}
\newcommand{\tJ}{\widetilde{J}}
\newcommand{\tnu}{\tilde{\nu}}
\newcommand{\hW}{\widehat{\cW}}
\newcommand{\Int}{{\mbox{Int}\, }}
\newcommand{\musrb}{{\mu_{\tiny{\mbox{SRB}}}}}
\newcommand{\hatmusrb}{\hat \mu_{\tiny{\mbox{SRB}}}}
\newcommand{\ximusrb}{\mu^\xi_{\tiny{\mbox{SRB}}}}
\newcommand{\vf}{\varphi}
\newcommand{\ve}{\varepsilon}
\newcommand{\vv}{v}
\def\beq{\begin{equation}}
\def\eeq{\end{equation}}
\newcommand{\diam}{\mbox{diam}}
\begin{document}

\title{Thermodynamic formalism for dispersing billiards}
\author{Viviane Baladi  \and Mark F. Demers}
\address{Laboratoire de Probabilit\'es, Statistique et Mod\'elisation (LPSM),  
CNRS, Sorbonne Universit\'e, Universit\'e  Paris Cit\'e,
4, Place Jussieu, 75005 Paris, France}
\email{baladi@lpsm.paris}
\address{Department of Mathematics, Fairfield University, Fairfield CT 06824, USA}
\email{mdemers@fairfield.edu}

\thanks{We thank Yuri Lima for stimulating questions and comments, and the anonymous referee for 
valuable clarifications.
MD was partly supported by NSF grants DMS 1800321 and DMS 2055070. VB's research is supported
by the European Research Council (ERC) under the European Union's Horizon 2020 research and innovation programme (grant agreement No 787304).
VB is grateful to the Knut and Alice Wallenberg Foundation for an invitation to Lund
University in 2020.}

\date{\today}

\begin{abstract}
For any finite horizon Sinai billiard map $T$
on the two-torus,  we find $t_*>1$ such that for each 
 $t\in (0,t_*)$ there exists a unique equilibrium
state $\mu_t$ for $- t\log J^uT$, and $\mu_t$ is
$T$-adapted.  (In particular, the SRB measure is the unique equilibrium state
for $- \log J^uT$.)
We show that $\mu_t$ is exponentially mixing for H\"older observables, and
the pressure function $P(t)=\sup_\mu \{h_\mu -\int t\log J^uT d \mu\}$ is analytic
on $(0,t_*)$. In addition,  $P(t)$ is strictly convex if and only if $\log J^uT$ is
not $\mu_t$-a.e. cohomologous to a constant, while,   
if there exist $t_a\ne t_b$ with $\mu_{t_a}= \mu_{t_b}$, then $P(t)$ is affine on $(0,t_*)$.
An additional
sparse recurrence condition gives  $\lim_{t\downarrow 0} P(t)=P(0)$.
\end{abstract}
\maketitle

\vspace{-14 pt}
\tableofcontents

\section{Introduction and Statement of Main Results}
\label{sec:intro}

\subsection{Set-up}

A Sinai  or dispersive billiard table $Q$  on the two-torus $\mathbb{T}^2$
is a set
  $
 Q=\mathbb{T}^2 \setminus \cup_{i=1}^\Omega \cO_i
 $,
for some finite number $\Omega\ge 1$ of 
pairwise disjoint closed domains $\cO_i$ (the obstacles, or scatterers) with $C^3$ boundaries 
having strictly positive curvature $\cK$. (In particular, the domains are strictly convex.)
 The
billiard flow, also called a periodic Lorentz gas,  is 
the motion of a point particle traveling in $Q$ at unit speed 
 and
undergoing specular reflections at the boundary of
the scatterers.   (At a tangential --- also called grazing --- collision, the
reflection does not change the direction of the particle.)

We study here the associated billiard map $T:M\to M$  on the compact set
$M = \partial Q \times [-\frac{\pi}{2}, \frac{\pi}{2} ]$, defined to be
the first collision map on the boundary of $Q$. We
use the standard coordinates $x = (r,\vf)$, where $r$ is arclength along
$\partial \cO_i$ and $\vf$ is the angle the post-collision trajectory makes with the normal to 
$\partial \cO_i$.
Grazing
collisions cause discontinuities in the map $T$.
 We remark, however, that since the flow is continuous, the map $T$ is well-defined 
and bijective on $M$.  There is no need to reduce the domain to a smaller set.

For $x \in M$, let $\tau(x)$ denote the distance from $x$ to $T(x)$ (the  free flight time). Set $\cK_{\max}=\sup \cK<\infty$,
 $\cK_{\min}=\inf \cK>0$,  and $\tau_{\min}=\inf \tau>0$. 
Then \cite{chernov book}
the cones in $\mathbb{R}^2$ defined by
\begin{align*}C^u &= \{ (dr, d\vf) 
 : \cK_{\min} \le \frac {d\vf}{dr} \le \cK_{\max} +\frac 1{\tau_{\min}} \}\, ,
\, C^s 
= \{ (dr, d\vf) 
: - \cK_{\min} \ge  \frac {d\vf}{dr} \ge -\cK_{\max} -  \frac 1{\tau_{\min}}\}
\end{align*}
are strictly invariant under $DT$ and $DT^{-1}$, respectively, whenever these derivatives exist.

The map $T$ is uniformly hyperbolic, in the following sense:  Let 
\begin{equation}\label{defL}\Lambda := 1 + 2 \tau_{\min} \cK_{\min}>1\, .
\end{equation}
Then there exists $C_1>0$ such that, for all $x$ for which $DT^n(x)$,  respectively $DT^{-n}(x)$, is defined, 
\begin{equation}
\label{eq:hyp}
\| DT^n(x) \vv \| \ge C_1 \Lambda^n \| \vv \|, \,\, \forall \vv \in \cC^u, \quad
\| DT^{-n}(x) \vv \| \ge C_1 \Lambda^n \| \vv \|, \,\, \forall \vv \in \cC^s \, , \, \, \forall n\ge 0\, . 
\end{equation}
Let $\cS_0 = \{ (r, \vf) \in M: \vf = \pm \frac{\pi}{2} \}$ denote the set of tangential collisions on $M$.
Then  
\begin{equation}\label{defSn}
\cS_n = \cup_{i=0}^{-n} T^i\cS_0\, , \, n \in \mathbb{Z}\,  ,
\end{equation}
is the singularity set for $T^n$. In other words,
there exists $n\in \integer$ such that $DT^n(x)$ is 
not defined  if and only if $x$  belongs to
the   (invariant and dense,  \cite[Lemma 4.55]{chernov book}) 
set of curves
$\cup_{m \in \mathbb{Z}} \cS_m$.
Let
\begin{equation}\label{defM'} 
M'  =  M\setminus \cup_{m \in \mathbb{Z}} \cS_m  \, .
\end{equation}
The  spaces $E^u(x)$ and $E^s(x)$  are defined
at any $x\in M'$. 
Indeed, 
for each $n \ge 0$, let $x_n = T^nx$, and consider $v_n = DT^{-n}(x_n) v/\| DT^{-n}(x_n) v\|$ for some 
$v \in C^s$.  Since $x \in M'$, we have that
$DT^{-n}(x_n)$ is well-defined for each $n \ge 0$.
By uniform hyperbolicity,  the sequence $v_n$ converges to a vector
$v_\infty$.  The direction of $v_\infty$ is $E^s(x)$.  Similarly, for $y \in M \setminus \cup_{m \le 0} \cS_m$,
consider $y_n = T^{-n}y$ and $u_n = DT^{n}(y_n) u /\| DT^{n}(y_n) u\|$, for $n \ge 0$ and 
$u \in C^u$.  The limit of $u_n$ is $E^u(y)$.

We have \cite[Theorem 4.66, Theorem 4.75]{chernov book} that
  $\mbox{Lebesgue}(M'\setminus M)=\musrb(M'\setminus M)=0$,
where $\musrb= (2  |\partial Q|)^{-1} \cos \vf \, drd\vf$ is the unique absolutely continuous invariant
measure. 
Also, 
at each $x\in M'$, the unstable and stable Jacobians $J^uT(x)$ and $J^sT(x)$, with respect
to arclength along unstable, respectively stable, manifolds, 
are well-defined and nonzero.
Note also that, if $J_{\mathrm{Leb}}T$ denotes the
Jacobian of $T$ with respect to Lebesgue, then, setting $E(x) = \sin (\angle(E^s(x), E^u(x)))$,
\begin{equation}
\label{eq:jacs}
 J_{\mathrm{Leb}}T (x)= \frac{\cos (\vf(x))}{\cos (\vf ( T(x)))} = J^u T(x)\cdot  J^sT(x) \cdot \frac{E \circ T(x)}{E(x)} \, ,\,\,
\forall x\in M' \, .
\end{equation}

We assume that the billiard table $Q$ has
{\it finite horizon,} 
i.e., the  billiard flow on  $Q$ does not have any trajectories making only tangential collisions. This implies (but is not\footnote{We
 need the stronger condition e.g. in the proof of Proposition~\ref{prop:max}.}  equivalent to) 
 $\tau_{\max}:=\sup \tau<\infty$, see \cite[Remark 1.1]{max}.


\subsection{Potentials and Pressure. 
Theorems \ref{thm:equil}--\ref{strconv}. Corollaries \ref{cor1}-- \ref{CLT}.The Operator $\cL_t$}

Since $T$ admits a finite generating partition (see the beginning
of Section~\ref{next}), it follows that for any
$T$-invariant probability measure\footnote{All probability measures in the
present work are Borel measures.} $\mu$, the Kolmogorov entropy $h_\mu(T)$ is finite (\cite[Theorem 4.10, Theorem 4.17]{walters}).

Fix $t\ge 0$.
Let $\mu$ be  a $T$-invariant probability measure $\mu$.
If $\mu(M\setminus M')= 0$,
define the \emph{pressure of $\mu$} for the (so-called \emph{geometric) potential $- t \log J^uT$} by
\[
P_\mu(-t \log J^uT) = h_\mu(T) - t \int_M \log J^uT \, d\mu \, .
\]
If $\mu(M\setminus M')\ne 0$,
we set $\int_M \log J^uT \, d\mu=\infty$,
so that $P_\mu(-t \log J^uT)=-\infty$ if $t>0$.
 Due to the invariance of $\mu$, the bound
\eqref{eq:hyp} implies that $\int_M \log J^uT \, d\mu = \lim_{n \to \infty} \frac 1n \int_M \log J^uT^n \, d\mu \ge \log \Lambda$, thus the integral is
either well-defined and nonnegative or infinite. 
(See Remark~\ref{newremark} for more comments.)
It is known that 
\begin{equation}
\chi^u:= \int_M \log J^uT \, d\musrb =h_{\musrb}(T) \in(\log \Lambda,\infty)\, ,
\end{equation}
so that 
$P_{\musrb}( - \log J^uT)=0$ (this is the Pesin entropy formula, see e.g. \cite[Theorem 3.42]{chernov book}).

\smallskip

For a bounded function $g:M\to \bR$, we set $P_\mu(-t\log J^uT+g)=P_\mu(-t\log J^uT)+\int g \, d\mu$, and
we define the \emph{pressure  $P(t,g)$
of the potential $- t \log J^uT +g$} by
\begin{equation}
\label{abovv}
P(t,g):=\sup \{P_\mu(-t\log J^uT+g) : \mu \mbox{ a
$T$-invariant probability measure } \}\, , \, \,
P(t):=P(t,0) \, .
\end{equation}
We call $\mu$ an \emph{equilibrium state} for the potential $- t \log J^uT +g$ if 
$P_{\mu}(-t \log J^uT+g) = P(t,g)$.  

The case $t=0$, $g=0$,
corresponds to the measure of maximal entropy. Under an additional
condition of  ``sparse recurrence to the singularity set'' (see Definition~\ref{sparse}), 
a measure $\mu_0$ 
with $P(0)=P_{\mu_0}(0)$  was recently  constructed in \cite{max} 
($\mu_0$ was called $\mu_*$ there),
 shown to be mixing (in fact, Bernoulli), to be the unique 
measure $\mu$ satisfying $P_\mu(0)=P(0)$, and to satisfy the $T$-adapted condition  \eqref{adapt} below. 
(The speed of mixing 
of $\mu_0$ is
not known.)

For $t=1$, we mentioned above that $P_\musrb(-\log J^uT)=0$. In addition, $\musrb$ is $T$-adapted and, 
for any $T$-invariant  probability  measure $\mu$ giving small
enough weight to neighbourhoods of
singularity sets  \cite[Part~IV, Theorem~1.1]{katok strelcyn},
the Ruelle inequality
$P_\mu( - \log J^uT)\le 0$ holds. The measure $\musrb$ is mixing, in fact, correlations for H\"older observables  decay exponentially \cite{Y98}.

For $t$ in a small interval\footnote{The interval depends on the exponential rate of return
(itself close to $1$)
to the Young tower coupling magnet.}   around $1$, 
\cite{zhang} established the existence of equilibrium
states for the potential $-t \log J^uT$ using a Young tower construction with exponential tails, 
proving that
these measures
are exponentially mixing on H\"older observables and are unique in the class of measures that lift 
to the Young tower.  

\smallskip
We  establish a thermodynamic formalism for Sinai billiards for 
$t \in (0, t_*)$, with $t_*>1$   defined by
\begin{equation}
\label{eq:t* def}
t_* := \sup \{ t>0 : \Lambda^{-t} < e^{P(t)} \} = \sup \left\{ t>0 : t > - \frac{P(t)}{\log \Lambda} \right\} \, .
\end{equation}
(That $t_*>1$ follows since $\Lambda>1$
from \eqref{defL}, while
$P(t) \ge 0$ for $t \le 1$.)  The definition of $t_*$  can be viewed as a pressure gap condition, controlling  by $P(t)$ the contribution from pieces that constantly get cut by the singularities. 
In particular, for any $t < t_*$, we may\footnote{It is in fact enough to require there that  $\theta^t < e^{P_*(t)}$.} choose $\theta \in(\Lambda^{-1}, 1)$ in the one-step expansion Lemma~\ref{lem:one step}
 so that $\theta^t < e^{P(t)}$.  This complexity bound
permits us to prove the required growth lemmas essential to our analysis.
Our first main result is the following theorem:

\begin{theorem}[Thermodynamic Formalism for Sinai Billiards]
\label{thm:equil}
For each $t \in (0, t_*)$,
the potential $- t \log J^uT$ admits a unique equilibrium state $\mu_t$.
The measure $\mu_t$ is mixing,
gives positive mass to any nonempty open set, and  does not have atoms. Moreover, $\mu_t$ is $T$-adapted, 
that is\footnote{The $T$-adapted property appears in particular in the work of Lima--Matheus \cite{LM}.}
 \begin{equation}\label{adapt}
 \int |\log d(x, \cS_{\pm 1})| \, d\mu_t<\infty\, . 
 \end{equation}
In addition, $\mu_t$  has exponential decay of correlations
for H\"older observables. Finally, if $T$ satisfies the sparse
 recurrence condition 
then $\lim_{t\downarrow 0} P(t)=P(0)$.
\end{theorem}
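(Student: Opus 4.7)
The plan is to build a weighted transfer operator $\cL_t$ acting on an anisotropic Banach space of distributions supported on unstable curves (in the style of the functional approach of Demers--Zhang, Baladi--Demers--Liverani, etc., adapted to the geometric potential). Concretely, for a test function $\psi$ pushed forward by $T$, one would set $\cL_t \psi = (J^uT)^{-t}\, \psi \circ T^{-1}$ (suitably interpreted at the singularities by summing over smooth branches), and seek a strong norm/weak norm pair on which $\cL_t$ satisfies a Lasota--Yorke inequality
\[
\|\cL_t^n \psi\|_s \le C \sigma^n \|\psi\|_s + C_n \|\psi\|_w,
\]
with $\sigma < e^{P(t)}$. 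The definition of $t_*$ in \eqref{eq:t* def} is exactly what is needed to beat the complexity from cuts along $\cS_n$: choosing $\theta \in(\Lambda^{-1},1)$ with $\theta^t<e^{P(t)}$ in the one-step expansion makes the combined hyperbolicity/complexity factor dominated by the spectral radius. The growth lemmas for homogeneous unstable curves (together with the $\chi^u>\log\Lambda$ bound and invariance of the cones $C^u,C^s$) then give the needed control on sums over connected components of $T^{-n}W$.

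Once quasi-compactness on the strong space and simplicity of the leading eigenvalue $e^{P(t)}$ are obtained, one identifies a conditionally $\cL_t$-invariant probability $\nu_t$ (left eigendistribution, roughly a $t$-conformal measure for $J^uT$) and a nonnegative fixed point $h_t$ (right eigendistribution, viewed as a leafwise density on unstable curves). The candidate equilibrium state is $\mu_t = h_t\,\nu_t$, which is automatically $T$-invariant. Positivity on nonempty open sets and absence of atoms follow from the topological mixing/density properties (density of $\cup_m \cS_m$ does not hurt because the conformal measure sees unstable-leaf geometry), and the $T$-adapted property \eqref{adapt} is built in by construction: the strong norm is chosen so that integrals of $|\log d(\cdot,\cS_{\pm 1})|$ are finite on $h_t$, a point enforced by imposing $\log$-regularity in the norm (as in \cite{max} for the MME). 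The hardest technical step is proving the Lasota--Yorke bound with constant $\sigma < e^{P(t)}$: one must combine the hyperbolicity factor $\Lambda^{-t}$ from \eqref{eq:hyp}, the cut-complexity bounded by $\theta^{-t}$ via the one-step expansion, and uniform distortion along homogeneous unstable curves, while ensuring the weak norm embedding is compact. The pressure-gap condition $t<t_*$ is used precisely here.

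For uniqueness and the variational identification $P_{\mu_t}(-t\log J^u T) = P(t)$, the strategy is two-sided: first produce an abstract upper bound $P(t) \le \log(\text{spectral radius of }\cL_t)$ by a covering/counting argument on unstable curves (Misiurewicz-style, replacing Bowen balls by dynamical refinements of an unstable partition), then match it with $P_{\mu_t}(-t\log J^u T)$ using the Pesin--Ledrappier--Young entropy formula along the unstable foliation and the conformality of $\nu_t$. Uniqueness then follows because any other equilibrium state would lift to a fixed point of $\cL_t$ on the strong space, contradicting simplicity of the eigenvalue. Mixing and exponential decay of correlations for H\"older observables drop out from the spectral gap in the usual way (approximating H\"older observables by densities on unstable curves).

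For the final assertion $\lim_{t\downarrow 0} P(t)=P(0)$ under the sparse recurrence hypothesis, the plan is upper-semicontinuity from the Ruelle-type variational principle (which gives $\limsup_{t\downarrow 0} P(t)\le P(0)$ since $\int \log J^uT\, d\mu_t \ge \log\Lambda$ is bounded below), and lower-semicontinuity by testing against the MME $\mu_0$ constructed in \cite{max}: under sparse recurrence, $\mu_0$ is $T$-adapted so $\int \log J^u T\, d\mu_0<\infty$, whence $P(t)\ge h_{\mu_0}-t\int\log J^uT\, d\mu_0 \to P(0)$. The main obstacle in the whole proof is the first step, namely the construction of a Banach space on which the Lasota--Yorke bound with a sharp enough constant can be proved uniformly on $(0,t_*)$ and which is compatible with both the $T$-adapted condition and the subsequent matching with the variational principle; everything else is, in spirit, extraction from a spectral picture.
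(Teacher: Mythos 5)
Your overall architecture (weighted transfer operator on an anisotropic space, pressure-gap condition $t<t_*$ feeding a Lasota--Yorke estimate, spectral gap, $\mu_t$ from the left/right eigenvectors, adaptedness via control of neighbourhoods of $\cS_{\pm1}$, and the $t\downarrow 0$ limit by testing against the MME of \cite{max}) is the paper's strategy, and your treatment of $\lim_{t\downarrow 0}P(t)=P(0)$ is essentially the argument of Proposition~\ref{zerolim}. But two steps you treat as routine are genuine gaps. First, uniqueness: you claim that ``any other equilibrium state would lift to a fixed point of $\cL_t$ on the strong space, contradicting simplicity of the eigenvalue.'' There is no such lifting: an arbitrary $T$-invariant measure achieving the supremum is just a Borel measure, and nothing identifies it with an element of $\cB$ (that would require a two-sided Gibbs property, which is never established here; even for the constructed $\mu_t$ only an \emph{upper} bound on Bowen balls, Proposition~\ref{prop:max}, is proved). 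The paper instead proves uniqueness by perturbing the potential by $\upsilon\phi$, $\phi\in C^1$, showing the perturbed spectral radius equals $e^{P(t,\upsilon\phi)}$ (Proposition~\ref{prop:spec rad}), and using differentiability of $\upsilon\mapsto\lambda_{t,\upsilon}$ together with the fact that every equilibrium state is a tangent functional to the pressure; this forces $\int\phi\,d\mu=\int\phi\,d\mu_t$ for all $C^1$ $\phi$. Second, the identification $P_{\mu_t}(-t\log J^uT)=P(t)$: you invoke a Ledrappier--Young formula along the unstable foliation plus ``conformality of $\nu_t$,'' but $\nu_t$ is a leafwise object along \emph{stable} manifolds, no absolute continuity of $\mu_t$ along unstables is available, and no Ledrappier--Young theory for such singular equilibrium states exists off the shelf. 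The paper instead bounds $\mu_t(B_n(x,\epsilon))$ from above using the eigenvector equation (Proposition~\ref{prop:max}) and applies a Brin--Katok type local entropy theorem, which is legitimate only because $\mu_t$ has already been shown to be $T$-adapted (Proposition~\ref{nbhdprop}); adaptedness is proved, not ``built in'' by a choice of norm.

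Two further points where the sketch would not go through as written. Your operator $\cL_t\psi=(J^uT)^{-t}\psi\circ T^{-1}$ on distributions ``supported on unstable curves'' is not the operator that works: with norms defined by pairing along curves that are expanded by $T^{-1}$ (i.e.\ stable curves) and with $C^1$ functions identified with the measures $f\,d\musrb$, the correct weight is $|J^sT|^{t-1}$, as explained by the coboundary computation \eqref{cobb}; the weight $(J^uT)^{-t}$ produces the wrong exponent in the leafwise change of variables and loses the distortion and growth estimates that power $Q_n(t)$, and the spectral-gap argument (Lemma~\ref{lem:gap}) further exploits that $\musrb$ has smooth conditional densities on stable manifolds. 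Finally, the ``growth lemmas'' you defer to are not a uniform-in-$t$ routine: the lower bounds on $\sum_i|J_{W_i}T^n|^t$ that make the pressure gap usable are elementary only for $t\le 1$; for $t\in(1,t_*)$ the paper needs a H\"older-interpolation bootstrap in several stages (Sections~\ref{sec:t>1} and \ref{sec:boot}) to reach all $t<t_*$, a mechanism your proposal does not anticipate.
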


We  prove Theorem~\ref{thm:equil} for $t\in (0,t_*)$ in three steps:
\begin{itemize}
\item
First, we introduce in Section~\ref{altt} an equivalent  (topological)  
expression $P_*(t)$ for $P(t)$,  generalising what was done in \cite{max} for $t=0$,
and we show that $P_*(t)$ is convex and strictly
decreasing (Proposition~\ref{prop:Ptilde}), and  that $P(t)\le P_*(t)$  
(Proposition \ref{prop:pressure}), for all $t>0$.
\item Next, for  $t\in (0,t_*)$,  we prove 
 the following properties for the transfer operator
\begin{equation}
\label{eq:L}
\cL_t f = \frac{f \circ T^{-1}}{|J^sT|^{1-t} \circ T^{-1}} \, 
\end{equation}
 acting on an anisotropic
Banach\footnote{
We attract the reader's attention to Lemma~\ref{lem:image} showing $\cL_t(C^1)\subset \cB$, which furnishes
 the proof of \cite[Lemma~4.9]{max}, which had been
omitted there, see Remark~\ref{omit}.}
space $\cB$ (Theorem~\ref{thm:spectral}):
The operator $\cL_t$ has spectral radius
$e^{P_*(t)}$, essential spectral radius strictly smaller than $e^{P_*(t)}$,
and the maximal eigenvectors
of $\cL_t$ and its dual give rise to  a $T$-invariant probability measure $\mu_t$. In addition,  $\cL_t$ has a spectral gap on $\cB$, so that $\mu_t$ is exponentially mixing on H\"older observables.
\item Finally, in Section~\ref{finall}, 
still for  $t\in (0,t_*)$, we show that $P_{\mu_t}(-t\log J^u T)=P_*(t)$ ,
so that $P(t)=P_*(t)$ (Corollary~\ref{cor:max}),
as well as the remaining claims about $\mu_t$: in particular  that $\mu_t$ is the
unique  equilibrium state among all $T$-invariant Borel probability measures
realising the variational principle $P(t)=P_*(t)$  (Theorem~\ref{thm:geo var}), and
that sparse recurrence implies that $P(t)$ tends to $P(0)$ as $t\downarrow 0$ (Proposition~\ref{zerolim}).
Our proof of uniqueness also gives
a more general variational principle, $P(t,g)=P_*(t,g)$, Theorem~\ref{thm:variational}.
\end{itemize}

We use the  Banach spaces $\cB$   introduced in \cite{dz2}, except that we work with (exact) stable manifolds
$\cW^s$ (as in \cite{max}) instead
of cone stable curves $\hW^s$ (see Section~\ref{defstuff}).
More importantly, we must tune the parameters
used to define $\cB=\cB(t_0,t_1)$ in Section~\ref{sec:norms} to an interval
$[t_0,t_1]\subset (0,t_*)$ containing  $t$.
In particular, the decay rate
 $k^{-q}$ defining the homogeneity strips \eqref{eq:H} in \cite{dz2} was $q=2$, while we need to assume $qt>1$ here (due to \eqref{eq:k0}).
Also,  we need to let the parameter $p$ used in the definition \eqref{stab}
of the strong stable norm tend to infinity when $t\to t_*$
(see Lemma~ \ref{lem:t ok}). It
follows that our bound for the essential spectral radius of $\cL_t$ on $\cB(t_0,t_1)$ deteriorates as $t_0\to 0$ or $t_1\to t_*$, and we lose the spectral gap
in both limits.

\smallskip 

The keys to the proof of the spectral Theorem~\ref{thm:spectral} are the delicate growth lemmas given in Sect.~\ref{GrL}. To prove these growth lemmas, subtle modifications of the
fundamental ideas of Chernov \cite{chernov book} and of the original techniques
 introduced in \cite{dz1, max} were necessary. In particular, the analysis for $t>1$ required a new bootstrap argument (see the beginning of Sect.~\ref{GrL} and Sect.~\ref{sec:t>1} 
 and \ref{sec:boot}).

\medskip

In Section~\ref{analytic}, a more careful study of the operator $\cL_t$ yields
our second main result:
\begin{theorem}[Strict Convexity]\label{strconv}
The function $t\mapsto P(t)$ is analytic on $(0,t_*)$, with
\begin{equation}\label{formula0}
P'(t)=\int \log J^s T \, d \mu_t =-\int \log J^u T \, d \mu_t<0\, , 
\end{equation}
and
\begin{equation}\label{formula}
P''(t)= \sum_{k\ge 0} \left[ \int  (\log J^s T \circ T^k)  \log J^s T\, d \mu_t - (P'(t))^2 \right] \ge 0\, .
\end{equation}
Moreover, $P''(t)=0$ if and only if $\log J^s T=f-f\circ T+ \int \log J^sT \, d\mu_t$
($\mu_t$ a.e.)  for some  $f\in L^2(\mu_t)$.
Finally, both $t \mapsto \int \log J^uT \, d\mu_t$ and
$t \mapsto h_{\mu_t}$ are decreasing functions of $t$.
\end{theorem}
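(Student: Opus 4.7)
The plan is to apply Kato's analytic perturbation theory to the operator family $t\mapsto\cL_t$ on $\cB$. By Theorem~\ref{thm:spectral} (combined with Corollary~\ref{cor:max}, so that $\lambda(t):=e^{P(t)}=e^{P_*(t)}$), for $t\in(0,t_*)$ the operator $\cL_t$ has a simple leading eigenvalue $\lambda(t)$ isolated from the rest of its spectrum. First I would verify that $t\mapsto\cL_t$ extends to a holomorphic family $z\mapsto\cL_z$ of bounded operators on $\cB(t_0,t_1)$ for $z$ in a complex neighbourhood of each $[t_0,t_1]\subset(0,t_*)$; writing $\cL_z f=e^{(z-1)\log|J^sT|\circ T^{-1}}\cdot f\circ T^{-1}$, the required estimates should follow from the growth lemmas of Section~\ref{GrL} combined with uniform control of $|J^sT|^{\mathrm{Re}(z)-1}$ on the homogeneity strips. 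Standard Kato theory then gives analytic eigen-data $(\lambda(t),\nu_t,\ell_t)$ with $\ell_t(\nu_t)=1$, and hence analyticity of $P(t)=\log\lambda(t)$ on $(0,t_*)$.

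For the first derivative, differentiating $\cL_t\nu_t=\lambda(t)\nu_t$ and pairing with $\ell_t$ gives $\lambda'(t)=\ell_t((\partial_t\cL_t)\nu_t)$. Since $\partial_t\cL_t f=(\log|J^sT|\circ T^{-1})\cdot\cL_t f$, and since the invariant probability $\mu_t$ of Theorem~\ref{thm:equil} is represented by $\mu_t(\varphi)=\ell_t(\varphi\cdot\nu_t)$, $T$-invariance yields $P'(t)=\int\log J^sT\,d\mu_t$. To identify this with $-\int\log J^uT\,d\mu_t$, use \eqref{eq:jacs}: $\log J_{\mathrm{Leb}}T=\log J^uT+\log J^sT+\log E\circ T-\log E$, where $J_{\mathrm{Leb}}T=\cos\varphi/(\cos\varphi\circ T)$ is a coboundary of $-\log\cos\varphi$. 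Since the stable and unstable cones are uniformly transverse, $\log E$ is bounded, and since $\cS_0\subset\cS_{\pm1}$, the $T$-adapted property \eqref{adapt} forces $\log\cos\varphi\in L^1(\mu_t)$; both coboundaries then have zero $\mu_t$-integral, giving $\int\log J^sT\,d\mu_t=-\int\log J^uT\,d\mu_t$. Strict negativity follows from $\log J^uT\geq\log\Lambda>0$ on $M'$ together with $\mu_t(M')=1$.

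For the second derivative, differentiating once more yields the Kato formula
\[
\lambda''(t)=\ell_t((\partial_t^2\cL_t)\nu_t)+2\,\ell_t((\partial_t\cL_t)\,S_t\,(\partial_t\cL_t)\nu_t),
\]
with reduced resolvent $S_t=\sum_{k\geq 0}\lambda(t)^{-(k+1)}\cL_t^k(I-\Pi_t)$, absolutely convergent by the spectral gap. Using $(\partial_t^j\cL_t)f=(\log|J^sT|\circ T^{-1})^j\,\cL_t f$ together with the correlation identity $\ell_t(\psi\cdot\cL_t^k(\varphi\cdot\nu_t))=\lambda(t)^k\,\mu_t(\varphi\cdot\psi\circ T^k)$ (from the eigenvector pairing), a rearrangement after subtracting $(P'(t))^2$ produces the Green--Kubo form \eqref{formula}. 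Non-negativity $P''(t)\geq 0$ is immediate and reconfirms the convexity from Proposition~\ref{prop:Ptilde}. The vanishing case $P''(t)=0$ is the classical Leonov/Gordin dichotomy: for the exponentially mixing $(T,\mu_t)$, the asymptotic variance of $g:=\log J^sT-\int\log J^sT\,d\mu_t$ vanishes iff $g=f-f\circ T$ in $L^2(\mu_t)$, the non-trivial direction following from Gordin's martingale approximation, feasible thanks to the spectral gap of $\cL_t$ on $\cB$. Monotonicity is then immediate: $\int\log J^uT\,d\mu_t=-P'(t)$ is decreasing because $P''(t)\geq 0$, and writing $h_{\mu_t}=P(t)-tP'(t)$ gives $\partial_t h_{\mu_t}=-tP''(t)\leq 0$.

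The main obstacle will be Step~1, establishing holomorphy of the family on $\cB(t_0,t_1)$, because the weight $|J^sT|^{z-1}$ is unbounded near $\cS_{-1}$ and the Banach norms themselves depend on $[t_0,t_1]$; I expect to absorb $|J^sT|^{z-t_0}$ as a bounded multiplication operator on each homogeneity strip (using the parameter $qt_0>1$ from \eqref{eq:k0}) and then apply the existing estimates for $\cL_{t_0}$. A secondary subtlety is the rigorous identification $\mu_t(\varphi)=\ell_t(\varphi\cdot\nu_t)$ for unbounded $\varphi$ such as $\log|J^sT|$, which should follow by a truncation argument together with \eqref{adapt}.
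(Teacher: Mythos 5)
Your overall architecture (Kato perturbation theory for the leading eigenvalue, the first/second derivative formulas, the Green--Kubo rearrangement, Gordin's dichotomy for $P''(t)=0$, and $\partial_t h_{\mu_t}=-tP''(t)\le 0$) coincides with the paper's. The genuine gap is in your Step~1, which you yourself flag as the main obstacle: you propose to obtain holomorphy of $z\mapsto \cL_z$ by writing $\cL_z=$ (multiplication by $|J^sT|^{z-t_0}\circ T^{-1}$) composed with $\cL_{t_0}$ and absorbing the weight ``as a bounded multiplication operator on each homogeneity strip.'' This is exactly the step that does not come for free on the anisotropic space: $J^sT$ is not piecewise H\"older on $M$ (see Remark~\ref{omit}), the weight $|J^sT|^{z-t_0}\circ T^{-1}$ has countably many discontinuity curves accumulating on $T\cS_0$, and for $\mathrm{Im}\,z\neq 0$ the factor $e^{i\,\mathrm{Im}(z)\log|J^sT|\circ T^{-1}}$ oscillates without bound near the singularity, so the comparison of its values on two $\ve$-close stable curves (needed for the unstable-norm part of a multiplier estimate) is not $O(\ve^\gamma)$ uniformly; the distortion controls \eqref{eq:distortion} and \eqref{eq:Cv} are only available \emph{after} applying the transfer operator, on matched weakly homogeneous pieces. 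The paper avoids multiplication operators altogether: it fixes an iterate $\cT=T^{n_0}$ with $|J^s\cT|\le 1/2$ (so $\log|J^s\cT|$ is bounded away from $0$, which Lemma~\ref{distlog} requires), expands $\cL^{n_0}_t$ as a power series in $t-w$, and proves the factorial bound $\|\cL_w^{n_0}((\log J^s\cT)^j f)\|_\cB\le j(Cj)^j\|f\|_\cB$ (Proposition~\ref{prop:analyticboundprop}) by trading $(\log|J^s\cT|)^j\le (C_\upsilon j)^j|J^s\cT|^{-\upsilon}$ for a small loss $\upsilon<t_0/2-1/p$ inside the Lasota--Yorke machinery. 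If you want to salvage your route you would essentially have to reprove these estimates, so the multiplication-operator shortcut is not a shortcut.

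A second, smaller gap is in the $P''(t)=0$ direction. Invoking ``Gordin, feasible thanks to the spectral gap'' is not enough here because the observable $\log J^sT$ is unbounded and is not a multiplier of $\cB$: the paper must verify the two martingale conditions \eqref{CLT1}--\eqref{CLT2} with respect to the stable partition $\cA_0$, and the decay \eqref{CLT2} uses crucially that $(\log|J^sT^{n_0}|\circ T^{-n_0})\,\nu_t=e^{-n_0P(t)}\cM^{(1)}_t(\nu_t)\in\cB$ (i.e.\ the same Proposition~\ref{prop:analyticboundprop} and Remark~\ref{rem:Mj}), together with Lemma~\ref{lem:distrib}, the higher moments $\log|J^sT|\in L^\ell(\mu_t)$ coming from the adapted property, and a truncation adapted from \cite{DRZ}. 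Your truncation remark addresses the pairing $\mu_t(\varphi)=\ell_t(\varphi\nu_t)$ but not this verification. Minor points: analyticity is proved for $\cL_t^{n_0}$, not $\cL_t$ (which suffices since $n_0P(t)$ analytic implies $P(t)$ analytic), and the pointwise bound $\log J^uT\ge\log\Lambda$ is not literally true because of the constant $C_1$ in \eqref{eq:hyp}; one gets $\int\log J^uT\,d\mu_t\ge\log\Lambda$ by averaging over iterates, which is what negativity of $P'(t)$ needs.
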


The formula for $P'(t)$ in \eqref{formula0} implies that, if
there exist $t_a\ne t_b$ in $[0,t_*)$ such that $\mu_{t_a}= \mu_{t_b}$,   then
$P(t)$ is not strictly convex: indeed, $P'(t)$ is constant on $[t_a, t_b]$.
By analyticity, we  then deduce that 
$P(t)$ must be affine on $(0,t_*)$. 
Therefore, we get an immediate corollary of Theorem~\ref{strconv}:
\begin{cor}\label{cor1}
If there exist $t_a\ne t_b$ in $(0,t_*)$ such that $\mu_{t_a}= \mu_{t_b}$
then $P(t)$ is affine on $(0,t_*)$,
and $\log J^sT$ is $\mu_t$ a.e.
cohomologous to its average $\int \log J^sT \, d\mu_t$ for all $t\in (0,t_*)$.
\end{cor}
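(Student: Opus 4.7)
The plan is to deduce the corollary from Theorem~\ref{strconv} in essentially one step, using analyticity to propagate a local vanishing to a global one. Assume there exist $t_a < t_b$ in $(0, t_*)$ with $\mu_{t_a} = \mu_{t_b}$. By the formula for $P'(t)$ given in \eqref{formula0},
\[
P'(t_a) = \int \log J^s T \, d\mu_{t_a} = \int \log J^s T \, d\mu_{t_b} = P'(t_b).
\]
By \eqref{formula}, $P''(t) \ge 0$ throughout $(0, t_*)$, so $P'$ is non-decreasing and hence must be constant on the entire interval $[t_a, t_b]$. In particular, $P''$ vanishes identically on $[t_a, t_b]$.

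Next I invoke the analyticity of $P$ on $(0, t_*)$ provided by Theorem~\ref{strconv}: the second derivative $P''$ is itself analytic on the connected open set $(0, t_*)$, and it vanishes on the subinterval $[t_a, t_b]$ of positive length. The identity principle for real-analytic functions then forces $P'' \equiv 0$ on $(0, t_*)$, so $P'$ is constant, and $P$ is affine on $(0, t_*)$.

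For the cohomological statement, I apply the characterization in the last part of Theorem~\ref{strconv}: for each $t \in (0, t_*)$, the equality $P''(t) = 0$ is equivalent to the existence of some $f_t \in L^2(\mu_t)$ satisfying
\[
\log J^s T = f_t - f_t \circ T + \int \log J^s T \, d\mu_t \quad \mu_t\text{-a.e.}
\]
Since $P'' \equiv 0$ on $(0, t_*)$ by the previous paragraph, this cohomological identity holds for every such $t$, establishing the second conclusion. I do not anticipate a genuine obstacle: both claims follow directly from the analyticity of $P$ and the variance-type formula \eqref{formula}, and the only mild point to verify is that the hypothesis $t_a, t_b \in (0, t_*)$ gives an interval of positive length strictly inside the domain of analyticity, which is immediate.
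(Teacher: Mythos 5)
Your proof is correct and follows essentially the same route as the paper: the formula $P'(t)=\int\log J^sT\,d\mu_t$ plus convexity forces $P'$ to be constant on $[t_a,t_b]$, analyticity (identity principle for $P''$) propagates this to all of $(0,t_*)$, and the characterization of $P''(t)=0$ in Theorem~\ref{strconv} yields the cohomological conclusion. No gaps.
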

We expect that there does not exist any Sinai billiard table such that
$\log J^sT$ is  $\mu_t$ a.e. cohomologous to a constant on $M'$ for some $t\in [0,t_*)$.
If we only want to verify that $\mu_0 \neq \mu_1=\musrb$, it is enough to show
that $P''(1)\ne 0$. Note that in
\cite{max}, assuming sparse recurrence (see Definition~\ref{sparse}), we showed that $\mu_0=\musrb$ (i.e., $\mu_0=\mu_1$)
only if  $\frac{1}{p} \log |\det (DT^{-p}|_{E^s}(x))|= P(0)$
for every nongrazing periodic orbit $T^p(x)=x$.

\medskip

The proof of analyticity of $P(t)$
via analyticity of $\cL_t$ in Theorem~\ref{strconv} gives  the following two corollaries.

\begin{cor}[Continuity of Equilibrium States]
\label{contin eq}
For each $\psi \in C^1(M)$, $\mu_t(\psi)$ is analytic for $t \in (0, t_*)$.  Moreover, 
the measures $\mu_t$ vary continuously in the weak topology.
\end{cor}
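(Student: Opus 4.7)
The plan is to combine the analyticity of the operator family $t\mapsto \cL_t$ established in Section~\ref{analytic} (as part of the proof of Theorem~\ref{strconv}) with the spectral gap of Theorem~\ref{thm:spectral} via Kato's analytic perturbation theory for isolated simple eigenvalues.

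Fix $\hat t\in (0,t_*)$ and choose $0<t_0<\hat t<t_1<t_*$ so that, setting $\cB=\cB(t_0,t_1)$, Theorem~\ref{thm:spectral} applies with a spectral gap uniformly for $t$ in a real interval around $\hat t$. The family $t\mapsto \cL_t\in L(\cB,\cB)$ extends analytically to a complex neighbourhood $U\subset\bC$ of $\hat t$: writing $|J^sT|^{t-1}=|J^sT|^{\hat t-1}\exp((t-\hat t)\log|J^sT|)$ and expanding the exponential as a power series in $t-\hat t$, the required operator-norm convergence is precisely the estimate already used in proving analyticity of $t\mapsto P(t)$. Since the leading eigenvalue $e^{P(\hat t)}$ of $\cL_{\hat t}$ is simple and isolated, Kato's theorem yields, for $t\in U$, an analytic rank-one spectral projector $\Pi_t:\cB\to\cB$, an analytic eigenvector $h_t=\Pi_t h_{\hat t}\in\cB$, and an analytic dual eigenvector $\tilde\nu_t=\Pi_t^*\tilde\nu_{\hat t}\in\cB^*$. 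After rescaling so that $\tilde\nu_t(h_t)=1$ (a condition preserved analytically since the pairing at $\hat t$ is nonzero), the equilibrium measure of Theorem~\ref{thm:spectral} acts as $\mu_t(\psi)=\tilde\nu_t(\psi\, h_t)$.

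The remaining ingredient is that, for each fixed $\psi\in C^1(M)$, the multiplication operator $f\mapsto \psi f$ is bounded on $\cB$; this is a standard property of the anisotropic norms introduced in Section~\ref{defstuff}, since $C^1$ regularity controls both the values and the transverse derivatives entering the strong stable and strong unstable seminorms. Consequently $t\mapsto \psi h_t\in \cB$ is analytic, and hence so is $t\mapsto \mu_t(\psi)=\tilde\nu_t(\psi h_t)$, as a composition of analytic maps. For the weak continuity of $t\mapsto\mu_t$, let $\psi\in C^0(M)$ and approximate uniformly by $\psi_n\in C^1(M)$; since each $\mu_t$ is a probability measure, $|\mu_t(\psi)-\mu_t(\psi_n)|\le\|\psi-\psi_n\|_\infty$ uniformly in $t$, so continuity of each $t\mapsto\mu_t(\psi_n)$ (from analyticity) transfers to the uniform limit $t\mapsto\mu_t(\psi)$.

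The main obstacle is technical rather than conceptual: verifying that the power series expansion of $\cL_t$ converges in $L(\cB,\cB)$ near $\hat t$ and that $C^1$ multiplication acts boundedly on $\cB$. The first is already in place from the proof of Theorem~\ref{strconv}; the second is a direct inspection of the seminorms of Section~\ref{defstuff}. Once both are available, the corollary falls out purely from abstract spectral perturbation theory.
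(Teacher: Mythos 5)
Your proposal is correct and follows essentially the same route as the paper: analyticity of the spectral data of the transfer operator via Kato perturbation theory, boundedness of multiplication by $C^1$ (indeed $C^\alpha$) functions on $\cB$ (the paper cites \cite[Lemma~5.3]{dz2} for exactly this), and an approximation argument to pass from $C^1$ to $C^0$ observables. Two small remarks. First, the analyticity estimates actually proved in the paper (Proposition~\ref{opan} via Proposition~\ref{prop:analyticboundprop} and Lemma~\ref{distlog}) concern the iterate $\cL_t^{n_0}$, with $n_0$ chosen so that $|J^sT^{n_0}|\le 1/2$, because the H\"older distortion bound for $(\log |J^s\cT|)^j|J^s\cT|^t$ requires $\log|J^s\cT|$ bounded away from $0$; so your claim that the one-step family $t\mapsto\cL_t$ is analytic ``by the estimate already used for $P(t)$'' is not literally what is available. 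This is harmless: running Kato on $\cL_t^{n_0}$, whose simple maximal eigenvalue $e^{n_0P_*(t)}$ has the same eigenvectors $\nu_t$ and $\tnu_t$, gives the analyticity of $t\mapsto\mu_t(\psi)$ exactly as you argue (the paper in fact deduces analyticity of $\nu_t$ and $\tnu_t$ from the power-series representation of $\cL_t^{n_0}$ and of its dual). Second, your weak-continuity step is a bit more elementary than the paper's: the paper first derives a quantitative Lipschitz bound $|\mu_t(\psi)-\mu_s(\psi)|\le C_\star|t-s|\,|\psi|_{C^\alpha}$ on compact subintervals and then mollifies a $C^0$ observable, balancing the blow-up $|\psi_\ve|_{C^\alpha}\le C\ve^{-\alpha}$ against $|t-s|$; you instead use only continuity of $t\mapsto\mu_t(\psi_n)$ for each $C^1$ approximant together with the uniform-in-$t$ bound $|\mu_t(g)|\le|g|_\infty$ coming from $\mu_t$ being a probability measure, and pass to the uniform limit. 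Both arguments are valid; the paper's version yields in addition an explicit modulus of continuity in $t$ for $C^\alpha$ test functions, which yours does not, but that extra information is not needed for the corollary as stated.
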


\begin{cor}[Uniform Rates of Mixing]
\label{unifrate}
The exponential rate of mixing of $\mu_t$ 
for $C^1$ observables is  uniformly bounded away from $1$ in any compact
subinterval of $(0,t_*)$.
\end{cor}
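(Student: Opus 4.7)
Fix a compact subinterval $[a,b]\subset(0,t_*)$ and choose $t_0,t_1$ with $0<t_0<a\le b<t_1<t_*$. My plan is to do everything on the single Banach space $\cB=\cB(t_0,t_1)$: all the operators $\cL_t$ with $t\in[a,b]$ act on this one space, and the proof of Theorem~\ref{strconv} (carried out in Section~\ref{analytic}) already provides that $t\mapsto \cL_t$ is analytic as a map from $(t_0,t_1)$ into the bounded operators on $\cB$. Given this common framework, the corollary should reduce to standard perturbation theory for the spectral gap combined with a compactness argument on $[a,b]$.

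First I would record the pointwise ingredients. For each $t\in[a,b]$, Theorem~\ref{thm:spectral} splits the spectrum of $\cL_t$ on $\cB$ as $\{e^{P(t)}\}\sqcup \Sigma_t$, with $e^{P(t)}$ a simple isolated eigenvalue and $\Sigma_t\subset \{|z|\le \rho(t)\}$ for some $\rho(t)<e^{P(t)}$. Writing $\cL_t^n = e^{nP(t)}\Pi_t + R_t^n$, with $\Pi_t$ the rank-one leading projector and $R_t$ the complementary operator, and using that $\mu_t$ is built from the leading left and right eigenvectors of $\cL_t$, I would obtain the pointwise decay
\[
\Bigl|\int \psi\,(\phi\circ T^n)\,d\mu_t - \mu_t(\psi)\mu_t(\phi)\Bigr|
\le C_t\,\bigl(\rho(t)/e^{P(t)}\bigr)^n \|\psi\|_{C^1}\|\phi\|_{C^1}
\]
for every $\psi,\phi\in C^1(M)$, with $C_t$ determined by $\|\Pi_t\|_\cB$ and $\sup_{n\ge 0}\|R_t^n\|_\cB/\rho(t)^n$.

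Next I would upgrade this to a uniform bound on $[a,b]$ via analyticity of $t\mapsto \cL_t$. For each $t^*\in[a,b]$, I would choose a circle $\Gamma\subset\mathbb{C}$ separating $e^{P(t^*)}$ from $\Sigma_{t^*}$, and use joint continuity of the resolvent $(\lambda-\cL_t)^{-1}$ in $(\lambda,t)$ on $\Gamma\times\{|t-t^*|<\delta\}$ to conclude, for $\delta$ small, that $\Gamma$ still separates the spectrum of $\cL_t$ for all nearby $t$. This yields analytic dependence of the rank-one projector $\Pi_t$ on $t$, and upper semi-continuity of $\rho(t)$. A finite subcover of $[a,b]$ then produces constants $\gamma<1$ and $C<\infty$, both independent of $t\in[a,b]$, such that $\rho(t)/e^{P(t)}\le \gamma$ and $C_t\le C$, which is precisely the uniform rate of mixing.

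The step I expect to be the main obstacle is verifying that the spectral gap provided by Theorem~\ref{thm:spectral} is genuinely robust under small $t$-perturbations in the full operator norm on $\cB(t_0,t_1)$, rather than merely in the weaker pair-of-norms sense used to establish quasi-compactness. Because Section~\ref{analytic} already delivers operator-norm analyticity of $t\mapsto \cL_t$, the resolvent argument above should proceed without any additional quasi-compactness massaging, and the finite-cover compactness argument then closes the proof.
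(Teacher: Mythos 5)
Your proposal is correct and follows essentially the same route as the paper: fix $[a,b]\subset(t_0,t_1)\subset(0,t_*)$, work on the single space $\cB(t_0,t_1)$ where the essential spectral radius bound of Proposition~\ref{prop:radius} is uniform, and combine the pointwise spectral gap with the operator-analyticity of Section~\ref{analytic} and Kato-type perturbation/continuity of the isolated peripheral eigenvalue, closing with compactness of the interval. The only small adjustment is that the paper establishes analyticity of $t\mapsto\cL_t^{n_0}$ (Proposition~\ref{opan}) rather than of $\cL_t$ itself, but running your resolvent-contour argument for $\cL_t^{n_0}$ yields the same uniform gap and hence the uniform mixing rate.
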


In addition, the proof of the claim on $P''(t)=0$ in Theorem~\ref{strconv}  gives:\footnote{Our approach gives other limit theorems (large deviation
estimates, invariance principles, see \cite[Sect. 6]{dz1}).}
\begin{cor}[Central Limit Theorem]
\label{CLT}For any $t\in (0,t_*)$ such that $P''(t)\ne 0$, setting
$\chi_t:= P'(t)$ and
$\sigma_t:=P''(t)$,
we have 
$
\lim_{k \to\infty}
\mu_t\bigl (\frac 1 {\sqrt k}\sum_{j=0}^{k-1}(\log J^s T -\chi_t) \circ T^j\le z
\bigr )
=\frac 1 {\sqrt {2\pi \sigma_t}}
\int_{-\infty}^ze^{-{v^2}/{(2\sigma_t^2)}}\, dv\, ,
$ for any $z\in \bR$.
\end{cor}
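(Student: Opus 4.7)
The plan is to apply the Nagaev--Guivarc'h perturbation method to a family of twisted transfer operators. Fix $t \in (0, t_*)$ with $\sigma_t := P''(t) \neq 0$, set $\psi := \log J^sT - \chi_t$ (the mean-zero observable), and for complex $z$ in a small neighborhood of the origin introduce
\[
\cL_{t,z}(f) := \cL_t\bigl(e^{z\psi} f\bigr) = e^{-z\chi_t}\,\cL_{t+z} f,
\]
so that by induction $\cL_{t,z}^n f = \cL_t^n(e^{z\Psi_n} f)$, where $\Psi_n := \sum_{j=0}^{n-1}\psi\circ T^j$. The identity $\cL_{t,z} = e^{-z\chi_t}\cL_{t+z}$ shows that the twisted family is essentially a complex shift of the already-studied family $\{\cL_s\}$ in the exponent $1-s$ appearing in \eqref{eq:L}.

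The first, and most delicate, step is to show that $z \mapsto \cL_{t,z}$ is a holomorphic family of bounded operators on the Banach space $\cB = \cB(t_0, t_1)$, for a suitable interval $[t_0, t_1] \subset (0, t_*)$ containing $t$ in its interior. Along the real axis this reduces to the analyticity of $s \mapsto \cL_s$ established in Section~\ref{analytic}; to extend to complex $z$ one allows the real exponent $1-t$ in \eqref{eq:L} to become $1-t-z$ with $|\operatorname{Im} z|$ small and checks that the Lasota--Yorke type estimates of Section~\ref{sec:norms} and the growth lemmas of Sect.~\ref{GrL} remain valid under this perturbation. The subtle point is that $e^{z\psi}$ is unbounded on $M$ because $\log J^sT$ has logarithmic singularities along $\cS_1$; this is absorbed using the $T$-adapted property \eqref{adapt} of $\mu_t$ together with the homogeneity-strip decomposition already built into the norms of $\cB$.

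Once holomorphy is in hand, the spectral gap for $\cL_{t,0}=\cL_t$ provided by Theorem~\ref{thm:spectral} together with standard Kato--Rellich perturbation theory furnish, for $|z|$ sufficiently small, a simple dominant eigenvalue $\lambda(z)$, a rank-one spectral projector $\Pi_z$, and a spectral complement of strictly smaller spectral radius, all analytic in $z$. Uniqueness of the leading eigenvalue along the real axis forces $\lambda(z) = e^{-z\chi_t}\,e^{P(t+z)}$ for real $z$, and analytic continuation extends this identity to complex $z$. Combined with \eqref{formula0} and \eqref{formula},
\[
\lambda(0) = e^{P(t)}, \qquad \frac{\lambda'(0)}{\lambda(0)} = -\chi_t + P'(t) = 0, \qquad \frac{\lambda''(0)}{\lambda(0)} = P''(t) = \sigma_t.
\]

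Writing $d\mu_t = h_t\, d\tilde\nu_t$ with $\cL_t h_t = e^{P(t)} h_t$, $\cL_t^* \tilde\nu_t = e^{P(t)} \tilde\nu_t$, and $\tilde\nu_t(h_t)=1$, the duality identity $\tilde\nu_t(\cL_t^n \phi) = e^{nP(t)}\tilde\nu_t(\phi)$ yields
\[
\int e^{iu\Psi_n/\sqrt n}\,d\mu_t \;=\; e^{-nP(t)}\,\tilde\nu_t\bigl(\cL_{t,iu/\sqrt n}^n h_t\bigr).
\]
Substituting the spectral decomposition $\cL_{t,z}^n = \lambda(z)^n\Pi_z + \cR_z^n$, with $\|\cR_z^n\|_\cB$ decaying geometrically uniformly for $|z|$ small, expanding $\lambda(iu/\sqrt n)$ to second order, and using $\sigma_t \neq 0$, the right-hand side converges to $e^{-\sigma_t u^2/2}$ as $n \to \infty$. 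L\'evy's continuity theorem then delivers the announced Gaussian limit. The main obstacle is the first step above: verifying holomorphy of $z \mapsto \cL_{t,z}$ on $\cB$ for complex $z$, which requires checking that a small imaginary perturbation of the exponent in \eqref{eq:L} degrades the norm estimates for $\cL_t$ only by factors analytic in $z$, a technical exercise modelled on the analogous analysis in \cite[Sect.~6]{dz1}.
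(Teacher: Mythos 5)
Your route is legitimate but it is genuinely different from the one the paper takes. The paper does not run a Nagaev--Guivarc'h argument at all: the CLT is harvested as a byproduct of the proof of Theorem~\ref{strconv}, where the two Gordin conditions \eqref{CLT1} and \eqref{CLT2} are verified for the filtration generated by homogeneous stable manifolds (the first via the H\"older regularity \eqref{distconcl0} of $\log J^sT$ along stable leaves, the second via the spectral gap through the key identity \eqref{forCLT4} and the fact that $(\log J^sT^{n_0}\circ T^{-n_0})\nu_t\in\cB$), and the corollary then follows from Gordin's martingale CLT with variance given by the Green--Kubo sum, which equals $P''(t)=\sigma_t$ by \eqref{formula}. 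What the martingale route buys is that exactly the same two conditions also deliver the coboundary characterisation of $P''(t)=0$, so no separate perturbative machinery is needed; what your route buys is stronger conclusions (local limit theorems, moderate deviations, Berry--Esseen-type refinements) once the twisted family is under control, which is why the paper's footnote points to \cite[Sect.~6]{dz1} for further limit theorems.

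Three cautions on your plan. First, your ``most delicate step'' is easier than you make it, but for a different reason than the one you give: holomorphy of $z\mapsto\cL_{t+z}^{n_0}$ in a complex neighbourhood is already contained in Proposition~\ref{opan}, since the power series $\sum_j\frac{(w-t)^j}{j!}\cL_w^{n_0}(f(\log J^s\cT)^j)$ built from Proposition~\ref{prop:analyticboundprop} has a radius of convergence uniform in $w$; the unboundedness of $\log J^sT$ is controlled there by the leafwise distortion estimates of Lemma~\ref{distlog}, not by the $T$-adapted property \eqref{adapt}, which concerns $\mu_t$-integrability and is not the right tool for operator bounds on $\cB$. Second, that analyticity is proved for the power $\cL_t^{n_0}$ (with $\cT=T^{n_0}$ chosen so that $|J^s\cT|\le 1/2$), so you should run the perturbation argument for $\cT$ and recover the CLT for $T$ by the standard block decomposition $n=mn_0+r$; your write-up treats $\cL_t$ itself as the analytic family. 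Third, the identity $\int e^{iu\Psi_n/\sqrt n}\,d\mu_t=e^{-nP(t)}\tilde\nu_t(\cL_{t,iu/\sqrt n}^n h_t)$ needs justification because $h_t=\nu_t$ is only a distribution and $e^{iu\Psi_n}$ is not in $C^\alpha(M)$ near $\cS_0$; the pairing must be made sense of via Lemma~\ref{lem:distrib} and \eqref{eq:weak dual}, in the spirit of the manipulation the paper carries out in \eqref{forCLT4}. With these points repaired, your argument goes through and yields the stated Gaussian limit with variance $\sigma_t=P''(t)$.
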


\smallskip

We next motivate \emph{heuristically} the choice of the weight
$1/|J^sT|^{1-t}$ in \eqref{eq:L}, by analogy with the theory for
  smooth hyperbolic $T$.  
For a transitive Anosov diffeomorphism $T$, the transfer operator
  whose maximal left and right
eigenvectors on an anisotropic Banach space give rise to $\mu_t$ is 
$\widetilde \cL_t(f )= \bigl (f/(|J^uT|^t J^sT)\bigr )\circ T^{-1}$
(see \cite{GL2} or \cite[Chapter 7]{Bbook}). A  coboundary argument,
reflecting the fact that $C^1$ functions are interpreted
as distributions via integration with respect to the SRB measure 
$\musrb= (2  |\partial Q|)^{-1} \cos \vf \, drd\vf$ here (see below
Proposition~\ref{embeds}),
but with respect to Lebesgue in  \cite{GL2, Bbook}, will
 replace $1/(|J^uT|^t J^sT)$
 by $1/|J^sT|^{1-t}$: Indeed, \eqref{eq:jacs} gives (on $M'$)
\begin{align}
\nonumber 
- \log \big( |J^uT|^t J^sT \big) &= - \log |J^sT J^uT|^t - \log |J^sT|^{1-t}
\\
\label{cobb} &= - t \log \left( \frac{E \, \cos \vf}{(E \cos \vf) \circ T}  \right) - (1-t) \log J^sT \, .
\end{align}
The first term of \eqref{cobb} is a coboundary. Thus 
we can expect that the  operators  $\widetilde \cL_t$
and $\cL_t$ from \eqref{eq:L}
have isomorphic spectral data, which motivates intuitively our study of $\cL_t$.

\medskip
 The rest of the paper is organized as follows.  In Section~\ref{sec:top}, after defining our notion
of topological pressure $P_*(t,g)$ that we will connect to the measure-theoretic pressure $P(t,g)$, we 
 state our strong variational principle and prove the preliminary result that
$P_*(t,g) \ge P(t,g)$.
In Section~\ref{GrL}, we carry out the main growth lemmas and estimates needed to
prove the exact exponential growth of the topological complexity $Q_n(t,g)$ (defined in Sect.~\ref{altt}).  These
estimates are uniform for $t \in [t_0, t_1] \subset (0, t_*)$.
In Section~\ref{sec:spec}, we define the Banach spaces on which our operators $\cL_t$ act
and prove inequalities which furnish  a spectral gap, again uniform
for $t \in [t_0, t_1]$.
Section~\ref{finall} establishes the main properties of the measure $\mu_t$ constructed from the 
maximal eigenvectors of $\cL_t$ and $(\cL_t)^*$; in particular, $\mu_t$ is the unique
equilibrium state with pressure equal to $P_*(t)$.  Its existence provides the strong
version of the variational principle $P_*(t) = P(t)$ and completes the proof of Theorem~\ref{thm:equil}
(see also the sketch provided after the statement of Theorem 1.1 above).
Finally, Section~\ref{analytic} proves the analyticity of the pressure function and conditions for its
strict convexity as stated in Theorem~\ref{strconv}.


\section{Topological Formulation $P_*(t,g)$ for $P(t,g)$. Variational Principle
(Theorem~\ref{thm:geo var})}
\label{sec:top}

\subsection{Hyperbolicity and Distortion.  $\cW^s$, $\hW^s$,  $\cW^s_\bH$,  $\hW^s_H$. Families
$\cM_{-k}^n$, $\cM_{-k}^{n,\bH}$}
\label{defstuff}

For $n>0$, following \cite{max}, define $\cM_0^n$  to be the
set of maximal connected components of
$M \setminus \cS_n$, and $\cM_{-n}^0$ to be the maximal connected components of 
$M \setminus \cS_{-n}$.  Set $\cM_{-k}^n = \cM_{-k}^0 \bigvee \cM_0^n$.
Note that if $A \in \cM_0^n$, then $T^kA \in \cM_{-k}^{n-k}$ for each $0 \le k \le n$, and $T^kA$ is a union of elements of
$\cM_{-k}^0$ for each $k > n$.

To control distortion, we  introduce homogeneity strips whose spacing depends on 
$t_0\in (0,1)$ if $t\ge t_0$.  Choose\footnote{The standard choice for $t=1$ is $q=2$.} $q = q(t_0)>1$ such that  $qt_0 \ge 2$.  
For fixed $k_0  \in \mathbb{N}$ define
\begin{equation}
\label{eq:H}
\mathbb{H}_k = \{ (r, \vf) \in M : (k+1)^{-q} \le \frac{\pi}{2} - \vf  <  k^{-q} \} \, , \quad \mbox{for $k \ge k_0$,}
\end{equation}
and similarly $\mathbb{H}_{-k}$ is defined approaching $\vf = -\pi/2$.
A  connected component of $\mathbb{H}_k$, for some $|k|\ge k_0$,
or of the set 
$\bH_0=\{ (r, \vf)  : k_0^{-q} \le 
\min\{\frac{\pi}{2} - \vf, \frac{\pi}{2} + \vf\} \}$
is called a  \emph{homogeneity strip.} We let $\cH$ denote the partition of $M$ into homogeneity strips.
Let $\cS_0^\mathbb{H} = \cS_0 \cup (\cup_{|k| \ge k_0} \partial \mathbb{H}_k )$ and, for $n \in \mathbb{Z} $, let 
$\cS_n^\mathbb{H} = \cup_{i=0}^{-n} T^i\cS_0^\mathbb{H}$
 denote the \emph{extended singularity set
for $T^n$.}

Fix\footnote{The index  $k_0=k_0(t_0,t_1)$ and the length scale  $\delta_0= \delta_0(t_0,t_1)<1$ will be chosen
in Definition~\ref{def:k0d0}.} $\delta_0\in (0,1)$. 
Let
$\cW^s$ denote the set of all nontrivial connected subsets $W$
of local stable manifolds of $T$ of  length at most $\delta_0$.   Such curves have curvature bounded above
by a fixed constant \cite[Prop~4.29]{chernov book}, and
$T^{-n}\cW^s = \cW^s$ for all $n\ge 1$, up to subdivision of curves  according to the length scale $\delta_0$.
Let $\cW^s_\bH\subset \cW^s$ denote the set of 
nontrivial connected subsets $W$ of elements of $\cW^s$  with the property that
$T^nW$ belongs to a single homogeneity strip for each  $n \ge 0$.  Such curves are called\footnote{In \cite{chernov book}, these curves are called $H$-manifolds. This strong notion of homogeneity is needed   to prove  H\"older continuity of the conditional densities of  the SRB measure decomposed along stable manifolds --  needed to get valid test functions for our spaces --- using  the asymptotic limit of the ratio of stable Jacobians,   forward iterates must be contained in a single homogeneity strip (so that the ratio remains bounded).} \emph{homogeneous stable manifolds.}

We call a $C^2$ curve $W \subset M$ (cone) stable if at each point $x$ in $W$, the tangent 
vector $\cT_xW$ to $W$ lies in $\cC^s$.  
We denote by
$\hW^s$ the set of (cone) stable curves with second derivative bounded by a constant chosen sufficiently large
(\cite[Prop~4.29]{chernov book}) so that
$T^{-n} \hW^s \subset \hW^s$ for all $n\ge 1$, up to subdivision of  curves according to  $\delta_0$.   Finally,
${\hW^s_H} \subset \hW^s$ is the set  of elements
of $\hW^s$
 contained in a single homogeneity strip, while $\cW^s_H$ is the set of  elements of 
$\cW^s$ that are contained in a single homogeneity strip.
  Such curves are called  \emph{weakly homogeneous} (cone) stable curves 
and stable manifolds, respectively. 
 Obviously, $\cW^s_\bH \subset \cW^s_H \subset \cW^s \subset \hW^s$ and  $\cW^s_H \subset \hW^s_H$.

For every  $W\in \hW^s$, let $C^1(W)$
denote the space of $C^1$ functions on $W$, and 
for every $\eta\in (0,1)$
let $C^\eta(W)$ denote the closure\footnote{Using
the closure of $C^1$  will give injectivity of the inclusion
of the strong space in the weak one in Proposition~\ref{embeds}.} of $C^1(W)$ for the
$\eta$-H\"older norm  
defined by
\begin{equation}
\label{eq:holder def}
|\psi|_{C^\eta(W)}=\sup_W|\psi|+ H_W^\eta(\psi)\, ,\,\, H_W^\eta(\psi)=\sup_{\substack{x, y \in W \\ x \neq y}}\frac{|\psi(x)-\psi(y)|}{d(x,y)^\eta}\, .
\end{equation}

  The following lemma
extends standard distortion bounds for homogeneous curves to all exponents $t>0$.
(See Lemma~\ref{distlog} for a further generalisation.)

\begin{lemma}
\label{lem:distortion}
There
exists $\bar \delta_0>0$
and $C_d >0$,  depending on $k_0$ and $q$, 
such that for all $\delta_0<\bar \delta_0$,
all  $n \ge 0$,
 and any $W \in T^{-n} \hW^s$ such
that $T^iW \in  \hW^s_H$ for each $i = 0, \ldots, n-1$, we have 
\[
\biggl| 1 - \frac{|J_WT^n(x)|^t}{|J_WT^n(y)|^t} \biggr| \le 2^t C_d d(x,y)^{1/(q+1)} \, ,
\, \, \, \forall \, x, \, y \in W \, , \, \, \forall t >0\, , 
\]
where $J_WT^n(x)=|\det (DT^n_x|\cT_xW)|$ denotes the Jacobian of $T^n$ along $W$, 
and $d(\cdot, \cdot)$ denotes distance in $M$. 
\end{lemma}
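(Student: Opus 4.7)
The strategy is to first establish the $t=1$ logarithmic estimate via the standard Chernov telescoping argument, and then pass to general $t>0$ via the integral identity $r^t - 1 = \int_0^t r^s \log r\, ds$, which produces the factor $2^t$ cleanly and without any spurious factor of $t$.

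First, I would write
\[
\log\frac{|J_WT^n(x)|}{|J_WT^n(y)|} = \sum_{i=0}^{n-1}\bigl[\log|J_{T^iW}T(T^ix)| - \log|J_{T^iW}T(T^iy)|\bigr],
\]
and bound each summand via the single-step H\"older estimate for weakly homogeneous stable curves: for $W' \in \hW^s_H$ and $x',y' \in W'$,
\[
\bigl|\log|J_{W'}T(x')| - \log|J_{W'}T(y')|\bigr| \le C_h\, d(x',y')^{1/(q+1)},
\]
with $C_h$ depending only on $k_0, q, \tmin, \cK_{\min}, \cK_{\max}$. For the customary $q=2$ this is \cite[Lemma~5.27]{chernov book}; its derivation uses the mirror formula for $DT$ together with the bounds $\cos\varphi \asymp k^{-q}$ and $\diam(\bH_k \cap W') \lesssim k^{-(q+1)}$, which conspire to produce the exponent $1/(q+1)$ (the dominant contribution to $\log J_{W'}T$ is $\log(\cos\varphi/\cos\varphi\circ T)$, whose relative variation is at most of order $k^q\, d(x',y') \le d(x',y')^{1/(q+1)}$). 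Uniform stable contraction \eqref{eq:hyp} gives $d(T^ix,T^iy) \le C_2 \Lambda^{-i} d(x,y)$ for $0 \le i \le n$, so summing a convergent geometric series yields
\[
\left|\log\frac{|J_WT^n(x)|}{|J_WT^n(y)|}\right| \le C_{\log}\, d(x,y)^{1/(q+1)},
\]
where $C_{\log} := C_h C_2^{1/(q+1)}/(1 - \Lambda^{-1/(q+1)})$ depends only on $k_0, q$ and the table.

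Finally, set $\bar\delta_0 := (\log 2 / C_{\log})^{q+1}$. For $\delta_0 \le \bar\delta_0$ and $d(x,y) \le \delta_0$ we have $|\log r| \le \log 2$, so that $r := |J_WT^n(x)|/|J_WT^n(y)| \in [1/2, 2]$ and in particular $r^s \le 2^s$ for $s \ge 0$. Applying the identity $r^t - 1 = \int_0^t r^s \log r\, ds$, I conclude
\[
|1 - r^t| \le |\log r|\int_0^t 2^s\, ds \le \frac{2^t}{\log 2}|\log r| \le \frac{2^t C_{\log}}{\log 2}\, d(x,y)^{1/(q+1)},
\]
which is the claimed bound with $C_d := C_{\log}/\log 2$. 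The main technical obstacle is the single-step H\"older estimate for general $q$: although $q = 2$ is classical, one must revisit the $q$-dependence of the constants throughout the mirror-formula computation to confirm that the exponent $1/(q+1)$ (rather than the standard $1/3$) emerges correctly for every $q \ge 2/t_0$; once that is in hand, the passage from $t = 1$ to general $t > 0$ via the integral identity above is essentially formal and accounts for the clean factor $2^t$ in the final bound.
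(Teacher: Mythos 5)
Your proof is correct and takes essentially the same route as the paper: both defer the core input (the distortion estimate with exponent $1/(q+1)$ for general $q$) to the Chernov--Markarian computation (\cite[Lemma 5.27]{chernov book}, \cite[App.~A]{dz1}), and both then shrink $\bar\delta_0$ so that the Jacobian ratio is pinned in a fixed interval around $1$ before handling the exponent $t$ by an elementary estimate costing the factor $2^t$. Your only deviations are cosmetic: you telescope one-step bounds instead of invoking the $n$-step ratio bound directly, and you use the identity $r^t-1=\int_0^t r^s\log r\,ds$ in place of the paper's case split ($|1-A^t|\le|1-A|$ for $t\le 1$, and $|1-A^t|\le 2^t|1-A|$ for $A\in[1/4,2]$ when $t>1$).
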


\begin{proof}
There exists $C_d <\infty$,  independent
of $\delta_0$, but depending on $k_0$ and $q$
such that
\begin{equation}
\label{eq:distortion}
\left| 1 - \frac{J_WT^n(x)}{J_WT^n(y)} \right| \le C_d d(x,y)^{1/(q+1)} \,  ,
\, \, \, \forall \, x, \, y \in W\, , \, \,
\forall \, W \mbox{ as in the lemma.} 
\end{equation}
(For $q=2$, see e.g. \cite[Lemma 5.27]{chernov book} or \cite[App. A]{dz1}. The proofs there give \eqref{eq:distortion} for all $q>1$.)

 For $t \le 1$, the estimate is an immediate consequence of
\eqref{eq:distortion}, since for all $A>0$, we have
$|1 - A^t| \le |1-A|$.  
Now choose $\bar \delta_0$ such that $C_d \bar \delta_0^{1/(q+1)} \le 3/4$. Then,
for $t >1$, we set $A = \frac{J_WT^n(x)}{J_WT^n(y)}$.  By \eqref{eq:distortion}, this implies that
$1/4 \le A \le 2$ if $\delta_0<\bar \delta_0$.  For $A$ in this range, we have, again using \eqref{eq:distortion}, that
$|1-A^t| \le 2^t |1-A| \le 2^t C_d d(x,y)^{1/(q+1)}$.
\end{proof}
\smallskip

Next, recalling that $\cS_k^\mathbb{H} = \cup_{i=0}^{-k} T^i\cS_0^\mathbb{H}$, define for $n \ge 1$,
\begin{equation}
\label{eq:Mn}
\begin{split}
\cM_0^{n, \bH} & = \mbox{maximal connected components of 
$M \setminus \left(  T^{-n}\cS_0 \cup \cS_{n-1}^\mathbb{H} \right)$} \, ,\\
\cM_{-n}^{0, \mathbb{H}} & = \mbox{maximal connected components of 
$M \setminus \left( \cS_0 \cup T (\cS_{-(n-1)}^\mathbb{H}) \right)$ } \, , \\
\cM_{-k}^{n, \mathbb{H}} &= \cM_{-k}^{0, \mathbb{H}} \bigvee \cM_0^{n, \mathbb{H}}\, ,\,\, k \ge 1\,  .
\end{split}
\end{equation}
We comment on the  use of $\cS_0^{\mathbb{H}}$ in \eqref{eq:Mn}.  
First notice (just like for the sets 
$\cM_{-k}^{n}$ defined in the beginning of this subsection) that  if $A \in \cM_0^{n, \mathbb{H}}$, then $T^kA \in \cM_{-k}^{n-k, \mathbb{H}}$ for each $0 \le k \le n$, and $T^kA$ is a union of elements of
$\cM_{-k}^{0, \mathbb{H}}$ for each $k > n$.
Next, if $W \in  \hW^s_H$ is such that $V = T^{-1}W$ is a single curve, then $J_WT^{-1}(x) \approx 1/\cos \vf(T^{-1}x)$
while $J_VT(y) \approx \cos \vf(y)$.  Thus by \eqref{eq:distortion},
the definitions in \eqref{eq:Mn} guarantee that for any $W \in \hW^s_H$ such that $W \subset A \in \cM_{-n}^{0, \mathbb{H}}$,
the Jacobian 
$J_WT^{-n}$ has bounded distortion on $W$, while
$J_{T^{-n}W}T^n$ has bounded distortion on $T^{-n}W$ (which is contained in a single element of $\cM_0^{n, \mathbb{H}}$).

\smallskip

 We shall also need the following distortion bound.

\begin{lemma}[Distortion Relative to $\cM_{-n}^{0,\bH}$]
\label{lem:comparable}
There exists $C>0$ such that for all $n \ge 1$, for all $U, V \in  \hW^s_H$ such that
$U, V \subset A \in \cM_{-n}^{0, \mathbb{H}}$, and  all\footnote{ $\bar U$ denotes the closure of
$U$ in $M$.  The distortion bounds on $U$ and $V$ extend trivially to the boundaries of homogeneity strips, but not to real singularity lines, hence $\bar U \setminus \cS_{-n}$.} $u \in \bar U \setminus \cS_{-n}$, $v \in \bar V \setminus \cS_{-n}$,
\[
\left| \log \frac{J_UT^{-n}(u)}{J_VT^{-n}(v)} \right| \le C \, .
\]
\end{lemma}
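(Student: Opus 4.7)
The plan is to reduce via within-curve distortion to a bound at a single pair of reference points $u^* \in \bar U$, $v^* \in \bar V$ whose backward orbits under $T^{-1}$ remain exponentially close, and then control a telescoping sum of one-step log-Jacobian differences. First, by Lemma~\ref{lem:distortion} applied to the stable curves $T^{-n}U \in T^{-n}\hW^s$ and $T^{-n}V$ (whose $T^i$-iterates lie in $\hW^s_H$ for $i=0,\ldots,n-1$, since $U,V\in\hW^s_H$ and $T^{-i}A$ sits in a single homogeneity strip for $1\le i\le n$ by \eqref{eq:Mn}), the ratios $J_UT^{-n}(u)/J_UT^{-n}(u^*)$ and $J_VT^{-n}(v)/J_VT^{-n}(v^*)$ are bounded uniformly for $u,u^*\in\bar U$, $v,v^*\in\bar V$. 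It thus suffices to bound $|\log(J_UT^{-n}(u^*)/J_VT^{-n}(v^*))|$ for one convenient pair.

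We construct such a pair using the hyperbolic geometry. Pick any $\bar u \in T^{-n}U$ and let $\gamma' \subset T^{-n}A$ be a short $C^1$ curve through $\bar u$ with tangent in $\cC^u$, extended in both directions until it meets $T^{-n}V$ at a point $\bar v$. Such a $\gamma'$ exists inside $T^{-n}A$: the latter is a connected open cell of $\cM_0^{n,\bH}$ free of $T$-singularities up to iterate $n$, and \eqref{eq:hyp} implies that any $\cC^u$-curve inside $T^{-n}A$ is expanded by at least $C_1\Lambda^n$ under the diffeomorphism $T^n|_{T^{-n}A}$, so the ``unstable diameter'' of $T^{-n}A$ is at most $C\Lambda^{-n}$. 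Set $u^*:=T^n\bar u$, $v^*:=T^n\bar v$, and $\gamma:=T^n\gamma'\subset A$. The tangent of $T^{-k}\gamma$ lies in $DT^{n-k}(\cC^u)\subset\cC^u$ for $0\le k\le n$, and \eqref{eq:hyp} again gives $|T^k(T^{-k}\gamma)|\ge C_1\Lambda^k|T^{-k}\gamma|$, so
\[
d(T^{-k}u^*,T^{-k}v^*)\le |T^{-k}\gamma|\le |\gamma|/(C_1\Lambda^k)\le C_0\Lambda^{-k},\qquad 0\le k\le n.
\]

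The telescoping identity
\[
\log\frac{J_UT^{-n}(u^*)}{J_VT^{-n}(v^*)} = \sum_{k=0}^{n-1}\bigl[\log J_{T^{-k}U}T^{-1}(T^{-k}u^*)-\log J_{T^{-k}V}T^{-1}(T^{-k}v^*)\bigr]
\]
then reduces the problem to a per-term estimate. The definition \eqref{eq:Mn} of $\cM_{-n}^{0,\bH}$ forces $T^{-(k+1)}A$ to lie in a single homogeneity strip, so $T^{-(k+1)}u^*$ and $T^{-(k+1)}v^*$ share that strip and the singular factor $|\cos\vf(T^{-1}\cdot)|^{-1}$ in $J_WT^{-1}$ contributes boundedly; furthermore the base points $T^{-k}u^*,T^{-k}v^*$ are $O(\Lambda^{-k})$-close, and the tangent directions of $T^{-k}U, T^{-k}V$ both lie within $C\Lambda^{-k}$ of $E^s$ by iterated contraction of $DT^{-1}$ off the stable direction. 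A standard cross-curve H\"older estimate for $\log J_WT^{-1}$ on homogeneous stable curves --- analogous to \eqref{eq:distortion} but permitting $W$ to vary --- then bounds each summand by $C'\Lambda^{-k/(q+1)}$, and the resulting geometric series sums to the required constant. The main technical obstacle is exactly this cross-curve H\"older estimate, whose proof combines the explicit formula for $DT^{-1}$ with the distortion ingredients behind \eqref{eq:distortion}; it is standard in the billiard literature, see e.g.\ \cite[Ch.~5]{chernov book}.
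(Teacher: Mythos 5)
Your construction has a genuine gap at its central step: the existence of the connecting curve $\gamma'$. The thinness of $T^{-n}A$ in the unstable direction does not force a curve with tangent in $\cC^u$, started at $\bar u\in T^{-n}U$ and kept inside $T^{-n}A$, to meet $T^{-n}V$ — and in general no such curve exists at all. Since $\cC^u$ is convex, every chord of a curve with tangent in $\cC^u$ lies in $\cC^u$; hence a necessary condition is that some displacement vector from a point of $U$ to a point of $V$ lie in the unstable cone. This can fail: for instance, $U$ and $V$ may be disjoint subarcs of one and the same homogeneous stable curve inside $A$ (then every such displacement lies in the stable cone), or more generally $V$ may be offset from $U$ essentially along the stable direction within the cell; these configurations are perfectly compatible with $U,V\in\hW^s_H$ and $U,V\subset A\in\cM_{-n}^{0,\bH}$. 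This is exactly why the paper's proof never places $u$ and $v$ on a common unstable (or stable) curve: it keeps the given $u\in\bar U$, $v\in\bar V$, uses only that $u_i=T^{-i}u$ and $v_i=T^{-i}v$ lie in the closure of the same element of $\cM_{-n+i}^{i,\bH}$, which yields the two-sided bound $d(u_i,v_i)\le C\max\{\Lambda^{-i},\Lambda^{-n+i}\}$ (not $C\Lambda^{-i}$), and then estimates the telescoped one-step differences directly from \eqref{eq:stable jac}, with the tail near $i=n$ handled inside the sums.

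Two further points. Your last step outsources the per-step bound to a ``standard cross-curve H\"older estimate'' for $\log J_WT^{-1}$, but the standard two-curve distortion results (e.g.\ \cite[Theorem~5.42]{chernov book}) assume the points are joined by a stable or unstable manifold, which is precisely what is unavailable here; the paper flags this explicitly (it can only use the $1/2$-H\"older continuity of $\tau$, and the slope difference $|\Delta\cV_i|$ is controlled by the convolution-type recursion \cite[eq.~(5.29)]{chernov book} involving the distances $d(u_j,v_j)$, not by a clean ``tangents within $C\Lambda^{-k}$ of $E^s$'' statement). Moreover the cosine comparison is only available for $i\ge 1$, since $A$ itself need not lie in a single homogeneity strip — which is why \eqref{eq:stable jac} has $\cos\vf(u_{i+1})$ rather than $\cos\vf(u_i)$ in the denominator. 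So the estimate you defer to the literature is the actual content of the lemma. Your initial within-curve reduction via Lemma~\ref{lem:distortion} is fine but, once the unstable connection is abandoned, superfluous: what remains to be done is essentially the paper's argument with the weaker, symmetric distance bound above.
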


The bound above is more general (and weaker) than the usual distortion bound along stable curves given by \eqref{eq:distortion} or 
between stable curves given by \cite[Theorem~5.42]{chernov book} (or more generally \cite[Appendix A]{dz1}) since we do not
assume that the points $u$, $v$ in $\bar A$,
with $A \in \cM_{-n}^{0, \mathbb{H}}$, lie on the same stable or unstable curve.

\begin{proof}
Let $n \ge1$, $u \in \bar U$, $v \in \bar V$, be as in the statement of the lemma.
Define $u_i = T^{-i}u$, $v_i = T^{-i}v$ for $i = 0, \ldots, n$,
and notice that $u_i, v_i$ belong to the closure
of the same element of $\cM_{-n+i}^{i, \mathbb{H}}$.  
By the uniform hyperbolicity of $T$, for $i = 0, \ldots, n$,  if $A \in \cM_{-n+i}^{i, \mathbb{H}}$, then
$\diam^u(\bar A) \le C \Lambda^{-i}$ and $\diam^s(\bar A) \le C \Lambda^{-n+i}$, 
where $\diam^u(B)$
is the maximum length of an unstable curve in $B$, and $\diam^s(B)$  is  the maximum length of a stable curve
in $B$.  Thus, due to the uniform transversality
of $\cC^s$ and $\cC^u$, we have
\begin{equation}
\label{eq:diam}
d(u_i, v_i) \le \bar C \max \{ \Lambda^{-i} , \Lambda^{-n+i} \} \, .
\end{equation}

By the time reversal of \cite[eq.~(5.24)]{chernov book}, we have that
\begin{equation}
\label{eq:stable jac}
\log J_{U_i}T^{-1}(u_i) =  \log
\frac{ \cos \vf(u_i) + \tau(u_{i+1})(\cK(u_i) - \cV(u_i))}
{ \cos \vf(u_{i+1})}
+ \log \frac{\sqrt{1+ \cV(u_{i+1})^2}}{\sqrt{1+\cV(u_i)^2}} \, ,
\end{equation}
where $\cV(u_i) = \frac{d\vf}{dr}(u_i) < 0$ is the slope of the tangent line to $U_i$ at $u_i$.
Summing over $i$, the last term above telescopes and the sum is uniformly bounded away from 0
and $\infty$, giving,
\begin{equation}
\label{eq:jac}
\left| \log \frac{J_UT^{-n}(u)}{J_VT^{-n}(v)} \right| \le C + \sum_{i=0}^{n-1} \left| \log \frac{\cos \vf(v_{i+1})}{\cos \vf(u_{i+1})} \right|  
+ \left| \log \frac{\cos \vf(u_i) + \tau(u_{i+1})(\cK(u_i) - \cV(u_i))}{\cos \vf(v_i) + \tau(v_{i+1})(\cK(v_i) - \cV(v_i))} \right|
\end{equation} 

Since $u_{i+1}$, $v_{i+1}$ lie in the same homogeneity strip for each $i$, using \eqref{eq:H} we have 
\begin{equation}
\label{eq:cos}
\left| \log \frac{\cos \vf(v_{i+1})}{\cos \vf(u_{i+1})} \right|
\le C \frac{|\vf(u_{i+1}) - \vf(v_{i+1})|}{\cos \vf(u_{i+1})}
\le C  d(u_{i+1}, v_{i+1})^{1/(q+1)} \, .
\end{equation}

Next, the  terms in the second set on the right-hand side of \eqref{eq:jac} are bounded and the denominator in the expression is at least
$\tau_{\min} \cK_{\min} > 0$.  Moreover, $\cK$ is differentiable while $\tau$
is $1/2$-H\"older continuous.\footnote{We cannot take advantage of the smoother
bounds on $\tau$ given by \cite[eq. (5.28)]{chernov book} since our points $u_i$ and $v_i$ may  lie on different stable
or unstable manifolds.}  Thus following \cite[eq. (5.26)]{chernov book}, we have
\[
\sum_{i=0}^{n-1}\left| \log \frac{\cos \vf(u_i) + \tau(u_{i+1})(\cK(u_i) - \cV(u_i))}{\cos \vf(v_i) + \tau(v_{i+1})(\cK(v_i) - \cV(v_i))} \right|
\le C \sum_{i=0}^{n-1} d(u_{i+1}, v_{i+1})^{1/2} + d(u_i, v_i) + |\Delta \cV_i| \, ,
\]
where $\Delta \cV_i = \cV(u_i) - \cV(v_i)$.  By \eqref{eq:diam}, the sums over all terms in \eqref{eq:jac} involving
$d(u_i,v_i)$ are uniformly bounded in $n$.  It remains to estimate
$\sum_{i=0}^{n-1} |\Delta \cV_i|$.
By \cite[eq.~(5.29)]{chernov book} and \eqref{eq:diam}, we
bound  $|\Delta \cV_i|$ by
\begin{align*}
 C \Big( |\Delta \cV_0| \Lambda^{-i} + \sum_{j=0}^i  \Lambda^{-j} d(u_{i-j}, v_{i-j})^{1/2} \Big)
&\le C \big( |\Delta \cV_0| \Lambda^{-i} + \sum_{j=0}^i \Lambda^{-j}(\Lambda^{(-i+j)/2} + \Lambda^{(-n+i-j)/2} ) \big) \\
& \le C' \big( |\Delta \cV_0| \Lambda^{-i} + \Lambda^{-i/2} + \Lambda^{(-n+i)/2} \big)  .
\end{align*}
Summing over $i$ completes the proof of the lemma.
\end{proof}

\begin{remark}[About $\int_M \log J^uT\, d\mu$]\label{newremark}
We note for further use a consequence of the proof
above: It is clear from \eqref{eq:stable jac} that for any $T$-invariant probability measure $\mu$ with
$\mu(M \setminus M') = 0$,
\[
\int_M \log J^sT \, d\mu = - \infty \iff \int_M \log \cos \vf \, d\mu = - \infty \, ,
\] 
since all other terms in \eqref{eq:stable jac} are bounded away from zero and infinity.  Similarly, the
time reversal of \eqref{eq:stable jac}  (see \cite[eq.~(5.24)]{chernov book}) implies
\[
\int_M \log J^uT \, d\mu = \infty \iff \int_M \log \cos \vf \circ T \, d\mu = -\infty \, .
\]
Thus, applying \eqref{eq:jacs}, for any $T$-invariant
probability measure $\mu$ on $M$,
\begin{equation}
\label{eq:switch potential}
\mbox{if } \mu(M\setminus M')=0 \mbox{ then } \int_M \log J^uT\, d\mu=- \int_M \log J^sT\, d\mu \, ,
\end{equation}
where equality holds also when both sides are infinite.
\end{remark}

\subsection{Topological Formulation $P_*(t,g)$ of the Pressure $P(t,g)$.  Theorem~\ref{thm:geo var}}
\label{altt}

In view of our proof of uniqueness in \S\ref{sec:unique}
(which uses differentiability of the pressure),
for  a  $C^1$  function $g :M\to \bR$   and $n\ge 1$, we set  $S_n g = \sum_{i=0}^{n-1} g \circ T^i$. The hyperbolicity
of $T$  implies the following distortion bounds: There exists
$C_*<\infty$ such that for all $n\ge 1$ and all $W \in \hW^s$ such that  $T^iW \in \hW^s\, ,\, \forall \, 
0 \le i \le n$, 
\begin{equation}
\label{eq:phi dist}
| e^{ S_ng(x) -  S_ng(y)} - 1| \le C_*   | \nabla g |_{C^0} \,  d(x,y)\,,\,  \forall x,y \in W \, . 
\end{equation}

Recalling
\eqref{defM'}, we define 
(aside from \S\ref{sec:unique} we only need  $g\equiv 0$),
\begin{equation}
\label{eq:Q_n}
Q_n(t, g) = \sum_{A \in \cM_0^{n, \bH}} \sup_{x \in A \cap M'} |J^sT^n(x)|^t 
e^{ S_ng(x)}\, , \,\, \,Q_n(t)=Q_n(t,0)\,,\,\,
 n\ge 1\, , 
\end{equation}
and
\begin{equation}
\label{eq:Pstar}
P_*(t, g) = \limsup_{n \to \infty} \frac 1n \log Q_n(t,g)\, , \,\, \, P_*(t):=P_*(t, 0).
\end{equation}

We will show the following result  in Section~\ref{next}:

\begin{proposition}[Topological Pressure]
\label{prop:pressure}
For all  $t>0$  and $g\in C^1$, we have\footnote{Recall our convention
that $\int_M \log J^uT \, d\mu=\infty$ if $\mu(M\setminus M')>0$.} $P(t,g) \le P_*(t,g)$.
\end{proposition}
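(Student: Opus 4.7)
By the convention in \S1.2, any $T$-invariant measure $\mu$ with $\mu(M\setminus M')>0$ satisfies $P_\mu(-t\log J^uT+g)=-\infty$, so the supremum defining $P(t,g)$ is attained only on $\mu$ with $\mu(M\setminus M')=0$. For such $\mu$, if $\int\log J^uT\,d\mu=\infty$ the same triviality applies; otherwise Remark~\ref{newremark} yields $\int\log J^sT\,d\mu=-\int\log J^uT\,d\mu$ finite, and the Ruelle inequality cited in \S1.2 gives $h_\mu(T)<\infty$. Setting $\psi:=t\log J^sT+g$, it therefore suffices to prove
\[
h_\mu(T)+\int\psi\,d\mu\le P_*(t,g)
\]
for every such $\mu$.

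My approach is the classical Misiurewicz-type convexity bound applied to the finite partition $\cA:=\cM_0^1$ (finite by the finite-horizon hypothesis). Because $\bigcup_{i=0}^{n-1}T^{-i}\cS_1=\cS_n$, an elementary check of boundaries shows $\bigvee_{i=0}^{n-1}T^{-i}\cA=\cM_0^n$. For each $n\ge 1$, $T$-invariance of $\mu$ combined with Jensen's inequality (concavity of $\log$) gives
\[
H_\mu(\cM_0^n)+n\int\psi\,d\mu\le\sum_{A\in\cM_0^n}\mu(A)\log\frac{\sup_A e^{S_n\psi}}{\mu(A)}\le\log\sum_{A\in\cM_0^n}\sup_A e^{S_n\psi},
\]
where we may assume each $\sup_A e^{S_n\psi}<\infty$, otherwise $Q_n(t,g)=\infty$ and the proposition is trivial. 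Since $\cM_0^{n,\bH}$ refines $\cM_0^n$, each $A\in\cM_0^n$ is a countable disjoint union of $A_i\in\cM_0^{n,\bH}$, so $\sup_A e^{S_n\psi}\le\sum_i\sup_{A_i}e^{S_n\psi}$, and summing yields $\sum_{A\in\cM_0^n}\sup_A e^{S_n\psi}\le Q_n(t,g)$. Dividing by $n$ and taking $\limsup_n$ produces $h_\mu(T,\cA)+\int\psi\,d\mu\le P_*(t,g)$.

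To identify $h_\mu(T,\cA)$ with $h_\mu(T)$, it remains to verify that $\cA$ is a $\mu$-generator. By the uniform hyperbolicity \eqref{eq:hyp}, elements of $\cM_{-n}^n=\bigvee_{i=-n}^n T^{-i}\cA$ have both stable and unstable diameters bounded by $C\Lambda^{-n}$; combined with the transversality of $C^u$ and $C^s$ and with $\mu(M\setminus M')=0$, this shows the bi-infinite join of $\cA$ separates points and generates the Borel $\sigma$-algebra mod $\mu$, so $h_\mu(T,\cA)=h_\mu(T)$. Substituting $\int\psi\,d\mu=-t\int\log J^uT\,d\mu+\int g\,d\mu$ and taking the supremum over $\mu$ delivers $P(t,g)\le P_*(t,g)$.

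\textbf{Main obstacles.} The delicate points are (i) handling the potential unboundedness of $\log J^sT$ near $\cS_0$ in the Jensen step, which is resolved by observing that a divergent $\sup_A e^{S_n\psi}$ forces $Q_n(t,g)=\infty$ and renders the claim vacuous; and (ii) the generator property of $\cA=\cM_0^1$ for every invariant measure charging $M'$, for which the key input is the exponential decay of diameters of $\cM_{-n}^n$ supplied by \eqref{eq:hyp}.
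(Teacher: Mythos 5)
Your route is essentially the paper's own: discard measures with $\mu(M\setminus M')>0$ or $\int\log J^uT\,d\mu=\infty$, use a finite partition whose $n$-fold join is (mod $\mu$) $\cM_0^n$, combine the generator property with Jensen's inequality and Remark~\ref{newremark} to bound $h_\mu(T)+\int(t\log J^sT+g)\,d\mu$ by $\limsup_n\frac1n\log\sum_{A\in\cM_0^n}\sup_{A\cap M'}|J^sT^n|^t e^{S_ng}$, and finally compare this sum with $Q_n(t,g)$. The paper works with the partition $\bar\cP$ into maximal connected sets of continuity of $T$ and quotes the generator argument from \cite{max}; your variant, taking $\cM_0^1$ as a mod-$\mu$ partition and deducing the generator property from the decay of diameters of elements of $\cM_{-n}^n$ via \eqref{eq:hyp} and cone transversality, is an acceptable substitute and does not change the substance of the argument.

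There is one step where your justification does not work as written: the claim that each $A\in\cM_0^n$ is a \emph{countable disjoint union} of elements $A_i\in\cM_0^{n,\bH}$, from which you deduce $\sup_{A\cap M'}e^{S_n\psi}\le\sum_i\sup_{A_i\cap M'}e^{S_n\psi}$. In fact the union of the $A_i$ omits $A\cap\cS_{n-1}^{\bH}$, i.e.\ the points of $A$ lying on homogeneity-strip boundaries and their preimages under $T^j$, $0\le j\le n-1$; these curves contain points of $M'$ at which $J^sT^n$ is defined, so the supremum over $A\cap M'$ could a priori be attained there, outside every $A_i$. The paper closes exactly this gap by noting that the distortion bound of Lemma~\ref{lem:comparable} extends to the boundaries of homogeneity strips (though not to genuine singularity curves), so the value of $|J^sT^n|^t$ at such a point is comparable, up to a uniform constant, to its supremum on an adjacent element of $\cM_0^{n,\bH}$; the constant disappears after taking $\frac 1n\log$, and the factor $e^{S_ng}$ causes no trouble since $g$ is continuous. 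You need to add this (or an equivalent) argument. A lesser remark: your appeal to the Ruelle inequality to get $h_\mu(T)<\infty$ is both unnecessary (finiteness follows from the existence of a finite generator, and in any case $h_\mu(T,\cA)\le\log\#\cA$ is all the argument uses) and not justified for arbitrary invariant measures, since the Katok--Strelcyn version cited in the introduction requires control of $\mu$ near the singularity set.
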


For $t_*>1$  given by \eqref{eq:t* def}, the analysis carried out in
Sections 3--5 will yield:

\begin{theorem}[(Strong\footnote{By "strong" we mean that the supremum is a maximum, and it is attained at a unique measure.}) Variational Principle]
\label{thm:geo var}
If $t \in (0,t_*)$, then $P_*(t) = P(t)$ and the supremum is attained at the unique invariant
measure $\mu_t$ from Theorem~\ref{thm:equil}.
\end{theorem}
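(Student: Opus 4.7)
The plan is to construct $\mu_t$ from the maximal eigenvectors of $\cL_t$ on $\cB$, identify $P_{\mu_t}(-t\log J^uT) = P_*(t)$ via a Gibbs-type estimate on dynamical cells, and then deduce uniqueness through analytic perturbation of the potential.

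By Theorem~\ref{thm:spectral}, $\cL_t$ has spectral radius $e^{P_*(t)}$ on $\cB$, strictly smaller essential spectral radius, and spectral gap at the leading eigenvalue. Pick nonnegative right and left eigenvectors $h_t \in \cB$ and $\tilde\nu_t \in \cB^*$ for $e^{P_*(t)}$, normalized by $\tilde\nu_t(h_t) = 1$, and define $\mu_t$ through $\int \psi \, d\mu_t := \tilde\nu_t(\psi h_t)$ for sufficiently regular $\psi$, extended to a Borel probability measure. The algebraic identity $\cL_t((\psi \circ T) h_t) = \psi \cL_t h_t = e^{P_*(t)} \psi h_t$, combined with $\cL_t^* \tilde\nu_t = e^{P_*(t)} \tilde\nu_t$, gives $T$-invariance of $\mu_t$. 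Non-atomicity, positivity on open sets, and mixing follow from the spectral gap and topological mixing of $T$.

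Next, using the action of $\cL_t^n$ on test densities supported on homogeneous stable manifolds $W \in \cW^s_\bH$, together with distortion Lemmas~\ref{lem:distortion} and~\ref{lem:comparable}, I would establish the two-sided Gibbs bound
\[
c\, e^{-nP_*(t)} \inf_{x \in A \cap M'} |J^sT^n(x)|^t \le \mu_t(A) \le C\, e^{-nP_*(t)} \sup_{x \in A \cap M'} |J^sT^n(x)|^t
\]
for every $A \in \cM_0^{n,\bH}$. The $T$-adapted condition \eqref{adapt} then follows from the upper estimate applied to shrinking neighborhoods of $\cS_{\pm 1}$, with summability in the homogeneity strip index guaranteed by $qt > 1$. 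Plugging the Gibbs bound into the entropy of the partition $\bigvee_{i=0}^{n-1} T^{-i} \cM_0^{1,\bH}$ and applying \eqref{eq:switch potential} to convert $\int \log J^sT \, d\mu_t$ to $-\int \log J^uT \, d\mu_t$ yields
\[
h_{\mu_t}(T) \ge P_*(t) + t \int \log J^uT \, d\mu_t \, ,
\]
i.e., $P_{\mu_t}(-t\log J^uT) \ge P_*(t)$. Combined with Proposition~\ref{prop:pressure}, equality holds throughout, so $\mu_t$ attains $P(t) = P_*(t)$.

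For uniqueness, I would repeat the construction for the perturbed operator $\cL_{t,sg} f = e^{sg \circ T^{-1}}(f \circ T^{-1})/|J^sT|^{1-t} \circ T^{-1}$ with $g \in C^1(M)$: this depends analytically on $s$ and yields $P(t,sg) = P_*(t,sg)$ analytic in $s$ with derivative $\mu_t(g)$ at $s = 0$. If $\mu$ is any other equilibrium state for $-t \log J^uT$, then $s \mapsto P_\mu(-t\log J^uT + sg)$ is affine in $s$, dominated by $s \mapsto P_*(t,sg)$, and equal at $s=0$; matching derivatives at $s=0$ forces $\mu(g) = \mu_t(g)$ for every $g \in C^1$, hence $\mu = \mu_t$. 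The main obstacle is the lower Gibbs bound in the previous step: it requires positivity of $h_t$ on every open set, which is not automatic from the spectral gap alone and must be extracted from topological transitivity of $T$ together with the action of $\cL_t^n$ on a suitable cone of admissible densities inside $\cB$. A secondary subtlety is that $\cM_0^{1,\bH}$ may fail to be $\mu_t$-generating; this is handled by refining to a partition whose $\mu_t$-boundary is null (using adaptedness) and verifying that the refinement carries the full entropy of $T$ with respect to $\mu_t$.
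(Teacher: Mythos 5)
Your construction of $\mu_t$ from the maximal eigenvectors of $\cL_t$ and your uniqueness argument are essentially the paper's own route: Theorem~\ref{thm:spectral} provides $\nu_t,\tnu_t$ and the invariant measure, and uniqueness is obtained exactly as in Propositions~\ref{prop:unique} and \ref{prop:spec rad}, by perturbing to $\cL_{t,\upsilon}$, using analyticity of the leading eigenvalue, and matching one-sided derivatives against the affine map $s\mapsto P_\mu(-t\log J^uT+sg)$ for a competing equilibrium state $\mu$. Be aware, though, that this argument is only useful because the identity ``spectral radius of $\cL_{t,\upsilon}$ $=e^{P(t,\upsilon\phi)}$'' (not merely $e^{P_*(t,\upsilon\phi)}$) is available for the perturbed potential; proving it requires redoing the attainment step for $-t\log J^uT+\upsilon\phi$, which is why the growth lemmas of Section~3 are proved with the weight $e^{S_ng}$ and why the perturbed Bowen-ball bound \eqref{eq:upper ball up} is needed. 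You assert this in one line, but it is a substantial part of the work.

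The genuine gap is in your attainment step, where you diverge from the paper. First, for the inequality $h_{\mu_t}\ge P_*(t)+t\int\log J^uT\,d\mu_t$ only the \emph{upper} Gibbs bound enters, so the obstacle you single out (the lower bound, full support of $h_t$) is irrelevant here --- and the two-sided bound you state is in any case doubtful for elements of $\cM_0^{n,\bH}$ that are cut at every step, whose $\mu_t$-measure can be anomalously small; likewise the ``generating'' subtlety you flag is not the relevant one, since generation is only needed for an upper bound on $h_{\mu_t}$, which you already have from Proposition~\ref{prop:pressure}. What is actually missing is the justification of $h_{\mu_t}(T)\ge\lim_n\tfrac1n H_{\mu_t}(\cM_0^{n,\bH})$: the homogeneity-refined partition is \emph{countable}, so this requires finite $\mu_t$-entropy of the one-step partition, and the available neighborhood estimate $\mu_t(\cN_\ve(S))\le C\ve^{1/p}$ (Proposition~\ref{nbhdprop}) only yields $\mu_t(\bH_k)\lesssim k^{-q/p}$ with $q/p<1$, which does not give a finite entropy sum; the bound $k^{-qt}$ you would need is essentially the one-cylinder Gibbs estimate itself, a circularity you would have to break (and neighborhoods of $T^{\mp1}\cS_0\subset\cS_{\pm1}$ are not unions of homogeneity strips, so ``summability in $k$'' does not by itself give adaptedness either). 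Moreover, when one derives the upper Gibbs bound from $\cL_t^n\nu_t=e^{nP_*(t)}\nu_t$ as in Proposition~\ref{prop:max}, the stable Jacobian is evaluated on the pieces $W_i\in\cG_n(W)$ meeting the pulled-back set, and one must control how many such pieces occur: for Bowen balls of $T^{-1}$ this count is at most two (using the strong finite-horizon property, \cite{max}), but elements of $\cM_0^{n,\bH}$ are crossed by backward singularity curves, so the analogous counting and distortion matching (via Lemma~\ref{lem:comparable}) require an argument you have not supplied. The paper avoids all of this by proving only the upper bound on Bowen balls for $T^{-1}$ (Proposition~\ref{prop:max}), establishing adaptedness separately from weak-norm bounds on $1_{\cN_\ve(\cS_{-n})}\nu_t$ (Proposition~\ref{nbhdprop}), and then invoking the Brin--Katok-type local entropy statement of \cite{DWY} applied to $T^{-1}$ (Corollary~\ref{cor:max}); your partition-entropy route could plausibly be repaired, but as written it does not go through.
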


\begin{proof}
This follows from Proposition~\ref{prop:pressure}, 
Theorem~\ref{thm:spectral}, Corollary~\ref{cor:max}, and  Proposition~\ref{prop:unique}.
\end{proof}

Theorem~\ref{thm:variational} will give the generalisation of the above strong form of the
variational principle to $P(t,g)=P_*(t,g)$ for suitable $g$.

\smallskip

We first establish basic properties of $P_*(t,g)$:

\begin{proposition}
\label{prop:Ptilde}
For each $t>0$ and $g \in C^1$ the limsup \eqref{eq:Pstar} defining $P_*(t,g)$ is a limit in   
$(-\infty, \infty)$. The function
 $t\mapsto P_*(t,g)$ is convex and strictly decreasing on $(0,\infty)$.
\end{proposition}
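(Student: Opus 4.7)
The plan is to prove the three claims---existence of the limit, strict monotonicity, and convexity---by establishing log-submultiplicativity of $Q_n(t,g)$ in $n$ (for the limit via Fekete), H\"older's inequality in $t$ (for convexity), and the uniform stable contraction bound (for strict monotonicity).

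First, I would establish the submultiplicativity $Q_{m+n}(t,g) \le Q_m(t,g)\, Q_n(t,g)$ for all $m,n \ge 1$. The key combinatorial input is the partition identity
\[
\cM_0^{m+n,\bH} = \cM_0^{m,\bH} \vee T^{-m}\cM_0^{n,\bH},
\]
which one checks by expanding $\cS_{k-1}^{\bH} = \bigcup_{i=0}^{-(k-1)} T^i \cS_0^{\bH}$ and observing that $\cS_{m-1}^{\bH} \cup T^{-m}\cS_{n-1}^{\bH} = \cS_{m+n-1}^{\bH}$, while $T^{-m}\cS_0 \subset T^{-m}\cS_0^{\bH} \subset \cS_{m+n-1}^{\bH}$. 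Given this, each $A \in \cM_0^{m+n,\bH}$ is contained in a unique $B \cap T^{-m}C$ with $B \in \cM_0^{m,\bH}$ and $C \in \cM_0^{n,\bH}$, so $T^m A \subset C$. Combining the cocycle identities $J^sT^{m+n} = J^sT^m \cdot (J^sT^n \circ T^m)$ on the $T$-invariant set $M'$ (by invariance of $E^s$ and the chain rule) with $S_{m+n}g = S_m g + S_n g \circ T^m$, the sup factors:
\[
\sup_{A \cap M'} |J^sT^{m+n}|^t e^{S_{m+n}g} \le \sup_{B \cap M'} |J^sT^m|^t e^{S_m g} \cdot \sup_{C \cap M'} |J^sT^n|^t e^{S_n g}.
\]
Summing over $A$ and then trivially over pairs $(B,C)$ yields $Q_{m+n}(t,g) \le Q_m(t,g)\, Q_n(t,g)$, so Fekete's lemma gives $\lim_n \frac{1}{n}\log Q_n(t,g) = \inf_n \frac{1}{n}\log Q_n(t,g) \in [-\infty, \infty)$. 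To rule out $-\infty$, I would apply Proposition~\ref{prop:pressure} to $\mu = \musrb$, for which $\musrb(M\setminus M')=0$ and $\chi^u < \infty$, giving $P_*(t,g) \ge P(t,g) \ge h_{\musrb}(T) - t\,\chi^u + \musrb(g) > -\infty$.

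For strict monotonicity, I would use the uniform stable contraction $|J^sT^n(x)| \le C_1^{-1}\Lambda^{-n}$ on $M'$, which one reads off by inverting \eqref{eq:hyp} on a tangent vector spanning $E^s(x) \subset \cC^s$. For $0 < t_1 < t_2$ this yields $Q_n(t_2, g) \le (C_1^{-1}\Lambda^{-n})^{t_2-t_1} Q_n(t_1, g)$, hence $P_*(t_2,g) \le P_*(t_1,g) - (t_2-t_1)\log\Lambda < P_*(t_1,g)$. Convexity then follows from H\"older's inequality: for $t = \alpha t_1 + (1-\alpha)t_2$ with $\alpha \in (0,1)$, writing inside each sup
\[
|J^sT^n|^t e^{S_n g} = \bigl(|J^sT^n|^{t_1} e^{S_n g}\bigr)^\alpha \bigl(|J^sT^n|^{t_2} e^{S_n g}\bigr)^{1-\alpha},
\]
then applying $\sup(fg) \le (\sup f)(\sup g)$ and H\"older's inequality to the sum over $A$ gives $Q_n(t,g) \le Q_n(t_1,g)^\alpha Q_n(t_2,g)^{1-\alpha}$; dividing by $n$, taking $\log$, and letting $n\to\infty$ yields convexity of $t\mapsto P_*(t,g)$.

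No step presents a genuine obstacle: the only mildly technical piece is the partition-refinement identity together with the resulting factorization of the sup; everything else is standard subadditivity and convexity machinery, and the upper bound $|J^sT^n| \le C_1^{-1}\Lambda^{-n}$ is immediate from \eqref{eq:hyp}.
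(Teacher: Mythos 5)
Your overall skeleton is the same as the paper's (submultiplicativity plus Fekete for the limit, H\"older's inequality for convexity, and \eqref{eq:hyp} for strict decrease), and the convexity and strict-monotonicity parts are correct and essentially identical to the paper's. The gap is in the submultiplicativity step. Your phrase ``summing over $A$ and then trivially over pairs $(B,C)$'' silently assumes that the map $A \mapsto (B,C)$ is injective, i.e.\ that each set $B \cap T^{-m}C$ contains at most one element of $\cM_0^{m+n,\bH}$. Your singular-set computation only yields the (easy) fact that each $A$ lies in a unique such pair; it does not give the claimed equality of partitions, because $T^{-m}C$, and hence $B \cap T^{-m}C$, need not be connected, while elements of $\cM_0^{m+n,\bH}$ are \emph{connected} components. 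A priori several distinct $A$'s could share the same pair $(B,C)$, in which case your sum over $A$ overcounts and the bound $Q_{m+n}(t,g)\le Q_m(t,g)\,Q_n(t,g)$ does not follow. This is precisely the point the paper's proof handles: it uses the structural fact that $T^m A \in \cM_{-m}^{n,\bH}=\cM_{-m}^{0,\bH}\bigvee \cM_0^{n,\bH}$ for $A \in \cM_0^{m+n,\bH}$, so that two distinct $A, A' \subset B$ have $T^mA, T^mA'$ lying in distinct elements of $\cM_0^{n,\bH}$; that injectivity is what bounds the inner sum by $Q_n(t,g)$. Without this (or an equivalent argument) the key inequality is not established.

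Two further issues. First, you never verify $Q_1(t,g)<\infty$: the sum in \eqref{eq:Q_n} runs over the countably many pieces created by the homogeneity strips, and finiteness is exactly where the condition $qt\ge 2>1$ enters (the paper's first step, bounding $Q_1(t)$ by $C\,\#\cM_0^1\sum_k k^{-qt}$). Without it, Fekete's lemma gives no finite upper bound and the claim that the limit lies in $(-\infty,\infty)$ is not proved on the upper side. Second, your lower bound $P_*(t,g)>-\infty$ invokes Proposition~\ref{prop:pressure} applied to $\musrb$, but in the paper that proposition is proved afterwards and its proof cites Proposition~\ref{prop:Ptilde}, so as written your argument is circular within the paper's logical structure. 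The paper avoids this by evaluating $Q_{np}(t,g)$ at a periodic orbit with no tangential collisions, which gives $P_*(t,g)\ge t\chi_p^- + \tfrac1p S_pg(x_p)>-\infty$ directly; you should use that (or some other self-contained lower bound) instead.
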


\begin{remark}
It is not hard to show,  using Lemma~\ref{lem:comparable}, that
for each $t > 0$, there exists $C_D>0$ such that 
$Q_n(t) \le C_D^t \sum_{A \in \cM_0^{n, \mathbb{H}}} \inf_{x \in A \cap M'} |J^sT^n(x)|^t$ for all  $n \ge 1$, so that replacing
the supremum by an infimum in the definition of $Q_n(t)$ 
does not change the value of $P_*(t)$.
\end{remark}

\begin{proof}[Proof of Proposition~\ref{prop:Ptilde}]
We first set $g=0$. The partition $\cM_0^1$ is finite, and each
element of  $\cM_0^1$ is subdivided by curves in $\cS_0^{\mathbb{H}}$ to comprise a
 union of elements of $\cM_0^{1, \bH}$, according to \eqref{eq:Mn}.
Thus,
\[
Q_1(t) = \sum_{A \in \cM_0^{1, \mathbb{H}}} \sup_{x \in A \cap M'} |J^sT(x)|^t
\le C \sum_{E \in \cM_0^1} \sum_k \sup_{x \in \mathbb{H}_k} |\cos \vf(x)|^t
\le C \# \cM_0^1 \sum_k k^{-qt} \, , 
\]
where the sum over $k$ includes $k=0$ and $|k| \ge k_0$;    the sum converges since $qt \ge 2>1$.   

We next show that $Q_n(t)$ is submultiplicative: 
\begin{align}
\nonumber Q_{n+k}(t) & = \sum_{A \in \cM_0^{n, \mathbb{H}}} \sum_{\substack{ B \in \cM_0^{n+k, \mathbb{H}} \\ B \subset A} } \sup_{x \in B \cap M'}
|J^sT^{n+k}(x)|^t \\
\label{inner} & \le \sum_{A \in \cM_0^{n, \mathbb{H}}} 
\sup_{y \in A \cap M'} |J^sT^n(y)|^t  
\sum_{\substack{ B \in \cM_0^{n+k, \mathbb{H}} \\ B \subset A} }   \sup_{x \in B \cap M'}  |J^sT^k(T^nx)|^t \, ,\,\,
\forall k, n\ge 1 \, .
\end{align}
Notice that if $B , B' \subset A \in \cM_0^{n, \mathbb{H}}$ are distinct elements of $\cM_0^{n+k, \mathbb{H}}$, 
then $T^nB, T^nB' \in \cM_{-n}^{k, \mathbb{H}} = \cM_{-n}^{0, \mathbb{H}} \bigvee \cM_0^{k, \mathbb{H}}$ 
are both contained in $T^nA \in \cM_{-n}^{0, \mathbb{H}}$ and so must
lie in distinct elements of
$\cM_0^{k, \mathbb{H}}$.  Thus the inner sum in \eqref{inner} is bounded by $Q_k(t)$ for each $A$, and the outer sum is bounded
by $Q_n(t)$, proving submultiplicativity. 
If $g\ne 0$, it is easy to see that we also have
$Q_{n+k}(t, g) \le Q_n(t, g) Q_k(t, g)$. Therefore, since $Q_1(t,g)<\infty$, the sequence in \eqref{eq:Pstar} converges to a limit  in $[-\infty,\infty)$.

To see that $P_*(t, g)>-\infty$, 
let $x_p$ be a periodic point of period $p$ with no tangential collisions,\footnote{Such a periodic
point always exists.  For example, since two adjacent scatterers are in convex opposition, there is a period 2 orbit whose trajectory is normal to both scatteres.} and let 
$\chi_p^-$ denote the negative Lyapunov exponent of $x_p$.  Then,
$Q_{np}(t, g) \ge |J^sT^{np}(x_p)|^t e^{ S_{np}  g(x_p)} =  e^{npt \chi_p^-} e^{ n S_p g(x_p)} $,
and so $P_*(t, g) \ge t \chi_p^- + \frac{1}{p} S_p g(x_p) > -\infty$.  

To prove convexity, pick $t, t' > 0$ and $\alpha \in [0,1]$.  Then using 
the H\"older inequality,
\begin{align*}
&Q_n(\alpha t + (1-\alpha) t', g)  =
\sum_{A \in \cM_0^{n, \bH}} \sup_{x \in A \cap M'} |J^sT^n|^{\alpha t + (1-\alpha) t'} 
e^{\alpha S_ng(x)}e^{(1-\alpha) S_ng(x)}  \\
&\quad \le \bigl(\sum_{A \in \cM_0^{n, \bH}} \sup_{x \in A \cap M'} |J^sT^n|^{t}
e^{ S_ng(x)} \bigr)^\alpha
\bigl(\sum_{A \in \cM_0^{n, \bH}} \sup_{x \in A \cap M'} |J^sT^n|^{t'} e^{ S_ng(x)} \bigr)^{1-\alpha}  
= Q_n(t,g)^\alpha Q_n(t',g)^{1-\alpha} \, .
\end{align*}
Taking logarithms, dividing by $n$, and letting $n \to \infty$ proves convexity.

Next, fixing $t>0$ and applying
\eqref{eq:hyp}, we find for $s>0$,
\[
Q_n(t+s, g) = \sum_{A \in \cM_0^{n, \bH}} \sup_{x \in A \cap M'} |J^sT^n|^{t+s}
e^{ S_ng(x)}
\le C_1^{-s} \Lambda^{-ns} Q_n(t, g) \, ,
\] 
so that $P_*(t+s,g) \le  P_*(t,g) - s\log \Lambda$, that is,  $P_*(t,g)$ is strictly decreasing in $t$.
\end{proof}


\subsection{Proof that $P_*(t,g)\ge P(t,g)$ (Proposition~\ref{prop:pressure})}
\label{next}

If $\cQ$ is a partition of $M$ we let
$\Int \cQ$ denote the set of interiors of elements of $\cQ$.
In \cite{max},  we worked with $\cP$, the (finite) partition of $M$ into maximal connected sets on which
$T$ and $T^{-1}$ are continuous, noticing
that the set $\Int \cP$  coincides with
 $\cM^1_{-1}$, while the refinements $\cP_{-k}^{n}=\bigvee_{i=-k}^n T^{-i}\cP$ may also contain
 isolated points if three or more scatterers have a common tangential trajectory (see \cite[Fig.1]{max}).
(Note that $\cP$ is a set-theoretical partition: zero measure sets do not need to
be ignored.)
We also observed that $\cP$ is a generator for any $T$-invariant Borel probability
measure $\mu$,  since $\bigvee_{i=-\infty}^\infty T^{-i} \cP$ separates\footnote{In fact, all points $x\ne y$ may be separated while the definition of a generator in \cite{parry}
allows a zero measure set of
pathological pairs.} points in the compact metric space $M$: if $x\ne y$  there exists $k\in \bZ$ such that $T^k(x)$
and $T^k(z)$ lie in different elements of $\cP$.
Let  $\bar \cP$ be the partition of $M$ into maximal connected sets on which
$T$ is continuous. Then  $\cP=\bar \cP\bigvee T(\bar \cP)$,  so $\bar \cP$ is also a generator
for $T$. We have $\Int \bar \cP=\cM^1_{0}$. More generally, $\Int(\bigvee_{k=0}^{n-1} T^{-k} \bar \cP) = \cM_0^n$ for $n\ge 1$.

\begin{proof}[Proof of Proposition~\ref{prop:pressure}]
If a $T$-invariant probability measure
$\mu$ gives positive weight to   $M \setminus M'$  or, more generally,
if $\int_M \log J^u T \, d\mu = \infty$, then $P_\mu(t, g) = -\infty$, so 
$P_\mu(t, g) <  P_*(t, g)$.  
We can thus assume without loss of generality that $\mu$ is a $T$-invariant probability
measure  with  $\int_M \log J^uT \, d\mu < \infty$, in particular  $\mu(\cS_n)=0$ for each $n \in \mathbb{Z}$. Then
\begin{equation}
\label{remem}H_\mu\left( \bigvee_{k=0}^{n-1} T^{-k}\bar \cP \right) = H_\mu( \cM_0^n)\, ,
\end{equation}
 since the boundary of any element of $\bigvee_{k=0}^{n-1} T^{-k} \bar \cP$ is contained in
$\cS_n$.

Since $\bar \cP$ is a generator, we have
$h_{\mu}(T) = h_{\mu}(T, \bar \cP)$
 for any  $T$-invariant probability measure $\mu$ on $M$  (see e.g. \cite[Theorem~4.17]{walters}).
Then,  using
\eqref{eq:switch potential},
we find, adapting the classical argument
(see e.g. \cite[Prop 9.10]{walters}), that
\begin{align*}
h_\mu(T, \bar \cP) &- t \int_M \log J^uT \, d\mu = \lim_{n \to \infty} \frac 1n H_\mu\left(\bigvee_{k=0}^{n-1} T^{-k}\bar \cP\right) 
+ \lim_{n \to \infty} \frac 1n  \int_M  t \sum_{k=0}^{n-1}\log J^sT \circ T^k \, d\mu\\
&\le \lim_{n \to \infty} \frac 1n
\biggl ( \sum_{ A\in \cM_0^n }
\mu(A) \big[ -\log \mu(A) +  \sup_{A\cap M'} t \log J^sT^n   \big] \biggr ) \\
&\le \lim_{n \to \infty} \frac 1n
 \sum_{A \in  \cM_0^n}  
\mu(A) \log \frac{\sup_{A \cap M'} |J^sT^n|^t}{\mu(A)}   
 \le \lim_{n \to \infty} \frac 1n \log \sum_{A \in \cM_0^n} \sup_{A \cap M'} |J^sT^n|^t  \, ,
\end{align*}
where we used \eqref{remem} in the third line,  and the convexity of the logarithm 
in the  fifth line. 
Finally, notice that each element of $\cM_0^n$ is a union of elements of $\cM_0^{n, \bH}$,
modulo the boundaries of homogeneity strips.  But since the distortion bound
Lemma~\ref{lem:comparable} extends to the boundaries of homogeneity strips, we have
\[
\sup_{A \cap M'} |J^sT^n|^t 
= \sup_{\substack{ B \in \cM_0^{n, \bH} \\ B \subset A}} \sup_{B \cap M'} |J^sT^n|^t
\le \sum_{\substack{ B \in \cM_0^{n, \bH} \\ B \subset A}} 
\sup_{B \cap M'} |J^sT^n|^t \, ,
\]
for each $A \in \cM_0^n$.
Using this bound in the previous estimate and applying
 Proposition~\ref{prop:Ptilde} implies
$h_\mu(T) - t \int_M \log J^uT \, d\mu \le P_*(t)$ for every $T$-invariant
probability measure $\mu$. 

If $g\ne 0$, we may write
\begin{align*}
h_\mu(T, \bar\cP) + \int (t\log J^sT +  g)  \, d\mu  
& = \lim_{n \to \infty} \frac 1n H_\mu \left( \bigvee_{k=0}^{n-1} T^{-k}\bar\cP \right) +
\lim_{n \to \infty} \frac 1n \int S_n(t \log J^sT +  g) \, d\mu \\
& \le \lim_{n \to \infty} \frac 1n \log \sum_{A \in \cM_0^n} \sup_{A \cap M'} |J^sT^n|^t 
e^{ S_ng} \, ,
\end{align*}
and this last expression is bounded by $P_*(t,g)$ by the same reasoning as above, 
using that the analogue of Lemma~\ref{lem:comparable} holds for $e^{S_n g}$:
Recalling \eqref{eq:phi dist},  for all $n >0$, 
all $A \in \cM_0^{n,\bH}$, and $x,y \in A$,
since the diameter of $T^iA$ is bounded by \eqref{eq:diam},
\begin{equation}
\label{eq:phi A}
e^{S_n g(x) - S_ng(y)} \le 1+ \bar C\, C_* \cdot | \nabla g |_{C^0} \, .
\end{equation}
\end{proof}


\section{Growth Lemmas}
\label{GrL}

 In this section, after preliminaries
in \S\ref{1step}, introducing in particular the contraction rate $\theta$
and sets $\cG_n(W)$
appearing when iterating the transfer operator $\cL_t$, we prove a series of growth
and complexity lemmas which will allow us to control   the
sums over 
$\cG_n(W)$ for 
$W \in \hW^s$.  This culminates in the lower bound of
Proposition~\ref{prop:lower bound}, which 
implies  
exact exponential growth (Proposition~\ref{prop:exact}) of
$Q_n(t,g)$.  (This exact exponential growth  is essential
to control the peripheral spectrum of $\cL_t$.)

Since $J^sT$ is not bounded away from zero, we shall use different strategies for $t\in (0,1]$ and 
$t>1$.  Several important growth lemmas are proved for $t \in (0,1]$ in 
Sections~\ref{sec:t<1} and \ref{sec:complexity}.  We then use the results for $t\le 1$ to bootstrap an analogous
set of lemmas for $t>1$ in Sections~\ref{sec:t>1}, \ref{sec:complexity},  and \ref{sec:boot}.


\subsection{One-Step Expansion: $\theta(t_1)$. Choice of  $q(t_0)$,  $k_0(t_0,t_1)$, $\delta_0(t_0,t_1)$.  $\cG_n(W)$, $\cI_n(W)$}
\label{1step}

We begin by proving an adaptation of the one-step expansion
(see e.g. \cite[Lemma 5.56]{chernov book}) for our choice of
potential and homogeneity strips.  
 Using the notation from \eqref{eq:stable jac},  recall $t_* > 1$ from \eqref{eq:t* def}, and the
adapted metric from \cite[Section~5.10]{chernov book}:
$$\| dx \|_*
= \frac{\cK+ |\cV|}{\sqrt{1+\cV^2}} \|dx\|\, .
$$ 

\begin{lemma}[One-Step Expansion]
\label{lem:one step}
For $t_1 \in (1, t_*)$, fix\footnote{This is  possible by definition of $t_*$
and Proposition~\ref{prop:pressure}.  It implies $\theta^t < e^{P_*(t)}$ for $t \le t_1$ since $P_*(t)$ is decreasing.} $\theta \in (\Lambda^{-1}, \Lambda^{-1/2})$ such that 
$\theta^{t_1} <  e^{P_*(t_1)}$.
Then for each $\bar t_0\in (0,1)$ and $q>2/\bar t_0$, there exist
$ \bar k_0 =  \bar k_0(\bar t_0, t_1,q) \ge 1$ and  $ \bar \delta_0 =  \bar \delta_0(\bar t_0, t_1,q)>0$ such that 
\begin{equation}
\label{eq:theta}
\sum_i |J_{V_i}T|_{C^0(V_i),*}^t < \theta^t\, ,\; \;  \forall W \in \hW^s
\mbox{ with } |W|<\bar \delta_0  \, ,\, \,
\forall t\ge \bar t_0 \, ,
\end{equation}
where the $V_i$ range over the maximal connected weakly $(q, \bar k_0)$-homogeneous\footnote{By weakly $(q, \bar k_0)$-homogeneous, we mean weakly homogeneous as defined in Sect.~\ref{defstuff}
using the parameters $q$ and $\bar k_0$ for the homogeneity strips.  Note that $W$ is not necessarily weakly homogeneous, but each $V_i$ is.} components of $T^{-1}W$, 
and $|J_{V_i}T|_{C^0(V_i),*}$ denotes
the maximum on $V_i$ of the Jacobian of $T$ along $V_i$
for the metric $\| \cdot \|_*$.
\end{lemma}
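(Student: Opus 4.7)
The plan is to establish \eqref{eq:theta} by a $t$-uniform adaptation of the classical one-step expansion argument (see \cite[Lemma~5.56]{chernov book}), which treats the case $t=1$ with the standard choice $q=2$. The fundamental property of the adapted metric $\|\cdot\|_*$ is that $DT^{-1}$ expands stable tangent vectors pointwise by at least $\Lambda$, giving $|J_{V_i}T|_{C^0(V_i),*} \le \Lambda_i^{-1}$, where $\Lambda_i := \inf_{V_i}|J_{V_i}T^{-1}|_* \ge \Lambda > 1$. Thus the task reduces to bounding $\sum_i \Lambda_i^{-t} < \theta^t$ for all $t \ge \bar t_0$, uniformly in $W \in \hW^s$ with $|W| < \bar \delta_0$.

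I would decompose the sum into \emph{strip contributions} from components $V_i$ contained in some $\bH_k$ with $|k| \ge \bar k_0$, and \emph{bulk contributions} from the remaining components. For the strip terms, combining the stable Jacobian formula \eqref{eq:stable jac} with the definition of the $*$-metric yields $\Lambda_i \ge c k^q$ whenever $V_i \subset \bH_k$, while the number of components of $T^{-1}W$ per strip is uniformly bounded when $W$ is short. Therefore
\[
\sum_{\text{strip}} \Lambda_i^{-t} \le C \sum_{|k| \ge \bar k_0} k^{-qt} \le \frac{2C}{qt-1} \, \bar k_0^{1-qt}.
\]
Since $q\bar t_0 > 2$ forces $qt > 2 > 1$ for all $t \ge \bar t_0$, the series converges. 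Writing the right-hand side divided by $\theta^t$ as $\bar k_0 (\bar k_0^q \theta)^{-t}/(qt-1)$ up to constants, and choosing $\bar k_0$ large enough so that $\bar k_0^q \theta > 1$, the factor $(\bar k_0^q \theta)^{-t}$ is decreasing in $t$; hence the quantity is uniformly bounded over $t \ge \bar t_0$ by its value at $t = \bar t_0$, which tends to $0$ as $\bar k_0 \to \infty$. Thus the strip contribution is dominated by $\theta^t/2$ uniformly in $t$ for $\bar k_0$ sufficiently large.

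For the bulk terms, the finite horizon condition forces $W$ of length at most $\bar \delta_0$ to be crossed by at most a uniformly bounded number of non-grazing curves of $\cS_{-1}$ when $\bar \delta_0$ is small, producing only finitely many bulk components. The classical argument, based on the short-curve focusing property of the $*$-metric, shows that $\sup_W \sum_{\text{bulk}} \Lambda_i^{-1} \to \Lambda^{-1}$ as $\bar \delta_0 \to 0$. The $t$-power version follows by the same focusing: the dominant contribution comes from a unique bulk component of minimal expansion $\Lambda_i \approx \Lambda$, while the remaining bulk components have strictly larger $\Lambda_i$; tracking the correction terms yields $\sup_W \sum_{\text{bulk}} \Lambda_i^{-t} \to \Lambda^{-t}$ uniformly in $t \ge \bar t_0$. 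Since $\theta > \Lambda^{-1}$, this gives $\sum_{\text{bulk}} \Lambda_i^{-t} < \theta^t/2$ for $\bar \delta_0$ small enough, uniformly in $t$.

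Parameters will be fixed in the order $\theta \to q \to \bar k_0 \to \bar \delta_0$. The main obstacle is the uniform $t$-control of the bulk sum when $t$ is close to $\bar t_0$: the gap between $\theta^t$ and $\Lambda^{-t}$ is smallest there, and a naive count $N \cdot \Lambda^{-t}$ of $N$ bulk components would fail. One must instead rely on the fine geometric structure of how a short curve splits under $T^{-1}$, in particular that auxiliary (non-dominant) bulk components have $\Lambda_i$ strictly bigger than $\Lambda$ thanks to the focussing of the adapted metric. This is the classical mechanism underlying Chernov's one-step expansion, and the $t$-power adaptation follows by propagating the focussing through each estimate.
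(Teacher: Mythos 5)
Your decomposition (high-index homogeneity strips versus the remaining ``bulk'' components, with parameters fixed in the order $\theta\to q\to\bar k_0\to\bar\delta_0$) is the same skeleton as the paper's proof, and your treatment of the strip sum is fine. The genuine problem is the budget you assign to the bulk: the claim $\sum_{\mathrm{bulk}}\Lambda_i^{-t}<\theta^t/2$ is false in general and cannot be rescued by shrinking $\bar\delta_0$. Even after all nearly tangential pieces have been pushed into the strips, the one non-grazing component survives, and its contraction in the adapted metric can be as close to $\Lambda^{-1}$ as the table allows; so the bulk sum is essentially $\Lambda^{-t}$, and $\Lambda^{-t}<\theta^t/2$ requires $(\theta\Lambda)^t\ge 2$. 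Since the lemma takes $\theta<\Lambda^{-1/2}$, one has $(\theta\Lambda)^t<\Lambda^{t/2}$, which is below $2$ for all $t\le 2\log 2/\log\Lambda$ --- in particular for every $t\in[\bar t_0,t_1]$ whenever $\Lambda$ is moderate and $\bar t_0$ is small. Thus the even split $\theta^t/2+\theta^t/2$ fails exactly in the regime the lemma is designed for; the split must be asymmetric, with the strips allotted only the (uniformly positive, but possibly tiny) margin $\theta^t-\Lambda^{-t}$.

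The paper's proof implements precisely this: it requires, at the single exponent $\bar t_0$, that $\Lambda^{-\bar t_0}+\frac{\tau_{\max}}{\tau_{\min}}\sum_{|k|\ge\bar k_0}C^{\bar t_0}k^{-q\bar t_0}<\theta^{\bar t_0}$ (possible since $\theta\Lambda>1$ and $q\bar t_0>2$), and then gets all $t\ge\bar t_0$ in one line from $|J_{V_i}T|_{C^0(V_i),*}\le\Lambda^{-1}$, namely $\sum_i|J_{V_i}T|_*^t\le\Lambda^{-(t-\bar t_0)}\sum_i|J_{V_i}T|_*^{\bar t_0}\le\Lambda^{-(t-\bar t_0)}\theta^{\bar t_0}\le\theta^t$; this replaces your separate per-$t$ monotonicity bookkeeping and, more importantly, your broken bulk estimate. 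A second, lesser issue is your justification of the bulk limit: the mechanism is not that ``auxiliary bulk components have $\Lambda_i$ strictly bigger than $\Lambda$ by focussing,'' but Chernov's structural fact that for $|W|$ short, $T^{-1}W$ has at most $\tau_{\max}/\tau_{\min}+1$ components of which all but at most one are nearly tangential, combined with $|T^{-1}W|\le C|W|^{1/2}$ and the coordination $C\bar\delta_0^{1/2}\le\bar k_0^{-q}$, which forces every nearly tangential component to lie entirely in $\cup_{|k|\ge\bar k_0}\bH_k$. After that choice the bulk is a single piece bounded by $\Lambda^{-t}$, and no claim about extra bulk components exceeding the minimal expansion is needed (nor available beyond the trivial bound $\ge\Lambda$). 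With the asymmetric split at $t=\bar t_0$ and the $\Lambda^{-(t-\bar t_0)}$ bootstrap, your argument becomes the paper's.
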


\begin{proof} 
Note that $|J_{V_i}T|_{C^0(V_i),*} \le \Lambda^{-1}$ and, if $V_i \subset \mathbb{H}_k$, then $|J_{V_i}T|_{C^0(V_i),*} \le Ck^{-q}$ for
some
$C > 0$ \cite[eq.~(5.36)]{chernov book}.  
There exists $\bar\delta > 0$ such that if $W \in \hW^s$ with $|W| \le \bar\delta$, then
$T^{-1}W$ has at most $\frac{\tau_{\max}}{\tau_{\min}} + 1$ connected components, and all but at most 
one component experience nearly tangential collisions (see \cite[Sect.~5.10]{chernov book}).

For  $\bar t_0\in(0,1)$ and $q > 2/\bar t_0$, choose $\bar k_0 = \bar k_0(\bar t_0, t_1, q)$ such that
\begin{equation}
\label{eq:k0}
\Lambda^{-\bar t_0} + \frac{\tau_{\max}}{\tau_{\min}} \sum_{|k| \ge \bar k_0} C^{\bar t_0} k^{-q\bar t_0}
\le \Lambda^{-\bar t_0} +  \frac{\tau_{\max}}{\tau_{\min}}\,  C^{\bar t_0}\,  \bar k_0^{-1} < 
\theta^{\bar t_0} \, .
\end{equation}
For all $W \in \hW^s$, we have $|T^{-1}W| \le  C' |W|^{1/2}$ \cite[Exercise~4.50]{chernov book} for some
$ C' >0$ independent of $W$.
Next, choose $\bar \delta_0(\bar t_0,  t_1)$ so small that  $C'\bar \delta_0^{1/2} \le
\bar k_0^{-q}$, so that 
if $|W| \le |\bar \delta_0|$, then
each component of $T^{-1}W$ making a nearly tangential collision lies in a union of homogeneity strips
$\mathbb{H}_k$ for $k \ge \bar k_0$. 

Then if $|W| \le \bar \delta_0$, the quantity $\sum_i |J_{V_i}T|_{C^0(V_i),*}^{{\bar t_0}}$ is
 bounded by the left-hand side of \eqref{eq:k0},
proving \eqref{eq:theta}  for $t=\bar t_0$.  Finally,  for all $ t\ge \bar t_0$,
\begin{equation}
\label{eq:contract}
\sup_{\substack{W \in \hW^s \\ |W| \le \bar \delta_0}} \sum_i |J_{V_i}T|_{C^0(V_i),*}^t
\le \Lambda^{-t+\bar t_0} \sup_{\substack{W \in \hW^s \\ |W| \le \bar \delta_0}} \sum_i |J_{V_i}T|_{C^0(V_i),*}^{\bar t_0}
\le \Lambda^{-t+\bar t_0} \theta^{\bar t_0} \le \theta^t \, .
\end{equation}
\end{proof}

We  now choose the parameters defining
$\hW^s$, $\hW^s_\bH$ and  $\cW^s$, $\cW^s_\bH$ depending on $t_0\in (0,1)$,
$t_1 \in (1, t_*)$:

\begin{defin}
\label{def:k0d0}
Given $t_0\in (0,1)$, $t_1\in  (1, t_*)$,  we fix $q(t_0)>1$ such that $qt_0/2 \ge 2$, and fix $\theta=\theta(t_1)$,
   $k_0 = k_0(t_0, t_1, q) := \bar k_0( \tfrac{t_0}{2}, t_1,q)$, 
$\delta_0 = \delta_0(t_0,  t_1,q ) := \bar \delta_0(\tfrac{t_0}{2}, t_1 ,q)$
 as in Lemma~\ref{lem:one step}. 
Reduce $\delta_0$ if needed
so that $C_d \delta_0^{\frac{1}{q+1}} \le 3/4$, with $C_d$  from \eqref{eq:distortion}.  
This choice of $(t_0,t_1)$, $\theta$, $q$, $\delta_0$, and $k_0$ determines the
set of stable curves $\hW^s$, $\hW^s_\bH$ and stable manifolds $\cW^s$, $\cW^s_\bH$. 
\end{defin}

\noindent
Our proofs use sets $\cG_n(W)$, $\cI_n(W)$ associated with $\delta_0$
and $k_0$, and, for $\delta <\delta_0$, also $\cG_n^\delta(W)$, $\cI_n^\delta(W)$:

For $W \in \hW^s$, we let $\cG_1(W)$ denote the maximal, weakly homogeneous, connected components of $T^{-1}W$,
with long pieces subdivided to have length between $\delta_0/2$ and $\delta_0$.  
Inductively, we define\footnote{This definition of $\cG_n(W)$ is  as in
\cite{dz1,dz2}, but different from \cite{max} where homogeneity was
not required.} $\cG_n(W) = \cup_{W_i \in \cG_{n-1}(W)} \cG_1(W_i)$.  
Thus $\cG_n(W)$
is the countable collection of subcurves of $T^{-n}W$ subdivided according to the extended singularity set
$\cS_{-n}^{\mathbb{H}}$, and $T^j(V)$ is weakly homogeneous for all $0\le j\le n-1$ and all $V\in \cG_n(W)$, in particular $\cG_n(W) \subset \hW^s_\bH$.
For each $n \ge 1$, let $L_n(W)$ denote the elements of $\cG_n(W)$ whose length is at least $\delta_0/3$.
Let $\cI_n(W)$ denote those elements $W_i \in \cG_n(W)$ such that $T^kW_i$ is never contained in an element
of $L_{n-k}(W)$ for all $k = 0, \ldots, n-1$.

Finally, for $\delta < \delta_0$, define $\cG_n^\delta(W)$ like $\cG_n(W)$,
but subdividing long pieces into pieces of length between $\delta/2$ and $\delta$.  Similarly, denote by $L_k^\delta(W)$ those elements of $\cG_k^\delta(W)$ having
length at least $\delta/3$, and by $\cI_n^\delta(W)$ those elements $W_i \in \cG_n^\delta(W)$
such that $T^kW_i$ has never been contained in an element of $L_{n-k}^\delta(W)$ for 
all 
$k =0, \ldots, n-1$.


\subsection{Initial Lemmas for all $t>0$}
\label{sec:t>0}

We start with two easy lemmas. (In the present paper, the parameter $\varsigma$ 
appearing in Lemmas~\ref{lem:short growth}
and \ref{lem:extra growth} will be zero or $1/p$, for $p>q+1$ chosen in \eqref{eq:all}.)

\begin{lemma}
\label{lem:short growth}
Fix $t_0\in (0,1)$. 
There exists $C_0=C_0(t_0) > 0$ such that for  all 
 $g\in C^0$, 
every $t\ge t_0$, all $t_1\in (1, t_*)$, and 
all $0 \le \varsigma <  \min \{ t, 1, \frac{2t-t_0}{2-t_0} \} $, we have
\begin{equation}
\label{eq:short growth up}
\sum_{W_i \in \cI_n(W)} \frac{|W_i|^\varsigma}{|W|^\varsigma} |J_{W_i}T^n|^t_{C^0(W_i)} 
|e^{ S_ng}|_{C^0(W_i)}
\le C_0 \theta^{n(t - \varsigma)} e^{n  |g|_{C^0}} \,  , \, \forall W \in \hW^s\, ,\, \forall  n \ge 1\, .
 \end{equation}
\end{lemma}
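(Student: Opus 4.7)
\emph{Proof plan.} The plan is to combine the one-step expansion Lemma~\ref{lem:one step} iterated via the chain rule, bounded distortion (Lemma~\ref{lem:distortion}), and H\"older's inequality. The factor $|e^{S_ng}|_{C^0(W_i)} \le e^{n|g|_{C^0}}$ is immediate, so the task reduces to bounding $\sum_{W_i \in \cI_n(W)} |W_i|^\varsigma |J_{W_i}T^n|^t_{C^0(W_i)}$ by $C_0 |W|^\varsigma \theta^{n(t-\varsigma)}$, and throughout I may replace the sum over $\cI_n(W)$ by the larger sum over $\cG_n(W)$.

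First, by induction on $n$ I would establish the iterated one-step bound
\begin{equation*}
\sum_{W_i \in \cG_n(W)} |J_{W_i}T^n|^s_{C^0(W_i),*} \le \theta^{ns} \, , \quad \forall s \ge t_0/2\, .
\end{equation*}
Using $\cG_n(W) = \bigcup_{V \in \cG_{n-1}(W)} \cG_1(V)$ together with the pointwise chain-rule bound $|J_{W_i}T^n|_{C^0,*} \le |J_{W_i}T|_{C^0,*} \cdot |J_V T^{n-1}|_{C^0,*}$, the inner sum $\sum_{W_i \in \cG_1(V)} |J_{W_i}T|^s_{C^0(W_i),*}$ is bounded by $\theta^s$ via Lemma~\ref{lem:one step} applied to $V \in \hW^s$ (which has length $\le \delta_0$). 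Converting to the Euclidean metric introduces only a single telescoping factor $h(T^n x)/h(x)$, where $h = (\cK+|\cV|)/\sqrt{1+\cV^2}$ is the bounded density relating the two metrics, yielding $\sum_{W_i \in \cG_n(W)} |J_{W_i}T^n|^s_{C^0} \le C_h^s \theta^{ns}$.

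Next, Lemma~\ref{lem:distortion} applied to $W_i \in \cG_n(W)$ (whose forward iterates are weakly homogeneous by construction) gives a constant $C_d$, independent of $n$, with $\sup_{W_i}|J_{W_i}T^n| \le C_d \inf_{W_i}|J_{W_i}T^n|$. Combined with $|T^n W_i| = \int_{W_i} |J_{W_i}T^n|$ this yields $|W_i| \le C_d |T^n W_i|/|J_{W_i}T^n|_{C^0(W_i)}$; raising to the $\varsigma$-th power, which requires $\varsigma \le t$ so that $t-\varsigma \ge 0$, gives
\begin{equation*}
|W_i|^\varsigma |J_{W_i}T^n|^t_{C^0(W_i)} \le C_d^\varsigma |T^n W_i|^\varsigma |J_{W_i}T^n|^{t-\varsigma}_{C^0(W_i)}\, .
\end{equation*}

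Finally I would apply H\"older's inequality with conjugate exponents $1/\varsigma$ and $1/(1-\varsigma)$, valid since $\varsigma < 1$, to the sum over $W_i \in \cI_n(W)$. The first factor is at most $|W|^\varsigma$ because the images $T^n W_i \subset W$ are pairwise disjoint, and the second factor is controlled by extending the sum to all of $\cG_n(W)$ and invoking the iterated one-step estimate with exponent $s = (t-\varsigma)/(1-\varsigma)$. The condition $s \ge t_0/2$ needed for the one-step bound is precisely the hypothesis $\varsigma \le (2t-t_0)/(2-t_0)$. The main subtlety is aligning the three bounds on $\varsigma$ so that the distortion trade ($\varsigma \le t$), the H\"older inequality ($\varsigma < 1$), and the admissibility of the H\"older exponent for the one-step expansion ($\varsigma < (2t-t_0)/(2-t_0)$) hold simultaneously; the degenerate case $\varsigma = 0$ bypasses H\"older and follows directly from the iterated one-step estimate.
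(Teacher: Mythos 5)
There is a genuine gap, and it is precisely at the point where you propose to ``replace the sum over $\cI_n(W)$ by the larger sum over $\cG_n(W)$.'' The iterated one-step bound you plan to prove by induction, namely $\sum_{W_i \in \cG_n(W)} |J_{W_i}T^n|^s_{C^0(W_i),*} \le \theta^{ns}$ for $s \ge t_0/2$, is false. Two things break the induction over $\cG_n(W)$: first, $\cG_1(V)$ is not just the set of maximal weakly homogeneous components of $T^{-1}V$ to which Lemma~\ref{lem:one step} applies --- long components are artificially subdivided into pieces of length between $\delta_0/2$ and $\delta_0$, and each subdivision multiplies the number of terms while each subpiece carries essentially the same sup of the Jacobian as its parent, so the inner sum is no longer bounded by $\theta^s$; second, and more decisively, the sum over all of $\cG_n(W)$ is comparable to $Q_n(s)$ (Lemma~\ref{lem:extra growth}), which by Proposition~\ref{prop:exact}/\ref{prop:summary} grows like $e^{nP_*(s)}$ --- exponentially large for $s<1$ --- whereas $\theta^{ns}\to 0$. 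The restriction to $\cI_n(W)$, i.e.\ to pieces whose forward images have never been contained in a long element at any intermediate time, is exactly what makes the inductive use of \eqref{eq:theta} legitimate: short pieces are never artificially subdivided, so at each step one applies the one-step expansion to a curve of length less than $\delta_0$ and no extra multiplicity appears (this is the argument of \cite[Lemma~3.1]{dz1} invoked in the paper). Consequently, in your H\"older step the second factor must also be kept as a sum over $\cI_n(W)$ (it is then controlled by the $\varsigma=0$ case with exponent $\frac{t-\varsigma}{1-\varsigma}\ge \frac{t_0}{2}$), not extended to $\cG_n(W)$.

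Apart from this, your plan coincides with the paper's proof: the $\varsigma=0$, $g=0$ case by induction on the one-step expansion with a single constant from switching between the adapted and Euclidean metrics; the distortion bound \eqref{eq:distortion} to trade $|W_i|$ for $|T^nW_i|/|J_{W_i}T^n|_{C^0(W_i)}$; H\"older with exponents $1/\varsigma$ and $1/(1-\varsigma)$, the first factor bounded using disjointness of the images $T^nW_i\subset W$; the constraint $\varsigma<\frac{2t-t_0}{2-t_0}$ arising exactly as you say from requiring $\frac{t-\varsigma}{1-\varsigma}\ge \frac{t_0}{2}$; and the trivial insertion of $e^{n|g|_{C^0}}$. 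So the fix is local: run the induction and both H\"older factors over $\cI_n(W)$ only, and the argument closes.
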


\begin{proof}
The case $\varsigma = 0$, $g=0$ can be proved by induction on $n$ using \eqref{eq:theta}, since elements of $\cI_n(W)$
have been short at each intermediate step.  This is the same as in \cite[Lemma~3.1]{dz1} (the exponent $t$ changes nothing),
and the\footnote{\label{care} The sets $\hW^s$ and $\cI_n(W)$
become smaller if $t_1$ is larger; while
$\theta$ increases if $t_1$ is larger, this does not affect $C_0$.} constant $C_0^{ [\varsigma = 0]}$ 
comes from
switching from the metric induced by the adapted norm $\|  \cdot \|_*$ to the standard Euclidean norm at the last step.

For   $\varsigma>0$, $g=0$, we use a H\"older inequality,
\begin{align*}
\sum_{W_i \in \cI_n(W)} \frac{|W_i|^\varsigma}{|W|^\varsigma}&  |J_{W_i}T^n|_{C^0(W_i)}^t
\le \biggl( \sum_{W_i \in \cI_n(W)} \frac{|W_i|}{|W|}  |J_{W_i}T^n|_{C^0(W_i)} \biggr)^{ \varsigma}
\biggl( \sum_{W_i \in \cI_n(W)} |J_{W_i}T^n|_{C^0(W_i)}^{\frac{t - \varsigma}{1 - \varsigma}} \biggr)^{1-\varsigma}  \, .
\end{align*}
Since $|J_{W_i}T^n|_{C^0(W_i)} \le e^{C_d} \frac{|T^nW_i|}{|W_i|}$ by \eqref{eq:distortion}, 
the first sum is bounded by $e^{C_d \varsigma}$.
Then, since $\frac{t - \varsigma}{1-\varsigma} \ge \frac{ t_0}{2}$, Definition~\ref{def:k0d0}
and Lemma~\ref{lem:one step} for $\bar t_0=t_0/2$, together with the case $\varsigma=0$,
imply the second sum is  bounded by $\bigl ( C_0^{ [\varsigma = 0]}\bigr )^{1-\varsigma} \theta^{n(t - \varsigma)}$. This completes the proof of the
lemma in the case $g=0$.
For nonzero $g$,  use $|e^{ S_ng}|_{C^0(W_i)}\le e^{n  |g|_{C^0}}$ for all $W_i$ to bootstrap from the bound for $g=0$.
\end{proof}

\begin{lemma}
\label{lem:extra growth} Fix  $t_0\in (0,1)$  and $t_1 \in (1, t_*)$.  Let  $\tilde t_1>t_0$.
There exists  $C_2 = C_2 ( t_0,  t_1, \tilde t_1)  > 0$ such that, for all  $g\in C^0$
and all $\varsigma \in [0, 1]$, we have
\begin{equation}
\label{eq:extra growth up}
\sum_{W_i \in \cG_n(W)} \frac{|W_i|^\varsigma}{|W|^\varsigma} |J_{W_i}T^n|^{t + \varsigma}_{C^0(W_i)}
|e^{ S_ng}|_{C^0(W_i)} \le C_2\,  Q_n(t, g) \, ,\,
\forall  W \in \hW^s\, ,\,    \forall n \ge 1 \,,\,\,
 \forall t\in[ t_0,  \tilde t_1] \, .
\end{equation}
\end{lemma}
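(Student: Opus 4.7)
The strategy is to bound the sum over $\cG_n(W)$ by the topological complexity sum $Q_n(t,g)$ by assigning each $W_i\in\cG_n(W)$ to the unique cell $A_i\in\cM_0^{n,\bH}$ containing it, and then transferring the supremum along $W_i$ to the supremum over $A_i$ using bounded distortion.

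First I would absorb the $\varsigma$-factor into a constant. Using the standard bounded-distortion estimate \eqref{eq:distortion} together with the fact that $T^n W_i\subset W$, for any $\varsigma\in[0,1]$,
\[
\frac{|W_i|^\varsigma}{|W|^\varsigma}\,|J_{W_i}T^n|^\varsigma_{C^0(W_i)}\;\le\;e^{C_d\varsigma}\,\frac{|T^n W_i|^\varsigma}{|W|^\varsigma}\;\le\;e^{C_d}\,,
\]
which reduces the claim to the $\varsigma=0$ version
\[
\sum_{W_i\in\cG_n(W)}|J_{W_i}T^n|^{t}_{C^0(W_i)}\,|e^{S_ng}|_{C^0(W_i)}\;\le\;C\,Q_n(t,g)\,.
\]

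Next, each $W_i\in\cG_n(W)$ lies in a unique cell $A_i\in\cM_0^{n,\bH}$, since $\cG_n(W)$ is cut along $\cS_{-n}^{\bH}$, whose $T^n$-image matches the singularity set defining $\cM_0^{n,\bH}$. I will then verify the uniform multiplicity bound: at most $N=N(\delta_0)$ distinct elements of $\cG_n(W)$ lie in any given $A\in\cM_0^{n,\bH}$, using that after subdivision each piece has length at least $\delta_0/2$, while the stable length of $T^{-n}W\cap A$ is uniformly bounded. For each $W_i\subset A_i$, bounded distortion \eqref{eq:distortion} on $W_i\in\hW^s_H$ together with the pointwise ratio bound $|J_{W_i}T^n(x)|\le C_1\,|J^sT^n(x)|$ for $x\in W_i\cap M'$ (both Jacobians are taken along directions in the stable cone $\cC^s$ at the same point $x$, so their ratio is uniformly controlled by cone geometry) yields
\[
|J_{W_i}T^n|^{t}_{C^0(W_i)}\;\le\;e^{C_d t}\,C_1^{t}\,\sup_{A_i\cap M'}|J^sT^n|^{t}\,,
\]
uniformly in $t\in[t_0,\tilde t_1]$. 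The oscillation bound \eqref{eq:phi A} for $e^{S_n g}$ on $A_i$ supplies $|e^{S_ng}|_{C^0(W_i)}\le(1+\bar C C_*|\nabla g|_{C^0})\sup_{A_i\cap M'}e^{S_ng}$.

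Summing over $W_i\in\cG_n(W)$ and regrouping by the host cell $A_i$, the multiplicity bound produces
\[
\sum_{W_i\in\cG_n(W)}|J_{W_i}T^n|^{t}_{C^0(W_i)}\,|e^{S_ng}|_{C^0(W_i)}\;\le\;N\,e^{C_d t}\,C_1^{t}\bigl(1+\bar C C_*|\nabla g|_{C^0}\bigr)\,Q_n(t,g)\,,
\]
and combining with Step 1 completes the proof with $C_2$ uniform on $[t_0,\tilde t_1]$ (since $e^{C_d t}C_1^{t}$ is bounded there). The most delicate point is the multiplicity bound $N$: ruling out an unbounded number of $\cG_n(W)$-pieces within a single $A\in\cM_0^{n,\bH}$ ultimately depends on the bounded stable diameter of elements of $\cM_0^{n,\bH}$ together with the lower subdivision length $\delta_0/2$. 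The pointwise comparison $|J_{W_i}T^n|\asymp|J^sT^n|$ along stable curves is by contrast standard from the cone construction, and distortion along each $W_i$ is immediate from \eqref{eq:distortion}.
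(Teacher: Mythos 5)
Your overall route is the paper's: regroup the pieces $W_i\in\cG_n(W)$ by the element of $\cM_0^{n,\bH}$ containing them (with multiplicity bounded by roughly $2/\delta_0$, coming from the artificial subdivisions at scale $\delta_0$), compare the curve Jacobian on $W_i$ with the supremum of $|J^sT^n|^t$ on the host cell, and absorb the factor $|W_i|^\varsigma |J_{W_i}T^n|^\varsigma_{C^0(W_i)}/|W|^\varsigma$ via \eqref{eq:distortion} and $|T^nW_i|\le |W|$. However, the step you dismiss as standard is exactly where the content lies, and your justification for it is not valid. The inequality $|J_{W_i}T^n(x)|\le C\,|J^sT^n(x)|$ with $C$ independent of $n$ does not follow from the mere fact that $\cT_xW_i$ and $E^s(x)$ both lie in $\cC^s$: cone geometry only gives a per-step comparison of the two one-step Jacobians up to a fixed factor, and compounding this over $n$ steps yields $C^n$, which is useless here. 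The uniform-in-$n$ comparability (and, more to the point, the comparison between $\sup_{W_i}|J_{W_i}T^n|$ and $\sup_{A_i\cap M'}|J^sT^n|$ at possibly different points of the cell) requires the billiard-specific distortion analysis --- cancellation of the $\cos\vf$ factors within common homogeneity strips, telescoping of the $\sqrt{1+\cV^2}$ terms, and exponential convergence of slopes --- which is precisely Lemma~\ref{lem:comparable}; this is what the paper invokes at this point and what produces the factor $e^{\tilde t_1 C}$ in $C_2$. You should cite (or reprove) that lemma rather than appeal to cone geometry.

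A second, smaller defect: the lemma is stated for $g\in C^0$ with $C_2$ depending only on $(t_0,t_1,\tilde t_1)$, but you control $|e^{S_ng}|_{C^0(W_i)}$ by $\sup_{A_i\cap M'}e^{S_ng}$ through the oscillation bound \eqref{eq:phi A}, which requires $g\in C^1$ and puts $|\nabla g|_{C^0}$ into your constant. This is unnecessary: keep the supremum of the product on $W_i$ itself, bounding
\[
|J_{W_i}T^n|^t_{C^0(W_i)}\,|e^{S_ng}|_{C^0(W_i)}\le e^{tC}\sup_{W_i\cap M'}\bigl(|J^sT^n|^t e^{S_ng}\bigr)
\]
via distortion along $W_i$ together with Lemma~\ref{lem:comparable}, and then use $W_i\subset A_i$, exactly as the paper does. (Your multiplicity argument is also slightly off as stated --- pieces of $\cG_n(W)$ created by genuine singularity or homogeneity cuts can be shorter than $\delta_0/2$ --- but the bounded multiplicity within a fixed cell is correct, since pieces sharing a cell of $\cM_0^{n,\bH}$ can only have been separated by artificial subdivisions.)
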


\begin{proof}
The case $\varsigma = 0$ and $g=0$ is trivial since by definition each $W_i \in \cG_n(W)$ is contained in a single element of $\cM_0^{n, \mathbb{H}}$.  Since there
can be at most $2/\delta_0$ elements of $\cG_n(W)$ in one element of $\cM_0^{n, \mathbb{H}}$ 
(with $\delta_0=\delta_0(t_0,  t_1, q)$  from Definition~\ref{def:k0d0}),  the
lemma holds with $C_2[0]= 2 \delta_0^{-1}  e^{\tilde t_1C}$,  where $C$ is from Lemma~\ref{lem:comparable} (recall also footnote \eqref{care}).
Next, for $\varsigma>0$ and $g=0$, notice that by \eqref{eq:distortion},
\[
|W_i| |J_{ W_i}T^n|_{C^0(W_i)} \le e^{C_d \delta_0^{1/(q+1)}} |T^nW_i| \le e^{C_d \delta_0^{1/(q+1)}} |W| \, ,
\]
so that the sum for $\varsigma > 0$ is bounded by the sum for $\varsigma =0$ times $e^{\varsigma C_d \delta_0^{1/(q+1)}}$.
If $g \ne 0$, then again using Lemma~\ref{lem:comparable} on each $W_i$, we have 
$$|J_{W_i}T^n|^t_{C^0(W_i)} |e^{S_ng}|_{C^0(W_i)} \le e^{tC} 
\sup_{W_i \cap M'}| |J^sT^n|^t e^{S_ng} | \, ,$$  
and the required bound follows with $C_2 = C_2[0] e^{\varsigma C_d \delta_0^{1/(q+1)}}$.
\end{proof}


\subsection{Growth Lemmas for $t \in (0,1]$}
\label{sec:t<1}
In this section, we prove two growth and complexity lemmas for
$t \in (0,1]$.
The first one shows that we can make the contribution from the sum over short pieces  small compared to the sum over all pieces in
$\cG_n(W)$ by choosing a small length scale.  
Recall the constant $C_*$ from \eqref{eq:phi dist}.

\begin{lemma}
\label{lem:short small}
Let $t_0\in (0,1)$  and $t_1 \in (1,t_*)$.  For any $\ve >0$, there exist
 $\delta_1 >0$ and $n_1 \ge 1$ such that
for all $W \in \hW^s$ with $|W| \ge \delta_1/3$, all $n \ge n_1$,
and all $g \in C^1$ with\footnote{The bound on $|\nabla g|$ bounds the distortion constant of $g$ by $2$, so that 
 $\delta_1$ and $n_1$ may be chosen uniformly in $g$.}
\begin{equation}
\label{eq:up one}
2|g|_{C^0} <  - t_0 \log \theta\, ,\,  
\mbox{ i.e. }\,\,
 e^{| g|_{C^0}} \theta^{{t_0}/2} <1  \, ,
 \, \mbox{ and } \,\, | \nabla g| \le (C_* \delta_0)^{-1} \, ,
\end{equation} 
we have
\[
\sum_{\substack{W_i \in \cG_n^{\delta_1}(W) \\ |W_i| < \delta_1/3}}
|J_{W_i}T^n|^t_{C^0(W_i)} |e^{S_ng}|_{C^0(W_i)}
\le \ve \sum_{W_i \in \cG_n^{\delta_1}(W) }
|J_{W_i}T^n|^t_{C^0(W_i)} |e^{ S_ng}|_{C^0(W_i)}\, , \,\, 
\forall t \in [t_0,1] \, .
\]
In particular, taking $\ve = 1/4$ gives
$\delta_1 < \delta_0$ and $n_1 \ge 1$ 
such that for all $n \ge n_1$, for all  $g\in C^0$ satisfying \eqref{eq:up one},
for all $W \in \hW^s$  with $|W| \ge \delta_1/3$, we have
\begin{equation}
\label{eq:delta1}
\sum_{W_i \in L_n^{\delta_1}(W)} |J_{W_i}T^n|^t_{C^0(W_i)} 
|e^{S_n g}|_{C^0(W_i)}
\ge \tfrac 34 \sum_{W_i \in \cG_n^{\delta_1}(W)} |J_{W_i}T^n|^t _{C^0(W_i)}
|e^{S_n} g|_{C^0(W_i)} \, ,
\,  \forall t \in [t_0, 1] \, .
\end{equation}
\end{lemma}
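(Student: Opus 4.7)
The plan is to prove the main bound first, then deduce \eqref{eq:delta1} by specialising to $\ve=1/4$. The strategy is a standard growth-lemma style decomposition of the short pieces, controlled from below by a uniform lower bound on the full sum that exploits subadditivity of $x\mapsto x^t$ on $[0,\infty)$ when $t\in(0,1]$.

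First, I would establish the uniform lower bound. For $W_i\in\cG_n^{\delta_1}(W)$, the images $T^nW_i$ tile $W$ up to a countable set, so $\sum_i|T^nW_i|=|W|$. Combining the distortion bound \eqref{eq:distortion}, the uniform estimate $|W_i|\le\delta_1$, subadditivity $(\sum a_i)^t\le\sum a_i^t$ for $t\in(0,1]$, and the hypothesis $|W|\ge\delta_1/3$, I obtain
\[
\sum_{W_i\in\cG_n^{\delta_1}(W)} |J_{W_i}T^n|^t_{C^0(W_i)} \ge e^{-tC_d}\delta_1^{-t}\sum_i |T^nW_i|^t \ge e^{-tC_d}(|W|/\delta_1)^t\ge 3^{-t}e^{-tC_d}=:c_0,
\]
and including the $g$-factor only costs $e^{-n|g|_{C^0}}$. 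This lower bound is uniform in $\delta_1$ and $t\in[t_0,1]$.

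Next, I would decompose the short pieces by when they last inherited from a long ancestor. For $W_i$ with $|W_i|<\delta_1/3$, let $V_j\in\cG_{n-j}^{\delta_1}(W)$ be the unique element containing $T^jW_i$, and define $k^*(W_i)$ as the smallest $j\in\{1,\ldots,n\}$ with $V_j\in L_{n-j}^{\delta_1}(W)$, setting $k^*=\infty$ if no such $j$ (so that $W_i\in\cI_n^{\delta_1}(W)$). For $k^*=k$, one has $V_{k-1}\in\cG_1^{\delta_1}(V_k)$ with $|V_{k-1}|<\delta_1/3$, and $W_i\in\cI_{k-1}^{\delta_1}(V_{k-1})$. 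The $\cI_n^{\delta_1}$ contribution is handled by Lemma~\ref{lem:short growth} with $\varsigma=0$, which under \eqref{eq:up one} ($\theta^{t_0/2}e^{|g|_{C^0}}<1$) gives a bound $\le C_0(\theta^{t/2}e^{|g|_{C^0}})^n\theta^{nt/2}$, decaying exponentially uniformly for $t\in[t_0,1]$. For the contribution at level $k$, the chain rule together with the distortion estimates \eqref{eq:distortion} and \eqref{eq:phi A}, then Lemma~\ref{lem:short growth} on the innermost sum over $\cI_{k-1}^{\delta_1}(V_{k-1})$ and the one-step expansion Lemma~\ref{lem:one step} on the middle sum over short children of $V_k$, yield factors $C_0(\theta^te^{|g|_{C^0}})^{k-1}$ and $C_1\theta^te^{|g|_{C^0}}$ respectively.

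Summing the outer sum over $V_k\in L_{n-k}^{\delta_1}(W)$ by Lemma~\ref{lem:extra growth} (giving $\le C_2Q_{n-k}(t,g)$), dividing by the lower bound $c_0e^{-n|g|_{C^0}}$, and using submultiplicativity of $Q_n(t,g)$ from Proposition~\ref{prop:Ptilde} with the pressure gap $\theta^t<e^{P_*(t,g)}$ (which follows from $t<t_*$ together with \eqref{eq:up one}), one controls the geometric series in $k$ and concludes that the ratio of short sum to full sum can be made $<\ve$ by choosing $n$ sufficiently large and $\delta_1$ sufficiently small. The main obstacle I anticipate is the Case B analysis: the naive application of Lemmas~\ref{lem:one step} and \ref{lem:extra growth} only yields a bounded ratio, not a small one, so genuine smallness must come from sharpening the short-child bound by exploiting that weakly homogeneous components of $T^{-1}V$ of length $<\delta_1/3$ lie in distant homogeneity strips $\bH_k$ with $|k|$ necessarily large when $\delta_1$ is small; bounding this tail contribution produces a factor that vanishes as $\delta_1\to 0$, which is what ultimately beats the geometric-series constant and yields the uniform smallness claimed in the lemma.
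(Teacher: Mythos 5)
There is a genuine gap, and it sits exactly where you flag it. First, the way you close the estimate does not work: you bound the level-$k$ contribution of short pieces by summing over all long ancestors via Lemma~\ref{lem:extra growth}, which produces a factor $Q_{n-k}(t,g)$, but you then divide by a lower bound on the full sum that is only a uniform constant $c_0e^{-n|g|_{C^0}}$ (your subadditivity bound). Since $Q_{n-k}(t,g)$ grows like $e^{(n-k)P_*(t,g)}$, this ratio is not small and in general not even bounded; invoking submultiplicativity or the pressure gap $\theta^t<e^{P_*(t)}$ does not repair the mismatch, and you cannot instead lower-bound the full sum by $c\,Q_n(t,g)$ because that is Proposition~\ref{prop:lower bound}, whose proof relies on the present lemma. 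The comparison has to be made ancestor-by-ancestor: for each fixed long ancestor $V_j$ the common weight $|J_{V_j}T^{m}|^t|e^{S_m g}|$ must appear in both the numerator (short descendants, bounded by iterated one-step expansion) and the denominator (all descendants, bounded below), so that it cancels; this is what the paper does, using the lower bound \eqref{eq:grow}, $\sum_{W_i\in\cG_k^{\delta_1}(V)}|J_{W_i}T^k|^t_{C^0}\ge C_1\Lambda^{k(1-t)}|V|\delta_1^{-1}$, per long curve rather than a single global constant.

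Second, your proposed rescue of the remaining difficulty (that the lag to the most recent long ancestor can be $1$, giving only a bounded ratio) is based on a false claim. It is not true that short one-step children of a long curve must lie in homogeneity strips of large index when $\delta_1$ is small: by the structure of $T^{-1}W$ used in the proof of Lemma~\ref{lem:one step}, all but at most one component per connected piece are nearly tangential, and that exceptional non-tangential component can perfectly well have length $<\delta_1/3$ while lying in a bounded strip, with Jacobian weight of order one uniformly in $\delta_1$; so no factor vanishing as $\delta_1\to0$ is available at the one-step level. The paper obtains smallness by a different mechanism: it coarsens time to multiples of a large $n_1$ and groups short pieces by their most recent long ancestor on the $n_1$-grid, so every lag is at least $n_1$; each block of $n_1$ steps started from a curve shorter than $\delta_1$ contributes $C_0\theta^{tn_1}$ via \eqref{eq:n1 short} (the only role of small $\delta_1$ is to guarantee that such curves stay below the $\delta_0$ scale for $2n_1$ iterates, so Lemma~\ref{lem:short growth}-type iteration applies without long ancestors being missed), and comparing with \eqref{eq:grow} on each long ancestor gives a term $\lesssim\bar\ve^{\,k-m}$ after choosing $n_1$ large with $C_0\theta^{tn_1}\Lambda^{-n_1(1-t)}<\bar\ve$; the geometric series then yields the stated $\ve$, uniformly for $t\in[t_0,1]$ and for $g$ satisfying \eqref{eq:up one} (the distortion constant $2$ for $e^{S_ng}$ entering both numerator and denominator). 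Without this block structure, or some substitute forcing every lag to be large, your scheme only proves a bounded ratio, which is strictly weaker than the lemma.
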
 

\begin{proof}
First assume $g=0$.  
Let $\ve >0$ and choose $\bar\ve > 0$ so that $6C_1^{-1} \bar\ve/(1-\bar\ve) < \ve$ (where $C_1\in (0,1]$ is from \eqref{eq:hyp}).  
Next, choose $n_1$ such that 
$C_0\theta^{tn_1}  \Lambda^{-n_1(1-t)} < \bar\ve$ (where $C_0$ is from 
Lemma~\ref{lem:short growth} for $\varsigma=0$).
Recalling again that $|T^{-1}U| \le C |U|^{1/2}$ for any $U \in \hW^s$, 
we may choose $\delta_1>0$ such that, if $|U| < \delta_1$, then
each homogeneous connected component of $T^{-n}U$ has 
length shorter than $\delta_0$ for each $n \le 2n_1$.  Then using Lemma~\ref{lem:short growth}
with $\varsigma=0$, 
if $U \in \hW^s$ with $|U| \le \delta_1$,
\begin{equation}
\label{eq:n1 short}
\sum_{W_i \in \cG_n(U)} |J_{W_i}T^n|^t_{C^0(W_i)} \le C_0 \theta^{t n}, \quad \mbox{for all $n \le 2n_1$.}
\end{equation} 

Now for $n \ge n_1$, write $n = kn_1 + \ell$, for some $0 \le \ell < n_1$.  Let $W \in \hW^s$ with $|W| \ge \delta_1/3$.  
Looking only at times $m n_1$, $m=0, \ldots, k-1$, we group elements 
$W_i \in \cG_n(W)$ with $|W_i| < \delta_1/3$
according to the largest $m$ such that $T^{(k-m)n_1+ \ell}W_i \subset V_j \in L^{\delta_1}_{mn_1}(W)$.  This is similar
to using\footnote{The most recent long ancestor for $W_i \in \cG_n(W)$ corresponds to the maximal $m \le n$ such that $T^{n-m}W_i \subset V_j$ and $V_j \in L_{m}(W)$, not to be confused with the first long ancestor, see \eqref{FLA}.} the {\em most recent long ancestor,}
 except that we only look at times that are multiples of $n_1$.
We denote by $\bar I^\delta_{(k-m)n_1 + \ell}(V_j)$ the set of $W_i \in \cG_n(W)$ identified with $V_j \in L_{mk}^{\delta_1}(W)$
in this way.  Since $|W| \ge \delta_1/3$, every element of $\cG_n^{\delta_1}(W)$ must have a long ancestor.

Note that since $T^{(k-m')n_1+\ell}W_i$ is contained in an element of $\cG_{m'k}(W)$ that is shorter than  $\delta_1/3$ for
$m' < m$,
we may apply \eqref{eq:n1 short} inductively $k-m$ times.  Thus,
\begin{align*}
\sum_{\substack{W_i \in \cG_n^{\delta_1}(W) \\ |W_i| < \delta_1/3}}  |J_{W_i}T^n|^t_{C^0(W_i)}
& \le \sum_{m=0}^{k-1} \sum_{V_j \in L^{\delta_1}_{mn_1}(W)} |J_{V_j}T^{mn_1}|^t_{C^0(V_j)}
\sum_{W_i \in \bar I^{\delta_1}_{(k-m)n_1 + \ell}(V_j)} |J_{W_i}T^{(k-m)n_1+\ell}|^t_{C^0(W_i)} \\
& \le \sum_{m=0}^{k-1} \sum_{V_j \in L^{\delta_1}_{mn_1}(W)} |J_{V_j}T^{mn_1}|^t_{C^0(V_j)}  C_0 \theta^{tn_1(k-m)} \, .
\end{align*}

Next, notice that for $t \in (0,1]$, $V \in \hW^s$ and each $k \ge 1$,
using $|V|=\sum_{W_i \in \cG_k^{\delta_1}(V)} |T^k(W_i)|$, 
\begin{align}
\label{eq:grow}
\sum_{W_i \in \cG_k^{\delta_1}(V)} |J_{W_i}T^k|^t_{C^0(W_i)} 
& =\sum_{W_i \in \cG_k^{\delta_1}(V)} |J_{W_i}T^k|_{C^0(W_i)} |J_{W_i}T^k|_{C^0(W_i)}^{t-1} \\
\nonumber & \ge C_1 \Lambda^{k(1-t)} \sum_{W_i \in \cG_k^{\delta_1}(V)} \frac{|T^kW_i|}{|W_i|} \ge 
C_1 \Lambda^{k(1-t)} |V| \delta_1^{-1} \, .
\end{align}
Also note that by the proof of Lemma~\ref{lem:distortion}, we have
\begin{equation}
\label{eq:ratio}
\frac{\sup_{x \in W_i} |J_{W_i}T^n(x)|^t}{\inf_{y \in W_i} |J_{W_i}T^n(y)|^t } \le 1+C_d \delta_0^{1/(q+1)}
\le 2 \, ,
\end{equation}
since $t \le 1$.
Putting these estimates together, we obtain,
\begin{align}
\nonumber&
\frac{\displaystyle \sum_{\substack{W_i \in \cG_n^{\delta_1}(W) \\ |W_i| < \delta_1/3}}  |J_{W_i}T^n|^t_{C^0(W_i)} }{\displaystyle \sum_{W_i \in \cG_n^{\delta_1}(W)} |J_{W_i}T^n|^t_{C^0(W_i)}}\\
\nonumber &\qquad\qquad \le \sum_{m=0}^{k-1} \frac{ \displaystyle 2 \sum_{ V_j \in L^{\delta_1}_{mn_1}(W)} |J_{V_j}T^{mn_1}|^t_{C^0(V_j)}  C_0 \theta^{tn_1(k-m)}  }
{ \displaystyle \sum_{  V_j \in L^{\delta_1}_{mn_1}(W)} |J_{V_j}T^{mn_1}|^t_{C^0(V_j)}
\sum_{ W_i \in \cG_{(k-m)n_1+\ell}^{\delta_1}(V_j)} |J_{W_i}T^{(k-m)n_1+\ell}|^t_{C^0(W_i)}} \\
\nonumber &\qquad\qquad \le \sum_{m=0}^{k-1} \frac{ \displaystyle 2 \sum_{ V_j \in L^{\delta_1}_{mn_1}(W)} |J_{V_j}T^{mn_1}|^t_{C^0(V_j)}  C_0 \theta^{tn_1(k-m)}} 
{ \displaystyle \sum_{  V_j \in L^{\delta_1}_{mn_1}(W)} |J_{V_j}T^{mn_1}|^t_{C^0(V_j)} C_1 \Lambda^{(k-m)n_1(1-t)} |V_j| \delta_1^{-1} } \\
\label{eq:short bound}&\qquad\qquad \le 6C_1^{-1} \sum_{m=0}^{k-1} \bar\ve^{k-m} \le 6 C_1^{-1} \frac{\bar\ve}{1-\bar\ve} \, , 
\end{align}
where in the second inequality we used \eqref{eq:grow} on each $V_j \in L_{mn_1}^{\delta_1}(W)$. 
This ends the case $g=0$.

If $g \ne 0$, note that by \eqref{eq:phi dist} and using \eqref{eq:up one},
\[
\frac{\sup_{x \in W_i} e^{S_kg(x)} }{\inf_{y \in W_i} e^{S_kg(y)} } \le 1 + C_* |\nabla g| d(x,y) \le 2 \, ,
\]
for all $W_i \in \cG_k^{\delta_0}(W)$ and any $k >0$.
Letting $\ve>0$, choose  $\bar\ve > 0$ such that $12 C_1^{-1} \bar \ve / (1- \bar \ve) < \ve$. 
Then we take
$n_1$ such that $C_0 \theta^{tn_1} e^{2  n_1 | g|_{C^0}} \Lambda^{-n_1(1-t)} < \bar\ve$, and we choose $\delta_1>0$ such that, 
if $U \in \hW^s$ satisfies $|U|<\delta_1$, then
\begin{equation}
\label{eq:n1 short up}
\sum_{W_i \in \cG_n(U)} |J_{W_i}T^n|^t_{C_0(W_i)} |e^{ S_n g}|_{C^0(W_i)}
\le C_0 \theta^{tn} e^{n | g|_{C^0}} \, , \qquad \mbox{for all $n \le 2n_1$,}
\end{equation}
which is the analogue of \eqref{eq:n1 short}.
(For fixed $t_0$, note that $n_1$ and $\delta_1$ depend only on $\ve$,
uniformly in   $g$ satisfying \eqref{eq:up one}.)  The
proof above  can then be followed line by line,  inserting 
$e^{ S_ng}$.  Thus \eqref{eq:grow} becomes,
\begin{equation}
\label{eq:grow up}
\sum_{W_i \in \cG_k^{\delta_1}(V)} |J_{W_i}T^k|^t_{C_0(W_i)} |e^{ S_kg}|_{C^0(W_i)}
\ge C_1 \Lambda^{k(1-t)} e^{-  k |g|_{C^0}} |V| \delta_1^{-1} \, .
\end{equation}
Inserting this lower bound in \eqref{eq:short bound} and applying Lemma~\ref{lem:short growth}
with $\varsigma = 0$ yields,
\begin{align}
\label{eq:short bound up}
& \frac{\displaystyle \sum_{\substack{W_i \in \cG_n^{\delta_1}(W) \\ |W_i| < \delta_1/3}}  |J_{W_i}T^n|^t_{C^0(W_i)} |e^{ S_n g}|_{C^0(W_i)} }{\displaystyle \sum_{W_i \in \cG_n^{\delta_1}(W)} |J_{W_i}T^n|^t_{C^0(W_i)} |e^{ S_ng }|_{C^0(W_i)} }\\
\nonumber  &\qquad\qquad \le \sum_{m=0}^{k-1} \frac{ \displaystyle 4 \sum_{ V_j \in L^{\delta_1}_{mn_1}(W)} |J_{V_j}T^{mn_1}|^t_{C^0(V_j)}   |e^{S_{m n_1} g} |_{C^0(V_j)}   C_0 \theta^{tn_1(k-m)} e^{ n_1(k-m) |g|_{C^0}}  } 
{ \displaystyle \sum_{  V_j \in L^{\delta_1}_{mn_1}(W)} |J_{V_j}T^{mn_1}|^t_{C^0(V_j)}  |e^{S_{m n_1} g} |_{C^0(V_j)}   C_1 \Lambda^{(k-m)n_1(1-t)} e^{-  n_1(k-m) |g|_{C^0}}    |V_j| {\delta_1}^{-1} } \\
\nonumber &\qquad\qquad \le 12 C_1^{-1} \sum_{m=0}^{k-1} \bar\ve^{k-m} \le 12 C_1^{-1} \frac{\bar\ve}{1-\bar\ve} \, < \, \ve \, ,
\end{align}
where the extra factor of $2$ comes from the distortion constant of $g$.
\end{proof}

The second lemma
proves the analogue of Lemma~\ref{lem:short small} for elements of $\cM_0^{n, \bH}$, in anticipation of Proposition~\ref{prop:lower bound}.
For $A \in \cM_0^{n, \mathbb{H}}$, let $B_{n-1}(A)$ denote the element of 
$\cM_{-n+1}^{0, \mathbb{H}} \bigvee {\cH}$ containing 
$T^{n-1}A \in \cM_{-n+1}^{1, \mathbb{H}}$, recalling that ${\cH}$ is the partition of $M$ 
into homogeneity strips $\bH_k$.
We introduce this additional intersection with ${\cH}$ (omitted from the definition of
$\cM_{-n+1}^{0, \bH}$) since it will be convenient to work with homogeneous partition
elements
in what follows. For $\delta > 0$, define
\begin{equation}
\label{eq:An delta}
\cA_n(\delta) = \{ A \in \cM_0^{n, \mathbb{H}} : \diam^u(B_{n-1}(A)) \ge \delta/3 \} \, . 
\end{equation}
The following result shows that most of the weights contributing to $Q_n(t,g)$ come from 
elements of  $\cA_n(\delta)$ if $\delta$ is chosen small enough.

\begin{lemma}
\label{lem:most long}
Let $t_0 >0$. For any $\upsilon \ge 0$,
there exist $\delta_2 >0$ and $c_0 = c_0(\upsilon)>0$ with
$c_0(\upsilon') \ge c_0(\upsilon)$ if $\upsilon'\in [0, \upsilon],$ such that 
for any $g \in C^1$ satisfying \eqref{eq:up one}
with $ |\nabla g|_{C^0}\le \upsilon$, 
\[
\sum_{A \in \cA_n(\delta_2) }\sup_{x \in A \cap M'} |J^sT^n(x)|^t e^{ S_ng(x)}
\ge c_0 \, Q_n(t, g) \,,\,\, \forall n \in \mathbb{N}\, ,
\,\,  \forall t\in [t_0,1]  \, .
\]
\end{lemma}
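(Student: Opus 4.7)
The strategy will be to adapt the argument of Lemma~\ref{lem:short small} from the curve-based sums over $\cG_n^{\delta_1}(W)$ to the partition-based sum $Q_n(t,g) = \sum_{A \in \cM_0^{n,\bH}} \sup_{A\cap M'} |J^sT^n|^t e^{S_n g}$, with the unstable diameter of $B_{n-1}(A)$ playing the role of the length of the preimage curve. We will show that the ``always short'' contribution decays geometrically, and that most of $Q_n(t,g)$ comes from the long elements $\cA_n(\delta_2)$, for $\delta_2$ chosen small enough.

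\textbf{Step 1 (Factorization).} For $A \in \cM_0^{n,\bH}$ and $0 \le k \le n$, let $A^{(k)} \in \cM_0^{k,\bH}$ be the unique ancestor containing $A$, and let $B^{(k)} \in \cM_0^{n-k,\bH}$ be the element containing $T^k A$. Using the chain rule for $|J^sT^n|$, the bounded-distortion estimate of Lemma~\ref{lem:comparable} on $A^{(k)}$ and on $B^{(k)}$, and the Birkhoff-sum bound \eqref{eq:phi A} for $g$ (which is allowed because $|\nabla g|_{C^0} \le \upsilon$), we obtain
\[
\sup_{A \cap M'} |J^s T^n|^t e^{S_n g} \le C \sup_{A^{(k)} \cap M'} |J^s T^k|^t e^{S_k g} \cdot \sup_{B^{(k)} \cap M'} |J^s T^{n-k}|^t e^{S_{n-k} g},
\]
with the reverse inequality up to another multiplicative constant, both uniform in $n$, $k$, $t \in [t_0,1]$ and in $g$.

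\textbf{Step 2 (Most recent long ancestor and base case).} For each $A \in \cM_0^{n,\bH} \setminus \cA_n(\delta_2)$, let $k(A)$ be the largest $k < n$ such that $A^{(k)} \in \cA_k(\delta_2)$; if no such $k$ exists, call $A$ \emph{always short}. Partitioning the short elements by $k(A)$, the key estimate will be a base case: for $m \ge 1$,
\[
\sum_{\substack{A \in \cM_0^{m, \bH} \\ A \text{ always short}}} \sup_{A \cap M'} |J^s T^m|^t e^{S_m g} \le C \theta^{mt} e^{m |g|_{C^0}}.
\]
This will follow from the one-step expansion of Lemma~\ref{lem:one step} applied along unstable directions: an element whose container $B_{k-1}(A^{(k)})$ stays below $\delta_2/3$ in unstable diameter for every $k \le m-1$ is forced to be cut (by either a real singularity or a homogeneity boundary) at essentially every iterate, and the geometric series coming from \eqref{eq:theta} provides the bound above (with $\theta < \Lambda^{-1/2}$ as in Definition~\ref{def:k0d0}).

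\textbf{Step 3 (Matching lower bound and conclusion).} For the long ancestors $A^{(k)} \in \cA_k(\delta_2)$, the container $B_{k-1}(A^{(k)})$ admits an unstable curve of length $\ge \delta_2/3$; iterating this unstable piece forward for $n-k$ steps (and applying the analog of the growth bound \eqref{eq:grow}--\eqref{eq:grow up} for $t \le 1$) yields a \emph{lower} bound of the form
\[
\sum_{\substack{B^{(k)} \in \cM_0^{n-k,\bH} \\ B^{(k)} \subset T^k A^{(k)}}} \sup_{B^{(k)} \cap M'} |J^s T^{n-k}|^t e^{S_{n-k} g} \;\ge\; c_1 \, \delta_2^{-1} \Lambda^{(n-k)(1-t)} e^{-(n-k) |g|_{C^0}}.
\]
Combining with Step 1, comparing the geometric upper bound of Step 2 (weighted by the contribution of the long ancestor $A^{(k)}$) against the lower bound above, and summing over $k = 0, \dots, n-1$, gives a bound of the form
\[
\sum_{A \in \cA_n(\delta_2)^c} \sup_A |J^sT^n|^t e^{S_ng} \;\le\; \Bigl( \sum_{k=0}^{n-1} (C \theta^{t_0} \Lambda^{-(1-t_0)} e^{2|g|_{C^0}})^{n-k} \Bigr) \sum_{A^{(k)} \in \cA_k(\delta_2)} \sup |J^sT^k|^t e^{S_kg} \, ,
\]
which, using the standing assumption \eqref{eq:up one} on $g$ and choosing $\delta_2$ small enough so that the prefactor is $\le 1$, yields the claim with $c_0 = c_0(\upsilon) := (1+\text{prefactor})^{-1}$. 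Monotonicity of $c_0(\upsilon)$ in $\upsilon$ follows because the prefactor is monotone in $|g|_{C^0}$.

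\textbf{Main obstacle.} The delicate step is the base case in Step 2, namely the \emph{partition} version of \eqref{eq:n1 short}/\eqref{eq:n1 short up}: the proof of Lemma~\ref{lem:short small} used the concrete estimate $|T^{-1}U| \le C|U|^{1/2}$ to choose $\delta_1$, whereas here we must control how the unstable forward-iterate $\diam^u(B_{m-1}(A))$ evolves under the combined cutting by real singularities \emph{and} homogeneity boundaries, and translate this into a sum over partition elements rather than over preimage curves.
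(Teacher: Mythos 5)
Your overall architecture (decompose the complement of $\cA_n(\delta_2)$ by most recent long ancestor, show geometric decay for the always-short part, and use the unstable curve of length $\ge \delta_2/3$ inside a long ancestor to generate a growth lower bound) is exactly the architecture of the paper's proof. But the step you flag as the ``main obstacle'' is not a technicality to be filled in later: it is the central idea of the proof, and your plan does not contain the mechanism that resolves it. The one-step expansion \eqref{eq:theta}, Lemma~\ref{lem:short growth}, \eqref{eq:delta1} and \eqref{eq:grow} are all statements about Jacobians \emph{along curves}, whereas both your Step~2 base case and your Step~3 lower bound are statements about sums of $\sup_{A\cap M'}|J^sT^m|^t$ over \emph{partition elements}. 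The paper bridges this by first rewriting, via \eqref{eq:jacs}, $J^sT^n(x) = C^{\pm 1}\cos\vf(T^{-n}y)\,J^uT^{-n+1}(T^{-1}y)$ (its eq.~\eqref{eq:relate}), so that the entire most-recent-long-ancestor argument is run for the \emph{unstable} Jacobian on elements of $\cM_{-n+1}^{0,\bH}\bigvee\cH$: the time reversal of Lemma~\ref{lem:short growth} gives the geometric decay for $u$-short elements, and the time reversals of \eqref{eq:delta1} and \eqref{eq:grow}, applied to a genuine unstable curve $U\subset B'$ with $|U|\ge\delta_2/3$, give Sublemma~\ref{sub:long}. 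This switch also forces the bookkeeping your sketch omits: the $\cos\vf$ factor must be tracked (comparability holds only because grouped elements share a homogeneity strip, and summability over strips uses $qt>1$); the number of $A\in\cM_0^{n,\bH}$ per element $B$ is bounded by $\#\cM_0^1$ and curve pieces oversample partition elements by $\sim\delta_2^{-1}$; one must wait a fixed time $n_2$ so that $\diam^s\le\delta_2$ before the short-piece contraction applies, with the leftover ``short since time $n_2$'' term bounded by $Q_{n_2+1}(t)\,\theta^{t(n-n_2-1)}$ and then dominated, for $n\ge n_3$, by the lower bound $\sim\Lambda^{n(1-t)}$ on the long sum (your scheme has no way to absorb the always-short term, since such elements have no long ancestor to compare against). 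Without \eqref{eq:jacs} and this bookkeeping, your Step~2 inequality and Step~3 lower bound are unsubstantiated claims, so the proposal as written has a genuine gap.

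Two smaller corrections. First, your Step~3 prefactor $\delta_2^{-1}$ is too strong: from \eqref{eq:grow} with $|U|\ge\delta_2/3$ at scale $\delta_2$ one gets an absolute bound of order $\Lambda^{(n-k)(1-t)}$, and passing from curve pieces to partition elements costs a further factor of order $\delta_2$ (this only changes constants, but it matters for the second point). Second, the conclusion ``choosing $\delta_2$ small enough so that the prefactor is $\le 1$'' is backwards: the comparison constant grows like $\delta_2^{-2}$ as $\delta_2\to 0$ (smallness of $\delta_2$ is needed only so that the one-step expansion applies and $\delta_2\le\delta_1$), and no such normalization is needed anyway, since $\sum_{\cA_n^c}\le C_{\delta_2}\sum_{\cA_n}$ already yields $c_0=(1+C_{\delta_2})^{-1}$, which is how the paper concludes (with $c_0\sim\delta_2^2$, uniform in $t\in[t_0,1]$ and monotone in $\upsilon$ through the distortion constant of $g$).
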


\begin{proof}
Assume first $g=0$. We begin by relating $J^sT^n$ on $A \in \cM_0^{n,\mathbb{H}}$ with 
$J^uT^{-n+1}$ on $T^{n-1}A$.   Recalling the definition of $E(y)$ in \eqref{eq:jacs}, if $x \in A$ and $y = T^nx$, we have 
\[
J^sT^n(x) = \frac{ (E\cos \vf) \circ T^{-n}(y)}{(E\cos \vf)(y)} J^uT^{-n}(y) \, . 
\]
Here, $J^uT^{-n} = \det (DT^{-n} |_{E^u})$, where $E^u$ is the unstable direction for $T$ (not
$T^{-1}$), so that $J^uT^{-n}$ is a contraction.  Next, since\footnote{We use the notation
$A = C^{\pm 1} B$ to denote $C^{-1} B \le A \le C B$.} 
$J^uT^{-1}(y) = C^{\pm 1} \cos \vf(y)$, and the function $E$ is uniformly bounded away from 0, 
we conclude,
\begin{equation}
\label{eq:relate}
J^sT^n(x) = C^{\pm 1} \cos \vf(T^{-n}y) J^uT^{-n+1}( T^{-1}y ) \, .
\end{equation}

For brevity, for any set $A \subset M$, we will denote
\begin{equation}\label{forbrevity}
|J^sT^n|^t_A :=  \sup_{x \in A \cap M'} |J^sT^n(x)|^t \, \, \mbox{ and similarly, } \, \,
|J^uT^{-n}|^t_A := \sup_{x \in A \cap M'} |J^uT^{-n}(x)|^t \, .
\end{equation}

Next, we consider the evolution of elements of $\cM_{-k}^{0, \bH}$ under iteration by $T^j$ for
$j \ge 1$.
If $B \in \cM_{-k}^{0, \bH}$, then we subdivide $T^jB$ according to singularity curves and homogeneity
strips at each iterate, much as we would consider the evolution of an unstable curve $U$ under
$T^j$.  We write $T^jB = \cup_{B' \in G_j(B)} B'$, where $G_j(B)$ is the maximal decomposition
of $T^jB$ into elements of $\cM_{-k-j}^{0, \bH} \bigvee \cH$,  recalling that $\cH$ denotes
the partition of $M$ according to homogeneity strips.  
This last intersection with ${\cH}$ is necessary since
we will work with homogeneous elements $B' \subset T^jB$ (to maintain bounded distortion
for $J^uT^{-j}$ on $B'$).
Let  $L_j^{\delta}(B)$ denote those elements $B' \in G_j(B)$ with $\diam^u(B') \ge  \delta/3$.

 Now by Definition~\ref{def:k0d0} and applying the time reversal of the proof of Lemma~\ref{lem:short growth} (with $\varsigma = 0$), there exists $\delta_2 >0$ such that if
$\max \{ \diam^u(B), \diam^s(B) \} \le \delta_2$, then 
\begin{equation}
\label{eq:d1 comp}
\sum_{B' \in G_j(B)} |J^uT^{-j}|^t_{B'} \le C_0 \theta^{tj}, \qquad \mbox{for all $j \le n_1$},
\end{equation}
where $n_1=n_1(1/4)$ is from \eqref{eq:delta1}
in Lemma~\ref{lem:short small}.  For convenience, we choose 
$\delta_2 \le \delta_1(1/4)$.
Also, 
if $B \in \cM_{-k}^{0,\bH}$, then $\diam^s(B) \le C\Lambda^{-k}$ for some
uniform constant $C>0$.  We choose $n_2 \ge n_1$
so that $\diam^s(B) \le  \delta_2$ if $B \in \cM_{-k}^{0, \bH}$ for $k \ge n_2$. 

We fix $n \ge n_2+1$ and prove the lemma for such $n$.
For $B \in \cM_{-n+1}^{0, \bH} \bigvee {\cH}$, let $B_{-j}$ denote the element
of $\cM^{0, \bH}_{-n+1+j} \bigvee {\cH}$ containing $T^{-j}B$.  We call $B_{-j}$ the most recent
$u$-long ancestor of $B$ if $j$ is the minimal integer $k \le n-n_2$ such that 
$\diam^u(B_{-k}) \ge  \delta_2$.  If no such $j$ exists, we say that $B$ has been $u$-short
since time $n_2$.  (It follows from the definition of $n_2$, that 
$\diam^s(B_{-j}) \le  \delta_2$ for all $j \le n-n_2-1$.)
Let $\mathbb{L}_{-n+1+j}^{\delta_2}$ denote those elements of $\cM^{0, \bH}_{-n+1+j} \bigvee {\cH}$
which are $u$-long, and let $\mathbb{S}_{-n+1+j}^{\delta_2}$ denote those elements which
are $u$-short (in the length scale  $\delta_2$).
Similarly, let
$\mathbb{I}_j^{\delta_2}(B_{-j})$ denote the collection of 
$B \in \cM^{0, \bH}_{-n+1} \bigvee {\cH}$ whose most recent $u$-long ancestor is $B_{-j}$.
Note that $\mathbb{I}_j^{\delta_2}(B_{-j}) \subset G_j(B_{-j})$.

Thus if $k \ge n_2$ and $B' \in \cM_{-k}^{0, \bH}$,
then  estimating inductively as in
the proof of Lemma~\ref{lem:short growth}, 
\begin{equation}
\label{eq:short elements}
\sum_{B \in \mathbb{I}_j^{\delta_2}(B')} |J^uT^{-j}|^t_{B} \le C_0 \delta_2^{-1} \theta^{t j} \, \mbox{for all $j \ge 0$,}
\end{equation}
where the factor $\delta_2^{-1}$ is due to the fact that $B'$ itself may be $u$-long, in which case
it would be artificially subdivided into $\sim \delta_2^{-1}$ pieces of $u$-diameter less than
$\delta_2$ before being iterated.

Let $\cA_n^c(\delta_2) = \cM_0^{n,\bH} \setminus \cA_n({\delta_2})$.  By \eqref{eq:relate},
\[
\sum_{A \in \cA_n^c(\delta_2)} |J^sT^n|^t_A = C^{\pm 1} \sum_{A \in \cA_n^c(\delta_2)}
|\cos \vf|^t_A |J^uT^{-n+1}|^t_{T^{n-1}A} \, .
\]
Note that if $B \in \cM_{-n+1}^{0, \bH} \bigvee {\cH}$ with $\diam^u(B) < \delta_2/3$, then
any $A \in \cM_0^{n,\bH}$ for which $B = B_{n-1}(A)$ belongs to $\cA_n^c(\delta_2)$.
Also, 
\[
T^{n-1}A \in \cM_{-n+1}^{1, \bH} = \cM_{-n+1}^{0, \bH} \bigvee {\cH} \bigvee \cM_0^1
\]
so that for fixed $B$, the number of $A$ such that $B_{n-1}(A) = B$ is at most
$\# \cM_0^1$.  Moreover, since all such $A$ are by definition contained in
$T^{-n+1}(B_{n-1}(A)) = T^{-n+1}B$, and 
$T^{-n+1}B \in \cM_0^{n-1, \bH} \bigvee T^{-n+1} {\cH}$,
all $A$ corresponding to one $B$ are contained in the same homogeneity strip, so that
$|\cos \vf|_A$ is comparable on all such $A$.
In addition, since $\cM_{-n+1}^{0, \bH} \bigvee \cH$ is the partition into
connected components of $M \setminus \cup_{i=0}^n T^i \cS_0^{\bH}$,
it follows that $T^{-k}$ is smooth on $B$, and $T^{-k}B$ belongs to a single homogeneity strip
for each $0 \le k \le n-1$.  Then, applying the time reversal of Lemma~\ref{lem:comparable}
to unstable curves, we conclude that $|J^uT^{-n+1}|_{T^{n-1}A}$ is comparable
to $|J^uT^{-n+1}|_B$ for each $A$ such that $B_{n-1}(A)=B$. 
Thus,
\begin{equation}
\label{eq:switch B short}
\sum_{A \in \cA_n^c(\delta_2)} |J^sT^n|^t_A = C^{\pm 1} \sum_{B \in \bS_{-n+1}^{\delta_2}}
|\cos \vf|^t_{T^{-n+1}B} |J^uT^{-n+1}|^t_{B}  \, ,
\end{equation}
and similarly,
\begin{equation}
\label{eq:switch B long}
\sum_{A \in \cA_n(\delta_2)} |J^sT^n|^t_A = C^{\pm 1} \sum_{B \in \bL_{-n+1}^{\delta_2}}
|\cos \vf|^t_{T^{-n+1}B} |J^uT^{-n+1}|^t_{B}  \, .
\end{equation}

Next, we group elements of $\bS_{-n+1}^{\delta_2}$ by most recent $u$-long ancestor
in $\cM_{-n+1+j}^{0, \bH}$, as 
described above.  By \eqref{eq:d1 comp}, there is no need to consider long ancestors
for $j < n_1$.
Note that if $B' \in \cM_{-n+1+j}^{0, \bH} \bigvee {\cH}$ and $B \in \bI_j^{\delta_2}(B')$,
then $T^{-n+1}B$ lies in the same homogeneity strip as $T^{-n+1+j}B'$, so that $\cos \vf$ is comparable
on each of these sets.
Thus, by   \eqref{eq:d1 comp}--\eqref{eq:short elements},
\begin{align}
\label{eq:group}
\sum_{B \in \mathbb{S}_{-n+1}^{\delta_2}}&|\cos \vf|^t_{T^{-n+1}B} |J^uT^{-n+1}|^t_{B} \\
\nonumber& = \sum_{j=n_1}^{n-n_2-1} \sum_{B' \in \bL_{-n+1+j}^{\delta_2}}
\sum_{B \in \bI_j^{\delta_2}(B')} |\cos \vf|^t_{T^{-n+1}B} |J^uT^{-j}|^t_B |J^uT^{-n+1+j}|^t_{B'} \\
\nonumber& \qquad + \sum_{B' \in \bS_{-n_2}^{\delta_2}} \sum_{B \in \bI_{n-n_2}^{\delta_2}(B')}
|\cos \vf|^t_{T^{-n+1}B} |J^uT^{-j}|^t_B |J^uT^{-n+1+j}|^t_{B'} \\
\nonumber& \le \sum_{j=n_1}^{n-n_2-1} \sum_{B' \in \bL_{-n+1+j}^{\delta_2}}
 C  \delta_2^{-1} \theta^{tj} |\cos \vf|^t_{T^{-n+1+j}B'} |J^uT^{-n+1+j}|^t_{B'} \\
\nonumber & \qquad + \sum_{B' \in \bS_{-n_2}^{\delta_2}} 
 C \theta^{t (n-n_2-1)} |\cos \vf|^t_{T^{-n_2}B'} |J^uT^{-n_2}|^t_{B'} \, ,
\end{align}
where the final sum over $B' \in \bS_{-n_2}^{\delta_2}$ represents those $B \in \bS_{-n+1}^{\delta_2}$
which have had no $u$-long ancestor since before time $n_2$.

To proceed, we will need the following sublemma, linking the contribution from 
$\bL_{-n+1+j}^{\delta_2}$ to the contribution from $\bL_{-n+1}^{\delta_2}$.

\begin{sublem}
\label{sub:long}
Let $t_0\in (0,1)$.
There exists $C>0$ such that for all
$t\in [t_0,1]$, each $n_1 \le j \le n-n_2-1$,  and all $B' \in \bL_{-n+1+j}^{\delta_2}$,
\[
|\cos \vf|^t_{T^{-n+1+j}B'} |J^uT^{-n+1+j}|^t_{B'}
\le C \delta_2^{-1} \Lambda^{j(t-1)} \sum_{B \in L_j(B')} |\cos \vf|^t_{T^{-n+1}B} |J^uT^{-n+1}|^t_B \, ,
\]
where $L_j(B')$ denotes the collection of elements $B \in G_j(B')$ with $\diam^u(B) \ge \delta_2/3$.
\end{sublem}

\begin{proof}
Since $B' \in \bL_{-n+1+j}^{\delta_2}$, there exists an unstable curve $U \subset B'$ with
$|U| \ge \delta_2/3$.  Let $G_j'(B')$ denote those elements $B \in G_j(B')$ such that
$T^jU \cap B \neq \emptyset$.  Letting $\cG_j^{\delta_2}(U)$ denote the $j$th generation of homogeneous
elements of $T^jU$, using the time reversed definition of $\cG_j^{\delta_2}(W)$ for stable curves from
Section~\ref{1step}.
If $U_i \in \cG_j^{\delta_2}(U)$ has $|U| \ge \delta_2/3$, and $B \cap U \neq \emptyset$ for
some $B \in G'_j(B')$, then necessarily, $\diam^u(B) \ge \delta_2/3$.  Let $L'_j(B') \subset L_j(B')$
denote this collection of long elements.  Then letting $L_j^{\delta_2}(U) \subset \cG_j^{\delta_2}(U)$ denote those
elements of $\cG_j^{\delta_2}(U)$ with length at least $\delta_2/3$, we estimate,
\begin{align}
\label{sub:est} \sum_{B \in L_j(B')} &|\cos \vf|^t_{T^{-n+1}B}  |J^uT^{-n+1}|^t_B\\
\nonumber &\ge C |\cos \vf|^t_{T^{-n+1+j}B'} |J^uT^{-n+1+j}|^t_{B'} \sum_{B \in L'_j(B')} |J^uT^{-j}|^t_B \\
\nonumber & \ge C' |\cos \vf|^t_{T^{-n+1+j}B'} |J^uT^{-n+1+j}|^t_{B'} \delta_2 \sum_{U_i \in L^{\delta_2}_j(U)} 
|J_{U_i}T^{-j}|^t_{C^0(U_i)} \\
\nonumber & \ge C' \delta_2 |\cos \vf|^t_{T^{-n+1+j}B'} |J^uT^{-n+1+j}|^t_{B'} 
\tfrac 34 \sum_{U_i \in \cG^{\delta_2}_j(U)} 
|J_{U_i}T^{-j}|^t_{C^0(U_i)} \\
\nonumber & \ge C'' \delta_2 |\cos \vf|^t_{T^{-n+1+j}B'} |J^uT^{-n+1+j}|^t_{B'} 
\Lambda^{j(1-t )} \sum_{U_i \in \cG^{\delta_2}_j(U)} 
|J_{U_i}T^{-j}|_{C^0(U_i)} \\
\nonumber& \ge C'' \delta_2 |\cos \vf|^t_{T^{-n+1+j}B'} |J^uT^{-n+1+j}|^t_{B'} 
\Lambda^{j(1-t )} \frac{|U|}{\delta_2} \, ,
\end{align}
where in the first inequality we have used the fact that $\cos \vf$ is comparable on $T^{-n+1}B$ and
$T^{-n+1+j}B'$, in the second inequality we have applied the time reversal of 
Lemma~\ref{lem:comparable}
and the factor $\delta_2$ appears since there may be up to
$\sim \delta_2^{-1}$ elements of $L^{\delta_2}_j(U)$ in each element $B \in L'_j(B')$ (due
to artificial subdivisions in the definition of $\cG_j^{\delta_2}(U)$), and in the third inequality
we have applied 
the time reversal of Lemma~\ref{lem:short small} and \eqref{eq:delta1} 
from Lemma~\ref{lem:short small}
since $\delta_2 \le \delta_1$ 
and $j \ge n_1$.  Since $|U| \ge \delta_2/3$, this completes the proof of the sublemma.
\end{proof}

Using the sublemma, we now estimate the right hand side of \eqref{eq:group}, summing
over $B' \in \bL_{-n+1+j}^{\delta_2}$ and noting that if $B \in L_j(B')$, then
$B \in \bL_{-n+1}^{\delta_2}$ and each such $B$ is associated with a unique $B'$:
\begin{align*}
\sum_{B \in \mathbb{S}_{-n+1}^{\delta_2}}
|\cos \vf|^t_{T^{-n+1}B}& |J^uT^{-n+1}|^t_{B}
 \; \le \sum_{B' \in \bS_{-n_2}^{\delta_2}} 
 C \theta^{t (n-n_2-1)} |\cos \vf|^t_{T^{-n_2}B'} |J^uT^{-n_2}|^t_{B'} 
\\
& \qquad + \sum_{j=n_1}^{n-n_2-1}  C \delta_2^{-2} \theta^{tj} \Lambda^{j(t-1)}
\sum_{B \in \bL_{-n+1}^{\delta_2}}
 |\cos \vf|^t_{T^{-n+1}B} |J^uT^{-n+1}|^t_{B} \\
& \le C_{n_2}(t_0) \theta^{tn}+ C \delta_2^{-2} \sum_{B \in \bL_{-n+1}^{\delta_2}}
 |\cos \vf|^t_{T^{-n+1}B} |J^uT^{-n+1}|^t_{B} \, ,
 \end{align*}
for some constant  $C_{n_2}(t_0) >0$ depending only on $n_2$ and $t_0$.  
To see the estimate on the first term, note that by \eqref{eq:relate}, since elements of $\bS_{-n_2}^{\delta_2}$
are connected components of $M \setminus \cup_{i=0}^{n_2+1} T^i \cS_0^{\bH}$, we have
\[
\sum_{B' \in \bS_{-n_2}^{\delta_2}} |\cos \vf|^t_{T^{-n_2}B'} |J^uT^{-n_2}|^t_{B'}
\le C \sum_{A \in \cM_0^{n_2+1, \bH}} |J^sT^{n_2+1}|^t_A \le C Q_{n_2+1}(t) 
\le C Q_{n_2+1}(t_0) \, .
\]

Next, note that the sum over $\bL_{-n+1}^{\delta_2}$ grows at a rate of at least $C \Lambda^{n(1-t)}$
by the proof of the sublemma.  Thus we may choose $n_3 \ge n_2$ large enough that 
$C_{n_2}(t_0) \theta^{tn} \le C\Lambda^{n(1-t)}$ for all $n \ge n_3$, which implies,
\[
\sum_{B \in \mathbb{S}_{-n+1}^{\delta_2}}
|\cos \vf|^t_{T^{-n+1}B} |J^uT^{-n+1}|^t_{B}
\le
C \delta_2^{-2} \sum_{B \in \bL_{-n+1}^{\delta_2}}
 |\cos \vf|^t_{T^{-n+1}B} |J^uT^{-n+1}|^t_{B} \, .
 \]
Using this estimate  with
\eqref{eq:switch B short}, \eqref{eq:switch B long} and the fact that
$\cA_n(\delta_2) \cup \cA_n^c(\delta_2) = \cM_0^{n, \bH}$ yields,
\begin{align*}
Q_n(t) & = \sum_{A \in \cA_n^c(\delta_2)} |J^sT^n|^t_A 
+ \sum_{A \in \cA_n(\delta_2)} |J^sT^n|^t_A \\
& \le C (\delta_2^{-2} + 1) \sum_{B \in \bL_{-n+1}^{\delta_2}}
 |\cos \vf|^t_{T^{-n+1}B} |J^uT^{-n+1}|^t_{B}  \le C (\delta_2^{-2}+1)  \sum_{A \in \cA_n(\delta_2)} |J^sT^n|^t_A \, ,
\end{align*}
completing the proof of the lemma for $n \ge n_3$
and $g=0$.  The statement for general $n$ (and $g=0$) follows,
possibly reducing the constant $c_0$,
since there are only finitely many $n$ to correct for.

If $g\ne0$, the proof remains as is until \eqref{eq:d1 comp} with the same choices
of $n_2$ and $\delta_2$ (these choices are independent of $g$), so that
\eqref{eq:d1 comp} holds with
$|e^{S_j g}|_{T^{-j}B'}$ inserted in the left-hand side and $e^{j |g|_{C^0}}$ in the right.
The analogous modification is made to \eqref{eq:short elements}.
Then  \eqref{eq:switch B short} becomes
\begin{equation}
\label{eq:switch B short up}
\sum_{A \in \cA_n^c(\delta_2)} |J^sT^n|^t_A |e^{S_n g}|_A
= C^{\pm 1} \sum_{B \in \bS_{-n+1}^{\delta_2}} |\cos \vf|^t_{T^{-n+1}B}
|J^uT^{-n+1}|^t_B |e^{ S_{n-1}g}|_{T^{-n+1}B} \, ,
\end{equation}
where $C$ depends on $|\nabla g|_{C^0}$ via  \eqref{eq:phi A},
with the analogous modification to \eqref{eq:switch B long}. Then \eqref{eq:group} is
modified in the obvious way for $n \ge n_2+1$,
\begin{align}
\label{eq:group up} 
& \sum_{B \in \mathbb{S}_{-n+1}^{\delta_2}} |\cos \vf|^t_{T^{-n+1}B} |J^uT^{-n+1}|^t_{B}
|e^{ S_{n-1}g}|_{T^{-n+1}B} \\
\nonumber & \le \sum_{j=n_1}^{n-n_2-1} \sum_{B' \in \bL_{-n+1+j}^{\delta_2}}
 C \delta_1^{-1} \theta^{tj} e^{ j |g|_{C^0}} |\cos \vf|^t_{T^{-n+1+j}B'} 
 |J^uT^{-n+1+j}|^t_{B'} 
 |e^{ S_{n-1-j} g}|_{T^{-n+1+j}B'} \\
\nonumber&  + \sum_{B' \in \bS_{-n_2}^{\delta_2}} 
 C \theta^{t (n-n_2 - 1)} e^{ (n-n_2 - 1) |g|_{C^0}}  |\cos \vf|^t_{T^{-n_2}B'} |J^uT^{-n_2}|^t_{B'}
 |e^{ S_{n_2} g}|_{T^{-n_2} B'} \, . 
\end{align} 
A suitable analogue of Sublemma~\ref{sub:long} yields  $C>0$ such that for each
$n_1 \le j \le n - n_2-1$ and $B' \in \mathbb{L}^{\delta_2}_{-n+1+j}$,
\begin{align*}
|\cos \vf|^t_{T^{-n+1+j}B'} & |J^uT^{-n+1+j}|^t_{B'} |e^{ S_{n-1-j}g}|_{T^{-n+1+j}B'} \\
&\le C \delta_2^{-1} \Lambda^{j(t-1)} e^{ j |g|_{C^0}} 
\sum_{B \in L_j(B')} 
|\cos \vf|^t_{T^{-n+1}B} |J^uT^{-n+1}|^t_{B} |e^{ S_{n-1}g}|_{T^{-n+1}B} \, ,
\end{align*}
where we have used the lower bound \eqref{eq:grow up} rather than \eqref{eq:grow}
in \eqref{sub:est}.  This provides the contraction required to complete the proof of 
Lemma~\ref{lem:most long} since $\theta^t e^{| g|_{C^0}} < 1$ by \eqref{eq:up one}.
\end{proof}

 
\subsection{Defining $s_1>1$. Growth Lemmas for $t\in (1, s_1)$}
\label{sec:t>1}

In this section, we bootstrap from our results for $t \le 1$ to conclude a parallel set 
of results for  $t \in (1, s_1)$, for
 $s_1 >1$ from Definition~\ref{def:t_*} below.  
 To do this, we will apply Propositions~\ref{prop:lower bound}
and \ref{prop:exact} from Section~\ref{sec:complexity}
for $t \le 1$ whose proofs rely only on the lemmas in Section~\ref{sec:t<1}.
In Section~\ref{sec:boot}, we show how to extend this  to all $t < t_*$.

The easy lemma below will be crucial  to define $s_1$:

\begin{lemma}
\label{lem:t_*}
We have $P_*(1)=0$. Moreover,
the limit $\displaystyle  \chi_1:= \lim_{s \to 1^-} \frac{P_*(s)}{1-s}$ exists and $\chi_1 \ge \log \Lambda >0$.
\end{lemma}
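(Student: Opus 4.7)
My plan is to prove the two parts ($P_*(1)=0$ and the existence/lower bound of $\chi_1$) separately.

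For $P_*(1)=0$: The lower bound $P_*(1)\ge 0$ is immediate from Proposition~\ref{prop:pressure} and the Pesin entropy formula, namely $P_*(1)\ge P(1)\ge P_{\musrb}(-\log J^uT)=h_{\musrb}-\int\log J^uT\,d\musrb=0$. For the matching upper bound $P_*(1)\le 0$, I would employ a Misiurewicz-style approximation. Let $\nu_n$ be the probability measure on $M$ putting weight proportional to $\sup_{A\cap M'}|J^sT^n|$ on a chosen representative $x_A\in A$, as $A$ ranges over $\cM_0^{n,\bH}$, and set $\bar\nu_n=\tfrac{1}{n}\sum_{k=0}^{n-1}T^k_*\nu_n$. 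Passing to a weak-$*$ accumulation point $\bar\nu$ (which is $T$-invariant), the convexity argument from the proof of Proposition~\ref{prop:pressure} reverses---using \eqref{remem} and the distortion bound \eqref{eq:phi A}---to yield $h_{\bar\nu}(T)+\int\log J^sT\,d\bar\nu\ge P_*(1)$. If one can show that $\bar\nu(M\setminus M')=0$ and that $\bar\nu$ is $T$-adapted, then by \eqref{eq:switch potential} this becomes $P_{\bar\nu}(-\log J^uT)\ge P_*(1)$, while Ruelle's inequality \cite[Part IV, Theorem~1.1]{katok strelcyn} forces $P_{\bar\nu}(-\log J^uT)\le 0$, yielding $P_*(1)\le 0$.

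For the existence of $\chi_1$: By Proposition~\ref{prop:Ptilde}, $P_*$ is convex on $(0,\infty)$, hence its left derivative $P_*'(1^-)\in(-\infty,0]$ exists. Since $P_*(1)=0$, we have
\[
\lim_{s\to 1^-}\frac{P_*(s)}{1-s}=-\lim_{s\to 1^-}\frac{P_*(s)-P_*(1)}{s-1}=-P_*'(1^-)\in[0,\infty),
\]
so $\chi_1$ is well-defined. For the lower bound $\chi_1\ge \log\Lambda$, the strict-decrease estimate in the proof of Proposition~\ref{prop:Ptilde} gives $P_*(t+s)\le P_*(t)-s\log\Lambda$ for all $s,t>0$. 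Setting $t=s'\in(0,1)$ and $s=1-s'$ and using $P_*(1)=0$ yields $P_*(s')\ge (1-s')\log\Lambda$, whence $P_*(s')/(1-s')\ge \log\Lambda$. Passing to the limit $s'\to 1^-$ gives $\chi_1\ge \log\Lambda>0$ since $\Lambda>1$ by \eqref{defL}.

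The main obstacle will be the upper bound $P_*(1)\le 0$, specifically verifying the technical conditions on the limit measure $\bar\nu$ needed to apply Katok--Strelcyn: that $\bar\nu$ charges neither the boundary set $M\setminus M'$ nor a logarithmic neighbourhood of $\cS_{\pm 1}$. Both should follow from the bounded-distortion control of Lemma~\ref{lem:comparable} on $\cM_0^{n,\bH}$ combined with the homogeneity-strip summability $\sum_k k^{-q}<\infty$ ensured by the choice $qt_0\ge 2$ in Definition~\ref{def:k0d0}, but the passage to the limit is delicate and constitutes the heart of the argument.
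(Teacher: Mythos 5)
Your treatment of everything except the upper bound $P_*(1)\le 0$ is fine and coincides with the paper: the lower bound $P_*(1)\ge P(1)=0$ via Proposition~\ref{prop:pressure} and Pesin's formula, the existence of $\chi_1$ from convexity of $P_*$ (Proposition~\ref{prop:Ptilde}) together with $P_*(1)=0$, and the bound $\chi_1\ge\log\Lambda$ from the estimate $P_*(t+s)\le P_*(t)-s\log\Lambda$ are all exactly the paper's argument.

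The gap is the upper bound, and it is not a peripheral technicality that you may defer: the Misiurewicz-style scheme you sketch is essentially the hard direction of the variational principle for this system, and the obstacles you do not address are at least as serious as the two you flag. First, $\cM_0^{n,\bH}$ is a \emph{countable} partition (because of the homogeneity strips), so the finite-partition entropy comparison does not apply as stated; second, $T$ is discontinuous and $\mu\mapsto h_\mu(T)$ is not upper semicontinuous here, and a weak-$*$ limit $\bar\nu$ of your averaged measures could a priori charge $\cup_m\cS_m$ or the boundaries of the homogeneity strips, which breaks the passage from $H_{\nu_n}$ to $h_{\bar\nu}$; third, $\log J^sT\asymp\log\cos\vf$ is unbounded, so weak-$*$ convergence gives no control of $\int\log J^sT\,d\bar\nu_n$ without uniform tightness estimates near $\cS_0$ — and such estimates, together with the adaptedness needed for Katok--Strelcyn \cite{katok strelcyn}, are precisely the statements the paper can only prove for $\mu_t$ through the full Banach-space machinery (Proposition~\ref{nbhdprop}); the analogous construction at $t=0$ in \cite{max} required the sparse recurrence hypothesis. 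So as written the proposal proves $P_*(1)\ge 0$ but leaves $P_*(1)\le 0$ open. The paper's actual argument is much shorter and avoids constructing any invariant measure: Proposition~\ref{prop:lower bound} at $t=1$ (whose proof uses only the $t\le1$ growth lemmas, so there is no circularity) gives $c_1\,Q_n(1)\le\sum_{W_i\in\cG_n(W)}|J_{W_i}T^n|_{C^0(W_i)}$ for a fixed curve $W\in\hW^s$ with $|W|\ge\delta_1/3$, and the right-hand side is uniformly bounded in $n$ by the classical growth lemma \cite[Lemma~3.2]{dz1}; hence $Q_n(1)$ is bounded and $P_*(1)\le 0$ follows at once. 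You should either adopt this route or supply genuine proofs of the entropy semicontinuity, integral convergence, and adaptedness steps in your scheme.
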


In fact,  $\chi_1 =\int_M \log J^uT \, d\musrb$, which follows from Theorems~\ref{strconv} and
\ref{thm:geo var}. 

\begin{proof}
Proposition~\ref{prop:lower bound} for $t=1$ together with 
\cite[Lemma~3.2]{dz1} prove that $Q_n(1)$ is uniformly bounded
 for all $n$, so that
$P_*(1) \le 0$. Since Proposition~\ref{prop:pressure} gives
$P_*(1) \ge P(1) = 0$,
we have established that $P_*(1)=0$.
Next, the convexity of $P_*(t)$ (Proposition~\ref{prop:Ptilde}) on $(0, \infty)$
implies that left (and right) derivatives exist at every $t>0$.
Thus, since $P_*(1) = 0$, the limit below  exists
\begin{equation}\label{leftder}
 \lim_{s \to 1^-} \frac{P_*(s)}{1-s} = \lim_{s \to 1^-} \frac{P_*(s) - P_*(1)}{1-s} \, .
\end{equation}
The proof that $P(t)$ is strictly decreasing in Proposition~\ref{prop:Ptilde} implies
$\chi_1 \ge \log \Lambda > 0$.
\end{proof}

\medskip

\begin{defin}\label{def:t_*}
Recalling  $\theta(t_1) \in (\Lambda^{-1}, \Lambda^{-1/2})$ 
from Definition~\ref{def:k0d0}, we  define
$\displaystyle  s_1 := \frac{\chi_1}{\chi_1 + \log \theta}>1$.
\end{defin}

Note that  $s_1$ is just the intersection point between the tangent line to $P_*(t)$ at $t=1$
(which is the largest $t$ where we have established
the lower bound \eqref{eq:grow} on the sum over $\cG_n(W)$) and the line $y = t \log \theta$.  If $t<s_1$
then $\theta^t < e^{P_*(t)}$, which
can be viewed as a pressure gap condition. 
Note finally that establishing Theorem~\ref{thm:geo var}. 
in a neighbourhood of $t=1$ will give $s_1\le t_*$.
\medskip

A key to many
results for $0<t\le 1$ is the lower bound on the rate of growth given by
\eqref{eq:grow} in the proof of Lemma~\ref{lem:short small}.   Our
next lemma obtains this lower bound
for $t\ge 1$, interpolating via a H\"older inequality. 

\begin{lemma}
\label{lem:low t>1}
Let $t_0\in (0,1)$  and $t_1 \in (1, t_*)$. Let  $\bar t_1 \ge 1$.
For any $\kappa>0$, there exist $C_\kappa>0$, $\eta_\kappa > 0$ such that for
all $g \in C^0$ and $\delta>0$, and all $W \in \hW^s$  with  $|W| \ge \delta/3$,
\begin{equation}
\label{eq:low t>1 up}
\sum_{W_i \in \cG_n^\delta(W)} |J_{W_i}T^n|^t_{C^0(W_i)} |e^{ S_ng}|_{C^0(W_i)}
\ge C_\kappa  \delta^{\frac{1}{\eta_\kappa}-1}  e^{-n(\chi_1 + \kappa)(t-1) - n  |g|_{C^0}} \, ,\,
\, \forall n \ge 1 \,,\,\,\forall t \in [1, \bar t_1] \, .
\end{equation}
Moreover, if $|W| \ge \delta_0/3$, then 
\[
\sum_{W_i \in \cG_n^\delta(W)} |J_{W_i}T^n|^t_{C^0(W_i)} |e^{ S_ng}|_{C^0(W_i)}
\ge C_\kappa  \delta^{- 1}  e^{-n(\chi_1 + \kappa)(t-1) - n |g|_{C^0}} \, ,\,
\, \forall n \ge 1 \,,\,\,\forall t \in [1, \bar t_1] \, .
\]
\end{lemma}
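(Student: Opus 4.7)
The strategy is to bootstrap from the lower bound for $t \le 1$ via a H\"older interpolation. Given $\kappa > 0$, use Lemma~\ref{lem:t_*} to fix $s = s_\kappa \in (t_0, 1)$ close enough to $1$ that $P_*(s)/(1-s) \le \chi_1 + \kappa$, and set $\eta_\kappa := (1-s)/(\bar t_1 - s) \in (0,1]$. For $t \in [1, \bar t_1]$, write $1 = (1-\beta) s + \beta t$ with $\beta = (1-s)/(t-s) \in (0,1]$. H\"older's inequality with conjugate exponents $1/(1-\beta)$ and $1/\beta$ applied to the pointwise identity $|J_{W_i} T^n| = |J_{W_i} T^n|^{(1-\beta) s} \cdot |J_{W_i} T^n|^{\beta t}$ yields
\[
\sum_{W_i \in \cG_n^\delta(W)} |J_{W_i} T^n|_{C^0(W_i)} \le \Bigl( \sum_{W_i \in \cG_n^\delta(W)} |J_{W_i} T^n|^s_{C^0(W_i)} \Bigr)^{1-\beta} \Bigl( \sum_{W_i \in \cG_n^\delta(W)} |J_{W_i} T^n|^t_{C^0(W_i)} \Bigr)^\beta ,
\]
which, solved for the $t$-sum, gives the lower bound $\bigl(\sum |J|\bigr)^{1/\beta} \bigl(\sum |J|^s\bigr)^{-(1-\beta)/\beta}$.

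For the numerator, the $t=1$ specialisation of the distortion estimate in the proof of~\eqref{eq:grow} (which uses only $|V| = \sum_{W_i} |T^n W_i|$ together with $|W_i| \le \delta$) gives, for any $V \in \hW^s$, $\sum_{\cG_n^\delta(V)} |J_{W_i} T^n|_{C^0(W_i)} \ge C |V|/\delta$. For the denominator, since $s < 1$, Lemma~\ref{lem:extra growth} (with $\varsigma = 0$, $g = 0$) gives $\sum_{\cG_n(W)} |J|^s \le C_2 Q_n(s)$, and the exact exponential growth of $Q_n$ for $t \le 1$ supplied by Proposition~\ref{prop:exact} yields $Q_n(s) \le C e^{n P_*(s)}$. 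Passing from the scale $\delta_0$ of $\cG_n(W)$ to the finer scale $\delta$ of $\cG_n^\delta(W)$ splits each element of $\cM_0^{n, \bH}$ into $O(\delta^{-1})$ homogeneous pieces on which $|J T^n|^s$ is comparable by Lemma~\ref{lem:distortion}, so the same argument that proves Lemma~\ref{lem:extra growth} gives $\sum_{\cG_n^\delta(W)} |J|^s \le C' \delta^{-1} e^{n P_*(s)}$.

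Substituting both estimates into the H\"older-derived lower bound, the combination $-1/\beta + (1-\beta)/\beta$ collapses to $-1$, giving
\[
\sum_{W_i \in \cG_n^\delta(W)} |J_{W_i} T^n|^t_{C^0(W_i)} \ge C'' |W|^{1/\beta} \delta^{-1} e^{-n(t-1) P_*(s)/(1-s)}.
\]
Applying $|W| \ge \delta/3$ and $\delta \le \delta_0 \le 1$ bounds $|W|^{1/\beta} \delta^{-1}$ below by $3^{-1/\beta} \delta^{1/\beta - 1} = 3^{-1/\beta} \delta^{(t-1)/(1-s)}$; since this exponent lies in $[0, (\bar t_1 - 1)/(1-s)] = [0, 1/\eta_\kappa - 1]$ and $\delta \le 1$, the factor is bounded below by $3^{-1/\eta_\kappa} \delta^{1/\eta_\kappa - 1}$ uniformly in $t$. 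Combined with $P_*(s)/(1-s) \le \chi_1 + \kappa$, this yields the first inequality. The moreover part follows by using $|W| \ge \delta_0/3$ in place of $\delta/3$, which replaces the power of $\delta$ coming from $|W|^{1/\beta}$ by a uniform constant and leaves the cleaner $\delta^{-1}$ dependence intact.

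The extension to $g \ne 0$ is immediate: since $|e^{S_n g}|_{C^0(W_i)} = e^{\sup_{W_i} S_n g} \ge e^{-n|g|_{C^0}}$ for every $W_i$, each term in the sum inherits this factor and the $g = 0$ bound multiplies through. The main obstacles are essentially bookkeeping: tracking the cancellation of the $\delta$-exponents to exactly $-1$ before applying $|W| \ge \delta/3$, and verifying that refining $\cG_n$ to $\cG_n^\delta$ costs only the anticipated factor $\delta^{-1}$ in the upper bound on $\sum|J|^s$. Once Proposition~\ref{prop:exact} is available for $t \le 1$, the remainder is elementary.
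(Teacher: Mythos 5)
Your proposal is correct and follows essentially the same route as the paper: a H\"older interpolation between exponent $1$ (where the lower bound \eqref{eq:grow} applies) and an exponent $s<1$ chosen via Lemma~\ref{lem:t_*} so that $P_*(s)/(1-s)\le \chi_1+\kappa$, with the $s$-sum controlled by Lemma~\ref{lem:extra growth} and Proposition~\ref{prop:exact} (for $t\le1$ only, avoiding circularity) and the factor $\delta_0/\delta$ accounting for the finer scale of $\cG_n^\delta(W)$. Your bookkeeping of the $\delta$-exponents, the uniform choice $\eta_\kappa=(1-s)/(\bar t_1-s)$, the reduction of the $g\ne0$ case to $|e^{S_ng}|_{C^0(W_i)}\ge e^{-n|g|_{C^0}}$, and the treatment of the case $|W|\ge\delta_0/3$ all match the paper's argument.
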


\begin{proof}
Assume first $g=0$.
For $t \ge 1$, we have for any $s \in (0,1)$,   taking $\eta(s)\in (0,1]$ such that 
$\eta t + (1-\eta)s = 1$, that 
$
\sum_i a_i = \sum_i a_i^{\eta t + (1-\eta)s} 
\le \bigl(\sum_i a_i^t \bigr)^\eta \bigl( \sum_i a_i^s \bigr)^{1-\eta} 
$ for any positive numbers $a_i$.
It follows that for any $W \in \hW^s$ with  $|W| \ge \delta/3$
and all $n \ge 1$,
\begin{equation}
\label{eq:interpol}
\sum_{W_i \in \cG_n^\delta(W)} |J_{W_i}T^n|^t
\ge \frac{\left( \sum_{W_i \in \cG_n^\delta(W)} |J_{W_i}T^n| \right)^{1/\eta}}
{\left( \sum_{W_i \in \cG_n^\delta(W)} |J_{W_i}T^n|^s \right)^{(1-\eta)/\eta}}
\ge \biggl (\frac {C_1 }3\biggr )^{1/\eta}  \left(  C_2[0]   \tfrac{\delta_0}{\delta}  \tfrac{2}{c_{2}} e^{nP_*(s)} \right)^{(\eta-1)/\eta} \, ,
\end{equation}
where we have used \eqref{eq:grow} for the lower bound in the numerator, and
Lemma~\ref{lem:extra growth} with $\varsigma=0$  and $\tilde t_1 =1$
combined with Proposition~\ref{prop:exact} for the upper bound in the denominator.
The factor $\delta_0/\delta$ comes from the fact that here we use $\cG_n^\delta(W)$, while
Lemma~\ref{lem:extra growth} uses $\cG_n^{\delta_0}(W)$. 
Since $\eta = (1-s)/(t-s)$, 
$e^{nP_*(s)(\eta-1)/\eta}=e^{-n(t-1)P_*(s)/(1-s)}$.  
 For fixed $\kappa>0$, Lemma~\ref{lem:t_*}
allows us to  choose $s=s(\kappa)\in (0, 1)$  (and hence $\eta_\kappa = \eta(s) >0$) 
such that $P_*(s)/(1-s) \le \chi_1 + \kappa$, completing the proof
for $g=0$ since $\eta(s)>(1-s)/\bar t_1$. 
For $g \ne 0$, \eqref{eq:low t>1 up} follows since
$|e^{S_ng}|_{C^0(W_i)} \ge e^{-n |g|_{C^0}}$ for each $W_i$. 

For the second inequality of the lemma when $|W| \ge \delta_0/3$, notice that (3.9) gives a lower bound
of $\frac{C_1 \delta_0}{3 \delta}$ in this case.  The rest of the estimate is the same (up to $C_\kappa$ changing
by a power of $\delta_0$).
\end{proof}

By 
definition, $\theta^t e^{\chi_1 (t-1)} < 1$ if $t <  s_1$.  Thus for $\bar t_1\in (1,s_1)$
there exists $ \kappa_1 = \kappa(\bar t_1) >0$  such that
\begin{equation}
\label{nofoot} \theta^{\bar t_1} e^{(\chi_1 + \kappa_1)(\bar t_1-1)}   < 1\, ,\,\,
\mbox{ and thus }\, 
\theta^t e^{(\chi_1 + \kappa_1)(t-1)} < 1\, ,\,\,
\forall t\le \bar t_1\, . 
\end{equation}
Our next lemma is the analogue of  Lemma~\ref{lem:short small} for $t>1$.

\begin{lemma}
\label{lem:short small t>1}
 Let $t_0\in (0,1)$,  $t_1 \in (1, t_*)$ and  $\bar t_1 \in (1, s_1)$. 
 Let $\kappa_1=\kappa(\bar t_1)$ satisfy \eqref{nofoot}.
Then for any $\ve>0$ there exist $\delta_1>0$ and $n_1 \ge 1$, 
such that\footnote{We take $\delta_1 <\delta_1(\ve)$ and $n_1\ge n_1(\ve)$  with
$\delta_1(\ve)$ and $n_1(\ve)$ from Lemma~\ref{lem:short small}.} for all $W \in \hW^s$ with $|W| \ge  \delta_1/3$, 
 \[
  \sum_{\substack{W_i \in \cG_n^{ \delta_1}(W) \\ |W_i| <  \delta_1/3}}
|J_{W_i}T^n|^t_{C^0(W_i)} |e^{ S_ng}|_{C^0(W_i)}
\le \ve \sum_{W_i \in \cG_n^{ \delta_1}(W) }
|J_{W_i}T^n|^t_{C^0(W_i)} |e^{ S_ng}|_{C^0(W_i)}
\, , \, \forall  t \in [1, \bar t_1 ]\, ,
\forall n \ge n_1 \, , 
  \]
for all $g\in C^1$
satisfying \eqref{eq:up one} and such that, in addition,
\begin{equation}
\label{eq:up two}
2|g|_{C^0}
<  - \bar t_1 \log \theta -(\chi_1+\kappa_1) (\bar t_1  -1) \,, \,
\mbox{ i.e. }\,\,
\theta^{ \bar t_1} e^{ (\chi_1+ \kappa_1)(\bar t_1 -1) + 2 | g|_{C^0}} < 1 \, .
\end{equation}
\end{lemma}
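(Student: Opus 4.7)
The plan is to follow the architecture of the proof of Lemma~\ref{lem:short small} for $t \le 1$, bootstrapping from it by replacing the key lower bound \eqref{eq:grow} --- which produces a growth factor $\Lambda^{k(1-t)}$ useless for $t > 1$ --- with the interpolated lower bound of Lemma~\ref{lem:low t>1} at parameter $\kappa = \kappa_1$ from \eqref{nofoot}. The upper bound on short pieces, obtained by iterating Lemma~\ref{lem:short growth}, carries over unchanged since that lemma covers all $t \ge t_0$. Only the geometric ratio driving the resulting series needs to be re-examined, and this is precisely where both the pressure-gap condition $\bar t_1 < s_1$ and the smallness assumption \eqref{eq:up two} on $|g|_{C^0}$ come in.

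First, I would fix $\bar\ve > 0$ with $12 C_1^{-1} \bar\ve/(1-\bar\ve) < \ve$ and choose $n_1 \ge n_1(\ve)$ (from Lemma~\ref{lem:short small}) large enough that, uniformly for $t \in [1,\bar t_1]$,
\[
C_0\, \theta^{t n_1}\, e^{2 n_1 |g|_{C^0}} \cdot C_{\kappa_1}^{-1}\, e^{n_1(\chi_1+\kappa_1)(t-1)} \le \bar\ve\,.
\]
This is possible because the left-hand side is a fixed constant times the $n_1$-th power of $\Theta(t) := \theta^{t}\, e^{(\chi_1+\kappa_1)(t-1)+2|g|_{C^0}}$, and under \eqref{nofoot}--\eqref{eq:up two} one has $\Theta(t) \le \theta^{\bar t_1}\, e^{(\chi_1+\kappa_1)(\bar t_1-1)+2|g|_{C^0}} < 1$ uniformly on $[1,\bar t_1]$ (since $\theta<1$ makes the expression monotone in $t$). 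I then reduce $\delta_1 \le \delta_1(\ve)$ so that every $U \in \hW^s$ with $|U| < \delta_1$ has all pre-iterates $T^{-j}U$ broken into pieces shorter than $\delta_0$ for $j \le 2 n_1$, so that the short-piece bound \eqref{eq:n1 short up} is available on such $U$.

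For $n = k n_1 + \ell$ with $0 \le \ell < n_1$ and $W \in \hW^s$ with $|W| \ge \delta_1/3$, I group the short elements $W_i \in \cG_n^{\delta_1}(W)$ by their most recent long ancestor $V_j \in L^{\delta_1}_{m n_1}(W)$ among multiples of $n_1$; every short $W_i$ has such an ancestor since $|W| \ge \delta_1/3$. Iterating \eqref{eq:n1 short up} on the intervening short blocks of length $n_1$ bounds the contribution of those descendants of $V_j$ that have remained short since time $m n_1$ by a constant multiple of $|J_{V_j}T^{m n_1}|^t_{C^0(V_j)}\, |e^{S_{m n_1} g}|_{C^0(V_j)} \cdot \theta^{t n_1 (k-m)}\, e^{n_1(k-m)|g|_{C^0}}$. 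Lemma~\ref{lem:low t>1} applied to each such $V_j$ (long since $|V_j| \ge \delta_1/3$) with $\kappa = \kappa_1$ and $\delta = \delta_1$ provides the matching lower bound for the full sum over descendants of $V_j$, namely a constant multiple of $|J_{V_j}T^{m n_1}|^t_{C^0(V_j)}\, |e^{S_{m n_1} g}|_{C^0(V_j)} \cdot \delta_1^{1/\eta_{\kappa_1}-1}\, e^{-(k-m)n_1[(\chi_1+\kappa_1)(t-1)+|g|_{C^0}]}$. Forming the ratio as in \eqref{eq:short bound up}, the $V_j$-dependent factors cancel, the prefactor $12 C_1^{-1}$ absorbs both the distortion constant of $g$ and the $|V_j|/\delta_1 \in [1/3,1]$ subdivision factor, and summing the geometric series in $\bar\ve^{k-m}$ yields the claim.

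The main obstacle is the switch from the $t \le 1$ regime, where \eqref{eq:grow} gives an \emph{expansive} lower bound $\Lambda^{k(1-t)}|V|/\delta_1$, to the $t > 1$ regime, where any lower bound on $\sum_{W_i \in \cG^{\delta_1}_k(V)} |J_{W_i}T^k|^t$ is necessarily \emph{contractive}. Lemma~\ref{lem:low t>1} provides the optimal such bound, at rate $e^{-k(\chi_1+\kappa)(t-1)}$ for any preset $\kappa > 0$, through H\"older interpolation against the pressure $P_*(s)$ for $s \nearrow 1$. The geometric series only closes when this contraction is beaten by the product $\theta^{tk} e^{2k|g|_{C^0}}$ of the short-piece upper bound and the distortion loss from $g$; that is exactly the content of $\theta^{\bar t_1} e^{(\chi_1+\kappa_1)(\bar t_1-1) + 2|g|_{C^0}} < 1$, combining the pressure gap at $\bar t_1 < s_1$ (through \eqref{nofoot}) with the size restriction \eqref{eq:up two}.
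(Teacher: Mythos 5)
Your outline follows the right general strategy (block the short pieces by most recent $\delta_1$-long ancestor at multiples of a fixed block length, bound the short descendants by iterating \eqref{eq:n1 short up}, and replace the lower bound \eqref{eq:grow} by Lemma~\ref{lem:low t>1}), but there is a genuine gap at the point where you form the ratio and assert that ``the $V_j$-dependent factors cancel.'' The factors $|J_{V_j}T^{mn_1}|^t_{C^0(V_j)}|e^{S_{mn_1}g}|_{C^0(V_j)}$ do cancel, but the lower bound you invoke is the \emph{first} inequality of Lemma~\ref{lem:low t>1} with $\delta=\delta_1$ (since your $V_j$ are only guaranteed to have length $\ge \delta_1/3$), and it carries the constant $C_{\kappa_1}\delta_1^{\frac{1}{\eta_{\kappa_1}}-1}$. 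Nothing in the numerator cancels this, so each block contributes $\approx \frac{C_0}{C_{\kappa_1}}\,\delta_1^{1-\frac{1}{\eta_{\kappa_1}}}\,\Theta^{n_1(k-m)}$ with $\Theta=\theta^t e^{(\chi_1+\kappa_1)(t-1)+2|g|_{C^0}}<1$, and the factor $\delta_1^{1-1/\eta_{\kappa_1}}$ blows up as $\delta_1\to 0$ (recall $\eta_{\kappa_1}<1$ in general). This is fatal because your order of choices is $\bar\ve \to n_1 \to \delta_1$: the requirement that backward iterates of curves shorter than $\delta_1$ stay shorter than $\delta_0$ for $2n_1$ steps forces $\delta_1$ to be super-exponentially small in $n_1$ (via $|T^{-1}U|\le C|U|^{1/2}$), so the uncontrolled factor $\delta_1^{1-1/\eta_{\kappa_1}}$ grows much faster than $\Theta^{n_1}$ decays; shrinking $\bar\ve$ only enlarges $n_1$, hence shrinks $\delta_1$ further, and the choices become circular. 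Even the single most recent block ($m=k-1$) cannot be made small in your scheme.

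The paper's proof avoids exactly this by a two-scale decomposition that you are missing. Each $\delta_1$-long ancestor $V_j$ is viewed inside a curve of $\cG_{km_1}^{\delta_0}(W)$ (paying only a counting factor $3\delta_0/\delta_1$), and those curves are further grouped by their most recent \emph{$\delta_0$-long} ancestor $W_a\in L_j^{\delta_0}(W)$, whose weight is controlled by Lemma~\ref{lem:short growth} at scale $\delta_0$. The lower bound is then applied to descendants of $W_a$ using the second (``Moreover'') inequality of Lemma~\ref{lem:low t>1}, which is available because $|W_a|\ge\delta_0/3$ and costs only $C_{\kappa_1}\delta_1^{-1}$; this $\delta_1^{-1}$ cancels exactly against the counting factor $3\delta_0/\delta_1$, so the geometric-series constant $\frac{6\delta_0C_0^2}{C_{\kappa_1}(1-\rho)}$ is independent of $\delta_1$ and $\bar\ve$ can legitimately be fixed first. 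The only term carrying the bad factor $\delta_1^{-1/\eta_{\kappa_1}}$ is the one with no $\delta_0$-long ancestor, and it comes multiplied by $\rho^n$; it is killed by choosing the threshold $n_1$ \emph{after} $\delta_1$, which also explains why the paper uses two time parameters (a blocking length $m_1$ with $\rho^{m_1}\le\bar\ve$, and a final $n_1\ge m_1$), whereas your proposal conflates them into a single $n_1$ chosen before $\delta_1$.
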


\medskip
Let $[t_0, \bar t_1]\subset (0,s_1)$.
For all $g\in C^1$ satisfying \eqref{eq:up one} and \eqref{eq:up two},
Lemma~\ref{lem:short small} and
Lemma~\ref{lem:short small t>1} for $\ve = 1/4$ give $n_1\ge 1$  and
$\delta_1>0$  such that
for  all  $n\ge n_1$ and all  $W \in \hW^s$ with $|W| \ge \delta_1/3$,
\begin{equation}
\label{eq:delta11}
\sum_{W_i \in L_n^{\delta_1}(W)} |J_{W_i}T^n|^t_{C^0(W_i)} 
|e^{S_ng}|_{C^0(W_i)}
\ge \tfrac 34 \sum_{W_i \in \cG_n^{\delta_1}(W)} |J_{W_i}T^n|^t _{C^0(W_i)}|e^{S_ng}|_{C^0(W_i)} \, ,
\, \forall t \in [t_0, \bar t_1] \, .
\end{equation}

\begin{proof}[Proof of Lemma~\ref{lem:short small t>1}]
Assume first that $g=0$.
Define $\rho = \theta^{\bar t_1} e^{(\chi_1 + \kappa_1)(\bar t_1 -1)} < 1$ (with ${\kappa_1} (t_0, t_1, \bar t_1)$ from \eqref{nofoot}).
For $\ve >0$, choose 
$\bar \ve >0$ such that 
$\frac{6 \delta_0 C_0^2}{C_{\kappa_1}(1-\rho)} \frac{\bar \ve}{1-\bar \ve} < \frac{\ve}{2}$ 
and choose $m_1$ such that $\rho^{m_1} \le \bar \ve$.
 Next, choose $ \delta_1>0$ such that \eqref{eq:n1 short} holds for all $n \le 2  m_1$.
 
For $W \in \hW^s$ with $|W| \ge \delta_1/3$, let $S_n^{\delta_1}(W)$ denote the elements $W_i \in \cG_n^{\delta_1}(W)$ such that $|W_i|< \delta_1/3$,
and let $L_n^{\delta_1}(W) = \cG_n^{\delta_1}(W) \setminus S_n^{\delta_1}(W)$.
For $n \ge m_1$, write $n = \ell m_1 + r$ for some $0 \le r < m_1$.  As in the proof of Lemma~\ref{lem:short small},
we group elements $W_i \in S_n^{\delta_1}(W)$ according to the largest $k$ such that
$T^{(\ell - k)m_1 + r}W_i \subset V_j \in L_{k m_1}^{\delta_1}(W)$.  
Such a $k \in [0, \ell-1]$ must exist since $|W| \ge \delta_1/3$ while $|W_i| < \delta_1/3$.
By choice of $\delta_1$ and $m_1$,
we may apply \eqref{eq:n1 short}
for $(\ell - k)m_1 + r$ iterates to obtain
\[
\sum_{W_i \in S_n^{\delta_1}(W)} |J_{W_i}T^n|^t_{C^0(W_i)} 
\le \sum_{k=0}^{\ell-1} \sum_{V_j \in L^{\delta_1}_{k m_1}(W)} |J_{V_j}T^{km_1}|^t_{C^0(V_j)} C_0 \theta^{t ((\ell - k)m_1 +r)} \, .
\]

Next, for each $k$, we consider each $V_j \in L^{\delta_1}_{k m_1}(W)$ as being contained in an element
$U_i \in \cG_{k m_1}^{\delta_0}(W)$.  Since $|V_j| \ge \delta_1/3$, there are at most $3 \delta_0/\delta_1$ 
$V_j$ corresponding to each such $U_i$.  Then we group each $U_i \in \cG_{km_1}^{\delta_0}(W)$ according to
its most recent long ancestor $W_a \in L_j^{\delta_0}(W)$ for some $j \in [0, k m_1]$.  Note that
$j=0$ is possible if $|W| \ge \delta_0/3$.  However, if $|W|  < \delta_0/3$ and no such time $j$ exists for $U_i$, then we 
associate such $U_i$ with index $j=0$ in any case.  In either case, $U_i \in \cI_{km_1}^{\delta_0}(W)$,
where $\cI_{km_1}^{\delta_0}(W) = \cI_{k m_1}(W)$ as defined in Lemma~\ref{lem:short growth}.
With these groupings, we estimate,
\[
\begin{split}
\sum_{V_j \in L^{\delta_1}_{km_1}(W)} |J_{V_j}T^{km_1}|^t_{C^0(W_i)}
& \le \frac{3 \delta_0}{\delta_1} \left( \sum_{U_i \in \cI_{km_1}^{\delta_0}(W) } |J_{U_i}T^{km_1}|^t_{C^0(U_i)}  \right. \\
& \qquad \left. + 
\sum_{j=1}^{km_1} \sum_{W_a \in L_j^{\delta_0}(W)}  |J_{W_a}T^j|^t \sum_{U_i \in \cI_{km_1 -j}^{\delta_0}(W_a)}
|J_{U_i}T^{km_1-j}|^t_{C^0(U_i)} \right) \\
& \le \frac{3 \delta_0}{\delta_1} \left( C_0 \theta^{t km_1 } + \sum_{j=1}^{km_1} \sum_{W_a \in L_j^{\delta_0}(W)}  |J_{W_a}T^j|^t C_0 \theta^{t (km_1 - j)} \right) \, ,
\end{split}
\]
where we have applied Lemma~\ref{lem:short growth}
to each collection $\cI_{km_1 - j}^{\delta_0}(W)$.
Collecting these estimates yields,
\[
\sum_{W_i \in S_n^{\delta_1}(W)} |J_{W_i}T^n|^t_{C^0(W_i)} 
\le \frac{3 \delta_0}{\delta_1} \left( C_0^2 \frac{n}{m_1} \theta^{tn}  + \sum_{k=1}^{\ell-1}  \sum_{j=1}^{km_1}
C_0^2 \theta^{t (n-j)} \sum_{W_a \in L_j^{\delta_0}(W)} |J_{W_a}T^j|^t_{C^0(W_a)}  \right) \, .
\]

For fixed $k>0$ and each $j$ for which $L_j^{\delta_0}(W)$ is not empty, we group elements $W_i \in \cG_n^{\delta_1}(W)$ according to which $W_a \in \cG_j^{\delta_0}(W)$
they descend from.  Then we use Lemma~\ref{lem:low t>1} and the distortion bound \eqref{eq:ratio} to estimate
a lower bound,
\[
\begin{split}
\sum_{W_i \in \cG_n^{\delta_1}(W)} |J_{W_i}T^n|^t_{C^0(W_i)} 
& \ge 
\sum_{W_a \in L_j^{\delta_0}(W)} \tfrac 12 |J_{W_a}T^j|^t_{C^0(W_a)} \sum_{W_i \in \cG_{n-j}^{\delta_1}(W_a)}
|J_{W_i}T^{n-j}|^t_{C^0(W_i)} \\
& \ge \frac{C_{\kappa_1}}{2\delta_1} e^{-(n-j) (\chi_1 + \kappa_1) (t-1)} \sum_{W_a \in L_j^{\delta_0}(W)} |J_{W_a}T^j|^t_{C^0(W_a)} \, .
\end{split}
\]
Combining upper and lower bounds yields (using \eqref{eq:low t>1 up} for the term\footnote{A better estimate is possible in the case
$|W| \ge \delta_0/3$, but we will not need this.} corresponding to $j=0$),
\[
\begin{split}
\frac{\sum_{W_i \in S_n^{\delta_1}(W)} |J_{W_i}T^n|^t_{C^0(W_i)}}{\sum_{W_i \in \cG_n^{\delta_1}(W)} |J_{W_i}T^n|^t_{C^0(W_i)} }
& \le
\frac{ \frac{3 \delta_0 n}{\delta_1 m_1} C_0^2 \theta^{tn}}{C_{\kappa_1} \delta_1^{\frac{1}{\eta_{\kappa_1}}-1} e^{-n(\chi_1+\kappa_1)(t-1)}} \\
& \qquad
 + \sum_{k=1}^{\ell-1} \sum_{j=1}^{km_1} 
 \frac{ \frac{3\delta_0}{\delta_1} C_0^2 \theta^{t (n-j)} \sum_{W_a \in L_j^{\delta_0}(W)} |J_{W_a}T^j|^t_{C^0(W_a)} }
 {\frac{C_{\kappa_1}}{2\delta_1} e^{-(n-j) (\chi_1 + \kappa_1)(t-1)} \sum_{W_a \in L_j^{\delta_0}(W)} |J_{W_a}T^j|^t_{C^0(W_a)} } \\
 & \le
 \frac{3 \delta_0 n C_0^2}{C_{\kappa_1} m_1} \delta_1^{-\frac{1}{\eta_{\kappa_1}}} \rho^n
 + \sum_{k=1}^{\ell-1} \sum_{j=1}^{km_1} \frac{6 \delta_0 C_0^2}{C_{\kappa_1}} \rho^{n-j} \\
 & \le \frac{3 \delta_0 n C_0^2}{C_{\kappa_1} m_1} \delta_1^{-\frac{1}{\eta_{\kappa_1}}} \rho^n
 +  \frac{6 \delta_0 C_0^2}{C_{\kappa_1}(1-\rho)} \sum_{i=1}^{\ell-1} \bar \ve^i
\le \frac{3 \delta_0 n C_0^2}{C_{\kappa_1} m_1} \delta_1^{-\frac{1}{\eta_{\kappa_1}}} \rho^n
 + \frac{\ve}{2} \, ,
   \end{split}
\]
by choice of $\bar \ve$, where we have used the fact that $\sum_{k=1}^{\ell-1} \sum_{j=1}^{km_1} \rho^{n-j}
\le \sum_{k=1}^{\ell-1} \frac{\rho^{n-k m_1}}{1-\rho} \le \sum_{i=1}^{\ell-1} \frac{\bar \ve ^i}{1-\rho}$.
Finally, we choose $n_1 \ge m_1$ sufficiently large that the first term is less than
$\frac{\ve}{2}$ for all $n \ge n_1$, completing the proof in the case $g=0$. 
 
If $g\ne 0$, define $\rho_g = \theta^{\bar t_1} e^{ (\chi_1 + \kappa_1)(\bar t_1-1) + 2 |g|_{C^0}} < 1$.
For $\ve >0$, pick $\bar \ve >0$ such that
$\frac{12 \delta_0 C_0^2}{C_{\kappa_1}(1-\rho_g)} \frac{\bar \ve}{1- \bar \ve} < \frac{\ve}{2}$.
Then choose $m_1$ and $\delta_1$ as in the case $g=0$, but with $\rho_g$ in place of $\rho$.
(The choices $ m_1$ and $ \delta_1$ are uniform for $g$ satisfying \eqref{eq:up one} 
and \eqref{eq:up two}.)
The argument then follows precisely as above with the inclusion of $g$ 
as in \eqref{eq:short bound up} in the proof of Lemma~\ref{lem:short small}.
The fact that the distortion constant for $g$ is at most $2$ is used for the lower bound for each $W_a \in L^{\delta_0}_j(W)$ appearing in the
denominator:
\[
\begin{split}
\sum_{W_i \in \cG_{n-j}^{\delta_1}(W_a) } & |J_{W_i}T^n|^t_{C^0(W_i)}  |e^{S_ng}|_{C^0(W_i)} \\
& \ge \tfrac 14 |J_{W_a}T^j|^t_{C^0(W_a)} |e^{S_jg}|_{C^0(W_a)}
\sum_{W_i \in \cG_{n-j}^{\delta_1}(W_a)} |J_{W_i}T^{n-j}|^t_{C^0(W_i)}
|e^{S_{n-j}g}|_{C^0(W_i)} \\
& \ge  \tfrac 14  |J_{W_a}T^j|^t_{C^0(W_a)} |e^{S_jg}|_{C^0(W_a)}
C_{\kappa_1} \delta_1^{-1} e^{-(n-j)(\chi_1+\kappa_1)(t-1) - (n-j) |g|_{C^0}}\, .
\end{split}
\]
\end{proof}

Our final lemma of this section is the analogue of 
Lemma~\ref{lem:most long} for $t > 1$.  Define $\cA_n(\delta)$ as in
\eqref{eq:An delta}.

\begin{lemma}
\label{lem:most long t>1}
Let $t_0\in (0,1)$,   $t_1 \in (1, t_*)$ and  $\bar t_1\in (1,s_1)$.
 Let $\delta_2 >0$
be as in Lemma~\ref{lem:most long}.   There exists a decreasing function $c_0 : [0, \infty) \to \mathbb{R}^+$
such that for any $\upsilon \ge 0$ and  any $g\in C^1$ satisfying \eqref{eq:up one}, \eqref{eq:up two},
and $|\nabla g|_{C^0}\le \upsilon$, we have
  \[
  \sum_{A \in \cA_n(\delta_2) }\sup_{x \in A \cap M'} |J^sT^n(x)|^t e^{ S_ng(x)}
\ge  c_0(\upsilon)  Q_n(t, g) \, , \, \forall n \in \mathbb{N}\, ,\,\, \forall t \in [1, \bar t_1 ]\, .
  \]
\end{lemma}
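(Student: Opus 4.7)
The plan is to follow the architecture of the proof of Lemma~\ref{lem:most long} line by line, replacing each use of a $t\le 1$ ingredient by its $t>1$ counterpart from \S\ref{sec:t>1}. The core ingredients to swap are: the uniform short-piece contraction \eqref{eq:n1 short up}, which still holds for $t\in[1,\bar t_1]$ verbatim (it only uses \eqref{eq:theta} via Lemma~\ref{lem:short growth}); the bound \eqref{eq:delta1} asserting that long pieces dominate, which is replaced by \eqref{eq:delta11} from Lemma~\ref{lem:short small t>1}; and the pointwise lower bound \eqref{eq:grow up} on the sum over $\cG_k^{\delta_2}(V)$, which is replaced by Lemma~\ref{lem:low t>1}. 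Crucially, the constants $\delta_2$, $n_2$ in the proof of Lemma~\ref{lem:most long} are unchanged: they are chosen only from \eqref{eq:d1 comp} (valid for any $t\ge t_0$) and the diameter estimate $\diam^s(B)\le C\Lambda^{-k}$.

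First I would reproduce the geometric setup: classify elements of $\cM^{0,\bH}_{-n+1}\bigvee\cH$ into the $u$-long set $\bL^{\delta_2}_{-n+1}$ and the $u$-short set $\bS^{\delta_2}_{-n+1}$ and apply the key identity \eqref{eq:relate} to obtain the analogues of \eqref{eq:switch B short up} and its long version. Then group each $B\in\bS^{\delta_2}_{-n+1}$ by its most recent $u$-long ancestor $B'\in\bL^{\delta_2}_{-n+1+j}$ for $j\in[n_1,n-n_2-1]$, plus a residual term from pieces that have been $u$-short since time $n_2$, obtaining the direct analogue of \eqref{eq:group up}, with the prefactor $\theta^{tj}e^{j|g|_{C^0}}$ coming from the time reversal of Lemma~\ref{lem:short growth} (still valid for $t\ge 1$).

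The real work is a $t>1$ version of Sublemma~\ref{sub:long}: for $B'\in\bL^{\delta_2}_{-n+1+j}$ there is an unstable curve $U\subset B'$ with $|U|\ge\delta_2/3$, and I would estimate $\sum_{B\in L_j(B')}|\cos\vf|^t|J^uT^{-n+1}|^t_B|e^{S_{n-1}g}|_{T^{-n+1}B}$ from below by, successively, comparability of $\cos\vf$ and bounded distortion (Lemma~\ref{lem:comparable}) on each piece, then the time reversal of \eqref{eq:delta11} to pass from $L^{\delta_2}_j(U)$ to all of $\cG^{\delta_2}_j(U)$, and finally the lower bound of Lemma~\ref{lem:low t>1} applied to the time reversal, which yields the factor $C_{\kappa_1}\delta_2^{-1}e^{-j(\chi_1+\kappa_1)(t-1)-j|g|_{C^0}}$ in place of $\Lambda^{j(1-t)}e^{-j|g|_{C^0}}$. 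The resulting sublemma reads
\[
|\cos\vf|^t_{T^{-n+1+j}B'}|J^uT^{-n+1+j}|^t_{B'}|e^{S_{n-1-j}g}|_{T^{-n+1+j}B'}
\le C\delta_2^{-1}e^{j(\chi_1+\kappa_1)(t-1)+j|g|_{C^0}}\sum_{B\in L_j(B')}|\cos\vf|^t_{T^{-n+1}B}|J^uT^{-n+1}|^t_B|e^{S_{n-1}g}|_{T^{-n+1}B}.
\]

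Feeding this into the analogue of \eqref{eq:group up} produces in the summand the factor
\[
\theta^{tj}e^{j|g|_{C^0}}\cdot e^{j(\chi_1+\kappa_1)(t-1)+j|g|_{C^0}}=\bigl(\theta^t e^{(\chi_1+\kappa_1)(t-1)+2|g|_{C^0}}\bigr)^j\le \rho_g^j,
\]
where $\rho_g<1$ by \eqref{eq:up two} combined with the defining inequality \eqref{nofoot} of $\kappa_1$, uniformly for $t\in[1,\bar t_1]$. Summing the geometric series in $j$ and absorbing the residual $\bS^{\delta_2}_{-n_2}$ term into $Q_{n_2+1}(t,g)$ as in the $t\le 1$ case (which is controlled by $e^{n_2(\chi_1+\kappa_1)(\bar t_1-1)}Q_{n_2+1}(t_0,g)$ plus finitely many initial $n$), I obtain
\[
\sum_{B\in\bS^{\delta_2}_{-n+1}}|\cos\vf|^t|J^uT^{-n+1}|^t_B|e^{S_{n-1}g}|\le C(\upsilon)\sum_{B\in\bL^{\delta_2}_{-n+1}}|\cos\vf|^t|J^uT^{-n+1}|^t_B|e^{S_{n-1}g}|,
\]
and converting back via \eqref{eq:relate} yields $Q_n(t,g)\le c_0(\upsilon)^{-1}\sum_{A\in\cA_n(\delta_2)}\sup_{A\cap M'}|J^sT^n|^te^{S_ng}$. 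Monotonicity of $c_0$ in $\upsilon$ is clear from the bound $|\nabla g|_{C^0}\le\upsilon$ entering only through the distortion constant for $g$.

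The principal obstacle is exactly the point where the lower bound on $\sum_{U_i\in\cG^{\delta_2}_j(U)}|J_{U_i}T^{-j}|^t$ must compensate for $\theta^{tj}$: for $t\le 1$ this is the clean estimate $C\Lambda^{j(1-t)}|U|/\delta_2$, but for $t>1$ Lemma~\ref{lem:low t>1} only gives the much weaker $C_{\kappa_1}\delta_2^{-1}e^{-j(\chi_1+\kappa_1)(t-1)}$, so everything hinges on the pressure-gap condition $\bar t_1<s_1$ (and the $C^0$-smallness of $g$) to force $\rho_g<1$. Once this is set up, the summation and bookkeeping are a routine transcription of the $t\le 1$ proof.
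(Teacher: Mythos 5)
Your proposal is correct and follows essentially the same route as the paper: the paper likewise keeps the geometric setup of Lemma~\ref{lem:most long} through \eqref{eq:group}, replaces Sublemma~\ref{sub:long} by a $t>1$ version proved via the time reversals of \eqref{eq:delta11} and Lemma~\ref{lem:low t>1}, and sums over $j$ using exactly the contraction $\theta^t e^{(\chi_1+\kappa_1)(t-1)+2|g|_{C^0}}<1$ from \eqref{nofoot} and \eqref{eq:up two}, finishing as in the $t\le 1$ case. The only (harmless) discrepancy is the power of $\delta_2$ in your sublemma: since $|U|\ge\delta_2/3$ rather than $\delta_0/3$, the first inequality of Lemma~\ref{lem:low t>1} applies and yields $\delta_2^{-1/\eta_{\kappa_1}}$ as in the paper's Sublemma~\ref{sub:long t>1}, not $\delta_2^{-1}$, but this only changes the value of $c_0(\upsilon)$.
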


\begin{proof}
The calculations in the proof of Lemma~\ref{lem:most long}
for $g=0$ are valid for all
$t>0$ up through \eqref{eq:group}.  To proceed, we replace Sublemma~\ref{sub:long}
by the following.

\begin{sublem}
\label{sub:long t>1}
Let $t_0\in (0,1)$,   $t_1 \in (1, t_*)$ and  $\bar t_1\in (1,s_1)$.
There exists $C>0$ such that for 
all $t\in [1, \bar t_1]$, each $n_1 \le j \le n-n_2-1$ and all $B' \in \bL_{-n+1+j}^{\delta_2}$,
\[
|\cos \vf|^t_{T^{-n+1+j}B'} |J^uT^{-n+1+j}|^t_{B'}
\le C \delta_2^{-1/\eta_{\kappa_1}} e^{j( \chi_1 + \kappa_1  )(t-1)} \sum_{B \in L_j(B')} |\cos \vf|^t_{T^{-n+1}B} |J^uT^{-n+1}|^t_B \, ,
\]
where $L_j(B')$ denotes the collection of elements $B \in G_j(B')$ with $\diam^u(B) \ge \delta_2/3$.
\end{sublem}

\begin{proof}
The proof of this sublemma only requires one adjustment to the estimate in \eqref{sub:est}.
Using the same notation as in Sublemma~\ref{sub:long}, we have
\begin{align*}
\sum_{B \in L_j(B')} |\cos \vf|^t_{T^{-n+1}B} & |J^uT^{-n+1}|^t_B
\ge C |\cos \vf|^t_{T^{-n+1+j}B'} |J^uT^{-n+1+j}|^t_{B'} \sum_{B \in L'_j(B')} |J^uT^{-j}|^t_B \\
& \ge C' |\cos \vf|^t_{T^{-n+1+j}B'} |J^uT^{-n+1+j}|^t_{B'} \delta_2 \sum_{U_i \in L^{\delta_2}_j(U)} 
|J_{U_i}T^{-j}|^t_{C^0(U_i)} \\
& \ge C' \delta_2 |\cos \vf|^t_{T^{-n+1+j}B'} |J^uT^{-n+1+j}|^t_{B'} 
\tfrac 34 \sum_{U_i \in \cG^{\delta_2}_j(U)} 
|J_{U_i}T^{-j}|^t_{C^0(U_i)} \\
& \ge C'' \delta_2^{1/\eta_{\kappa_1}} |\cos \vf|^t_{T^{-n+1+j}B'} |J^uT^{-n+1+j}|^t_{B'} 
C_{\kappa_1} e^{-j(\chi_1 + \kappa_1)(t-1)} \, ,
\end{align*}
where the only new justifications are that we use the time reversal of  \eqref{eq:delta11}
in the third inequality since $\delta_2 \le \delta_1$ and $|U| \ge \delta_2/3$, 
and in the fourth inequality, we apply the time reversal of  Lemma~\ref{lem:low t>1} 
since $j \ge n_1$ with $\kappa_1$ from \eqref{nofoot} .
\end{proof}

Using Sublemma~\ref{sub:long t>1}, we estimate the right hand side of \eqref{eq:group}
as in the proof of Lemma~\ref{lem:most long}, summing
over $B' \in \bL_{-n+1+j}^{\delta_2}$
and recalling that if $B \in L_j(B')$, then
$B \in \bL_{-n+1}^{\delta_2}$ and each such $B$ is associated with a unique $B'$:
\begin{align}
\label{???}\sum_{B \in \mathbb{S}_{-n+1}^{\delta_2}}&
|\cos \vf|^t_{T^{-n+1}B} |J^uT^{-n+1}|^t_{B}
 \; \le \sum_{B' \in \bS_{-n_2}^{\delta_2}} 
 C \theta^{t (n-n_2-1)} |\cos \vf|^t_{T^{-n_2}B'} |J^uT^{-n_2}|^t_{B'}   \\
\nonumber& \qquad + \sum_{j=n_1}^{n-n_2-1}  C \delta_2^{-1-1/\eta_{\kappa_1}} \theta^{tj} e^{j( \chi_1+\kappa_1  )(t-1)}
\sum_{B \in \bL_{-n+1}^{\delta_2}}
 |\cos \vf|^t_{T^{-n+1}B} |J^uT^{-n+1}|^t_{B} \\
& \le C_{n_2} \theta^{tn}+ C \delta_2^{-1-1/\eta_{\kappa_1}} \sum_{B \in \bL_{-n+1}^{\delta_2}}
 |\cos \vf|^t_{T^{-n+1}B} |J^uT^{-n+1}|^t_{B} \, ,
\nonumber \end{align}
for some constant $C_{n_2} >0$ depending only on $n_2$, where we have used the fact
that $\theta^t e^{(\chi_1 + \kappa_1)(t-1)} < 1$ to sum over $j$.

The sum over $B \in \bL_{-n+1}^{\delta_2}$ shrinks at a rate bounded below by 
$C \delta_2^{1/\eta_{\kappa_1}} e^{-n( \chi_1 + \kappa_1  )(1-t)}$
by the proof of Sublemma~\ref{sub:long t>1}.  Thus we may choose $n_3 \ge n_2$ large enough that 
$C_{n_2} \theta^{tn} \le C  \delta_2^{-1 }  e^{-n(\chi_1 + \kappa_1)( t-1)}$ for all $n \ge n_3$, which implies,
\[
\sum_{B \in \mathbb{S}_{-n+1}^{\delta_2}}
|\cos \vf|^t_{T^{-n+1}B} |J^uT^{-n+1}|^t_{B}
\le
C \delta_2^{-1-1/\eta_{\kappa_1}} \sum_{B \in \bL_{-n+1}^{\delta_2}}
 |\cos \vf|^t_{T^{-n+1}B} |J^uT^{-n+1}|^t_{B} \, .
\]
The proof of the Lemma~\ref{lem:most long} proceeds without further changes from this point, ending the proof of Lemma~\ref{lem:most long t>1} if $g=0$.

If $g\ne 0$, choosing $\delta_2$  as in the proof of Lemma~\ref{lem:most long} 
implies that \eqref{eq:switch B short up} and \eqref{eq:group up} remain as written.  The only
change required in the proof is to use the lower bound \eqref{eq:low t>1 up}  with $\kappa = \kappa_1$ to
prove the analogue of Sublemma~\ref{sub:long}:  There exists $C>0$ such that for all
$n_1 \le j \le n-n_2-1$ and $B' \in \mathbb{L}^{\delta_2}_{-n+1+j}$,
\begin{align*}
&|\cos \vf|^t_{T^{-n+1+j}B'}  |J^uT^{-n+1+j}|^t_{B'} |e^{ S_{n-1-j}g}|_{T^{-n+1+j}B'} \\
&\qquad\qquad\le C \delta_2^{-1/\eta_{\kappa_1}} e^{j(\chi_1 + \kappa_1)(t-1)} e^{ j |g|_{C^0}} 
\sum_{B \in L_j(B')} 
|\cos \vf|^t_{T^{-n+1}B} |J^uT^{-n+1}|^t_{B} |e^{ S_{n-1}g}|_{T^{-n+1}B} \, .
\end{align*}
We then proceed as in \eqref{???}  
using 
the contraction provided by 
\eqref{eq:up two} to sum over $j$.
\end{proof}


\subsection{Lower Bounds on Complexity. Exact Exponential Growth of $Q_n(t,g)$}
\label{sec:complexity}

In order to conclude that the spectral radius of $\cL_t$ on $\cB$ is 
 $e^{P_*(t)}$ and to control the peripheral spectrum of $\cL_t$, we shall
establish the exact exponential growth of $Q_n(t)$. 

The lower bound on the spectral radius of $\cL_t$ is a consequence of the following lemma, 
guaranteeing that the weighted complexity of long elements of $\hW^s$ grows at 
the rate $Q_n(t,g)$.

\begin{proposition}
\label{prop:lower bound}
Let $t_0\in (0,1)$,   $t_1 \in (1, t_*)$ and  $\bar t_1\in (1,s_1)$.
 There exists a decreasing function $c_1 : [0, \infty) \to \mathbb{R}^+$
such that for any $\upsilon \ge 0$ and 
any
$W \in \hW^s$ with $|W| \ge \delta_1/3$, 
\begin{equation}
\label{eq:lower bound up}
\sum_{W_i \in \cG_n(W)} |J_{W_i}T^n|^t_{C^0(W_i)} |e^{ S_n g}|_{C^0(W_i)}
\ge  c_1(\upsilon)  Q_n(t, g) \, , \, \forall n \ge 1 \, , \,\, \forall t \in [t_0, \bar t_1 ]\, ,
\end{equation}
for any $g\in C^1$ with $|\nabla g|_{C^0}\le \upsilon$ and 
such that \eqref{eq:up one}  and \eqref{eq:up two} hold.
\end{proposition}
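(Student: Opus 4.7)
The plan is to bootstrap from Lemmas~\ref{lem:most long} and \ref{lem:most long t>1}, which already control $Q_n(t,g)$ by the subsum over $\cA_n(\delta_2)$, and then to transfer this control over to $\cG_n(W)$ via a geometric crossing argument. Combining those lemmas uniformly in $t \in [t_0, \bar t_1]$ and in $g$ satisfying the hypotheses with $|\nabla g|_{C^0} \le \upsilon$, we obtain
\[
Q_n(t,g) \le c_0(\upsilon)^{-1} \sum_{A \in \cA_n(\delta_2)} \sup_{x \in A \cap M'} |J^sT^n(x)|^t e^{S_n g(x)} .
\]
Thus it suffices to produce a constant $\bar c(\upsilon) > 0$, decreasing in $\upsilon$, such that the right-hand side is bounded above by $\bar c(\upsilon)^{-1}$ times the sum over $\cG_n(W)$; the result then follows with $c_1(\upsilon) = c_0(\upsilon)\bar c(\upsilon)$.

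For the geometric step, I would fix $A \in \cA_n(\delta_2)$ and look at the "long thin unstable box" $T^n A$. By \eqref{eq:diam}, $\diam^s(T^n A) \le C\Lambda^{-n}$, while the ancestor condition $\diam^u(B_{n-1}(A)) \ge \delta_2/3$ together with bounded distortion of $J^u T^{-1}$ along unstable extents (the time-reversal of Lemma~\ref{lem:comparable}) yields a lower bound on $\diam^u(T^n A)$ of order $\delta_2$, up to subdivision of the $n=1$ iterate. Let $E_n(W) := \{A \in \cA_n(\delta_2) : T^n A \cap W \ne \emptyset\}$. For each $A \in E_n(W)$, the uniform transversality of $\cC^s$ and $\cC^u$ ensures that $W \cap T^n A$ is a single connected subcurve, which (after possible further subdivision by $\delta_0$ and homogeneity strips) pulls back under $T^n$ to an element $W_{i(A)} \in \cG_n(W)$ contained in $A$. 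Invoking Lemma~\ref{lem:comparable} (to compare Jacobians between $W_{i(A)}$ and the full element $A$, the cone-stable vs.\ true-stable direction discrepancy being absorbed into the constant) and \eqref{eq:phi A} (to compare $e^{S_n g}$ with distortion constant controlled by $|\nabla g|_{C^0}\le \upsilon$) gives
\[
|J_{W_{i(A)}}T^n|^t_{C^0(W_{i(A)})} \, |e^{S_n g}|_{C^0(W_{i(A)})} \ge \bar c(\upsilon) \sup_{x \in A \cap M'} |J^sT^n(x)|^t e^{S_n g(x)} .
\]
Summing over $A \in E_n(W)$ yields the desired inequality once we know that the weighted sum of terms indexed by $\cA_n(\delta_2) \setminus E_n(W)$ is bounded by a definite fraction of the total.

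The main obstacle is precisely this last point: bounding the weight in the boxes $T^n A$ that $W$ \emph{misses}. The difficulty is that $\diam^s(T^n A) \le C\Lambda^{-n}$ is much smaller than $|W| \ge \delta_1/3$, so while each $T^n A$ is tall in the unstable direction, $W$ might slip between many of them. The key mechanism rescuing the argument is that the weight $\sup|J^sT^n|^t e^{S_n g}$ is essentially locally constant across unstably adjacent boxes (by Lemma~\ref{lem:comparable} and \eqref{eq:phi A}), and by uniform transversality and Definition~\ref{def:k0d0} (with $\delta_1 < \delta_2$ enforced in \eqref{eq:delta11}), the unstable projection of $W$ sees a positive fraction of the tiling of the ``long unstable extents'' contributed by $\cA_n(\delta_2)$. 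A Fubini-type argument projecting along unstable leaves, combined with the homogeneity groupings introduced before \eqref{eq:An delta}, should give a uniform lower bound on the weighted measure of $E_n(W)$; the monotonicity of $c_0(\upsilon)$ then transfers directly to $c_1(\upsilon)$.
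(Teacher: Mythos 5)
Your first reduction (invoking Lemmas~\ref{lem:most long} and \ref{lem:most long t>1} to replace $Q_n(t,g)$ by the sum over $\cA_n(\delta_2)$) is exactly the paper's first step, but the transfer to $\cG_n(W)$ contains a genuine gap, and it is precisely at the point you flag as ``the main obstacle.'' Your proposed rescue rests on two claims that do not hold. First, the weight $\sup_A |J^sT^n|^t e^{S_ng}$ is \emph{not} essentially locally constant across unstably adjacent elements: Lemma~\ref{lem:comparable} (and \eqref{eq:phi A}) give comparability only for points inside the \emph{same} element of $\cM_{-n}^{0,\bH}\bigvee\cH$; two neighbouring strips $T^nA$, $T^nA'$ can have wildly different weights, since their orbits may have experienced completely different sequences of near-grazing collisions — indeed the enormous variation of these weights among elements is the whole reason the growth lemmas of Section~\ref{GrL} are delicate. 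Second, there is no justification that the family $E_n(W)=\{A\in\cA_n(\delta_2): T^nA\cap W\neq\emptyset\}$ carries a uniformly positive fraction of the weighted sum: the strips $T^nA$ have unstable extent only of order $\delta_2$, so only those anchored in an $O(\delta_2)$-neighbourhood of the fixed curve $W$ can meet it, while the bulk of the weight may sit in parts of $M$ that $W$ never visits. A ``Fubini-type projection along unstable leaves'' cannot produce such an equidistribution statement for free; some dynamical input is indispensable.

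The paper supplies exactly this missing input, and in a different way than intersecting $W$ itself with the strips. One first pigeonholes over a \emph{finite} family of high-density Cantor rectangles $\cR(\delta_2)=\{R_1,\dots,R_k\}$ to find $R_{i^*}$ whose associated subfamily $\cA_n^{i^*}$ (those $A$ for which $B_{n-1}(A)$ contains a long unstable curve properly crossing $R_{i^*}$) already carries a fraction $c_0/k$ of $Q_n(t,g)$ — see \eqref{eq:k part}. Then, using the mixing of $\musrb$ via \cite[Lemma~7.90]{chernov book}, there is a \emph{fixed} time $N_1=N_1(\delta_2)$, independent of $n$ and of $W$, such that some component $W_1\in\cG_{N_1}(W)$ properly crosses $R_{i^*}$; this single component then crosses $B_{n-1}(A)$ for \emph{every} $A\in\cA_n^{i^*}$, so the comparability of Lemma~\ref{lem:comparable} can be applied element by element (each comparison now legitimately within one element). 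Finally the resulting lower bound on the sum over $\cG_{n+N_1-1}(W)$ is converted into one over $\cG_n(W)$ using Lemma~\ref{lem:extra growth} for the upper direction and the one-step bound $|T^{-1}V|\le C|V|^{(q+1)/(2q+1)}$ (iterated $N_1-1$ times) for the lower direction, at the price of constants depending only on $N_1$, $\delta_2'$ and $k$. Without this crossing-plus-pigeonhole mechanism — or some substitute equidistribution argument, which your sketch does not provide — the step from $\cA_n(\delta_2)$ to $\cG_n(W)$ does not go through.
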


\begin{proof}
As usual we first consider $g=0$.
The main idea of the proof is to show that for each curve 
$W \in \cW^s$ with $|W| \ge \delta_1/3$,
the image $T^{-n}W$ intersects a positive fraction of elements of $\cM_n^{0, \bH}$, weighted
by $|J^sT^n|^t$, for $n$ large enough.
The mixing property of $\mu_{\tiny{\mbox{SRB}}}$ is instrumental here.

To do this, we recall the construction of locally maximal homogeneous Cantor rectangles from \cite[Section~7.12]{chernov book} 
(and similar to those used in
\cite[Section~5.3]{max} where we worked\footnote{The construction in
\cite[Section~7.12]{chernov book}  uses $\cW^s_{\mathbb {H}}$, but since each $V$  in 
$\cW^s$ are unions of manifolds $W_i$ in $\cW^s_{\mathbb {H}}$, if the $W_i$ cross properly, so does $V$.} with $\cW^s$ instead
of $\cW^s_{\mathbb {H}}$).  We call $D \subset M$ a {\em solid rectangle} if $D$ is a closed, simply connected region whose
boundary consists of two homogeneous unstable and two stable manifolds.  Given such a rectangle $D$, the maximal
Cantor rectangle $R(D)$ in $D$ is the union of all points in $D$ whose homogeneous stable and unstable manifolds completely cross
$D$.  Note that $R(D)$ is closed and contains the boundary of $D$ \cite[Section~7.11]{chernov book}, but is not simply connected due to the effect of the singularities, which create, for any $\ve>0$,
a dense set of points with stable and unstable manifolds shorter than $\ve$.

In what follows, we restrict to Cantor rectangles with sufficiently high density, i.e.,
\begin{equation}
\label{eq:dense R}
\inf_{x \in R} \frac{m_{W^u}(W^u(x) \cap R)}{m_{W^u}(W^u(x) \cap D(R))} \ge 0.99 \, ,
\end{equation}
where $m_{W^u}$ denotes arclength measure along an unstable manifold.
We say that a homogeneous stable curve $W \in   \hW^s_H$ {\em properly crosses} a maximal homogeneous Cantor rectangle
$R = R(D)$ satisfying \eqref{eq:dense R} if 
 $W$ crosses both unstable sides of $D$,
 and, in addition, for every $x \in R$, the point $W \cap W^u(x)$ divides the curve $W^u(x) \cap D(R)$ in a ratio between
  $0.1$ and $0.9$, and on either side of $W \cap W^u(x)$, the density of $R$ in $W^u(x) \cap D(R)$ is at least $0.9$.  
Reversing the roles of stable and unstable manifolds, we obtain the analogous definition of an unstable curve 
properly crossing a Cantor rectangle.

By \cite[Lemma~7.87]{chernov book}, we choose a finite number of locally maximal homogeneous Cantor
rectangles $\cR({\delta_2}) = \{ R_1, \ldots, R_k\}$ satisfying \eqref{eq:dense R} and its analogue along stable manifolds, with the property 
that any homogeneous stable or unstable curve of length at least
{ $\delta_2/3$} properly crosses at least one of them.  Let {$\delta_2'$} be the minimum diameter of the rectangles in
$\cR(\delta_2)$ and note that $\delta_2'$ is a function only of $\delta_2$.

Now fix $n \ge 1$ and let $\cA_n^i \subset \cA_n(\delta_2)$ denote those
elements $A \in \cA_n(\delta_2)$ such that $B_{n-1}(A)$ contains an homogeneous unstable curve of length at least
$\delta_2/3$ that properly crosses $R_i$.  Due to Lemma~\ref{lem:most long} 
 for $t \le 1$ and Lemma~\ref{lem:most long t>1} for $t>1$, there exists $i^*$ such that
\begin{equation}
\label{eq:k part}
\sum_{A \in \cA_n^{i^*}} \sup_{x \in A \cap M'} |J^sT^n(x)|^t \ge \frac{c_0}{k} Q_n(t) \, .
\end{equation}

Fix an arbitrary homogeneous $W \in \hW^s$ with $|W| \ge \delta_1/3$, and let $R_j \in \cR(\delta_2)$ denote the Cantor rectangle that is
properly crossed by $W$ { (recalling that $\delta_2 \le \delta_1$)}.  By the mixing property of $\musrb$ and \cite[Lemma~7.90]{chernov book}, there
exists $N_1 = N_1(\delta_2) \ge 1$ such that $T^{-N_1}R_i$ has a homogeneous
connected component that properly crosses $R_{i^*}$, for all $i = 1, \ldots, k$.  In particular, 
$T^{-N_1}R_j$ properly crosses $R_{i^*}$,  so an element of $\cG_{N_1}(W)$ properly crosses $R_{i^*}$.

Let $W_1 \in \cG_{N_1}(W)$ denote the component of $T^{-N_1}W$ that properly crosses $R_{i^*}$ and note that $W_1$
crosses $B_{n-1}(A)$ for all $A \in \cA_n^{i^*}$.  Since $W_1$ is homogeneous and
$N_1 \ge 1$,  $W_1$ cannot cross a singularity line in $T^{-1}\cS_0$ (since then
the curve would have been subdivided at time $N_1-1$), and so for each such $A$, $W_1$ crosses 
an element $B'_A \in \cM_{-n+1}^{1, \mathbb{H}}$, $B'_A \subset B_{n-1}(A)$.  
Let $V'_A = W_1 \cap B'_A$ and let 
$V_A = T^{-n+1}V'_A$.
Then $V_A$ is a homogeneous component belonging to an element of $\cG_{n-1+N_1}(W)$.  
By Lemma~\ref{lem:comparable}, 
recalling the notation
$|J^sT^k|^t_{A'} =  \sup_{x \in A' \cap M'} |J^sT^k(x)|^t$ from \eqref{forbrevity},
\[
|J_{V_A}T^{n-1}|^t_{C^0(V_A)} = e^{\pm tC} |J^sT^{n-1}|^t_{T^{-n+1}B_{n-1}(A)} \, ,
\]
since $T^{-n+1}B_{n-1}(A) \in \cM_0^{n-1, \mathbb{H}} \bigvee T^{-n+1}{\cH}$. 
By definition, $T^{-n+1}B_{n-1}(A)$ contains $A$.  Thus,
\begin{equation}
\label{eq:first compare}
|J^sT^{n-1}|^t_A \le e^{tC} |J_{V_A}T^{n-1}|^t_{C^0(V_A)} \, .
\end{equation}

Next, we wish to compare $J^sT$ on $T^{n-1}A$ with $J_{V'_A}T$.  Since $V'_A \subset W_1 \subset T^{-N_1}W$, we have that
$TV'_A$ is a stable curve, and so is $TW_1$, so that $J_{V'_A}T = e^{\pm C_d} J_{W_1}T = e^{\pm C_d} k^{-q}$, where 
$k$ is the index of the homogeneity strip containing $W_1$.  But since  $|W_1| \ge \delta_2'$ 
(since $W_1$ properly crosses $R_{i^*}$), we have
$k \le (\delta_2')^{ -1/(q+1)}$ and so $J_{W_1}T \ge C (\delta_2')^{ q/(q+1)}$.  Since $J^sT \le e^{C_d}$, we have, using
\eqref{eq:first compare}, that
$|J^sT^n|^t_A \le C (\delta_2')^{ -tq/(q+1)} |J_{V_A}T^n|^t_{C^0(V_A)}$.
Then summing over $A \in \cA_n^{i^*}$, we obtain,
\begin{equation}
\label{eq:i* compare}
\sum_{A \in \cA_n^{i^*}} |J^sT^n|^t_A \le C (\delta_2')^{-t q/(q+1)} \sum_{V_i \in \cG_n(TW_1)} |J_{V_i}T^n|^t_{C^0(V_i)} \, .
\end{equation}

Next, we express the sum over $\cG_{n+ N_1-1}(W)$ in two  ways.  On the one hand, by Lemma~\ref{lem:extra growth},
\begin{equation}
\label{eq:upper}
\begin{split}
\sum_{V_j \in \cG_{n+N_1-1}(W)} |J_{V_j}T^{n+N_1-1}|^t_{C^0(V_j)}
& \le \sum_{W_i \in \cG_n(W)} |J_{W_i}T^n|^t_{C^0(W_i)} \sum_{V_j \in \cG_{N_1-1}(W_i)} |J_{V_j}T^{N_1-1}|^t_{C^0(V_j)} \\
& \le C_2[0] Q_{N_1-1}(t) \sum_{W_i \in \cG_n(W)} |J_{W_i}T^n|^t_{C^0(W_i)} \, .
\end{split}
\end{equation}
On the other hand, letting $W_1'$ be the element of $\cG_{N_1-1}(W)$ containing $TW_1$,
\begin{equation}
\label{eq:lower}
\begin{split}
\sum_{V_j \in \cG_{n+N_1-1}(W)} |J_{V_j}T^{n+N_1-1}|^t_{C^0(V_j)}
& \ge e^{-t C_d} |J_{W_1'}T^{N_1-1}|^t_{C^0(W_1')} \sum_{V_i \in \cG_n(W_1')} |J_{V_i}T^n|^t_{C^0(V_i)} \\
& \ge e^{-t C_d} C' (\delta_2')^{t  \big(\frac{2q+1}{q+1} \big)^{N_1-1} }  \sum_{V_i \in \cG_n(W_1')} |J_{V_i}T^n|^t_{C^0(V_i)} \, ,
\end{split} 
\end{equation}
where the lower bound on $|J_{W_1'}T^{N_1-1}|^t_{C^0(W_1')}$ comes from the fact that $|W_1'| \ge \delta_2'$ and
for a stable curve $V$ { such that $V$ and $T^{-1}V$ are both homogeneous}, $|T^{-1}V| \le C |V|^{ \frac{q+1}{2q+1}}$, and this bound can be iterated $N_1-1$ times
as in \cite[eq. (5.3)]{max}.

Combining \eqref{eq:i* compare}, \eqref{eq:upper} and \eqref{eq:lower}, and recalling \eqref{eq:k part} yields,
\[
\sum_{W_i \in \cG_n(W)} |J_{W_i}T^n|^t_{C^0(W_i)} \ge (C_2[0] )^{-1} Q_{N_1-1}(t)^{-1} C'' (\delta_2')^{t  \big( \frac{2q+1}{q+1} \big)^{N_1} } 
\frac{c_0}{k} Q_n(t) \, ,
\]
which completes the proof of the proposition if $g=0$.

\smallskip
If $g\ne 0$, starting  as above, we choose the finite family
of Cantor rectangles
$\cR(\delta_2)$ in the same way, and find an index $i^*$ such that
the analogue of \eqref{eq:k part}
\[
\sum_{A \in \cA_n^{i^*}} \sup_{x \in A \cap M'} |J^sT^n(x)|^t e^{ S_n g(x)}
\ge \frac{c_0}{k} Q_n(t, g) \, ,
\]
holds, using Lemma~\ref{lem:most long} if $t\le 1$
and Lemma~\ref{lem:most long t>1} if $t> 1$.
Fixing $W \in \cW^s$, choosing $N_1$ as above,
and using the same  notation introduced there, we obtain the 
 modification of \eqref{eq:first compare},
\[
| |J^sT^{n-1}|^t e^{S_{n-1} g}|_A \le e^{tC} (1+\bar C C_*  |\nabla g |_{C^0}) ||J_{V_A}T^{n-1}|^t   e^{S_{n-1} g}|_{C^0(V_A)} \, ,
\]
applying \eqref{eq:phi A}.  Next, \eqref{eq:i* compare} needs only the multiplication 
by $e^{ S_n g}$ to each term on both sides, up to replacing the constant
$C$ by $C (1+ \bar C\, C_* \cdot |\nabla g |_{C^0})$.  The upper bound \eqref{eq:upper} requires only a change of constant
to $C_2[0] Q_{N_1-1}(t, g)$, using Lemma~\ref{lem:extra growth} with $\varsigma=0$,
while the lower bound \eqref{eq:lower} requires the added factor
$e^{- (N_1-1)  |g|_{C^0}}$ on the right hand side.  Since $N_1$ is fixed (depending only on
$\cR(\delta_2)$), these bounds are combined as in the 
case $g=0$ to complete the proof of the proposition.
\end{proof}

The following important consequence of Proposition~\ref{prop:lower bound}  will be used
to characterize the peripheral spectrum of $\cL_t$.

\begin{proposition}[Exact Exponential Growth of $Q_n(t,g)$]
\label{prop:exact}
Let $t_0\in (0,1)$,  $t_1 \in (1, t_*)$ and  $\bar t_1\in (1,s_1)$.
There exists a decreasing function $c_2 : [0, \infty) \to \mathbb{R}^+$
such that for any $\upsilon \ge 0$ and 
any $g\in C^1$ with $|\nabla g |_{C^0}\le \upsilon$ and 
such that \eqref{eq:up one}  and \eqref{eq:up two} hold, we have 
\begin{equation}
\label{eq:exact up}
e^{n P_*(t, g)} \le Q_n(t, g) \le \frac{2}{c_2(\upsilon)} e^{n P_*(t, g)}\, ,\,
\forall t \in [t_0,  \bar t_1 ]\, ,\,\, 
\forall n \ge 1 \, .
\end{equation}
\end{proposition}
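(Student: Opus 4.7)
The plan is to derive \eqref{eq:exact up} by combining submultiplicativity of $Q_n(t,g)$ (already proven in Proposition~\ref{prop:Ptilde}) with a converse quasi-supermultiplicative inequality obtained from Proposition~\ref{prop:lower bound} together with the growth lemmas of the preceding subsections. The lower bound is immediate: since $\log Q_n(t,g)$ is subadditive, Fekete's lemma yields $P_*(t,g)=\inf_{n\ge 1}\tfrac{1}{n}\log Q_n(t,g)$, whence $Q_n(t,g)\ge e^{nP_*(t,g)}$ for every $n\ge 1$.

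For the upper bound, the key step is to establish a reverse inequality
\[
Q_{n+k}(t,g)\;\ge\;c_*(\upsilon)\,Q_n(t,g)\,Q_k(t,g), \qquad \forall\, n\ge 1,\ k\ge n_1,\ t\in[t_0,\bar t_1],
\]
with $c_*(\upsilon)>0$ decreasing in $\upsilon$ and $n_1$ from Lemmas~\ref{lem:short small}/\ref{lem:short small t>1}. Iterating with $k=n\ge n_1$ gives $Q_{jn}(t,g)\ge c_*^{\,j-1}Q_n(t,g)^j$; letting $j\to\infty$ in $\tfrac{1}{jn}\log Q_{jn}(t,g)\to P_*(t,g)$ yields $Q_n(t,g)\le c_*^{-1}\,e^{nP_*(t,g)}$ for all $n\ge n_1$, while the finitely many values $n<n_1$ are absorbed into the factor $2/c_2(\upsilon)$ appearing in the statement.

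To prove the quasi-supermultiplicative estimate, fix any $W\in\hW^s$ with $|W|\ge\delta_1/3$ and abbreviate $\Sigma_m(V):=\sum_{U\in\cG_m(V)}|J_UT^m|^t_{C^0(U)}|e^{S_mg}|_{C^0(U)}$. Proposition~\ref{prop:lower bound} (lower bound) and Lemma~\ref{lem:extra growth} with $\varsigma=0$ (upper bound) sandwich
\[
c_1(\upsilon)\,Q_m(t,g)\;\le\;\Sigma_m(W)\;\le\;C_2\,Q_m(t,g), \qquad \forall\, m\ge 1.
\]
The disjoint refinement $\cG_{n+k}(W)=\bigsqcup_{V\in\cG_k(W)}\cG_n(V)$, the chain rule $J_UT^{n+k}=J_UT^n\cdot (J_VT^k)\circ T^n$ on each $U\in\cG_n(V)$, and bounded distortion of $|J_VT^k|^t e^{S_kg}$ on $V$ give
\[
\Sigma_{n+k}(W)\;\ge\;c\sum_{V\in\cG_k(W)}|J_VT^k|^t_{C^0(V)}|e^{S_kg}|_{C^0(V)}\cdot\Sigma_n(V).
\]
Restrict the outer sum to $V\in L_k^{\delta_1}(W)$; since then $|V|\ge\delta_1/3$, Proposition~\ref{prop:lower bound} lower-bounds $\Sigma_n(V)\ge c_1(\upsilon)\,Q_n(t,g)$ uniformly in such $V$. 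Next, \eqref{eq:delta1} (if $t\le 1$) or \eqref{eq:delta11} (if $t\in(1,\bar t_1]$ and $k\ge n_1$) ensures that the restricted outer sum retains at least $\tfrac34$ of the sum over $\cG_k^{\delta_1}(W)$, which, differing from the sum over $\cG_k(W)$ only by the finer subdivision scale of long pieces, is bounded below by $c_1(\upsilon)\,Q_k(t,g)$ up to a bounded multiplicative distortion factor. Combining with the upper bound $\Sigma_{n+k}(W)\le C_2\,Q_{n+k}(t,g)$ then delivers the desired quasi-supermultiplicative inequality.

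The main obstacle is ensuring that the long-descendant condition $|V|\ge\delta_1/3$ is met by a definite fraction of the $\cG_k(W)$-weighted sum; this is exactly the content of Lemmas~\ref{lem:short small}/\ref{lem:short small t>1}, and it forces the restriction $k\ge n_1$ under which the bootstrap iteration must be carried out. The uniformity of the constants across $t\in[t_0,\bar t_1]$ is inherited from the corresponding uniformity already built into Proposition~\ref{prop:lower bound}, Lemma~\ref{lem:extra growth}, and the short/small lemmas, while the dependence on $\upsilon$ tracks the distortion constants of $e^{S_mg}$ via \eqref{eq:phi dist}--\eqref{eq:phi A} under the standing assumptions \eqref{eq:up one}--\eqref{eq:up two} on $g$.
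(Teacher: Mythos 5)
Your proposal is correct and follows essentially the same route as the paper: the lower bound via submultiplicativity and Fekete, and the upper bound via the supermultiplicative inequality $Q_{n+k}(t,g)\ge c\,Q_n(t,g)Q_k(t,g)$, obtained by sandwiching $\sum_{W_i\in\cG_{n+k}^{\delta_1}(W)}|J_{W_i}T^{n+k}|^t_{C^0}|e^{S_{n+k}g}|_{C^0}$ between $C_2\,Q_{n+k}(t,g)$ (Lemma~\ref{lem:extra growth}) and $c\,Q_n(t,g)Q_k(t,g)$ (Proposition~\ref{prop:lower bound} on long descendants together with \eqref{eq:delta1}/\eqref{eq:delta11}), then iterating. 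The paper merely delegates the final iteration step to the argument of \cite[Proposition~4.6]{max}, which is exactly the $Q_{jn}\ge c^{\,j-1}Q_n^j$ iteration you spell out.
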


\begin{proof}
The lower bound follows immediately from submultiplicativity of $Q_n(t,g)$
(obtained in the proof of Proposition~\ref{prop:Ptilde} for any $t>0$ and
$g\in C^1$) since then
$P_*(t,g) = \inf_n \frac 1n \log Q_n(t,g)$.  

To obtain the upper bound
for $g=0$, we first prove the following supermultiplicative
property:  There exists $c_2>0$ such that for all $t\in [t_0, \bar t_1]$ and for any $j,n \ge 1$,
\begin{equation}
\label{eq:super}
Q_{n+j}(t) \ge c_2 Q_n(t) Q_j(t) \, .
\end{equation}
Let $W \in \hW^s$ with $|W| \ge \delta_1/3$.  For $n$, $j \ge 1$, by 
Lemma~\ref{lem:extra growth} 
with $\varsigma = 0$,
\[
\sum_{W_i \in \cG_{n+j}^{\delta_1}(W)} |J_{W_i}T^{n+j}|^t_{C^0(W_i)} \le C_2[0]  \,  Q_{n+j}(t) \, .
\]
On the other hand, if $n \ge n_1$, then using Lemma~\ref{lem:distortion},
\begin{align*}
\sum_{W_i \in \cG_{n+j}^{\delta_1}(W)}  |J_{W_i}T^{n+j}|^t_{C^0(W_i)}
& \ge C  \sum_{V_k \in \cG_n^{\delta_1}(W)}  |J_{V_k}T^n|^t_{C^0(V_k)}
\sum_{W_i \in \cG_j^{\delta_1}(V_k)} |J_{W_i}T^j|^t_{C^0(W_i)} \\
& \ge C \sum_{V_k \in L_n^{\delta_1}(W)} |J_{V_k}T^n|^t_{C^0(V_k)}
\sum_{W_i \in \cG_j^{\delta_1}(V_k)} |J_{W_i}T^j|^t_{C^0(W_i)} \\
& \ge C \sum_{V_k \in L_n^{\delta_1}(W)} |J_{V_k}T^n|^t_{C^0(V_k)} c_1 Q_j(t) \\
& \ge C c_1 Q_j(t) \tfrac 34 \sum_{V_k \in \cG_n^{\delta_1}(W)} |J_{V_k}T^n|^t_{C^0(V_k)} 
\; \ge \; C' c_1^2 Q_j(t) Q_n(t) \, , 
\end{align*}
where in the third and fifth inequalities, we have used Proposition~\ref{prop:lower bound} and in the
fourth inequality we have applied \eqref{eq:delta11}.
This proves  \eqref{eq:super} for $n \ge n_1$, and the case
 $n \le n_1$ follows by adjusting the constant $c_2$.
(Note that $c_2$ is uniform in $t$.)
The proof of the upper bound on $Q_n(t)$ then proceeds precisely as in the proof of
\cite[Proposition~4.6]{max}.
The case of nonzero $g$ is identical.
\end{proof}


\subsection{Growth Lemmas and Exact Exponential Growth for $t \in (s_1,t_*)$}
\label{sec:boot}

The main result of this section is Proposition~\ref{prop:summary}
which extends Propositions~\ref{prop:lower bound} and \ref{prop:exact} to all $t<t_*$.
The constant
$t_*>1$ is defined by \eqref{eq:t* def}, while $s_1>1$ is introduced  in Definition~\ref{def:t_*}.
What we have proved up to now suffices
     to establish all the results of  Sections~\ref{sec:spec}--\ref{analytic}  for $t\in (0,s_1)$. In particular  Theorem~\ref{thm:geo var} holds in a neighbourhood of
     $t=1$, so  we know that $s_1\le t_*$.

Recall that $t_1 \in (1, t_*)$ is fixed in Definition~\ref{def:k0d0}, determining 
$\theta \in (\Lambda^{-1}, \Lambda^{-1/2})$ and
 our main statements are for $t\in [t_0,t_1]$.
If $s_1 \ge t_1$, there is nothing to do. 
Otherwise,  $\theta^{s_1} < e^{P(s_1)} \le e^{P_*(s_1)}$ by 
Proposition~\ref{prop:pressure}.  Since $P_*(t)$ is convex and decreasing, the 
left-hand slopes are lower semi-continuous, so
we may choose $\bar t_1 \in(1,s_1)$ so that  the intersection point $s_2(\bar t_1)$ 
between the tangent line to $P_*(t)$ (from the left) at
$t = \bar t_1$ and the line $t \log \theta$ satisfies $s_2 > s_1$.  
Indeed, we have
\begin{equation}
\label{eq:s_2}
s_2 =s_2(\bar t_1):= \frac{P_*(\bar t_1) + \chi_2 \bar t_1}{\chi_2+ \log \theta} 
 \, ,
\quad \mbox{ where } \chi_2 =\chi_2(\bar t_1):= \lim_{s \to \bar t_1^-} \frac{P_*(s) - P_*(\bar t_1)}{\bar t_1-s} \ge \log \Lambda \, ,
\end{equation}
where (by convexity of $P_*(t)$)  the limit defining $\chi_2$ exists and $P_*(t)$ lies above its tangents,
so that $\theta^t < e^{P_*(t)}$ for all $t < s_2$.

  Our next lemma
is an analogue of Lemma~\ref{lem:low t>1},  interpolating now from $\bar t_1$ to $s_2$.

\begin{lemma}
\label{lem:low t>s_1}
Fix $t_0\in (0,1)$  and $t_1 \in (1, t_*)$,  and let $\bar t_1 \in (1, s_1)$ and $s_2(\bar t_1)>s_1$
be as above.  For any  $\bar t_2 \in (s_1, s_2)$ and any $\kappa >0$, there exist
$C_\kappa>0$,  $\eta_\kappa >0$ such that 
for all $g \in C^0$,
all $\delta>0$, all $W \in \hW^s$  with  $|W| \ge \delta/3$, and all $n \ge 1$,
\begin{equation}
\label{eq:low s_1}
\sum_{W_i \in \cG_n^\delta(W)} |J_{W_i}T^n|^t_{C^0(W_i)} |e^{ S_ng}|_{C^0(W_i)}
\ge C_\kappa \delta^{\frac{1}{\eta_\kappa}-1} e^{-n(\chi_2 + \kappa)(t-\bar t_1) + n P_*(\bar t_1) - n  |g|_{C^0}} \, ,\,
\, \forall t \in [\bar t_1,\bar t_2] \, .
\end{equation}
Moreover, if $|W| \ge \delta_0/3$, then 
\[
\sum_{W_i \in \cG_n^\delta(W)} |J_{W_i}T^n|^t_{C^0(W_i)} |e^{ S_ng}|_{C^0(W_i)}
\ge C_\kappa  \delta^{- 1}  e^{-n(\chi_2 + \kappa)(t-1) - n |g|_{C^0}} \, ,\,
\, \forall n \ge 1 \,,\,\,\forall t \in [\bar t_1, \bar t_2] \, .
\]
\end{lemma}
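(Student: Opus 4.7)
I follow the interpolation scheme of Lemma \ref{lem:low t>1}, but now pivoting on the exponent $\bar t_1 \in (1, s_1)$ rather than on $1$, exploiting the fact that Propositions \ref{prop:lower bound} and \ref{prop:exact} have already been established on $[t_0, \bar t_1]$ (since $\bar t_1 < s_1 \le t_*$).

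For $t \in [\bar t_1, \bar t_2]$ and any $s \in (0, \bar t_1)$, set $\eta = \eta(s,t) := (\bar t_1 - s)/(t - s) \in (0,1]$ so that $\eta t + (1-\eta) s = \bar t_1$. The H\"older inequality applied with exponents $t$ and $s$ to positive numbers $a_i = |J_{W_i}T^n|_{C^0(W_i)} \cdot |e^{S_n g}|_{C^0(W_i)}^{1/\bar t_1}$ (so that powers of $e^{S_n g}$ distribute through) yields
$$
\sum_{W_i \in \cG_n^\delta(W)} a_i^t \;\ge\; \frac{\bigl(\sum_{W_i \in \cG_n^\delta(W)} a_i^{\bar t_1}\bigr)^{1/\eta}}{\bigl(\sum_{W_i \in \cG_n^\delta(W)} a_i^s\bigr)^{(1-\eta)/\eta}}\, .
$$
For the numerator I apply Proposition \ref{prop:lower bound} at the exponent $\bar t_1$ together with the lower bound $Q_n(\bar t_1,g) \ge e^{n P_*(\bar t_1,g)}$ from Proposition \ref{prop:exact}; for the denominator I apply Lemma \ref{lem:extra growth} (with $\varsigma=0$, $\tilde t_1 = \bar t_1$) combined with the upper bound $Q_n(s,g) \le 2 c_2^{-1} e^{n P_*(s,g)}$ from Proposition \ref{prop:exact}, picking up a factor $\delta_0/\delta$ exactly as in Lemma \ref{lem:low t>1} when comparing $\cG_n^\delta(W)$ to $\cG_n^{\delta_0}(W)$.

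Combining these estimates (and using $P_*(\cdot,g) = P_*(\cdot) + \int g\,d\mu$-type identities to strip the $g$-dependence up to an $e^{-n|g|_{C^0}}$ factor, or alternatively by tracking the inserted weights $|e^{S_ng}|$ in the H\"older step as in the proof of Lemma \ref{lem:low t>1}), the resulting exponent in $n$ equals
$$
\tfrac{1}{\eta} P_*(\bar t_1) - \tfrac{1-\eta}{\eta} P_*(s) \;=\; P_*(\bar t_1) \,-\, (t - \bar t_1)\cdot\tfrac{P_*(s) - P_*(\bar t_1)}{\bar t_1 - s}\, .
$$
By the definition of $\chi_2$ in \eqref{eq:s_2} as (the negative of) the left derivative of $P_*$ at $\bar t_1$, the secant slope $(P_*(s) - P_*(\bar t_1))/(\bar t_1 - s)$ tends to $\chi_2$ as $s \to \bar t_1^-$. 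Given $\kappa > 0$, I therefore fix $s = s(\kappa) \in (0, \bar t_1)$ so that this secant is at most $\chi_2 + \kappa$, and set $\eta_\kappa := (\bar t_1 - s(\kappa))/(\bar t_2 - s(\kappa)) > 0$; the exponent on $\delta$ in the denominator, $-(1-\eta)/\eta$, then becomes $1 - 1/\eta_\kappa$ after raising to the power $1/\eta$ in the numerator is absorbed, giving precisely the factor $\delta^{1/\eta_\kappa - 1}$. The sharper bound when $|W| \ge \delta_0/3$ follows because Lemma \ref{lem:extra growth} can then be applied directly to $W$, removing the $\delta_0/\delta$ loss.

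The principal obstacle is that Proposition \ref{prop:lower bound} requires $|W| \ge \delta_1/3$, whereas our hypothesis $|W| \ge \delta/3$ allows $\delta < \delta_1$. To reconcile this, I first iterate $n_0 = n_0(\delta) \le C \log_\Lambda(\delta_1/\delta)$ steps so that, by uniform hyperbolicity and the fact that expanded preimages accumulate mass on long pieces (via the machinery of Lemmas \ref{lem:short small} and \ref{lem:short small t>1}), some descendant $W' \in \cG_{n_0}(W)$ has length $\ge \delta_1/3$; Proposition \ref{prop:lower bound} is then applied to $W'$ for the remaining $n - n_0$ iterates, and the bounded contribution from the initial $n_0$ steps is absorbed into the $\delta^{1/\eta_\kappa - 1}$ prefactor.
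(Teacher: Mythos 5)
Your treatment of the main display \eqref{eq:low s_1} is essentially the paper's own proof: the H\"older interpolation pivoted at $\bar t_1$, with Proposition~\ref{prop:lower bound} plus the lower bound of Proposition~\ref{prop:exact} in the numerator, Lemma~\ref{lem:extra growth} plus the upper bound of Proposition~\ref{prop:exact} in the denominator, the secant slope $(P_*(s)-P_*(\bar t_1))/(\bar t_1-s)\to\chi_2$, and $\eta_\kappa=(\bar t_1-s(\kappa))/(\bar t_2-s(\kappa))$; your exponent computation agrees with \eqref{eq:interpol s_2}. One small caveat: attaching $|e^{S_ng}|^{1/\bar t_1}$ to $a_i$ means $\sum_i a_i^t$ is not the left-hand side of \eqref{eq:low s_1}, and untangling it does not give exactly the stated factor $e^{-n|g|_{C^0}}$; the clean route (which the paper uses, and which you mention) is to prove the case $g=0$ and then apply $|e^{S_ng}|_{C^0(W_i)}\ge e^{-n|g|_{C^0}}$ on each piece.

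Two points are genuine gaps. First, the ``Moreover'' inequality does not follow from your one-line remark: the factor $\delta_0/\delta$ you want to remove sits in the \emph{denominator} (it comes from comparing $\cG_n^\delta(W)$ with $\cG_n^{\delta_0}(W)$ when invoking Lemma~\ref{lem:extra growth}) and is present whether or not $|W|\ge\delta_0/3$. In the template, the proof of Lemma~\ref{lem:low t>1}, the gain for $|W|\ge\delta_0/3$ comes from the \emph{numerator}: \eqref{eq:grow} improves from $C_1/3$ to $C_1\delta_0/(3\delta)$, and $(\delta_0/\delta)^{1/\eta}$ against the denominator's $(\delta_0/\delta)^{(1-\eta)/\eta}$ yields exactly the prefactor $\delta^{-1}$. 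Here the numerator is at exponent $\bar t_1>1$, so an extra argument is needed to gain the factor $\delta_0/\delta$ (e.g.\ grouping the $\gtrsim|V_j|/\delta$ scale-$\delta$ subpieces of each long $V_j\in\cG_n(W)$, whose suprema are comparable by \eqref{eq:ratio}, and then using \eqref{eq:delta11} and Propositions~\ref{prop:lower bound}--\ref{prop:exact}); moreover that argument produces the exponent $-n(\chi_2+\kappa)(t-\bar t_1)+nP_*(\bar t_1)$ of the first display, whereas your write-up never engages with the different exponent $-n(\chi_2+\kappa)(t-1)$ appearing in the second display. Second, your repair of the ``obstacle'' $\delta<\delta_1$ fails quantitatively: waiting $n_0(\delta)\sim\log(\delta_1/\delta)$ iterates for a long descendant costs the factor $|J^sT^{n_0}|^{\bar t_1}$ on that descendant, and along weakly homogeneous stable curves the only available lower bound is the one used in \eqref{eq:lower} and in Lemma~\ref{lem:equivalent}, of size $\delta_1^{\,\bar t_1\left(\frac{2q+1}{q+1}\right)^{n_0}}$, i.e.\ stretched-exponentially small in $1/\delta$; this cannot be absorbed into $C_\kappa\delta^{1/\eta_\kappa-1}$ with $C_\kappa$ independent of $\delta$, as the lemma requires. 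The paper does not take this detour: it applies Proposition~\ref{prop:lower bound} directly in \eqref{eq:interpol s_2}, using that the sum over the finer collection $\cG_n^\delta(W)$ dominates, term by term, the sum over $\cG_n(W)$; if one is worried about the hypothesis $|W|\ge\delta_1/3$ there, the fix must not introduce a $\delta$-dependent waiting time of the kind you propose.
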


\begin{proof}
We adapt the proof of Lemma~\ref{lem:low t>1}.  First assume $g=0$.
For $t \ge s_1$, let $s \in (1, \bar t_1)$,  and $\eta(s)\in (0,1]$ such that 
$\eta t + (1-\eta)s = \bar t_1$.  Then again using the H\"older inequality,
for any $W \in \hW^s$ with  $|W| \ge \delta/3$
and all $n \ge 1$,
\begin{equation}
\label{eq:interpol s_2}
\sum_{W_i \in \cG_n^\delta(W)} |J_{W_i}T^n|^t
\ge \frac{\left( \sum_{W_i \in \cG_n^\delta(W)} |J_{W_i}T^n|^{\bar t_1} \right)^{1/\eta}}
{\left( \sum_{W_i \in \cG_n^\delta(W)} |J_{W_i}T^n|^s \right)^{(1-\eta)/\eta}}
\ge \frac{\big( c_1 e^{n P_*(\bar t_1)} \big)^{1/\eta}}{\big( C_2[0]  \frac{\delta_0}{\delta} \frac{2}{c_2}
e^{n P_*(s)} \big)^{(1-\eta)/\eta} }   \, ,
\end{equation}
where we have used Propositions~\ref{prop:lower bound} and \ref{prop:exact} 
for the lower bound in the numerator, and
Lemma~\ref{lem:extra growth} with $\varsigma=0$
and Proposition~\ref{prop:exact} for the upper bound in the denominator.
Since $\eta = (\bar t_1-s)/(t-s)$, 
\[
e^{-n(P_*(s) - P_*(\bar t_1))/\eta} e^{n P_*(s)} 
= e^{-n(t-s) \frac{P_*(s) - P_*(\bar t_1)}{\bar t_1 - s}} e^{n P_*(s)} \, .
\]
For fixed $\kappa>0$, by \eqref{eq:s_2}, we may
choose $s=s(\kappa)\in (1, \bar t_1)$ and $\eta_\kappa > 0$
such that $(t-s)\frac{P_*(s) - P_*(\bar t_1)}{\bar t_1 - s} \le (t- \bar t_1) (\chi_2 + \kappa)$, 
completing the proof
for $g=0$ since
$P_*(s) \ge P_*(\bar t_1)$. 
For $g \ne 0$, the lemma follows, again using the bound $|e^{S_ng}|_{C^0(W_i)} \ge e^{-n|g|_{C^0}}$.
\end{proof}

By 
definition, $\theta^t e^{\chi_2 (t- \bar t_1) - P_*(\bar t_1) } < 1$ if $t < s_2$.  Thus for $\bar t_2\in (s_1,s_2)$,
there exists $\kappa_2 = \kappa(\bar t_2) >0$  such that
\begin{equation}
\label{kappa2 choice}
\theta^{\bar t_2} e^{(\chi_2 + \kappa_2)(\bar t_2- \bar t_1) - P_*(\bar t_1) }   < 1\, ,\,\,
\mbox{ and thus }\, 
\theta^t e^{(\chi_2 + \kappa_2)(t- \bar t_1) - P_*(\bar t_1)} < 1\, ,\,\,
\forall t\le \bar t_2\, . 
\end{equation}
Our next lemma extends Lemma~\ref{lem:short small t>1} for $t \in [\bar t_1, \bar t_2]$.

\begin{lemma}
\label{lem:short small t_2}
Let $t_0\in (0,1)$  and $t_1 \in (1, t_*)$.
Let $\bar t_2 \in (s_1, s_2)$, and let $\kappa_2=\kappa(\bar t_2)$ satisfy \eqref{kappa2 choice}.
Then for any $\ve>0$ there exist $\delta_1>0$ and $n_1 \ge 1$, 
such that for all $W \in \hW^s$ with $|W| \ge  \delta_1/3$, and for all $n \ge  n_1$, 
 \[
  \sum_{\substack{W_i \in \cG_n^{ \delta_1}(W) \\ |W_i| <  \delta_1/3}}
|J_{W_i}T^n|^t_{C^0(W_i)} |e^{ S_ng}|_{C^0(W_i)}
\le \ve \sum_{W_i \in \cG_n^{ \delta_1}(W) }
|J_{W_i}T^n|^t_{C^0(W_i)} |e^{ S_ng}|_{C^0(W_i)}
\, , \, \forall  t \in [1, \bar t_2 ]\, ,
  \]
for all $g\in C^1$
satisfying \eqref{eq:up one}, \eqref{eq:up two} and such that, in addition,
\begin{equation}
\label{eq:up three}
2|g|_{C^0}
< - \bar t_2 \log \theta -(\chi_2+\kappa_2) (\bar t_2  - \bar t_1) + P_*(\bar t_1) \,, \,
\mbox{ i.e. }\,\,
\theta^{\bar t_2} e^{(\chi_2+ \kappa_2)(\bar t_2 - \bar t_1) - P_*(\bar t_1) + 2 | g|_{C^0}} < 1 \, .
\end{equation}
\end{lemma}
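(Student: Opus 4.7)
The plan is to mirror the proof of Lemma~\ref{lem:short small t>1} line by line, shifting the baseline from $t=1$ (where $P_*(1)=0$) to $t=\bar t_1$ (where $P_*(\bar t_1)>0$), and replacing Lemma~\ref{lem:low t>1} by Lemma~\ref{lem:low t>s_1} as the source of the crucial lower bound on sums of $|J_{W_i}T^n|^t$.  Since the conclusion for $t\in[1,\bar t_1]$ is already Lemma~\ref{lem:short small t>1}, I would take $\delta_1,n_1$ no larger/smaller than those provided there and focus the new argument on $t\in[\bar t_1,\bar t_2]$, finally intersecting/maximizing to obtain uniform constants over the full range $[1,\bar t_2]$.

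Set $\rho:=\theta^{\bar t_2}e^{(\chi_2+\kappa_2)(\bar t_2-\bar t_1)-P_*(\bar t_1)+2|g|_{C^0}}$, which lies in $(0,1)$ by \eqref{eq:up three} and dominates $\theta^t e^{(\chi_2+\kappa_2)(t-\bar t_1)-P_*(\bar t_1)+2|g|_{C^0}}$ for every $t\in[\bar t_1,\bar t_2]$.  Given $\ve>0$, I would first choose $\bar\ve>0$ so small that
\[
\frac{C\delta_0}{C_{\kappa_2}(1-\rho)}\,\frac{\bar\ve}{1-\bar\ve}<\frac{\ve}{2}
\]
for the constant $C$ emerging in the ratio estimate below, then pick $m_1$ with $\rho^{m_1}\le\bar\ve$, and finally reduce $\delta_1$ so that every homogeneous component of $T^{-n}U$ has length $<\delta_0$ for all $U\in\hW^s$ with $|U|\le\delta_1$ and all $n\le 2m_1$.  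By Lemma~\ref{lem:short growth} with $\varsigma=0$, extended to $t\le\bar t_2$ via \eqref{eq:contract}, this yields the analogue of \eqref{eq:n1 short up},
\[
\sum_{W_i\in\cG_n(U)}|J_{W_i}T^n|^t_{C^0(W_i)}|e^{S_ng}|_{C^0(W_i)}\le C_0\theta^{tn}e^{n|g|_{C^0}},\qquad n\le 2m_1.
\]

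With these choices fixed, I would decompose short pieces exactly as in Lemma~\ref{lem:short small t>1}: for $W\in\hW^s$ with $|W|\ge\delta_1/3$ and $n=\ell m_1+r$, each $W_i\in\cG_n^{\delta_1}(W)$ with $|W_i|<\delta_1/3$ has a well-defined most recent $\delta_1$-long ancestor $V_j\in L^{\delta_1}_{km_1}(W)$ at some largest $k\le\ell-1$; each $V_j$ is contained in an element of $\cG^{\delta_0}_{km_1}(W)$ (with at most $3\delta_0/\delta_1$ values of $V_j$ per such element), and those are in turn grouped by their most recent $\delta_0$-long ancestor $W_a\in L^{\delta_0}_j(W)$, or assigned to $j=0$ if no such ancestor exists.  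Iterating the one-step bound through $\cI^{\delta_0}_{km_1-j}(W_a)$ produces an upper bound of the same form as in Lemma~\ref{lem:short small t>1}.  For the lower bound I would invoke the second inequality of Lemma~\ref{lem:low t>s_1} with $\kappa=\kappa_2,\delta=\delta_1$ on each $W_a$ (which is $\delta_0/3$-long, so the good $C_{\kappa_2}\delta_1^{-1}$ coefficient applies and the potentially large factor $\delta_1^{1/\eta_{\kappa_2}-1}$ is avoided); combining with \eqref{eq:ratio} gives
\[
\sum_{W_i\in\cG^{\delta_1}_n(W)}|J_{W_i}T^n|^t|e^{S_ng}|\;\ge\;\tfrac{C_{\kappa_2}}{4\delta_1}\,e^{(n-j)[P_*(\bar t_1)-(\chi_2+\kappa_2)(t-\bar t_1)-|g|_{C^0}]}\sum_{W_a\in L^{\delta_0}_j(W)}|J_{W_a}T^j|^t|e^{S_jg}|_{C^0(W_a)}.
\]
In the ratio of corresponding $j$-th terms the $W_a$ sums cancel, leaving the uniform factor $C\delta_0C_{\kappa_2}^{-1}\rho^{n-j}$; summing the geometric series in $j$ (as in \eqref{eq:short bound up}) gives the $\ve/2$ contribution by the choice of $\bar\ve$.

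The only remaining term is the ``no $\delta_0$-long ancestor'' contribution, bounded by $\tfrac{3\delta_0 nC_0^2}{C_{\kappa_2}m_1}\delta_1^{-1/\eta_{\kappa_2}}\rho^n$ via the first inequality of Lemma~\ref{lem:low t>s_1}; with $\delta_1$ already fixed, this is $<\ve/2$ for all $n\ge n_1$ provided $n_1$ is taken sufficiently large, since $\rho<1$.  The treatment of $g\in C^1$ satisfying \eqref{eq:up one}--\eqref{eq:up three} is completely analogous to the insertion of $|e^{S_kg}|$ in the proof of Lemma~\ref{lem:short small t>1}; the distortion constant of $g$ is at most $2$, contributing only a bounded multiplicative factor absorbed in $C$.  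The main point to verify is that $\rho<1$, which is precisely the content of hypothesis \eqref{eq:up three} combined with the choice of $\kappa_2$ in \eqref{kappa2 choice}; once this is in hand, the rest of the argument is bookkeeping parallel to Lemma~\ref{lem:short small t>1}.
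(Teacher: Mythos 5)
Your proposal is correct and follows essentially the same route as the paper, which proves this lemma precisely by repeating the argument of Lemma~\ref{lem:short small t>1} with the lower bound \eqref{eq:low s_1} of Lemma~\ref{lem:low t>s_1} substituted for \eqref{eq:low t>1 up}, the contraction being supplied by \eqref{kappa2 choice} and \eqref{eq:up three}. Your write-up merely spells out the bookkeeping (the choice of $\rho$, $\bar\ve$, $m_1$, $\delta_1$, and the two-scale ancestor decomposition) that the paper leaves as ``analogous modifications,'' so no discrepancy to report.
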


\begin{proof}
The proof of Lemma~\ref{lem:short small t_2} proceeds with the analogous modifications
used in the proof of Lemma~\ref{lem:short small t>1}, using the lower bound
\eqref{eq:low s_1} in place of \eqref{eq:low t>1 up}.  The proof goes through due
to the contraction provided by \eqref{kappa2 choice} and \eqref{eq:up three}.
\end{proof}

\medskip
Let $[t_0, \bar t_2]\subset (0,s_2)$.
For all $g\in C^1$ satisfying \eqref{eq:up one}, \eqref{eq:up two}, \eqref{eq:up three},
Lemmas~\ref{lem:short small},  
\ref{lem:short small t>1} and \ref{lem:short small t_2} for $\ve = 1/4$ give $n_1\ge 1$  and
$\delta_1>0$  such that
for  all  $n\ge n_1$ and all  $W \in \hW^s$ with $|W| \ge \delta_1/3$,
\begin{equation}
\label{eq:delta111}
\sum_{W_i \in L_n^{\delta_1}(W)} |J_{W_i}T^n|^t_{C^0(W_i)} 
|e^{S_ng}|_{C^0(W_i)}
\ge \tfrac 34 \sum_{W_i \in \cG_n^{\delta_1}(W)} |J_{W_i}T^n|^t _{C^0(W_i)}|e^{S_ng}|_{C^0(W_i)} \, ,
\, \forall t \in [t_0, \bar t_2] \, .
\end{equation}

At this point it is clear that Lemma~\ref{lem:most long t>1} (with the same constant $\delta_2>0$,
but possibly smaller $c_0>0$), and Propositions~\ref{prop:lower bound} and \ref{prop:exact}
(with possibly smaller constants $c_1, c_2 >0$)
hold with $\bar t_1$ replaced by $\bar t_2 \in (s_1, s_2)$.

The interpolation can now be continued inductively.  Suppose we have created a sequence
$1 < \bar t_1 < s_1 < \bar t_2 < s_2 < \ldots < \bar t_n < s_n < t_1 < t_*$ so that 
Propositions~\ref{prop:lower bound} and \ref{prop:exact} hold with $\bar t_1$ replaced by
$\bar t_n$.  Then since $s_n < t_1$, we have $\theta^{s_n} < e^{P_*(s_n)}$ and we
may define
\[
\chi_{n+1} = \lim_{s \to \bar t_n^-} \frac{P_*(s) - P_*(\bar t_n)}{\bar t_n - s} \ge \log \Lambda,
\quad  \mbox{and} \quad
s_{n+1}=  \frac{P_*(\bar t_n) + \chi_n \bar t_n}{\chi_n + \log \theta} > s_n \, ,
\]
where $s_{n+1} > s_n$ by choice of $\bar t_n$.  
Following the proof of Lemma~\ref{lem:low t>s_1} with $\bar t_1, \bar t_2, \chi_2$ replaced by 
$\bar t_n, \bar t_{n+1}, \chi_{n+1}$, it follows that the conclusion of the lemma holds for
all $t \in [\bar t_n, \bar t_{n+1}]$.  Analogous modifications to Lemma~\ref{lem:short small t_2}
imply that Lemma~\ref{lem:most long t>1} and the propositions of Section~\ref{sec:complexity}
hold with $\bar t_1$ replaced by $\bar t_{n+1} \in (s_n, s_{n+1})$.

Finally, the sequence $(s_n)$ cannot accumulate on any $s_\infty \le t_1$.  For if it does, then
by definition of $\theta$, it follows that $\theta^{s_\infty} < e^{P_*(s_\infty)}$, so we may repeat the construction
above, finding a point of intersection $s'>s_\infty$ between $t \log \theta$ and the left
hand tangent to $P_*(t)$ at some $\bar t_n < s_\infty$.  It follows that this sequence of interpolations
can be chosen so that $t_1 < \bar t_n < t_*$ for some $n \ge 1$.  At this point we stop, and
since we have made only finitely many choices of the required constants, we have extended
the analogues of Propositions~\ref{prop:lower bound} and \ref{prop:exact} to all $t_1<t_*$:

\begin{proposition}
\label{prop:summary}
Let $t_0\in (0,1)$ and $t_1\in (1,t_*)$.
 There exist decreasing functions $c_i : [0, \infty) \to \mathbb{R}^+$, $i=1,2$,
such that for any $\upsilon \ge 0$ and 
 any $g\in C^1$ with $|\nabla g|_{C^0}\le \upsilon$ and 
such that $|g|_{C^0}$ is sufficiently small (depending on the number of interpolations 
required to reach  $t_1$), 

\begin{itemize}
  \item[a)] for any $W \in \hW^s$ with $|W| \ge \delta_1/3$,   
\[
\sum_{W_i \in \cG_n(W)} |J_{W_i}T^n|^t_{C^0(W_i)} |e^{ S_n g}|_{C^0(W_i)}
\ge  c_1(\upsilon) Q_n(t, g) \, , \, \forall n \ge 1 \, , \,\, \forall t \in [t_0, t_1 ]\, ;
\]
  \item[b)]  for all $n \ge1$, we have $\displaystyle
e^{n P_*(t, g)} \le Q_n(t, g) \le \frac{2}{c_2(\upsilon)} e^{n P_*(t, g)}\, ,\,
\forall t \in [t_0, t_1 ]\, . $
\end{itemize}
\end{proposition}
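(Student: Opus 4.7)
The plan is to carry out a bootstrap already foreshadowed in Sections~\ref{sec:t>1}--\ref{sec:boot}: Propositions~\ref{prop:lower bound} and \ref{prop:exact} are already established for $t \in [t_0, \bar t_1]$ with $\bar t_1 \in (1, s_1)$, and I intend to iteratively enlarge this interval by interpolation until it covers $[t_0, t_1]$. Concretely, assuming by induction that both propositions hold on $[t_0, \bar t_n]$ for some $\bar t_n < s_n < t_1$, I define
\[
\chi_{n+1} := \lim_{s \to \bar t_n^-} \frac{P_*(s) - P_*(\bar t_n)}{\bar t_n - s} \, , \qquad s_{n+1} := \frac{P_*(\bar t_n) + \chi_{n+1} \bar t_n}{\chi_{n+1} + \log \theta} \, ,
\]
and pick $\bar t_{n+1} \in (s_n, s_{n+1})$. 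Convexity of $P_*$ (Proposition~\ref{prop:Ptilde}) guarantees that $\chi_{n+1}$ exists and $\chi_{n+1} \ge \log \Lambda$, and it also puts $P_*(t)$ above its left tangent at $\bar t_n$, so $\theta^t < e^{P_*(t)}$ for all $t < s_{n+1}$.

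The core analytic input, at each step of the induction, is the Hölder inequality exactly as in \eqref{eq:interpol s_2}: for $t \in [\bar t_n, \bar t_{n+1}]$ I choose $s \in (1, \bar t_n)$ and $\eta = (\bar t_n - s)/(t-s)$ so that $\eta t + (1-\eta) s = \bar t_n$, and then
\[
\sum_{W_i \in \cG_n^\delta(W)} |J_{W_i}T^n|^t \ge \frac{\bigl(\sum |J_{W_i}T^n|^{\bar t_n}\bigr)^{1/\eta}}{\bigl(\sum |J_{W_i}T^n|^s\bigr)^{(1-\eta)/\eta}} \, ,
\]
where the inductive hypothesis gives a lower bound $\gtrsim e^{n P_*(\bar t_n)}$ on the numerator via Proposition~\ref{prop:lower bound}, and Proposition~\ref{prop:exact} plus Lemma~\ref{lem:extra growth} gives an upper bound $\lesssim e^{n P_*(s)}$ on the denominator. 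The defining formula for $s_{n+1}$ is arranged precisely so that, for any $\kappa_{n+1}>0$, a suitable choice of $s$ yields a lower bound of the form $C_{\kappa_{n+1}} \delta^{1/\eta - 1} e^{-n(\chi_{n+1}+\kappa_{n+1})(t-\bar t_n) + n P_*(\bar t_n)}$, and the pressure-gap inequality $\theta^{\bar t_{n+1}} e^{(\chi_{n+1}+\kappa_{n+1})(\bar t_{n+1} - \bar t_n) - P_*(\bar t_n)} < 1$ provides the contraction needed to rerun Lemma~\ref{lem:short small t_2} and Lemma~\ref{lem:most long t>1} on the new interval; from there, the analogues of Proposition~\ref{prop:lower bound} and the supermultiplicativity argument for Proposition~\ref{prop:exact} follow verbatim, with constants $c_1(\upsilon), c_2(\upsilon)$ that may shrink but remain positive.

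The main obstacle is proving that this inductive procedure terminates with some $\bar t_n \ge t_1$, rather than the sequence $(s_n)$ accumulating at some $s_\infty \le t_1$. I argue by contradiction: if $s_n \nearrow s_\infty \le t_1$, then since $t_1 < t_*$ we still have $\theta^{s_\infty} < e^{P_*(s_\infty)}$, so one more application of the same tangent-line construction at some $\bar t_n$ sufficiently close to $s_\infty$ would produce $s_{n+1} > s_\infty$, contradicting $s_n \nearrow s_\infty$. Hence the procedure halts after finitely many steps with $t_1$ covered. At that point I collect the finitely many constants and the finitely many conditions on $|g|_{C^0}$ (each step tightens the analogue of \eqref{eq:up one}--\eqref{eq:up three}) into a single smallness requirement on $|g|_{C^0}$ and single decreasing functions $c_1(\upsilon), c_2(\upsilon)$, yielding the statement of Proposition~\ref{prop:summary} uniformly on $[t_0, t_1]$.
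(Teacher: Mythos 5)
Your proposal is correct and follows essentially the same route as the paper's Section on the bootstrap to $t_1<t_*$: the same tangent-line quantities $\chi_{n+1}$, $s_{n+1}$, the same Hölder interpolation as in \eqref{eq:interpol s_2} to get the lower bound analogous to Lemma~\ref{lem:low t>s_1}, re-running Lemmas~\ref{lem:short small t_2} and \ref{lem:most long t>1} on each new interval, and a non-accumulation argument for $(s_n)$ (your contradiction at a putative limit $s_\infty\le t_1$ is just the paper's "the sequence cannot accumulate below $t_1$" phrased contrapositively), finishing by collecting the finitely many constants and smallness conditions on $|g|_{C^0}$.
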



\section{Spectral Properties of $\cL_t$ (Theorem~\ref{thm:spectral})}
\label{sec:spec}

\subsection{Definition of Norms and Spaces $\cB$ and $\cB_w$}
\label{sec:norms}

For fixed $t_0 > 0$  and $t_1 \in(\max\{t_0,1\}, t_*)$,  we choose  $\theta(t_1) \in (\Lambda^{-1}, \Lambda^{-1/2})$
satisfying $\theta^{t_1} < e^{P_*(t_1)}$,
$q > \min \{1,2/t_0\}$, $k_0=k_0(t_0,  t_1)$ (for the homogeneity strips \eqref{eq:H}), 
and $\delta_0=\delta_0(t_0,  t_1)$ from Definition~\ref{def:k0d0}.
These choices affect the definitions of $\cW^s$ and $\cW^s_{\bH}$, as well as  conditions \eqref{eq:all} and \eqref{eq:Cve} below
on the parameters $\alpha$, $\beta$, $\gamma$, $p$,  $\ve_0$, determining spaces 
$\cB=\cB(t_0,  t_1)$ and $\cB_w=\cB_w(t_0,  t_1)$ on which 
$\cL_t$ will be bounded  for all
$t \ge t_0$. 
An additional condition on the parameter $p$ depending on $t_1 < t_*$ will be needed to obtain
the Lasota--Yorke bound 
\eqref{eq:LY}  (see Lemma~ \ref{lem:t ok}) and thus the spectral gap of $\cL_t$ on  
 $\cB$ for all $t\in [t_0,t_1]$.

\medskip

First we define notions of distance\footnote{The triangle inequality is not satisfied,
but this is of no consequence for our purposes.} between stable curves and test functions as follows.

Since the slopes of stable curves are uniformly bounded away from the vertical, we view each $W \in \hW^s$ as the
graph of a function of the $r$-coordinate over an interval $I_W$,
\[
W = \{ G_W(r) : r \in I_W \} = \{ (r, \vf_W(r)) : r \in I_W \} \, .
\]
By the uniform bound on the curvature of $W \in \hW^s$, we have $B:=\sup_{W \in \hW^s} | \vf_W''| <\infty$.

Next, given $W_1, W_2 \in \hW^s$ with functions $\vf_{W_1}$, $\vf_{W_2}$, we define 
\[
d_{\cW^s}(W_1, W_2) = | I_{W_1} \bigtriangleup I_{W_2}| + |\vf_{W_1} - \vf_{W_2}|_{C^1(I_{W_1} \cap I_{W_2})} \, ,
\]
if $W_1$ and $W_2$ lie in the same homogeneity strip, and $d_{\cW^s}(W_1, W_2) = 3B + 1$ otherwise.

Finally, if $d_{\cW^s}(W_1, W_2) < 3B+1$, then for $\psi_1 \in C^0(W_1)$, $\psi_2 \in C^0(W_2)$, define
\[
d(\psi_1, \psi_2) = |\psi_1 \circ G_{W_1} - \psi_2 \circ G_{W_2} |_{C^0(I_{W_1} \cap I_{W_2})} \, ,
\]
while if $d_{\cW^s}(W_1, W_2) \ge 3B+1$
and  $\psi_1 \in C^0(W_1)$, $\psi_2 \in C^0(W_2)$, we set
$
d(\psi_1, \psi_2)  =\infty$.

\medskip

We next define the norms, introducing parameters $\alpha$, $\beta$, $\gamma$, $p$, and $\ve_0$.
Choose\footnote{The condition $\gamma \le \frac{1}{6q+7}$ is used in  Lemma~\ref{lem:image}.}
\begin{equation}
\label{eq:all}
\alpha \in \Big( 0,  \frac{1}{q+1} \Big] \, , \,\, \, \,\,
p > q+1, \, \, \, \,\, \beta \in \Big( \frac{1}{p}, \alpha \Big),\,\, \,\,\,
 \gamma \in \Big( 0,\min \Big\{ \frac{1}{p}, \alpha - \beta, \frac{1}{6q+7}\Big\} \Big)\, .
\end{equation}
(This implies $\alpha \le 1/3$, $\gamma <1/p$, and
$\min(\beta, t)> \frac{1}{p}$.)
Finally for  $C_{vert}<\infty$ to be determined in \eqref{eq:Cv}, let $\ve_0$ satisfy
\begin{equation}
\label{eq:Cve}
0< C_{vert} \,  \ve_0^{1/(q+1)} \le \frac 34 \, . 
\end{equation} 

\smallskip

For $f \in C^1(M)$, recalling $C^\eta(W)$   and $\cW^s_H$ from Section~\ref{defstuff}, define the weak norm of $f$ by\footnote{
Using weakly homogeneous curves implies that Lebesgue
measure belongs to $\cB$, see Remark~\ref{LebB}.}
\[
|f|_w = \sup_{W \in \cW^s_H} \sup_{|\psi|_{C^\alpha(W)} \le 1} \int_W f \, \psi \, dm_W \, ,
\]
define the stable norm of $f$ by\footnote{The weight $|W|^{-1/p}$ in \eqref{stab} is used both in the proof of
compactness (Proposition~\ref{embeds}) and to
control the estimate over unmatched pieces in the Lasota-Yorke inequality (Section~\ref{sec:unstable}).}
\begin{equation}\label{stab}
\|f\|_s = \sup_{W \in \cW^s_H} \sup_{|\psi|_{C^\beta(W)} \le |W|^{-1/p}} \int_W f \, \psi \, dm_W \, ,
\end{equation}
and the unstable norm of $f$ by
\[
\| f \|_u = \sup_{\ve \le \ve_0} \sup_{\substack{ W_1, W_2 \in \cW^s_H \\ d_{\cW^s}(W_1, W_2) \le \ve}}
\sup_{\substack{|\psi_i|_{C^\alpha(W_i)} \le 1 \\ d(\psi_1, \psi_2)=0}} \ve^{-\gamma}
\left| \int_{W_1} f\, \psi_1 \, dm_{W_1} - \int_{W_2} f \, \psi_2 \, dm_{W_2} \right| \, .
\]
Finally, define the strong norm of $f$ to be 
$$\| f \|_{\cB} = \| f \|_s + c_u \| f \|_u
$$ 
for a constant $c_u =c_u(\beta,\gamma, p)> 0$
(so that $c_u$ depends on $[t_0,t_1]$) to be chosen
in \eqref{choicec_u}.
Define $\cB$ to be the completion of $C^1(M)$ in the 
$\| \cdot \|_{\cB}$ norm, and $\cB_w$ to be the completion of 
$C^1(M)$ in the $|\cdot |_w$ norm. 


\subsection{Statement of the Spectral Result. Embeddings}

The  equilibrium measure in Theorem~\ref{thm:equil} 
and its properties will be obtained by letting the transfer operator
$\cL_t$ act on $\cB$.

\begin{theorem}[Spectrum of $\cL_t$ on $\cB$]
\label{thm:spectral}
For each $t_0\in (0,1)$ and $t_1\in (1,t_*)$  there exists a
Banach space $\cB= \cB(t_0,t_1)$  such that 
for each $t \in [t_0,t_1]$, the operator $\cL_t$ is bounded on $\cB$ 
with spectral radius equal to $e^{P_*(t)}$ and,
recalling $\theta(t_1)$ from Lemma~\ref{lem:one step},
  essential spectral radius not larger than
$$\max \{ \Lambda^{(-\beta + 1/p)}, \theta^{(t-1/p)}e^{- P_*(t)}, \Lambda^{-\gamma} \} e^{P_*(t)} < e^{P_*(t)}
\, .
$$
Moreover $\cL_t$ has a spectral gap: the only eigenvalue 
of modulus $e^{P_*(t)}$ is $e^{P_*(t)}$ and it is simple.

Let $\nu_t$ denote the unique element of $\cB$ 
with $\nu_t(1)=1$ satisfying $\cL_t \nu_t = e^{P_*(t)} \nu_t$, and let $\tilde \nu_t$ denote the
maximal eigenvector for the dual, $\cL_t^* \tilde \nu_t = e^{P_*(t)} \tilde \nu_t$.  
Then the distribution $\mu_t$ defined by 
$\mu_t(\psi) = \frac{\tilde \nu_t( \psi \nu_t)}{\tilde \nu_t(\nu_t)}$ is in fact  
a $T$-invariant probability measure.
This measure is mixing, correlations for $C^\alpha$ observables decay exponentially  
with rate $\upsilon$ for any
\begin{equation}\label{uups}
\upsilon>\upsilon_0(t) := \sup  \{ |\lambda|\mid \lambda \in \mbox{sp}(e^{-P_*(t)}\cL_t) \setminus \{1\}\}
\, ,
\end{equation}
and correlations for H\"older observables of arbitrary exponent decay exponentially.
\end{theorem}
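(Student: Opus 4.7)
The plan is to establish the standard suite of results for transfer operators on anisotropic Banach spaces: boundedness of $\cL_t$ on both $\cB$ and $\cB_w$, a Lasota--Yorke inequality, compactness of the inclusion $\cB \hookrightarrow \cB_w$, and a peripheral spectrum analysis. Everything is driven by the change of variables identity
\begin{equation*}
\int_W \cL_t^n f \cdot \psi \, dm_W \;=\; \sum_{W_i \in \cG_n(W)} \int_{W_i} f \cdot |J^sT^n|^t \cdot \psi \circ T^n \, dm_{W_i}\, ,
\end{equation*}
valid for any $W \in \cW^s_H$ (using $J_{W_i}T^n = J^sT^n$ on stable curves), which reduces every norm estimate to a weighted sum of $|J_{W_i}T^n|^t$ over $\cG_n(W)$, i.e., precisely to the quantities controlled by the growth lemmas of Section~\ref{GrL}.

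The weak-norm upper bound $|\cL_t^n f|_w \le C\, e^{n P_*(t)} |f|_w$ follows at once from Lemma~\ref{lem:extra growth} (with $\varsigma = 0$) together with Proposition~\ref{prop:summary}(b), after Lemma~\ref{lem:distortion} is used to keep $\psi \circ T^n$ inside a multiple of the unit ball of $C^\alpha(W_i)$. The main work is the Lasota--Yorke bound of the form
\begin{equation*}
\|\cL_t^n f\|_\cB \;\le\; C\,\sigma^n e^{n P_*(t)} \|f\|_\cB \,+\, C_n\, e^{n P_*(t)}\, |f|_w \, ,
\end{equation*}
for some $\sigma < 1$ uniform in $t \in [t_0,t_1]$. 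To prove it I would split $\cG_n(W)$ into short pieces $\cI_n(W)$ and long pieces $L_n(W)$. Short pieces contribute the factor $\theta^{n(t - 1/p)} e^{-n P_*(t)}$ via Lemma~\ref{lem:short growth} applied with $\varsigma = 1/p$, which is strictly less than $1$ uniformly on $[t_0,t_1]$ once $p$ is taken sufficiently large depending on $t_1$ (this is the constraint captured by Lemma~\ref{lem:t ok} of the paper). Long pieces permit factoring out the iterate along the first long ancestor and estimating the remainder via the weak norm, producing the $C_n |f|_w$ term. For the unstable norm I would carry out the usual matching construction along the stable holonomy between paired fragments of $T^{-n}W_1$ and $T^{-n}W_2$ at mutual distance $O(\Lambda^{-n}\ve)$; unmatched fragments are controlled by the weight $|W|^{-1/p}$ present in \eqref{stab}, which keeps the sum of their reciprocal lengths integrable.

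Granted the Lasota--Yorke inequality and the compactness of $\cB \hookrightarrow \cB_w$ (to be established separately, in the spirit of \cite[Proposition 3.7]{dz2}), Hennion's theorem yields the stated quasi-compactness of $\cL_t$ on $\cB$ with essential spectral radius not larger than the claimed maximum. The lower bound on the spectral radius is obtained by testing $\cL_t^n$ against a long stable manifold $W$ with $|W| \ge \delta_0/3$: the identity above combined with Proposition~\ref{prop:summary}(a)--(b) gives $\int_W \cL_t^n \mathbf{1}\, dm_W \ge c\, e^{n P_*(t)}$, forcing $r(\cL_t) = e^{P_*(t)}$. For the peripheral spectrum I would follow the strategy of \cite[Section 4]{dz1} and \cite[Section 4]{max}: exact exponential growth (Proposition~\ref{prop:summary}(b)) provides matching upper and lower bounds on $\|e^{-n P_*(t)} \cL_t^n\|_\cB$, and the Cantor rectangle mixing argument already used in the proof of Proposition~\ref{prop:lower bound} (leveraging mixing of $\musrb$) rules out peripheral eigenvalues other than $e^{P_*(t)}$ and forces simplicity of the latter.

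With the spectral gap in hand, the maximal right and left eigenvectors $\nu_t$ and $\tilde\nu_t$ are positive on suitable cone test objects, so $\mu_t(\psi) := \tilde\nu_t(\psi\,\nu_t)/\tilde\nu_t(\nu_t)$ defines a Borel probability measure; its $T$-invariance follows immediately from the identity $\cL_t(\psi \circ T \cdot \nu_t) = \psi \cdot \cL_t\nu_t = e^{P_*(t)}\, \psi\,\nu_t$ combined with $\cL_t^*\tilde\nu_t = e^{P_*(t)}\tilde\nu_t$. Mixing and exponential decay of correlations for $C^\alpha$ observables with any rate $\upsilon > \upsilon_0(t)$ are standard consequences of the spectral decomposition $e^{-nP_*(t)} \cL_t^n = \Pi_t + R_n^{(t)}$, where $\Pi_t$ is the rank-one projector onto $\nu_t$ and $\|R_n^{(t)}\|_\cB = O(\upsilon^n)$; decay for H\"older observables of arbitrary exponent is then deduced by approximation. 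The main obstacle throughout is the unstable-norm part of the Lasota--Yorke inequality, whose matching argument must be carried out with parameters tuned so that $\theta^{t - 1/p} e^{-P_*(t)} < 1$ holds uniformly over $[t_0,t_1]$.
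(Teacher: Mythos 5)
Your outline of the Lasota--Yorke scheme (weak/stable/unstable bounds via the growth lemmas, Hennion, lower bound on the spectral radius by testing $\cL_t^n 1$ on a long stable manifold) matches the paper's Sections~\ref{sec:norms}--\ref{sec:unstable} and Proposition~\ref{prop:radius}. The genuine gap is in your treatment of the peripheral spectrum. You assert that ``the Cantor rectangle mixing argument already used in the proof of Proposition~\ref{prop:lower bound} (leveraging mixing of $\musrb$) rules out peripheral eigenvalues other than $e^{P_*(t)}$ and forces simplicity,'' but mixing of $\musrb$ says nothing, by itself, about eigenvectors of $\cL_t$ for $t\neq 1$: those eigenvectors are measures absolutely continuous with respect to $\nu_t$, with densities $f_\nu\in L^\infty(\nu_t)$ defined only $\nu_t$-a.e., and $\nu_t$ is a priori singular with respect to $\musrb$. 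The paper's proof of Lemma~\ref{lem:gap} needs three ingredients you do not supply: (i) the identity $f_\nu\circ T^{-1}=e^{2\pi i\varpi}f_\nu$ ($\nu_t$-a.e.), which forces the peripheral spectrum into finitely many cyclic groups via the auxiliary eigenvectors $(f_\nu)^k\nu_t$ and a nondegeneracy argument against $\tnu_t$; (ii) Lemma~\ref{lem:equivalent}, which requires the quantitative lower bound $\nu_t(W)>0$ on \emph{every} $W\in\cW^s_{\bH}$ (proved via \eqref{eq:lower weight W}) and shows that the factor measure of $\nu_t$ over a stable foliation is equivalent to $\hatmusrb$ --- this is the only bridge to $\musrb$, and it exploits the nongeneric fact that $\musrb$ has smooth conditional densities on \emph{stable} manifolds, not the Cantor-rectangle mixing construction (which in the paper is used for Proposition~\ref{prop:lower bound} and for full support, Lemma~\ref{lem:full}, but not for the gap); (iii) ergodicity of $(T^b,\musrb)$ for all $b\ge1$ to exclude the nontrivial rational rotations produced in step (i). Without (ii) in particular, constancy of $f_\nu$ along stable manifolds cannot be upgraded to constancy $\nu_t$-a.e., and your argument for simplicity does not close. (Note also that in \cite{max}, which you cite as a model, no spectral gap was obtained for $t=0$.)

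A second, more technical omission: before the Lasota--Yorke estimates on $C^1$ test functions yield a bounded operator on $\cB$, one must know $\cL_t(C^1(M))\subset\cB$, i.e.\ that $\cL_tf$ can be approximated by $C^1$ functions in the strong norm. Since $\cB$ is defined as a completion of $C^1(M)$ and, for $t<1$, the weight $|J^sT|^{t-1}\circ T^{-1}$ blows up along $T\cS_0$, this is not automatic; the paper devotes Lemma~\ref{lem:image} and the mollification argument of Section~\ref{sec:approach} (using Lemmas~\ref{lem:lots} and \ref{lem:smooth}) to it, and explicitly flags that the corresponding step had been omitted in \cite{max}. Relatedly, your construction of $\mu_t$ needs $\tnu_t$ to act on $\cB_w$ (so that $\psi\nu_t$ can be paired with $\tnu_t$ for merely H\"older $\psi$, via Lemma~\ref{lem:Lt distr}) and the nondegeneracy $\langle\nu_t,\tnu_t\rangle\neq0$; these are short but not free arguments (Proposition~\ref{prop:mut def}) and should be stated rather than subsumed under ``positivity on suitable cone test objects.''
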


Recall that $\theta<1/\sqrt \Lambda<1$. Note that since $qt_0>1$ while $\beta <1/(q+1)$ and $\gamma\le \min\{ 1/(q+1),1/(q+1)-\beta\}$, our bound on the essential spectral radius tends
to $e^{P_*(t_0)}$ as $t_0\to 0$.
Similarly, as $t_1\to t_*$ we need
to let $p\to \infty$ to ensure $\theta^{ t_1 -1/p}<e^{P_*(t_1)}$
(see Lemma~ \ref{lem:t ok}) and our bound on the  essential spectral radius tends
to $e^{P_*(t_*)}$ as $t_1\to t_*$.

\smallskip

As usual, Hennion's theorem is the key  to proving
the above theorem. It requires two ingredients: the compact embedding 
proposition below and the
Lasota--Yorke estimates in Proposition~\ref{prop:ly}.
 
\begin{proposition}[Embeddings]
\label{embeds}
For any $t_0\in (0,1)$ and $t_1 \in (1,t_*)$, the   continuous inclusions  
$$C^1(M)  
\subset \BB \subset \BB_w  \subset (C^1(M))^* \, $$
hold, so that
$C^1(M)\subset (\BB_w)^*\subset \BB^* \subset (C^1(M))^*$.
In addition,  the inclusions $C^1(M)  
\subset \BB$ and $\BB\subset \BB_w$ are injective, and  the embedding of the unit ball of $\cB$ 
in $\cB_w$ is compact.
\end{proposition}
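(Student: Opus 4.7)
\emph{Continuous inclusions.} For $f \in C^1(M)$ and $W \in \cW^s_H$, any $\psi$ with $|\psi|_{C^\beta(W)} \le |W|^{-1/p}$ obeys $|\int_W f\psi\,dm_W| \le |f|_{C^0}|W|^{1-1/p} \le |f|_{C^0}\delta_0^{1-1/p}$ (using $p>q+1>1$); and for close curves $d_{\cW^s}(W_1,W_2)\le\ve$ with matched test functions $d(\psi_1,\psi_2)=0$, parametrising both over $I_{W_1}\cap I_{W_2}$ and Taylor-expanding in the $C^1$ graph functions gives $|\int_{W_1}f\psi_1 - \int_{W_2}f\psi_2| \le C|f|_{C^1}\ve$, so $\|f\|_u \le C|f|_{C^1}\ve_0^{1-\gamma}$ (using $\gamma<1$). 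For $\cB \subset \cB_w$: if $|\psi|_{C^\alpha(W)}\le 1$ then H\"older interpolation gives $|\psi|_{C^\beta(W)} \le C$, and rescaling by $|W|^{-1/p}/C$ yields $\int_W f\psi\,dm_W \le C|W|^{1/p}\|f\|_s$. For $\cB_w \subset (C^1(M))^*$: by absolute continuity of the stable foliation, $\musrb$ disintegrates locally as $d\musrb = \rho_W\,dm_W\,d\nu_{trans}$ over boxes of weakly homogeneous stable manifolds with H\"older conditional densities $\rho_W$ uniform in $W$; for $\phi \in C^1(M)$, $\phi\rho_W \in C^\alpha(W)$ with $|\phi\rho_W|_{C^\alpha(W)}\le C|\phi|_{C^1}$, so Fubini yields $|\int_M f\phi\,d\musrb| \le C|\phi|_{C^1}|f|_w$; the dual inclusions follow by transposition.

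\emph{Injectivity.} If $f \in C^1(M)$ has $\|f\|_\cB=0$, then $|f|_w=0$, and testing against $C^1$-bump functions on individual stable manifolds in $\cW^s_H$ (dense in $M$ by \cite[Lemma~4.55]{chernov book}) forces $f\equiv 0$ by continuity. For $\cB \hookrightarrow \cB_w$, I use the factorisation $\cB \hookrightarrow \cB_w \xrightarrow{\Psi} (C^1(M))^*$ through the SRB-pairing above. Since $\musrb$ has full support, $\Psi$ is injective on $C^1(M)$; the point of defining $C^\eta(W)$ in \eqref{eq:holder def} as the closure of $C^1(W)$ (separable, and built from smooth objects) is that an element of $\cB_w$ annihilating every $\phi \in C^1(M)$ also annihilates every $\psi \in C^\eta(W)$ on every $W \in \cW^s_H$, via a standard localisation $\phi = \psi\cdot\chi$ transverse to $W$ using the uniform H\"older regularity of $\rho_W$, and hence has zero weak norm. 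So $\Psi$ is injective on $\cB_w$, and therefore so is $\cB \hookrightarrow \cB_w$.

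\emph{Compact embedding of the unit ball of $\cB$.} Given $\{f_n\}$ with $\|f_n\|_\cB \le 1$, I extract a $|\cdot|_w$-Cauchy subsequence by three nested finite approximations. The short-curve cut-off: for $|W|<\delta_*$, the bound $|\int_W f_n\psi\,dm_W| \le C|W|^{1/p}\|f_n\|_s \le C\delta_*^{1/p}$ (for $|\psi|_{C^\alpha(W)}\le 1$) controls such contributions uniformly in $n$. The curve discretisation: $\{W\in\cW^s_H : |W|\ge\delta_*\}$ is $d_{\cW^s}$-precompact, as $C^2$ graphs with uniformly bounded curvature over intervals in a compact phase space, so I pick a finite $\ve$-net $\{W_i\}$. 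The test-function discretisation: for each $W_i$, the unit ball of $C^\alpha(W_i)$ embeds compactly into $C^\beta(W_i)$ by Arzel\`a--Ascoli (since $\beta<\alpha$), so I pick a finite $\ve$-net $\{\psi_{ij}\}$. A diagonal extraction produces a subsequence along which each of the finitely many numbers $\int_{W_i}f_n\psi_{ij}\,dm_{W_i}$ is Cauchy.

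The main obstacle is then closing the triangle inequality. For arbitrary $(W,\psi)$ with $|W|\ge\delta_*$ and $|\psi|_{C^\alpha(W)}\le 1$, I pick $W_i$ with $d_{\cW^s}(W,W_i)\le\ve$ (same homogeneity strip, $\ve\le\ve_0$), transport $\psi$ to $\bar\psi := \psi\circ G_W\circ G_{W_i}^{-1}$ on $W_i$ so that $d(\psi,\bar\psi)=0$, select the nearest $\psi_{ij}$ in $C^\beta(W_i)$, and decompose
\[
\int_W f_n\psi - \int_{W_i}f_n\psi_{ij} = \Big(\int_W f_n\psi - \int_{W_i}f_n\bar\psi\Big) + \int_{W_i}f_n(\bar\psi-\psi_{ij}),
\]
with the first difference $\le\ve^\gamma\|f_n\|_u$ by the unstable norm and the second $\le C\ve|W_i|^{1/p}\|f_n\|_s$ by the stable norm after rescaling. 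Choosing $\delta_*$ small, then $\ve$ small, then the subsequence index large, forces $|f_n-f_m|_w$ below any prescribed threshold. The delicacy is that the matched-pair structure $d(\psi_1,\psi_2)=0$ built into $\|\cdot\|_u$, the weight $|W|^{-1/p}$ that kills short-curve contributions, and the gap $\alpha>\beta$ giving Arzel\`a--Ascoli compactness, are precisely the features arranged by the constraints in \eqref{eq:all}, and all have to fit together consistently to close the estimates.
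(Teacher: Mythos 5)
Your treatment of the continuous inclusions and of the compactness of the unit ball of $\cB$ in $\cB_w$ follows the standard route (short-curve cut-off via the $|W|^{-1/p}$ weight, finite $d_{\cW^s}$-net of long curves, Arzel\`a--Ascoli net of test functions in $C^\beta$, transport of $\psi$ to the net curve so that $d(\psi,\bar\psi)=0$, and the $\ve^\gamma\|f\|_u + \ve\|f\|_s$ closing estimate), which is exactly what the paper does by adapting \cite[Prop.~4.2, Lemma~4.4, Prop.~6.1]{max}; those parts are fine modulo harmless constants.

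The injectivity of $\cB\subset\cB_w$, however, is not established by your argument, and this is a genuine gap. You prove (or rather assert) that $\Psi:\cB_w\to (C^1(M))^*$ is injective and conclude ``therefore so is $\cB\hookrightarrow\cB_w$''. That is a non sequitur: injectivity of $\Psi$ says nothing about the kernel of $\iota:\cB\to\cB_w$. What would help is injectivity of the \emph{composite} $\Psi\circ\iota$, but your localisation only takes you from ``$f$ annihilates all global $\phi\in C^1(M)$'' to ``$f$ has zero weak norm'', and from zero weak norm you would still need to deduce $\|f\|_s=\|f\|_u=0$ --- which is precisely the statement being proved, so the argument is circular. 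Moreover, the intermediate claim that an element of $\cB_w$ annihilating every global $C^1$ function annihilates every leafwise test function is itself unproved: for a general element of $\cB_w$ the map $W\mapsto \int_W \psi\, f$ has no continuity in $W$ (that is controlled only by the unstable norm), so the ``tube'' localisation $\phi=\psi\cdot\chi$ does not obviously recover leafwise integrals; note the proposition deliberately does \emph{not} claim injectivity of $\cB_w\subset (C^1(M))^*$. The correct and short argument --- the very reason $C^\eta(W)$ is defined as the closure of $C^1(W)$ (see the footnote at \eqref{eq:holder def}) --- runs directly through the norms: if $f\in\cB$ has $|f|_w=0$, then every pairing $\int_W f\,\psi$ with $|\psi|_{C^\alpha(W)}\le 1$ vanishes, so $\|f\|_u=0$ because the unstable norm uses exactly this class of test functions; and since the extended leafwise functional satisfies $|\int_W f\,\psi|\le \|f\|_s\,|W|^{1/p}|\psi|_{C^\beta(W)}$ and vanishes on $C^1(W)$, density of $C^1(W)$ in $C^\beta(W)$ forces it to vanish on all of $C^\beta(W)$, whence $\|f\|_s=0$ and $f=0$ in $\cB$. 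You invoke the closure definition of $C^\eta(W)$, but for the wrong implication; the strong norms never enter your injectivity argument, which is where the content lies.
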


The  embedding $\BB_w  \subset (C^1(M))^*$
is understood in the following sense: For $f \in C^1(M)$, 
we identify $f$ with the measure $f d\musrb\in  (C^1(M))^*$. Then,
for  $f \in \cB_w$ there exists $C_f<\infty$ such that, letting $f_n\in C^1(M)$ be a sequence 
 converging to $f$ in the $\cB_w$ norm,  
 for every $\psi\in C^1(M)$ the limit 
 $
 f(\psi):=\lim_{n \to \infty} \int f_n \psi \, d\musrb
$
exists  and satisfies $|f(\psi)|\le C_f |\psi |_{C^1(M)}$.
See Lemma~\ref{lem:distrib} for a strengthening of this embedding.

\begin{proof}[Proof of Proposition~\ref{embeds}]
The proof of the  claims in the first sentence is the same as the
proof of \cite[Prop. 4.2, Lemma 4.4]{max}. 
The injectivity of the first inclusion is obvious, while the injectivity of the second 
follows from our definition of $C^\beta(W)$:  
if $|f|_w = 0$ then $\| f\|_u=0$ since the class of test functions is the same, but
also $\| f \|_s =0$ since $C^1(W)$ is dense in $C^\beta(W)$, proving injectivity. 
The
proof of the compact embedding  follows exactly  the lines of that of \cite[Prop. 6.1]{max}, using $\hW^s$. The only differences are  that, in the unstable norm,
$|\psi|_{C^\beta(W)} \le|\log  |W||^{\tilde \gamma}$ there is replaced by
 $|\psi|_{C^\beta(W)} \le |W|^{-1/p}$, while
the logarithmic modulus of continuity
$|\log \epsilon|^{-\zeta}$ there is replaced
by a H\"older modulus of continuity $\epsilon ^\gamma$.
\end{proof}

To show that the transfer operator $\cL_t$ is bounded
on $\cB$, we require the following lemma.

\begin{lemma}
\label{lem:image}
For any $f \in C^1(M)$ and any $t  \ge  t_0$, the image 
$\cL_t f$ belongs to the closure of $C^1(M)$ in the strong norm $\| \cdot \|_{\cB}$,
for $\cB=\cB(t_0,  t_1)$.
\end{lemma}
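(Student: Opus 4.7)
I would construct a family $(g_\ve)_{\ve>0}\subset C^1(M)$ with $\|g_\ve-\cL_t f\|_{\cB}\to 0$ as $\ve\to 0$. The obstruction is that $\cL_tf=(f\circ T^{-1})/|J^sT|^{1-t}\circ T^{-1}$ is discontinuous across $\cS_{-1}$, and for $t<1$ it is unbounded there since $J^sT\circ T^{-1}\to 0$ as one approaches $\cS_{-1}$. Away from $\cS_{-1}$, however, $T^{-1}$ is smooth and $J^sT\circ T^{-1}$ is bounded away from $0$ and $\infty$, so $\cL_t f\in C^1(M\setminus\cS_{-1})$. Thus I would take a smooth cutoff $\chi_\ve:M\to[0,1]$ equal to $0$ in the $(\ve/2)$-neighborhood of $\cS_{-1}$, equal to $1$ outside the $\ve$-neighborhood, with $|\nabla\chi_\ve|_{C^0}\le C\ve^{-1}$, and set $g_\ve:=\chi_\ve\cdot\cL_tf$, extended by $0$ across $\cS_{-1}$. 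Then $g_\ve\in C^1(M)$, and the difference $\cL_tf-g_\ve=(1-\chi_\ve)\cL_tf$ is supported in the $\ve$-neighborhood of $\cS_{-1}$.

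For the stable norm, I would fix $W\in\cW^s_H$ and $|\psi|_{C^\beta(W)}\le|W|^{-1/p}$, and change variables via $T^{-1}$ to write
\[
\int_W(1-\chi_\ve)\cL_tf\cdot\psi\,dm_W=\sum_{V_i\in\cG_1(W)}\int_{V_i}\bigl((1-\chi_\ve)\circ T\bigr)\cdot f\cdot|J^sT|^t\cdot(\psi\circ T)\,dm_{V_i}.
\]
Since $T^{-1}\cS_{-1}\subset\cS_0$, the factor $(1-\chi_\ve)\circ T$ restricts the integrand to pieces $V_i$ living in the highest homogeneity strips $\bH_k$, with $k\gtrsim\ve^{-1/q}$, on which $|J^sT|^t\lesssim k^{-qt}$ and the lengths of $V_i\cap T^{-1}N_\ve(\cS_{-1})$ are controlled by the local Jacobian. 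Summing via the one-step expansion of Lemma~\ref{lem:one step} and the distortion bound of Lemma~\ref{lem:distortion}, and using $|f|_{C^0}\le|f|_{C^1}$, the total is dominated by $C\ve^{a}|f|_{C^1}$ for some positive $a>0$, uniformly in $W$.

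The unstable norm is more delicate: for $W_1,W_2\in\cW^s_H$ with $d_{\cW^s}(W_1,W_2)\le\ve'\le\ve_0$ and matching test functions, I would again change variables and then pair up corresponding pre-image components of $T^{-1}W_j$. Matched pieces lying outside the highest homogeneity strips contribute a difference of order $(\ve')^{1/(q+1)}$ through the H\"older regularity of $T^{-1}$ and $J^sT$ within each $\bH_k$; unmatched short pieces, and pieces in strips close to $\cS_0$, are absorbed into the stable-norm bound of the previous step. Dividing by $(\ve')^{\gamma}$ and letting $\ve\to 0$ (with $\ve'$ fixed), the condition $\gamma\le 1/(6q+7)$ from \eqref{eq:all} is exactly what allows the resulting bound to vanish.

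\textbf{Main obstacle.} The chief difficulty is the unstable-norm estimate: one must balance the decay in $\ve$ of the support of $1-\chi_\ve$ against the $(\ve')^{-\gamma}$ loss built into the unstable norm, while simultaneously absorbing the deterioration of H\"older moduli of $T^{-1}$ and $J^sT$ in the highest homogeneity strips, where the H\"older exponent $1/(q+1)$ competes with the thickness $k^{-q}$ of $\bH_k$. The exponents chosen in \eqref{eq:all} — in particular $\gamma\le 1/(6q+7)$ — are calibrated so that all these contributions tend to $0$ jointly as $\ve\to 0$.
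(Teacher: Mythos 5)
Your construction fails at the very first step: $g_\ve=\chi_\ve\cdot\cL_tf$ is not in $C^1(M)$, because $\cL_tf$ is \emph{not} $C^1$ on $M\setminus\cS_{-1}$ — it is not even continuous there. The weight $|J^sT|^{1-t}\circ T^{-1}$ involves the exact stable direction $E^s$, which is determined by the entire forward orbit; $E^s$ is only defined off the dense set $\cup_{n\ge1}\cS_n$ and is discontinuous across it, so $J^sT$ is not piecewise H\"older, let alone piecewise $C^1$. This is precisely the point of Remark~\ref{omit}: the reason the analogous statement in \cite{max} could not be reduced to \cite[Lemma~3.8]{dz1} is exactly that $J^sT$ lacks this transverse regularity. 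Your subsequent stable- and unstable-norm estimates for $(1-\chi_\ve)\cL_tf$ (which do resemble the paper's treatment of the high homogeneity strips near $T\cS_0$, cf. \eqref{eq:high index}) therefore address only the blow-up near the singularity, which is not the real obstruction; multiplying by a smooth cutoff does nothing to repair the roughness of $J^sT$ in the unstable direction away from $T\cS_0$.

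The paper's proof instead approximates $\cL_tf$ by a genuine mollification $g_\eta$ (convolution with a smooth kernel) and must then estimate $\cL_tf(x)-\cL_tf(z)$ for $z$ in an $\eta$-ball around $x$. This is done by passing through the point $y=[x,z]$: along the stable manifold the difference is controlled by the distortion bound (Lemma~\ref{lem:distortion}), while along the unstable manifold one needs the regularity statement of Lemma~\ref{lem:smooth} (built on the foliation of \cite{bdl}), which holds only off an exceptional set; the exceptional sets are absorbed using Lemma~\ref{lem:lots} and absolute continuity of the unstable foliation. That transverse (unstable-direction) regularity of $J^sT$, valid only off small bad sets, is the essential input of the lemma, and any correct proof — including a repaired version of yours, which would have to mollify anyway — must supply it. Incidentally, the condition $\gamma\le\frac{1}{6q+7}$ is used in the paper to beat the $C^1$-norm blow-up $\eta^{t/2-3/2}$ of the mollified function in the unstable-norm estimate, not to balance homogeneity-strip widths against the $(\ve')^{-\gamma}$ loss as you suggest.
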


We prove Lemma~\ref{lem:image} in Section~\ref{sec:approach}.

\begin{remark}[Lemma 4.9 in \cite{max}]\label{omit}
We remark that the proof of \cite[Lemma~4.9]{max}, which is the analogue of the
present Lemma~\ref{lem:image}, was omitted there.  The reference given there to 
\cite[Lemma~3.8]{dz1} is not correct since $J^sT$ is not piecewise H\"older.  However, its statement
is correct as the  proof  in Section~\ref{sec:approach} 
and Remark~\ref{t=0} demonstrate.
\end{remark}

\begin{remark}[Lebesgue measure belongs to $\cB$]
\label{LebB}
Since we identify $f \in C^1(M)$ with the measure
$f d\musrb$, Lebesgue measure is identified with the function
$f = 1/\cos \vf$, which is not in $C^1(M)$.  However, it follows from \cite[Lemma 3.5]{dz2},
that $1/\cos\vf$ can be approximated by $C^1$ functions in the $\cB$ norm, so that 
Lebesgue measure belongs to $\cB$.  (The proof requires that
our norms integrate on weakly homogeneous stable manifolds, rather than on all
$W \in \cW^s$ as was done in  \cite{max}.)
\end{remark}


\subsection{Lasota--Yorke Inequalities}

Using the exact bounds for $Q_n(t)$ from Proposition~ \ref{prop:exact},
we prove the following proposition (under more general conditions than 
Theorem~\ref{thm:spectral}).

\begin{proposition}
\label{prop:ly}
Fix $t_0\in (0,1)$ and $t_1\in (1, t_*)$ and let $\cB = \cB(t_0, t_1)$, $\theta=\theta (t_1)$.
Fix $ t_2 \in(t_0, \infty)$. 
For any $t\in [t_0, t_2]$, the operator $\cL_t$ extends continuously to $\cB_w$ and $\cB$, and setting $C_n = \max_{1 \le j \le n} Q_j(t)$,
there exists $C = C(t_0,  t_2) <\infty$ such that for every $n \ge 0$, 
\begin{align}
\label{eq:weak}
| \cL_t^n f |_w   &\le   C Q_n(t) |f|_w  \, ,\,  \,  \forall f \in \cB_w\, , \\
\label{eq:stable} \| \cL_t^n f \|_s  &\le   C Q_n(t) \left[ (\Lambda^{(-\beta + 1/p)n} + \theta^{(t-1/p)n} Q_n(t)^{-1} ) \right] \| f \|_s +  C C_n  |f|_w \, , \, \forall \, f \in \cB  \, ,  \\
\label{eq:unstable} \| \cL_t^n f \|_u   &\le   C Q_n(t) \left[ n \Lambda^{-\gamma n} \| f\|_u + Q_n(t-1/p) Q_n(t)^{-1} \| f\|_s \right] \, , \,  \forall f \in \cB \, . 
\end{align}
Moreover,  if   $t_2 = t_1$,  then, up to taking 
$p$ large enough, for any 
$$ 
\sigma \in ( \max \{ \Lambda^{-\beta + 1/p}, \Lambda^{-\gamma}, \theta^{t - 1/p} e^{- P_*(t)} \}, 1)\, ,
$$  
there exists $c_u=c_u(t_0,t_1) > 0$,  and $\bar C_n >0$,
such that, for  all $f \in \cB$, 
\begin{equation}
\label{eq:LY}
 \| \cL_t^n f \|_{\cB} 
\le  C e^{P_*(t) n}   \left [\sigma^n \| f \|_{\cB} +  \bar C_n |f|_w \right ]  \, , \, \forall \, n  \ge 1  \, . 
\end{equation}
\end{proposition}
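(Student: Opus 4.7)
The starting point is the identity
\begin{equation*}
\int_W \cL_t^n f \cdot \psi \, dm_W = \sum_{W_i \in \cG_n(W)} \int_{W_i} f \cdot (\psi \circ T^n) \cdot |J_{W_i}T^n|^t \, dm_{W_i} \, ,
\end{equation*}
valid for $f \in C^1(M)$, $W \in \cW^s_H$ and $\psi \in C^0(W)$, obtained by iterating \eqref{eq:L}, using $J^sT = J_{W_i}T$ on stable curves, and decomposing $T^{-n}W$ along the extended singularity set $\cS_{-n}^{\bH}$. Once \eqref{eq:weak}--\eqref{eq:unstable} are proved on $C^1(M)$, continuity of $\cL_t$ on $\cB_w$ and $\cB$ follows from density together with Lemma~\ref{lem:image}. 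For \eqref{eq:weak} I would test against $|\psi|_{C^\alpha(W)} \le 1$: stable contraction combined with Lemma~\ref{lem:distortion} (applicable since $\alpha \le 1/(q+1)$ and $qt \ge qt_0 > 1$) yields $|(\psi \circ T^n)|J_{W_i}T^n|^t|_{C^\alpha(W_i)} \le C|J_{W_i}T^n|^t_{C^0(W_i)}$, so that applying the weak norm on each $W_i$ and summing via Lemma~\ref{lem:extra growth} with $\varsigma = 0$ produces $C Q_n(t) |f|_w$.

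For \eqref{eq:stable}, I would take $|\psi|_{C^\beta(W)} \le |W|^{-1/p}$, set $\chi_i = (\psi \circ T^n)|J_{W_i}T^n|^t$, and split $\chi_i = \bar\chi_i + (\chi_i - \bar\chi_i)$ about its $m_{W_i}$-average. The constant parts pair with $f$ against $|f|_w$ and sum to the $C C_n |f|_w$ term by Lemma~\ref{lem:extra growth}. For the zero-mean parts, stable contraction and Lemma~\ref{lem:distortion} give $|\chi_i - \bar\chi_i|_{C^\beta(W_i)} \le C\Lambda^{-\beta n} |W|^{-1/p} |J_{W_i}T^n|^t_{C^0(W_i)}$; testing $f$ via the stable norm on $W_i$ introduces an extra factor $|W_i|^{1/p}$, and the distortion inequality $|W_i|^{1/p} |J_{W_i}T^n|^t_{C^0(W_i)} \le C |W|^{1/p} |J_{W_i}T^n|^{t-1/p}_{C^0(W_i)}$ turns the weight into $|J_{W_i}T^n|^{t-1/p}$. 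Summing over $\cG_n(W) \setminus \cI_n(W)$ via Lemma~\ref{lem:extra growth} yields a contribution $\asymp \Lambda^{-\beta n} Q_n(t-1/p) \|f\|_s$, equal to $Q_n(t)\Lambda^{(-\beta+1/p)n} \|f\|_s$ up to distortion constants (using $|J^sT^n| \asymp \Lambda^{-n}$). The pieces in $\cI_n(W)$ are treated directly by Lemma~\ref{lem:short growth} with $\varsigma = 1/p$, giving the $\theta^{(t-1/p)n} \|f\|_s$ term.

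The unstable bound \eqref{eq:unstable} is the technical heart of the argument. Given $W_1, W_2 \in \cW^s_H$ with $d_{\cW^s}(W_1, W_2) \le \ve$ and matched test functions $\psi_1, \psi_2$, I would adapt the matching construction of \cite{dz1, dz2}: partition $\cG_n(W_1)$ and $\cG_n(W_2)$ into pairs of \emph{matched} curves $(U_i, V_i)$ lying in a common element of $\cM_{-n}^{0,\bH}$ with $d_{\cW^s}(U_i, V_i) \le C\Lambda^{-n}\ve$, together with residual \emph{unmatched} pieces of cumulative length $O(n\ve)$. After transporting the test function holonomically between $U_i$ and $V_i$, pairwise differences are controlled by the unstable norm of $f$ at scale $\Lambda^{-n}\ve$, and summing the weights $|J_{U_i}T^n|^t_{C^0}$ via Lemma~\ref{lem:extra growth} and dividing by $\ve^\gamma$ delivers $Cn\Lambda^{-\gamma n} Q_n(t)\|f\|_u$; the factor $n$ absorbs the accumulated cost of holonomy reparametrizations and of the small failures of matching produced by nearly-tangential collisions at each iterate. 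Unmatched pieces, being short, are estimated as in the stable-norm step and produce the $Q_n(t-1/p) Q_n(t)^{-1} \|f\|_s$ summand once $Q_n(t)$ is factored out.

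For the Lasota--Yorke \eqref{eq:LY} I would substitute $Q_n(t) \le (2/c_2) e^{nP_*(t)}$ from Proposition~\ref{prop:exact}, choose $p$ large enough that $\theta^{t-1/p} e^{-P_*(t)} < 1$ uniformly for $t \in [t_0, t_1]$ (possible because $\theta^{t_1} < e^{P_*(t_1)}$ and $P_*$ is decreasing, this being the content of Lemma~\ref{lem:t ok}), and fix $c_u > 0$ small enough that the $\|f\|_s$ contribution to \eqref{eq:unstable} is absorbed into the $\sigma^n \|f\|_s$ term from \eqref{eq:stable}, while $c_u$ remains strictly positive so that $\|\cdot\|_\cB$ is equivalent to $\|\cdot\|_s + c_u\|\cdot\|_u$. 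The principal obstacle throughout is the matching construction for \eqref{eq:unstable}: $W_1$ and $W_2$ can be cut differently by $T^{-n}\cS_0^{\bH}$, so the matching must absorb these boundary discrepancies, relying on the weak homogeneity built into $\cW^s_H$ and on the one-step expansion of Lemma~\ref{lem:one step} to keep the tail of unmatched pieces summable.
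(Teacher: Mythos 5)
Your overall architecture is the paper's: prove \eqref{eq:weak}--\eqref{eq:unstable} for $f\in C^1(M)$ and extend by density via Lemma~\ref{lem:image}, then assemble \eqref{eq:LY} by substituting the exact growth of $Q_n(t)$ from Proposition~\ref{prop:exact}, choosing $p$ via Lemma~\ref{lem:t ok}, and taking $c_u$ small at a fixed iterate so the $Q_n(t-1/p)Q_n(t)^{-1}\|f\|_s$ term from \eqref{eq:unstable} is absorbed; your weak-norm and unstable-norm (matched/unmatched) sketches likewise track the paper. The stable-norm step, however, contains a genuine error. You average the full product $\chi_i=(\psi\circ T^n)|J_{W_i}T^n|^t$ and claim $|\chi_i-\bar\chi_i|_{C^\beta(W_i)}\le C\Lambda^{-\beta n}|W|^{-1/p}|J_{W_i}T^n|^t_{C^0(W_i)}$. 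This is false: by Lemma~\ref{lem:distortion} the relative oscillation of $|J_{W_i}T^n|^t$ on $W_i$ is of order $d(x,y)^{1/(q+1)}$ --- bounded, but with no decay in $n$ --- so $H^\beta_{W_i}(\chi_i)$ contains the term $|\psi\circ T^n|_{C^0}\,H^\beta_{W_i}(|J^sT^n|^t)\sim |W|^{-1/p}|J^sT^n|^t_{C^0(W_i)}$, and subtracting the mean of the product does not remove it. Running this un-contracted piece through the stable norm yields a contribution of order $Q_n(t-1/p)\|f\|_s$ with no gain over $Q_n(t)$ (indeed $P_*(t-1/p)>P_*(t)$), and unlike in \eqref{eq:unstable} this cannot be damped by $c_u$, since $\|f\|_s$ enters $\|f\|_{\cB}$ with coefficient $1$; the Lasota--Yorke step would then fail. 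The paper avoids this by averaging only $\psi\circ T^n$, so the contraction factor $|J^sT^n|^\beta_{C^0(W_i)}$ comes entirely from $\psi\circ T^n-\bar\psi_i$ while the Jacobian is carried along multiplicatively in $C^\beta$, giving $\Lambda^{-(\beta-1/p)n}Q_n(t)$ after Lemma~\ref{lem:extra growth} with $\varsigma=1/p$.

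A second, related gap is in your treatment of the averaged (weak-norm) terms: you assert that Lemma~\ref{lem:extra growth} alone yields $CC_n|f|_w$, but the constants are bounded only by $|W|^{-1/p}|J^sT^n|^t_{C^0(W_i)}$, so for $|W|<\delta_0/3$ this route gives $|W|^{-1/p}Q_n(t)|f|_w$, which is not uniform in $W$. The paper removes the $|W|^{-1/p}$ by decomposing according to the \emph{first long ancestor}: pieces in $\cI_n(W)$ (never long) are paired with $\|f\|_s$ via Lemma~\ref{lem:short growth} with $\varsigma=1/p$ (this is indeed the source of the $\theta^{(t-1/p)n}\|f\|_s$ term, consistent with your accounting), while a piece first long at time $k$ lets one write $|W|^{-1/p}=|V_j|^{-1/p}\,|V_j|^{1/p}/|W|^{1/p}$ with $|V_j|\ge\delta_0/3$, so that Lemma~\ref{lem:short growth} over the family $P_k(W)$ supplies the compensating factor $\theta^{k(t-1/p)}$ and the sum $\sum_k\theta^{k(t-1/p)}Q_{n-k}(t)|f|_w$ is bounded by $C\,C_n|f|_w$. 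Without this grouping (or an equivalent device) the stated form of \eqref{eq:stable} does not follow from your estimates.
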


\smallskip
\noindent
Proving \eqref{eq:LY} will use the following lemma:

\begin{lemma}
\label{lem:t ok}
For any  $t_1\in (1,t_*)$
 there exists   $p> 1$ such that
$\theta^{t-1/p} < e^{P_*(t)}$ for
all $t \in (1/p, t_1]$.
\end{lemma}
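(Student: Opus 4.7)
The plan is to take logarithms and reformulate $\theta^{t-1/p} < e^{P_*(t)}$ equivalently as $h(t) > |\log \theta|/p$, where
\[
h(t) := P_*(t) - t \log \theta = P_*(t) + t |\log \theta|\,.
\]
Since $|\log \theta|/p \to 0$ as $p \to \infty$, it suffices to establish a uniform strictly positive lower bound for $h$ on $(0, t_1]$, after which one takes $p$ so large that $|\log \theta|/p$ is strictly smaller than this bound; the assertion on the subinterval $(1/p, t_1]$ then follows a fortiori.

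I would obtain such a lower bound by showing that $h$ is strictly decreasing on $(0, t_1]$, via two ingredients. First, the estimate $P_*(t+s) \le P_*(t) - s \log \Lambda$ established for every $s, t > 0$ in the proof of the strict decrease of $P_*$ in Proposition~\ref{prop:Ptilde} (from $Q_n(t+s) \le C_1^{-s} \Lambda^{-ns} Q_n(t)$); dividing by $s$ and passing to the one-sided limits $s \to 0^\pm$ shows that both one-sided derivatives of the convex function $P_*$ satisfy $P_*'_\pm(t) \le -\log \Lambda$ uniformly in $t$. Second, the constraint $\theta > \Lambda^{-1}$ imposed in Definition~\ref{def:k0d0} rearranges to $\log \theta + \log \Lambda > 0$. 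Combining these,
\[
h'_\pm(t) = P_*'_\pm(t) - \log \theta \le -\log \Lambda - \log \theta < 0\,,
\]
uniformly in $t$, and a convex function whose one-sided derivatives are bounded above by a strictly negative constant is strictly decreasing. Thus $h(t) \ge h(t_1)$ throughout $(0, t_1]$.

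Finally, the positivity $h(t_1) > 0$ is precisely the defining feature of $\theta$ in Definition~\ref{def:k0d0}, where $\theta$ was chosen so that $\theta^{t_1} < e^{P_*(t_1)}$, that is, $P_*(t_1) - t_1 \log \theta > 0$. Choosing any integer $p > |\log \theta|/h(t_1)$ then yields $|\log \theta|/p < h(t_1) \le h(t)$ for all $t \in (0, t_1]$, whence the lemma. No serious obstacle is anticipated: the argument is essentially a bookkeeping consequence of the pressure-gap condition defining $t_*$, and the only input beyond Proposition~\ref{prop:Ptilde} is the derivative bound $P_*'_\pm \le -\log \Lambda$, which is already implicit in the proof of the strict monotonicity of $P_*$.
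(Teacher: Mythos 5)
Your argument is correct and rests on the same two ingredients as the paper's proof: the uniform slope bound $P_*'\le-\log\Lambda<\log\theta$ extracted from the proof of Proposition~\ref{prop:Ptilde}, and the defining property $\theta^{t_1}<e^{P_*(t_1)}$ of $\theta$ from Definition~\ref{def:k0d0}, with the final choice of $p$ (namely $|\log\theta|/p<P_*(t_1)-t_1\log\theta$) identical to the paper's. The only cosmetic difference is that you run the slope comparison uniformly over all of $(0,t_1]$ via the decreasing function $h(t)=P_*(t)-t\log\theta$, whereas the paper splits off $t\in(0,1]$ and handles it with the simpler observation $P_*(t)\ge 0$ there; both are fine.
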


\begin{proof}
If $t\in(0, 1]$ then $P_*(t) \ge 0$ so that $\theta^{t-1/p} < 1 \le e^{P_*(t)}$
for  all  $\theta<1$, all $p>1$ and all $t\in(1/p,1]$.
For $t \in (1, t_1]$,  since the slopes of $P_*(t)$ are at most
$- \log \Lambda$ by the proof of Proposition~\ref{prop:Ptilde}, we have $\frac{P_*(t_1)- P_*(t)}{t_1-t} <-\log \Lambda< \log \theta$, so that $\theta^t e^{- P_*(t)} < \theta^{t_1} e^{-P_*(t_1)}$.  
The choice of $\theta=\theta(t_1)$ in   Definition~\ref{def:k0d0}   gives $\theta^{t_1} e^{-P_*(t_1)}<1$.
 Choosing $p>1$ such that
$\theta^{t_1- 1/p} e^{-P_*(t_1)} < 1$ ends the  proof.
\end{proof}

\begin{proof}[Proof of Proposition~\ref{prop:ly}]
We first show that \eqref{eq:weak},  \eqref{eq:stable},
and  \eqref{eq:unstable} imply that 
if  $ t_2 =  t_1 < t_*$ and $p$ is large enough,
then  $\cL_t$ satisfies the  Lasota--Yorke inequality
\eqref{eq:LY} for $f\in \cB(t_0,t_1)$:
Choosing $p$ according to Lemma~\ref{lem:t ok}, observe that $\theta^{t-1/p} e^{- P_*(t)} < 1$
 implies $\theta^{t-1/p} Q_n(t)^{-1} \le \theta^{(t-1/p) n} e^{-P_*(t) n}< 1$ for all $n\ge 1$, since $Q_n(t) \ge e^{P_*(t) n}$  by 
Proposition~\ref{prop:exact}.
Next, recalling that $P_*(t)$ is  convex and  strictly decreasing by Proposition~\ref{prop:Ptilde}, and  fixing 
$$\ve_1:=P_*(t-1/p) - P_*(t) \in (0,  P_*(t_0-1/p)- P_*(t_0)  ) \, ,$$
we find, using  both the lower and upper bounds  from
Proposition~\ref{prop:summary}(b),
\[
Q_n(t-1/p) Q_n(t)^{-1} \le \frac{2}{c_2} e^{P_*(t-1/p) n} e^{-P_*(t)n} 
\le \frac{2}{c_2}  e^{\ve_1 n}\, ,
\,\forall n \ge 1 
\, .  
\]
Next,  fix $  1 > \sigma >  \max \{ \Lambda^{-\beta + 1/p}, \Lambda^{-\gamma}, \theta^{t - 1/p} e^{- P_*(t)} \}$ and choose  $N \ge 1$ such that
\[
\frac{2 C }{c_2} \max \{ N \Lambda^{-\gamma N} , 2\big( \Lambda^{-(\beta - 1/p)N} + \theta^{(t-1/p)N} e^{-P_*(t)N}\big)  \}
\le \sigma^N \, . 
\]
Choosing $c_u>0$ to satisfy
\begin{equation}
\label{choicec_u}
c_u \le 
\frac{c_2^2 \sigma^N}{8 C e^{2(P_*(t_0-1/p) -  P_*(t_0)  )N} }\, ,
\end{equation}
 we estimate, using once more the upper bound for $Q_n(t)$ from
Proposition~\ref{prop:summary}(b),
\begin{align*}
\| \cL_t^N f \|_{\cB} & = \| \cL_t^N f \|_s + c_u \| \cL_t^N f \|_u \\
& \le e^{P_*(t)N} \left[ \frac{\sigma^N}{2} \| f \|_s +c_u \sigma^N \| f \|_u +  \frac{4 c_u}{c_2^2}  e^{\ve_1 N} \| f \|_s \right]  +  C C_N   | f |_w \\
& \le  e^{P_*(t)N}  
\left[ \sigma^N \| f \|_{\cB} + e^{-P_*(t)N}  C C_{N}   | f |_w \right]\, .
\end{align*}
Iterating this equation and using the first claim of \eqref{eq:weak} (recalling one more time  the upper bound for $Q_n(t)$ from
Proposition~\ref{prop:summary}(b)) yields \eqref{eq:LY} for $n=\ell N$, with $\ell \ge 1$. 
The general case  follows since \eqref{eq:stable} and \eqref{eq:unstable} imply
$\| \cL_t^k f\|_{\cB} \le \bar C \| f\|_{\cB}$ for $k \le N$.

By Lemma~\ref{lem:image}, it suffices
to prove the bounds
\eqref{eq:weak},  \eqref{eq:stable},
and  \eqref{eq:unstable}  for $f \in C^1(M)$,
and they also imply that $\cL_t$ extends to a bounded operator on $\cB$ and $\cB_w$.
This is similar  to the proof of \cite[Proposition~2.3]{dz1}
and  is the content of Sections~\ref{sec:weak norm}--\ref{sec:unstable}.
\end{proof}


\subsubsection{Proof of Weak Norm Bound \eqref{eq:weak}}
\label{sec:weak norm}

Let $f \in C^1(M)$, $W \in \cW^s$ and $\psi \in C^\alpha(W)$ such that $|\psi|_{C^\alpha(W)} \le 1$.  Then for $n \ge 0$,
we have
\begin{equation}
\label{eq:weak start}
\begin{split}
\int_W \cL_t^n f \, \psi \, dm_W & = \sum_{W_i \in \cG_n(W)} \int_{W_i} f \psi \circ T^n |J^sT^n|^t \, dm_{W_i} \\
& \le \sum_{W_i \in \cG_n(W)} |f|_w |\psi \circ T^n|_{C^\alpha(W)} ||J^sT^n|^t|_{C^\alpha(W)} \, .
\end{split}
\end{equation}
The contraction along stable manifolds implies for $x, y \in W_i \in \cG_n(W)$, recalling \eqref{eq:holder def},
\begin{equation}
\label{eq:test contract}
|\psi(T^nx) - \psi(T^ny)| \le H^\alpha_W(\psi) d(T^nx, T^ny)^\alpha \le H^\alpha_W(\psi) |J^sT^n|_{C^0(W_i)}^\alpha d(x,y)^\alpha \, .
\end{equation} 
This implies  $H^\alpha_{W_i}(\psi \circ T^n) \le |J^sT^n|_{C^0(W_i)}^\alpha H^\alpha_W(\psi)$ and 
$|\psi \circ T^n|_{C^\alpha(W_i)} \le C_1^{-1} |\psi|_{C^\alpha(W)}$, with
$C_1$ from \eqref{eq:hyp}.

Moreover, since $\alpha \le 1/(q+1)$, the distortion bound of Lemma~\ref{lem:distortion} implies
\begin{equation}
\label{eq:JC}
||J^sT^n|^t|_{C^\alpha(W_i)} \le (1+ 2^tC_d) |J^sT^n|^t_{C^0(W_i)} \, , \, \,
 \forall\,  W_i \in \cG_n(W)\, . 
\end{equation}

Using \eqref{eq:test contract} and \eqref{eq:JC} in \eqref{eq:weak start}, we obtain,
\[
\int_W \cL^n_t f \, \psi \, dm_W \le \sum_{W_i \in \cG_n(W)} |f|_w C_1^{-1} (1+2^tC_d) |J^sT^n|^t_{C^0(W_i)}
\le C |f|_w Q_n(t) \, ,
\]
where in the last inequality, we have used Lemma~\ref{lem:extra growth} with $\varsigma = 0$.  Taking the suprema over
$\psi \in C^\alpha(W)$ with $|\psi|_{C^\alpha(W)} \le 1$ and $W \in \cW^s$  yields 
\eqref{eq:weak}.


\subsubsection{Proof of Stable Norm Bound \eqref{eq:stable}}
\label{sec:stable norm}

Let $f \in C^1(M)$, $W \in \cW^s$, and $\psi \in C^\beta(W)$ be such that $|\psi|_{C^\beta(W)} \le |W|^{-1/p}$.  For
$n \ge 0$ and
$W_i \in \cG_n(W)$, we define the average $\bpsi_i = |W_i|^{-1} \int_{W_i} \psi \circ T^n \, dm_{W_i}$.  Then as in
\eqref{eq:weak start}, we write,
\begin{align}
\label{eq:stable split}
\int_W \cL_t^n f \, \psi \, dm_W &= \sum_{W_i \in \cG_n(W)} \int_{W_i} f \, (\psi \circ T^n - \bpsi_i) |J^sT^n|^t \, dm_{W_i}\\
\nonumber &\qquad\qquad\qquad\qquad + \sum_{W_i \in \cG_n(W)} \bpsi_i \int_{W_i} f \, |J^sT^n|^t \, dm_{W_i} \, .
\end{align}
Note that by \eqref{eq:test contract}, 
\[
|\psi \circ T^n - \bpsi_i|_{C^\beta(W_i)} \le 2 |J^sT^n|^\beta_{C^0(W_i)} |\psi|_{C^\beta(W)}
\le 2 |J^sT^n|^\beta_{C^0(W_i)} |W|^{-1/p} \, .
\]
Therefore, replacing $\alpha$ by $\beta$ in  \eqref{eq:JC}, the definition of the strong stable norm gives
\begin{equation}
\label{eq:first stable}
\begin{split}
\sum_{W_i \in \cG_n(W)} \int_{W_i} f \, (\psi \circ T^n &- \bpsi_i) |J^sT^n|^t \, dm_{W_i}\\
& \le \sum_{W_i \in \cG_n(W)} 2(1+2^tC_d) \| f\|_s \frac{|W_i|^{1/p}}{|W|^{1/p}} |J^sT^n|^{t + \beta}_{C^0(W_i)} \\
& \le 2(1+2^tC_d) C_1^{-1} \Lambda^{-n(\beta - 1/p)} \| f \|_s C_2[0] \, Q_n(t) \, ,
\end{split}
\end{equation}
where in the second inequality we have used Lemma~\ref{lem:extra growth} with $\varsigma = 1/p$ (recall $\beta > 1/p$).

For the second sum in \eqref{eq:stable split}, note that $|\bpsi_i| \le |W|^{-1/p}$.  If $|W| \ge \delta_0/3$, then we simply estimate
\[
\sum_{W_i \in \cG_n(W)} \bpsi_i \int_{W_i} f \, |J^sT^n|^t \, dm_{W_i} \le \frac{3} {\delta_0^{1/p}} |f|_w (1+2^t C_d) \sum_{W_i \in \cG_n(W)}
|J^sT^n|^t_{C^0(W_i)} \le C |f|_w Q_n(t) \, ,
\] 
by Lemma~\ref{lem:extra growth} with $\varsigma=0$.

If $|W| < \delta_0/3$, we handle the estimate differently, splitting the sum
into two parts as follows.  We decompose the elements
of $\cG_n(W)$ by {\em first long ancestor} as follows:
Recalling the sets $\cI_n(W)$ defined 
in \S\ref{1step}, we call $V_j \in \cG_k(W)$ the first long ancestor of $W_i \in \cG_n(W)$ if 
\begin{equation}\label{FLA}
T^{n-k}W_i \subset V_j\, ,\quad
|V_j| \ge \delta_0/3\, , \quad \mbox{and $TV_j$ is contained in an element of } \cI_{k-1}(W)\, .
\end{equation}
We denote by $P_k(W)$ the set of such $V_j \in \cG_k(W)$ that are long for the first time at time $k$.  Note that $W_i$ has no long ancestor
if and only if $W_i \in \cI_n(W)$.

Grouping the terms in the second sum in \eqref{eq:stable split} by whether they belong to $\cI_n(W)$ or not, we apply the
weak norm to those elements that have a first long ancestor, and the strong stable norm to those that do not.  Thus,
\begin{equation}
\label{eq:stable short sum}
\begin{split}
& \sum_{W_i \in \cI_n(W)} \bpsi_i \int_{W_i} f \, |J^sT^n|^t \, dm_{W_i}
\le \sum_{W_i \in \cI_n(W)} |\bpsi_i| \| f\|_s |W_i|^{1/p} |J^sT^n|^t_{C^\beta(W_i)} \\
& \quad \le (1+2^t C_d) \| f\|_s \sum_{W_i \in \cI_n(W)} \frac{|W_i|^{1/p}}{|W|^{1/p}} |J^sT^n|^t_{C^0(W_i)}
\le (1+2^t C_d) \|f \|_s C_0\theta^{n(t-1/p)} \, ,
\end{split}
\end{equation}
where in the last estimate we applied Lemma~\ref{lem:short growth} with $\varsigma = 1/p$ since\footnote{This bound
holds since $p > q+1$ in the definition of the norms, yet
 $q \ge 2/t$ from \eqref{eq:H}.} $1/p \le \min \{1/2, t/2\}$.

For the terms that have a first long ancestor in $P_k(W)$, we again apply Lemma~\ref{lem:short growth}
from time $0$ (since $|W| < \delta_0/3$) to time $k$,
setting $\cG_0(V)=\{V\}$,
\begin{align*}
\sum_{k=1}^n & \sum_{V_j \in P_k(W)} \sum_{W_i \in \cG_{n-k}(V_j)} \bpsi_i \int_{W_i} f \, |J^sT^n|^t \, dm_{W_i} \\
& \le \sum_{k=1}^n \sum_{V_j \in P_k(W)} |f|_w |V_j|^{-1/p} (1+2^t C_d) \frac{|V_j|^{1/p}}{|W|^{1/p}} |J^sT^k|^t_{C^0(V_j)} 
\sum_{W_i \in \cG_{n-k}(V_j)}  |J^sT^{n-k}|^t_{C^0(W_i)} \\
& \le \sum_{k=1}^n \sum_{V_j \in P_k(W)} |f|_w 3 \delta_0^{-1/p} (1+2^t C_d) \frac{|V_j|^{1/p}}{|W|^{1/p}}|J^sT^k|^t_{C^0(V_j)} 
CC_2[0] \,  Q_{n-k}(t)  \\
& \le \sum_{k=1}^n |f|_w 3 \delta_0^{-1/p} (1+2^t C_d) CC_2[0] \,  C_0 \theta^{k(t-1/p)} Q_{n-k}(t) \, ,
\end{align*}
applying  Lemma~\ref{lem:extra growth} 
for $\varsigma=0$ in the second inequality and 
Lemma~\ref{lem:short growth} (for $\varsigma=1/p$) in the third.

Putting these estimates together with \eqref{eq:first stable} in \eqref{eq:stable split} yields\footnote{It is in fact possible to show $\max_{0\le j \le n} Q_{j}(t)\le \max \{ Q_n(t), Q_n(1)\}$,
but we shall not use this.}
\[
\int_W \cL_t^n f \, \psi \, dm_W \le  C Q_n(t) \big( \Lambda^{-n(\beta - 1/p)} + \theta^{n(t-1/p)} Q_n(t)^{-1} \big) \| f \|_s + C \max_{0\le j \le n} Q_{j}(t) |f|_w   \, ,
\]
and taking the appropriate suprema proves \eqref{eq:stable}
($C_n$ depends on $t$ only through $[t_0,t_1]$).


\subsubsection{Proof of Unstable Norm Bound \eqref{eq:unstable}}
\label{sec:unstable}

Let $\ve < \ve_0$ and let $W^1, W^2 \in \cW^s$ with $d_{\cW^s}(W^1, W^2) \le \ve$.  For $n \ge1$ and $\ell=1,2$, we partition
$T^{-n}W^\ell$ into matched pieces $U^\ell_j$ and unmatched pieces $V^\ell_i$ like
in \cite{dz1} as follows.

To each homogeneous connected component $V$
of $T^{-n}W^1$, we associate a family of vertical segments $\{ \gamma_x \}_{x \in V}$ of length at most 
$C_1^{-1} \Lambda^{-n}\ve$ such that if $\gamma_x$ is not cut by an element of $\cS_n^{\bH}$,  its image $T^n\gamma_x$
will have length $C\ve$ and will intersect $W^2$.
According to \cite[Sect.~4.4]{chernov book}, for such a segment, $T^i\gamma_x$ will be an unstable curve for $i=1, \ldots, n$ and so will remain uniformly transverse to the stable cone and
undergo the minimum expansion given by \eqref{eq:hyp}. 

Repeating this procedure for each connected component of $T^{-n}W^1$, we obtain a partition of $W^1$ into subintervals for
which $T^n\gamma_x$ is not cut and intersects $W^2$ and subintervals for which this is not the case.  This also defines
an analogous partition on $W^2$ and on the images $T^{-n}W^1$ and $T^{-n}W^2$.  We call two curves in
$T^{-n}W^1$ and $T^{-n}W^2$ {\em matched} if they are connected by  the foliation $\gamma_x$ and their images under
$T^n$ are connected by $T^n\gamma_x$.  We further subdivide the matched pieces if necessary to ensure that they 
have length $\le \delta_0$ and that they remain homogeneous stable curves.  Thus there are at most two matched pieces
$U^\ell_j$ corresponding to each element of $\cG_n(W^\ell)$.  The rest of the connected components of $T^{-n}W^\ell$ we call
{\em unmatched} and denote them by $V^\ell_i$.  Once again, there are at most two unmatched pieces $V^\ell_i$ 
corresponding to each element of $\cG_n(W^\ell)$.

Recalling the notation of Section~\ref{sec:norms}, we have constructed a pairing on matched pieces $U^\ell_j$ defined
over a common $r$-interval $I_j$ such that for each $j$,
\begin{equation}
\label{eq:match}
U^\ell_j = G_{U^\ell_j}(I_j) = \{ (r, \vf_{U^\ell_j}(r)) : r \in I_j \} \, , \; \; \; \ell = 1,2 .
\end{equation}

Now let $\psi_\ell \in C^\alpha(W^\ell)$ with $|\psi_\ell|_{C^\alpha(W^\ell)} \le 1$ and $d(\psi_1, \psi_2)=0$.  Decomposing
$W^1$ and $W^2$ into matched and unmatched pieces as above, we write,
\begin{align}
\label{eq:unstable decomp}
\left| \int_{W^1} \cL_t^n f \, \psi_1 - \int_{W^2} \cL_t^n f \, \psi_2 \right|
& \le \sum_j \left| \int_{U^1_j} f \, \psi_1 \circ T^n \, |J^sT^n|^t - \int_{U^2_j} f \, \psi_2 \circ T^n \, |J^sT^n|^t \right| \\
\nonumber & \qquad + \sum_{\ell, i} \left| \int_{V^\ell_i} f \, \psi_\ell \circ T^n \, |J^sT^n|^t \right| \, .
\end{align}

We estimate the unmatched pieces first.  For this we use the fact that unmatched pieces $V^\ell_i$ occur either
because $T^nV^\ell_i$ is near the endpoints of $W^\ell$ or because a vertical segment $T^n\gamma_x$ intersects
$\cS_{-n}^\bH$. In either case, due to the uniform transversality of the stable and unstable cones, we have
$|T^nV^\ell_i| \le C \ve$ for some uniform constant $C>0$, independent of $n$ and $W^\ell$, since $d_{\cW^s}(W^1,W^2) \le \ve$.
Thus, we estimate the sum over unmatched pieces using the strong stable norm,
\begin{equation}
\label{eq:unmatched}
\begin{split}
\sum_{\ell, i} \left| \int_{V^\ell_i} f \, \psi_\ell \circ T^n \, |J^sT^n|^t \right|
& \le \sum_{\ell,i} \| f\|_s |V^\ell_i|^{1/p} |\psi_\ell \circ T^n|_{C^\beta(V^\ell_i)} (1+2^t C_d) |J^sT^n|^t_{C^0(V^\ell)_i} \\
& \le \| f \|_s C_1^{-1} (1+2^t C_d) \sum_{\ell,i} |T^nV^\ell_i|^{1/p} |J^sT^n|^{t-1/p}_{C^0(V^\ell)_i} \\
& \le 4 C_2[0] \, C_1^{-1} (1+ 2^t C_d) \| f \|_s \ve^{1/p} Q_n(t-1/p) \, ,
\end{split}
\end{equation}
where $C_1$ is from \eqref{eq:hyp} and
we have used \eqref{eq:JC} in the first inequality, \eqref{eq:test contract}
 in the second, and Lemma~\ref{lem:extra growth} (for $\varsigma=0$)
in the third since there are at most two unmatched pieces corresponding to each element of $\cG_n(W^\ell)$.

To perform the estimate over matched pieces in \eqref{eq:unstable decomp},  
we  will change variables to define the relevant test functions on the same curve.  To this end,
recalling \eqref{eq:match}, define on each $U^1_j$,
\[
\tpsi_2 = \psi_2 \circ T^n \circ G_{U^2_j} \circ G_{U^1_j}^{-1}, \quad \mbox{and} \quad
\tJ^sT^n = J^sT^n \circ G_{U^2_j} \circ G_{U^1_j}^{-1} \, .
\]

\begin{sublem}
\label{lem:matching}
There exists $C>0$, independent of $t$, $n$, $W^1$, and $W^2$ such that
\begin{itemize}
  \item[a)] $\displaystyle d_{\cW^s}(U^1_j, U^2_j) \le C n \Lambda^{-n} \ve 
=: \ve_1$,
for all $j$;
  \item[b)] $\displaystyle |\psi_1 \circ T^n |J^sT^n|^t - \tpsi_2 |\tJ^sT^n|^t |_{C^\beta(U^1_j)} \le C 2^t |J^sT^n|^t_{C^0(U^1_j)} \ve^{\alpha-\beta}$, for all $j$.
\end{itemize}
\end{sublem}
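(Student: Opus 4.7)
The plan is to prove (a) first, as a purely geometric/hyperbolic statement about matched stable curves, and then use it together with the hypothesis $d(\psi_1,\psi_2)=0$ and the distortion estimates from Lemmas~\ref{lem:distortion} and \ref{lem:comparable} to deduce (b) via a short interpolation argument.

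For (a), recall that by construction each pair $U^1_j, U^2_j$ is connected by the family of vertical segments $\gamma_x$, $x \in U^1_j$, whose length is at most $C_1^{-1}\Lambda^{-n}\ve$. Since the $\gamma_x$ have constant $r$-coordinate, $U^1_j$ and $U^2_j$ share the same $r$-projection $I_j$, so the set-symmetric-difference term in $d_{\cW^s}$ vanishes; moreover, $|\vf_{U^1_j}(r) - \vf_{U^2_j}(r)|$ equals $|\gamma_x|$ for the corresponding $x$, yielding the $C^0$ bound. The delicate $C^1$ bound reduces to controlling the difference $\Delta\cV_0$ of the slopes of the stable directions at matched points: pulling back the relation $|\Delta\cV_n| \le \ve$ (which encodes $d_{\cW^s}(W^1,W^2)\le \ve$) along the paired trajectories via a time-reversed version of the Jacobi recursion \cite[eq.~(5.29)]{chernov book}, one gets contraction by $\Lambda^{-1}$ per step with a new Jacobi-type error per step governed by $d(T^ix, T^iy(x)) \le C\Lambda^{i-n}\ve$; accumulating $n$ such bounded contributions yields the slope bound $|\Delta\cV_0| \le Cn\Lambda^{-n}\ve$, and hence $d_{\cW^s}(U^1_j, U^2_j) \le Cn\Lambda^{-n}\ve = \ve_1$.

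For (b), I split the product by the triangle inequality,
\begin{align*}
\psi_1 \circ T^n\, |J^sT^n|^t - \tpsi_2\, |\tJ^sT^n|^t = (\psi_1 \circ T^n - \tpsi_2)\, |J^sT^n|^t + \tpsi_2\, (|J^sT^n|^t - |\tJ^sT^n|^t),
\end{align*}
and estimate each summand in $C^\beta(U^1_j)$ using that $C^\beta$ is a Banach algebra. For the first summand: the paired points $T^nx \in W^1$ and $T^ny(x) \in W^2$ lie on the common unstable curve $T^n\gamma_x$ of length at most $C\ve$, so their $r$-coordinates differ by $\le C\ve$; combining with $\psi_1 \circ G_{W^1}(r)=\psi_2 \circ G_{W^2}(r)$ and the $C^\alpha$-regularity of $\psi_2$ yields $|\psi_1 \circ T^n - \tpsi_2|_{C^0(U^1_j)} \le C\ve^\alpha$. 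Since both $\psi_1 \circ T^n$ and $\tpsi_2$ have uniformly bounded $C^\alpha(U^1_j)$ seminorm via \eqref{eq:test contract} and the uniformly Lipschitz change of variables $G_{U^2_j} \circ G_{U^1_j}^{-1}$ (justified by (a) and the fact that slopes are uniformly bounded in the stable cone), the standard interpolation between $C^0$ and $C^\alpha$ (estimating $|\phi(x_1)-\phi(x_2)|$ by $2|\phi|_{C^0}$ when $d(x_1,x_2)\ge \ve$ and by $|\phi|_{C^\alpha} d(x_1,x_2)^\alpha$ otherwise) upgrades this to $|\psi_1 \circ T^n - \tpsi_2|_{C^\beta(U^1_j)} \le C\ve^{\alpha - \beta}$; multiplying by $||J^sT^n|^t|_{C^\beta(U^1_j)} \le (1+2^tC_d)|J^sT^n|^t_{C^0(U^1_j)}$ from \eqref{eq:JC} closes the first term. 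For the second summand, $|\tpsi_2|_{C^\beta(U^1_j)}$ is uniformly bounded, and iterating the formula \eqref{eq:stable jac} for $\log J^sT$ along the paired trajectories (which stay in the same homogeneity strip at each step, with $d(T^ix, T^iy(x)) \le C\Lambda^{i-n}\ve$) and summing the telescoping contributions as in the proof of Lemma~\ref{lem:comparable} yields $||J^sT^n|^t - |\tJ^sT^n|^t|_{C^0(U^1_j)} \le C\cdot 2^t\,|J^sT^n|^t_{C^0(U^1_j)}\,\ve^{1/(q+1)}$; since $\alpha \le 1/(q+1)$ by \eqref{eq:all} and $\ve \le 1$, this is $\le C\cdot 2^t\,|J^sT^n|^t_{C^0(U^1_j)}\,\ve^\alpha$, and the same interpolation upgrades it to the $C^\beta(U^1_j)$-bound $\le C\cdot 2^t\,|J^sT^n|^t_{C^0(U^1_j)}\,\ve^{\alpha-\beta}$.

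The main obstacle is the slope-matching estimate in (a): achieving the sharp rate $Cn\Lambda^{-n}\ve$ (rather than the weaker $C\ve^{1/2}$ coming from the naive H\"older regularity of $E^s$) requires careful bookkeeping through the $\cV$-recursion, exploiting the exponential contraction at each of the $n$ backward iterates while controlling the Jacobi-type error contributions, and is the standard delicate input from the billiard literature. Once (a) is in hand, (b) is essentially bookkeeping, combining the two distortion ingredients already in the paper with the elementary $C^0$--$C^\alpha$ interpolation.
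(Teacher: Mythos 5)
Your treatment of part (b) follows the paper's proof almost verbatim in structure: the same splitting into a $\psi$-difference weighted by $|J^sT^n|^t$ plus a $\tpsi_2$-weighted Jacobian difference, the same use of \eqref{eq:test contract} and \eqref{eq:JC}, the same elementary $C^0$--H\"older interpolation (distinguishing $|r-s|\le \ve$ from $|r-s|\ge\ve$), and the same $|1-A^t|\le 2^t|1-A|$ device from Lemma~\ref{lem:distortion} to pass from exponent $1$ to exponent $t$. The real difference is what is proved versus cited: the paper takes part (a) directly from \cite[Lemma~4.2]{dz1}, the bound $|\psi_1\circ T^n\circ G_{U^1_j}-\psi_2\circ T^n\circ G_{U^2_j}|_{C^\beta(I_j)}\le C\ve^{\alpha-\beta}$ from \cite[Lemma~4.4]{dz1}, and the key $C^0$ comparison $|1-J_1/J_2|\le C_{vert}\,\ve^{1/(q+1)}$ from \cite[eq.~(4.16)]{dz1} (which is where the constant $C_{vert}$ of \eqref{eq:Cv}, and hence the condition \eqref{eq:Cve} on $\ve_0$, originate), whereas you attempt to rederive all three.

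Two of those rederivations are not yet proofs. First, in (a), your slope recursion takes the per-step error to be proportional to $d(T^ix,T^iy)$ to the \emph{first} power; but, as the footnote to Lemma~\ref{lem:comparable} emphasizes, $\tau$ is only $1/2$-H\"older when the two points lie on different curves, and the naive recursion then yields only an $\ve^{1/2}$-type bound, not $Cn\Lambda^{-n}\ve$. Obtaining the linear dependence on $\ve$ requires exploiting that matched points are joined by the common smooth unstable curves $T^i\gamma_x$; you gesture at this (``careful bookkeeping'') but do not carry it out, and this is precisely the content of the cited \cite[Lemma~4.2]{dz1}. Second, your $C^0$ bound on $|J^sT^n|^t-|\tJ^sT^n|^t$ rests on the assertion that the paired orbits ``stay in the same homogeneity strip at each step.'' The construction only guarantees that each matched piece is individually homogeneous at every iterate (being a subcurve of an element of $\cG_n(W^\ell)$); nothing in the matching procedure forces $T^iU^1_j$ and $T^iU^2_j$ into the same strip at intermediate times, and the comparison \eqref{eq:cos} is not valid when the two strip indices differ substantially, so the telescoping argument of Lemma~\ref{lem:comparable} does not apply as stated. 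This is exactly the point handled by \cite[eq.~(4.16)--(4.18)]{dz1}, where the comparison is organized along the vertical foliation and its images and the constant $C_{vert}$ reflects the uniform transversality of the vertical direction to the stable and unstable cones. With these two steps either closed by the DZ1-style arguments or simply cited, as the paper does, your argument coincides with the paper's.
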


\begin{proof}
Part (a) of the sublemma is \cite[Lemma~4.2]{dz1}.  To prove part (b), 
note that  due to the uniform bound on slopes of stable curves, it follows
\begin{equation}
\label{eq:JG}
1 \le JG_W(r) := \sqrt{1 + (\vf_W'(r))^2} \le \sqrt{1 + (\cK_{\max} + \tau_{\min}^{-1})^2} =: C_g < \infty \, .
\end{equation}
Therefore
$1 \le |JG_{U^\ell_j}|_{C^0(I_j)} \le C_g$, and we have
\begin{align*}
|\psi_1 \circ T^n |J^sT^n|^t & - \tpsi_2 |\tJ^sT^n|^t |_{C^\beta(U^1_j)}\\
&\le C_g |(\psi_1 \circ T^n |J^sT^n|^t) \circ G_{U^1_j} - (\psi_2 |\tJ^sT^n|^t) \circ G_{U^2_j} |_{C^\beta(I_j)} \\
& \le C_g |\psi_2 \circ T^n|_{C^\beta(U^2_j)} | |J^sT^n|^t \circ G_{U^1_j} - |J^sT^n|^t \circ G_{U^2_j} |_{C^\beta(I_j)} \\
& \qquad + C_g ||J^sT^n|^t|_{C^\beta(U^1_j)} |\psi_1 \circ T^n \circ G_{U^1_j} - \psi_2 \circ T^n \circ G_{U^2_j}|_{C^\beta(I_j)} \\
& \le C_gC_1^{-1} | |J^sT^n|^t \circ G_{U^1_j} - |J^sT^n|^t \circ G_{U^2_j} |_{C^\beta(I_j)} \\
& \qquad + C_g (1+2^tC_d) ||J^sT^n|^t|_{C^0(U^1_j)} |\psi_1 \circ T^n \circ G_{U^1_j} - \psi_2 \circ T^n \circ G_{U^2_j}|_{C^\beta(I_j)} \, ,
\end{align*}
where we have used \eqref{eq:test contract} and \eqref{eq:JC} for the final inequality.  We first observe that
\[
|\psi_1 \circ T^n \circ G_{U^1_j} - \psi_2 \circ T^n \circ G_{U^2_j}|_{C^\beta(I_j)} \le C \ve^{\alpha-\beta} \, ,
\]
by \cite[Lemma~4.4]{dz1}.
For brevity, set $J_\ell = J^sT^n\circ G_{U^\ell_j}$.  By\footnote{The case $q=2$ is treated there, the general case is similar.} \cite[eq. (4.16)]{dz1}, we have
\begin{equation}
\label{eq:Cv}
\left| 1 - \frac{J_1(r)}{J_2(r)} \right| \le C_{vert} \,  \ve^{1/(q+1)} \, ,
\,\, \forall r \in I_j\, , 
\end{equation}
for some constant $C_{vert} >0$ depending only on the uniform angle between the vertical direction and the stable and unstable
cones.  
Thus, since $\ve_0>0$ satisfies \eqref{eq:Cve}
and $\ve \le \ve_0$, this implies that $\frac 14 \le \frac{J_1(r)}{J_2(r)} \le \frac 74$.
Then, estimating as in Lemma~\ref{lem:distortion}, we have
\[
| J_1^t(r) - J_2^t(r) | \le 2^t |J_1^t|_{C^0(I_j)} C_{vert} \,  \ve^{1/(q+1)} \, .
\]
Following \cite[eq. (4.17) and (4.18)]{dz1}, yields,
\[
H^\beta(J_1^t - J_2^t) \le C 2^t |J_1^t|_{C^0(I_j)} \sup_{r,s \in I_j} \min \{ \ve^{1/(q+1)} |r-s|^{-\beta}, |r-s|^{1/(q+1) - \beta} \} \, ,
\]
where $H^\beta(\cdot)$ is the H\"older constant with exponent $\beta$ on $I_j$.
This bound is maximized when $\ve = |r-s|$, which yields 
$H^\beta(J_1^t - J_2^t) \le C 2^t |J_1^t|_{C^0(I_j)} \ve^{1/(q+1)-\beta}$.  Putting these estimates together yields,
\[
| |J^sT^n|^t \circ G_{U^1_j} - |J^sT^n|^t \circ G_{U^2_j} |_{C^\beta(I_j)} \le C 2^t |J^sT^n|^t_{C^0(U^1_j)} \ve^{1/(q+1) - \beta} \, .
\]
Together with the previous estimate on $\psi_\ell$, this
completes the proof of the sublemma since $\alpha \le 1/(q+1)$.
\end{proof}

Returning to \eqref{eq:unstable decomp}, we split the estimate over matched pieces as follows. 

\begin{align}
\nonumber
\left| \int_{U^1_j} f \, \psi_1 \circ T^n \, |J^sT^n|^t \right. & - \left. \int_{U^2_j} f \, \psi_2 \circ T^n \, |J^sT^n|^t \right|
 \le \left| \int_{U^1_j} f \, (\psi_1 \circ T^n \, |J^sT^n|^t - \tpsi_2 |\tJ^sT^n|^t )\right| \\
\label{eq:match split}& \qquad + \left|   \int_{U^1_j} f \, \tpsi_2 |\tJ^sT^n|^t
-   \int_{U^2_j} f \, \psi_2 \circ T^n \, |J^sT^n|^t  \right| \, .
\end{align}
We estimate the first term on the right side using the strong stable norm and Lemma~\ref{lem:matching}(b),
\[
\left| \int_{U^1_j} f \, (\psi_1 \circ T^n \, |J^sT^n|^t - \tpsi_2 |\tJ^sT^n|^t )\right|
 \le \| f \|_s \delta_0^{1/p} C 2^t |J^sT^n|^t_{C^0(U^1_j)}  \ve^{\alpha - \beta} \, .
\]
Then, noting that $d(\psi_1 \circ T^n \, |J^sT^n|^t, \tpsi_2 |\tJ^sT^n|^t) = 0$ by definition, and the $C^\alpha$ norm of each
test function is bounded by $C 2^t |J^sT^n|^t_{C^0(I_j)}$, using \eqref{eq:test contract} and \eqref{eq:JC}, we estimate the
second term on the right side of \eqref{eq:match split} using the strong unstable norm:
\begin{equation}
\label{eq:f B}
\begin{split}
{ \left| \int_{U^1_j} f \, \tpsi_2 |\tJ^sT^n|^t  - \int_{U^2_j} f \, \psi_2 \circ T^n |J^sT^n|^t \right| }
& \le \| f \|_u d_{\cW^s}(U^1_j, U^2_j)^\gamma  C 2^t |J^sT^n|^t_{C^0(U^1_j)} \\
& \le C' \| f \|_u n^\gamma \Lambda^{-n \gamma} \ve^\gamma |J^sT^n|^t_{C^0(U^1_j)} \, ,
\end{split}
\end{equation}
where we used Lemma~\ref{lem:matching}(a) in the second inequality.  Putting these estimates into \eqref{eq:match split},
then combining with \eqref{eq:unmatched} in \eqref{eq:unstable decomp}, and summing over $j$ (since there are at most
two matched pieces corresponding to each element of $\cG_n(W^1)$), yields,
\begin{equation}
\label{eq:final unstable}
\begin{split}
&\left| \int_{W^1} \cL_t^n f \, \psi_1 - \int_{W^2} \cL_t^n f \, \psi_2 \right|\\
&\qquad\qquad\qquad\le C \left( \| f \|_u n^\gamma \Lambda^{-n \gamma} \ve^\gamma Q_n(t) + 
\| f \|_s (\ve^{1/p} Q_n(t-1/p) + \ve^{\alpha- \beta} Q_n(t)) \right) \, .
\end{split}
\end{equation}
Dividing through by $\ve^\gamma$ and taking the appropriate suprema over $W^\ell$ and $\psi_\ell$ proves
\eqref{eq:unstable} since
$\gamma \le \min \{ 1/p, \alpha - \beta \}$.


\subsection{Proof of Lemma~\ref{lem:image}. ($\cL_t(C^1)\subset \cB$)}
\label{sec:approach}

 We assume $0\le t<1$.  The proof for $t \ge 1$ is similar, but simpler,
since $\cL_t f$ is bounded when $t \ge1$.
 Without loss of generality, we also assume that
$t_0 \le 1/2$, so that, by Definition~\ref{def:k0d0}, $q \ge 8$ and $p>9$.

 We  introduce a mollification in order 
to approximate $\cL_t f$ by functions in $C^1(M)$:
Let $\rho: \mathbb{R}^2 \to \mathbb{R}$ be a $C^\infty$ nonnegative, rotationally symmetric 
function supported on the unit disk with 
$\int_{\mathbb{R}^2} \rho \, d^2z = 1$ and 
$|\rho|_{C^1} \le 2$.
For $f \in C^1(M)$ and $\eta>0$, define
\[
g_\eta(x) = \int_{B_\eta(x)} \eta^{-2} \rho\left( \frac{d(x,z)}{\eta} \right) \, \cL_tf(z) d^2z \, ,
\]
where $B_\eta(x)$ is the ball of radius $\eta$ centered at $x$.  
Viewing $M$ as a subset of $\mathbb{R}^2$, we set $\cL_tf \equiv 0$ outside $M$
so that the integral is well-defined even when $B_\eta(x) \not\subset M$.
We first develop bounds on
$|g_\eta|_{C^0(M)}$ and $|g_\eta|_{C^1(M)}$, for any $t\ge 0$.

Since $t<1$, the operator
$\cL_tf$ is unbounded in neighborhoods of $T\cS_0$, so the bounds on $g_\eta$ will
be greatest in such neighborhoods.  Suppose $x$ and $\eta$ are such that $B_\eta(x) \cap T\cS_0 \neq \emptyset$ and note that there can be at most $\tau_{\max}/\tau_{\min} + 1$ connected components of  $B_\eta(x) \setminus T\cS_0$.  Fix one such component with boundary $S \in T\cS_0$ such that $S$ is the accumulation of the sequence of sets,
$B_\eta(x) \cap T\bH_k$, $k \ge k_0$.  On each such set,
$|J^sT|^{1-t} = C^{\pm 1} k^{-q(1-t)}$.  Also, due to the uniform transversality of $T\cS_0^{\bH}$
with the stable cone, we have  $\diam^s(B_\eta(x) \cap T\bH_k) \le C k^{-2q-1}$,
and  $\diam^u(B_\eta(x) \cap T\bH_k) \le C\eta$.
Moreover, since the boundary of $T\bH_k$ has distance approximately $k^{-2q}$ from $S$,
we have $B_\eta(x) \cap T\bH_k = \emptyset$ unless $k \ge C\eta^{-1/(2q)}$.
Assembling these facts, we estimate,
\[
|g_\eta| \le C \sum_{k \ge C\eta^{-1/(2q)}} \int_{B_\eta(x) \cap T\cS_0} \eta^{-2} \cL_t f \, d^2 z
\le C|f|_\infty \sum_{k \ge C\eta^{-1/(2q)}} \eta^{-1} k^{-q-1-{ qt} } \, .
\]
We conclude that, for any { $0 \le t < 1$},\footnote{For $t=0$, any choice of $q>1$ gives the same bound.}
\begin{equation}
\label{eq:geta bound}
|g_\eta|_{C^0(M)} \le C |f|_\infty \eta^{{ \frac{t}{2}} - \frac 12},
\quad \mbox{and similarly,}
\quad
|g_\eta|_{C^1(M)} \le C |f|_\infty \eta^{{ \frac{t}{2}} - \frac 32 } \, .
\end{equation}

To prove Lemma~\ref{lem:image},  we must control  $g_\eta - \cL_t f$ integrated along stable manifolds.
To this end, we will need the following two lemmas.
(The first one is classical and the second uses
bounds on the auxiliary foliation constructed in \cite[Section~6]{bdl}.)

\begin{lemma}
\label{lem:lots}
Let $W \in \cW^s$ be weakly homogeneous and for $\eta > 0$ let $W_u(\eta) \subset W$ 
denote the set of points in $W$ 
whose unstable manifold extends at least length $\eta$ on both sides of $W$.  Then
$m_W(W \setminus W_u(\eta)) \le C\eta$ for some constant $C>0$ independent of
$W$ and $\eta$.
\end{lemma}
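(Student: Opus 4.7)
The plan is to bound the measure of the complement $W\setminus W_u(\eta)=\{x\in W:r^u(x)<\eta\}$, where $r^u(x)$ denotes the distance from $x$ to the nearest endpoint of $W^u(x)$. This is a classical estimate (see, e.g., \cite[Section~5.10]{chernov book}); I would reprove it using the growth lemmas of Section~\ref{GrL}. The idea is to decompose $W\setminus W_u(\eta)=\bigcup_{k\ge 0}A_k(\eta)$ by the ``generation'' of the singularity cutting the unstable manifold: the endpoints of $W^u(x)$ lie on $\bigcup_{k\ge 0}T^{-k}\cS_0$, since extending $W^u(x)$ requires $T^k$ to remain smooth for all $k\ge 0$, and $A_k(\eta)$ consists of those $x$ whose relevant endpoint lies on $T^{-k}\cS_0$ at distance less than $\eta$, with $k\ge 0$ minimal.

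I expect the level-wise estimate $m_W(A_k(\eta))\le C\vartheta^k\eta$ for some $\vartheta<1$ uniform in $W$, $k$, and $\eta$; geometric summation over $k\ge 0$ then yields the claim. To establish this level-wise bound I would push forward by $T^k$: for $x\in A_k(\eta)$, the endpoint $y$ of $W^u(x)$ satisfies $T^ky\in\cS_0$, and by \eqref{eq:hyp} together with the distortion bound of Lemma~\ref{lem:distortion}, $T^kx\in T^kW$ lies within unstable-distance $C\Lambda^k\eta$ of $\cS_0$. Since $\cS_0=\{\vf=\pm\pi/2\}$ is uniformly transverse to the stable cone (slopes of stable curves are bounded away from $0$ by $\cK_{\min}>0$), the intersection of each weakly homogeneous component of $T^kW$ with this tubular neighborhood has controlled $m$-measure. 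The resulting pull-back to $W$ picks up the inverse stable Jacobian, whose summed contribution over homogeneous components is controlled by Lemma~\ref{lem:extra growth} (with $\varsigma=0$, $t=1$) combined with Proposition~\ref{prop:summary}(b); crucially, $P_*(1)=0$ by Lemma~\ref{lem:t_*} guarantees that $Q_k(1)$ is bounded uniformly in $k$, so that the net pullback produces the required geometric decay when balanced against the time-reversed one-step expansion of Lemma~\ref{lem:one step}.

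The main obstacle is the delicate cancellation in the preceding paragraph: a naive bound on the pullback introduces a factor $\Lambda^k$ from the scale of the unstable neighborhood of $\cS_0$ that a priori swamps the linear-in-$\eta$ target. The correct accounting exploits the summed Jacobian control of Proposition~\ref{prop:summary}(b) at $t=1$ together with the uniform transversality constant, yielding the geometric decay $\vartheta^k$ in the level $k$ essentially from the one-step expansion estimate $\theta<\Lambda^{-1/2}$ (Definition~\ref{def:k0d0}). The remaining verification consists of estimating the number and geometry of intersections of weakly homogeneous components of $T^kW$ with neighborhoods of $\cS_0$, which is standard in the Chernov--Markarian framework and uniform in $k$ by \cite[Prop.~4.29]{chernov book}.
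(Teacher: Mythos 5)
There is a genuine gap, and it starts with the identification of which singularities cut unstable manifolds. You write that the endpoints of $W^u(x)$ lie on $\bigcup_{k\ge 0}T^{-k}\cS_0$ "since extending $W^u(x)$ requires $T^k$ to remain smooth for all $k\ge 0$" — this is the characterization of \emph{stable} manifolds. An unstable manifold is the set on which all \emph{backward} iterates $T^{-k}$ remain smooth, so its endpoints lie on $\bigcup_{k\ge 0}T^{k}\cS_0$ (i.e.\ on the sets $\cS_{-k}$). Because of this flip you push forward by $T^k$, which expands along $W^u(x)$ and places $T^kx$ only in a neighborhood of $\cS_0$ of width $C\Lambda^{k}\eta$; you then acknowledge that this factor "a priori swamps" the target and appeal to a "delicate cancellation" via $Q_k(1)\le C$ and $\theta<\Lambda^{-1/2}$. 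No such cancellation is available from the ingredients you cite: Proposition~\ref{prop:summary}(b) at $t=1$ controls the summed stable Jacobians of the pullback (a bounded factor), but nothing in Lemma~\ref{lem:one step} or the transversality constant removes the $\Lambda^{k}$ in the width of the neighborhood, so the level-$k$ bound $C\vartheta^k\eta$ is never established and the proof does not close.

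The irony is that with the correct orientation no cancellation is needed and your outline becomes the standard argument: if the offending endpoint $y$ of $W^u(x)$ (with $d(x,y)<\eta$) satisfies $T^{-k}y\in\cS_0$ with $k\ge 0$ minimal, then backward iteration \emph{contracts} along unstable curves, so by \eqref{eq:hyp} $T^{-k}x$ lies within $C\Lambda^{-k}\eta$ of $\cS_0$; since $\cS_0=\{\vf=\pm\pi/2\}$ is uniformly transverse to stable curves, each component of $T^{-k}W$ meets this neighborhood in a set of length $\le C\Lambda^{-k}\eta$ (a bounded number of times), and pulling back to $W$ costs only the summed stable Jacobians over the components of $T^{-k}W$, which are uniformly bounded (e.g.\ by iterating Lemma~\ref{lem:one step}, or by $Q_k(1)\le C$). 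Summing the geometric series $\sum_k C\Lambda^{-k}\eta$ gives $C\eta$. Note also that the paper does not reprove this statement at all: it simply invokes \cite[Theorem~5.66]{chernov book}, together with the corrected proof in \cite{baldestot} (the original argument in the book is known to require repair precisely at the measure estimate you are trying to redo), so if you do want a self-contained proof you should pattern it on that corrected argument with the backward-iteration decomposition above.
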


\begin{proof}
This is precisely \cite[Theorem~5.66]{chernov book}.  See also the corrected proof in
\cite{baldestot}.
\end{proof}

\begin{lemma}
\label{lem:smooth}
There exist constants $C, C_s>0$ such that for any
weakly homogeneous unstable curve $U$ and any $\varrho > 0$, 
there exists a set $U' \subset U$
with $m_U(U \setminus U') \le C \varrho$ such that 
\[
 \left| \frac{J^sT(x)}{J^sT(y)} - 1 \right| \le C_s \left( \varrho^{- \frac{q}{q+1}} k_U^{-q} d(x,y) + d(x,y)^{1/(q+1)} \right) \, , \quad \forall x, y \in U'\, , 
\]
where $k_U$ is the index of the homogeneity strip containing $U$.
\end{lemma}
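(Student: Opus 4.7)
The strategy is to express $\log J^sT$ at a point $z\in M'$ via an analogue of \eqref{eq:stable jac} as the sum of a smooth piece involving only $\vf$, $\tau$, $\cK$ at $z$ and $Tz$, and a piece depending on the slope $\cV^s$ of the stable direction $E^s$ at $z$ and $Tz$. For $x, y \in U$ close, the difference $\log J^sT(x) - \log J^sT(y)$ splits accordingly. The smooth contribution is bounded by $C\, d(x,y)^{1/(q+1)}$ via the same argument used in the proof of Lemma~\ref{lem:comparable}: since $U \subset \bH_{k_U}$ is weakly homogeneous, $\cos\vf \asymp k_U^{-q}$ on $U$, and $Tx, Ty$ lie in a single homogeneity strip modulo a controlled number of corrections, so that the estimate leading to \eqref{eq:cos} applies. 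This produces the second term of the claim.

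The subtle contribution comes from $\cV^s(x)-\cV^s(y)$ and $\cV^s(Tx)-\cV^s(Ty)$. Since $E^s$ is only Borel on $U$ (obtained as an $n\to\infty$ limit of $DT^{-n}v$ for $v$ in the stable cone), no uniform Lipschitz estimate can hold on all of $U$. I would invoke the auxiliary foliation of \cite[Sect.~6]{bdl}: for each $\varrho>0$, foliate a neighborhood of $U$ by $C^1$ approximate stable leaves with absolutely continuous holonomy, and remove from $U$ those points whose approximate leaf fails to extend to length $\eta(\varrho)$. By an argument analogous to Lemma~\ref{lem:lots}, the removed set has $m_U$-measure at most $C\varrho$, defining $U'$. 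On $U'$ one obtains a Lipschitz bound on $\cV^s|_{U'}$ with constant of order $\varrho^{-q/(q+1)}$; the exponent $q/(q+1)$ is the complement of the H\"older exponent $1/(q+1)$ appearing in the smooth term, and reflects the optimal tradeoff between the exponential convergence $|E^s - DT^{-n}v| = O(\Lambda^{-n})$ and the Lipschitz constant $\sim \Lambda^{n(q+1)/q}$ of $DT^{-n}v$ on the smooth locus. The factor $k_U^{-q}$ enters via the overall factor $\cos\vf(x) \asymp k_U^{-q}$ multiplying the slope differences in the formula for $\log J^sT$, producing the first term of the claim.

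The principal obstacle is to adapt the foliation construction of \cite[Sect.~6]{bdl} to our parameter range $q = q(t_0) > 1$, rather than the standard $q=2$: one must verify that both the measure estimate $m_U(U\setminus U')\le C\varrho$ and the Lipschitz constant $\varrho^{-q/(q+1)}$ on $U'$ remain valid. The dependence on the strip index $k_U$ also needs to be propagated carefully through both applications of the construction (at $x, y\in U$ and at $Tx, Ty\in TU$, the latter generally lying in a different homogeneity strip). Once these are in place, passing from the logarithmic estimate to the ratio via $|e^a - 1| \le 2|a|$ for $|a|$ small yields the stated inequality, with $C_s$ absorbing all geometric constants.
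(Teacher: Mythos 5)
Your overall route is the paper's: split $\log J^sT$ via (the time reversal of) \eqref{eq:stable jac} into smooth data and stable-slope terms, control the smooth part by the distortion argument behind \eqref{eq:cos} (using that $U\subset\bH_{k_U}$), and control the slope of $E^s$ along $U$ by the pulled-back transverse foliation of \cite[Sections~6.1--6.2]{bdl}, discarding a set of $m_U$-measure at most $C\varrho$ and keeping a Lipschitz bound for the surviving leaves' slopes; this is exactly how the paper defines $U'$ (as the set covered by the surviving foliation, with bounds uniform in the number $n$ of pullbacks).

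The genuine gap is your accounting of the factor $k_U^{-q}$. In the formula for $\log J^sT$ the slope differences $|\cV(x)-\cV(y)|$ and $|\cV(Tx)-\cV(Ty)|$ enter with coefficients of order one: they sit inside $\log\bigl(\cos \vf(T\cdot)+\tau\,(\cK-\cV)(T\cdot)\bigr)$, whose argument is bounded below by $\tau_{\min}\cK_{\min}$, and inside the bounded ratios $\sqrt{1+\cV^2}$; there is no prefactor $\cos\vf(x)\asymp k_U^{-q}$ multiplying them (the only place $\cos\vf(x)$ enters is the $\log\cos\vf$ term, which produces the $d(x,y)^{1/(q+1)}$ contribution via \eqref{eq:cos}). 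Consequently, with your claimed Lipschitz constant $\varrho^{-q/(q+1)}$ for the slope on $U'$, your argument yields only the weaker bound $C_s\bigl(\varrho^{-q/(q+1)}d(x,y)+d(x,y)^{1/(q+1)}\bigr)$, not the stated one. In the paper the factor $k_U^{-q}$ comes from the slope estimate itself: \cite[Lemma~6.5]{bdl}, adapted to homogeneity spacing $q$, gives $|\partial_u\partial_s G|_\infty\le C\varrho^{-q/(q+1)}k_U^{-q}$, i.e.\ the irregular part of the stable slope field along $U$ is itself proportional to $\cos\vf\asymp k_U^{-q}$; your heuristic tradeoff between $\Lambda^{-n}$-convergence and the Lipschitz constant of $DT^{-n}v$ does not capture this. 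The repair is simply to invoke the correct form of the \cite{bdl} bound (for the application in Lemma~\ref{lem:image} the paper in any case only uses $k_U^{-q}\le 1$). A minor further point: homogeneity of $Tx,Ty$ is neither available nor needed, since $\cos\vf(T\cdot)$ only appears in the term whose denominator is bounded below; the difference $|\cos\vf(Tx)-\cos\vf(Ty)|$ is handled like the $\tau$ term, contributing a H\"older-$1/2$ piece dominated by $d(x,y)^{1/(q+1)}$.
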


\begin{proof}
Fixing a length $\varrho < k_U^{-q-1}$, we define a foliation of stable curves transverse
to $U$, following the procedure\footnote{\cite{bdl} constructs this as a foliation of unstable curves
transverse to a stable curve. By the time reversal property of the billiard, the same
construction holds with stable and unstable directions exchanged.} 
in \cite[Sections~6.1, 6.2]{bdl}:  Choose $n \in \mathbb{N}$ arbitrarily
large and define a smooth ``seeding'' foliation of homogeneous 
stable curves transverse to connected components
of $T^nU$; elements of the seeding foliation are then pulled back under $T^{-n}$ and those that
are not cut form a foliation of homogeneous 
stable curves of length at least $\varrho$ and transverse to  $U$. Letting
$U'_n \subset U$ denote the set covered by this surviving foliation, we have  
$m_U(U \setminus U'_n) \le C \varrho$, for some $C>0$ independent of $n$
\cite[Section~6.1]{bdl}.
Moreover,
expressing the foliation in local coordinates $(s,u)$ adapted to the stable and unstable directions
defines a function $G(s,u)$ such that each stable curve can be expressed as
$\{ (s, G(s,u)) \}_{s \in [-\varrho, \varrho]}$, and $G(0,u) = u$, so that 
the unstable manifold $U$ corresponds to the vertical segment $\{ (0,u) \}_{u \in [0,|U|]}$.
It follows that the slope $\cV(u)$ of
the tangent vector to the foliation at $(0,u)$ 
is just
 $\partial_sG(0,u)$.
By \cite[Lemma~6.5]{bdl}, $\partial_u\partial_sG \in C^0$ with 
$|\partial_u\partial_sG|_\infty \le C\varrho^{-q/(q+1)} k_U^{-q}$ (where we have adapted
the exponent according to the spacing of our homogeneity strips).  

Note that the foliation of stable curves constructed in this way has tangent
vectors in $DT^{-n}C^s$.  
Since the bounds on $m_U(U \setminus U'_n)$
and $|\partial_u \partial_s G|_\infty$
are independent of $n$, we conclude there exists a set $U' \subset U$ with
$m_U(U \setminus U') \le C\varrho$ such that the stable manifolds
passing through $U'$ have length at least $\varrho$ and 
satisfy $|\partial_u\partial_sG|_\infty \le C\varrho^{-q/(q+1)} k_U^{-q}$
(see also \cite[Remark~1.1]{bdl}).
 
Finally,  for $u,v \in U'$ we estimate as in \eqref{eq:jac} (with $n=1$), 
using \eqref{eq:cos} for $\log \frac{\cos \vf(u)}{\cos \vf(v)}$ and 
$|\cV(u) - \cV(v)| \le C\varrho^{-q/(q+1)} k_U^{-q} d(u,v)$ from the construction in \cite{bdl}.
Putting these estimates together proves the lemma.
\end{proof}

We record for future use that for any measurable set $V \subseteq W  \in \cW^s$,
\begin{equation}
\label{eq:size bound}
\begin{split}
\int_V \cL_t f \, \psi \, dm_V & = \int_{T^{-1}V} f |J^sT|^t \psi \circ T \, dm_{T^{-1}V} \\
& \le |f|_\infty |\psi|_\infty |T^{-1}V|^{1-t}|V|^t \le C |f|_\infty |\psi|_\infty |V|^{(1+t)/2} \, ,
\end{split}
\end{equation}
where $|V|$ denotes the arc length measure of $V$, and we have used the H\"older inequality for the first inequality and the bound
$|T^{-1}V| \le C|V|^{1/2}$ in the second.

\medskip
\noindent
{\em Approximating the strong stable norm.}
Fix $\eta >0$, and let $W \in \cW^s$ and $\psi \in C^\beta(W)$ with 
$|\psi|_{C^\beta(W)} \le |W|^{-1/p}$.
If $|W| \le \eta$, then using \eqref{eq:geta bound} and  \eqref{eq:size bound}, we write, simply, 
using $p>9$,
\begin{equation}
\label{eq:short W}
\int_W (\cL_tf - g_\eta) \psi \, dm_W 
\le C|f|_\infty |W|^{-1/p} \big( |W|^{\frac{1+t}{2}} + |W| \eta^{\frac{t}{2} - \frac 12} \big)
\le C|f|_\infty \eta^{\frac{t}{2} + \frac{1}{3}} \, .
\end{equation}
In what follows, we assume $|W| > \eta$.  Let $W_\eta^-$ denote the curve $W$
minus the $\eta$-neighborhood of its boundary.
Treating the integral over the two components of $W \setminus W_\eta^-$ in the same way
as \eqref{eq:short W}, we estimate,
using that $m_W(W \setminus W_\eta^-) \le 2\eta$,
\begin{equation}
\label{eq:ends}
\int_{W\setminus W_\eta^-} (\cL_t f - g_\eta) \psi \, dm_W \le C |f|_\infty \eta^{\frac{t}{2} + \frac 13} 
\, .
\end{equation} 
Next,
since
$W$ intersects at most $N = \tau_{\max}/\tau_{\min}+1$ elements of $T\cS_0$, 
the  set
$W \cap \big( \cup_{k \ge \eta^{-1/(2q+1)}} T\bH_k \big)$ comprises at most $N$ intervals of length $C\eta^{2q/(2q+1)}$.
We estimate as in \eqref{eq:short W} using 
$V = W \cap \big( \cup_{k \ge \eta^{-1/(2q+1)}} T\bH_k \big)$ in \eqref{eq:size bound}, and that  $p > q+1\ge 9$
\begin{equation}
\label{eq:high index}
\int_{W \cap \big( \cup_{k \ge \eta^{-1/(2q+1)}} T\bH_k \big)}
(\cL_t f - g_\eta) \psi \, dm_W \le C|f|_\infty \eta^{\frac{t}{2} + \frac{3}{10}} \, .
\end{equation}
Finally, we estimate $\cL_tf - g_\eta$ on those portions of $W_\eta^-$ that intersect
$T\bH_k$ for $k \le \eta^{-1/(2q+1)}$.  Let $x$ be such a point in $W_\eta^-$.  Due to the
restriction on $k$, the ball $B_\eta(x)$ lies in a bounded number of homogeneity strips, so
we may use bounded distortion in conjunction with Lemma~\ref{lem:smooth} to
bound the difference in each such interval.  Let $S_\eta = W \setminus W_u(\eta)$ 
denote the exceptional set of points in Lemma~\ref{lem:lots}.
We write $A_\eta(x)$ for the subset
of $B_\eta(x)$ foliated by unstable manifolds of length at least $2\eta$, and let $E_\eta(x) = B_\eta(x) \setminus A_\eta(x)$.
Then,
\begin{align}
\label{eq:good bad}
 \cL_tf(x) - g_\eta(x)  &= \int_{B_\eta(x)} \eta^{-2} \rho(\tfrac{d(x,z)}{\eta}) (\cL_tf(x) - \cL_tf(z)) d^2z \\
\nonumber & = \int_{A_\eta(x)} \eta^{-2} \rho(\tfrac{d(x,z)}{\eta}) (\cL_tf(x) - \cL_tf(z)) d^2z\\
\nonumber &\qquad\qquad\quad
+ \int_{E_\eta(x)} \eta^{-2} \rho(\tfrac{d(x,z)}{\eta}) (\cL_tf(x) - \cL_tf(z)) d^2z \, .
\end{align}
We first estimate the integral over $E_\eta(x)$ using the bound 
$\cL_tf(z) \le C \cL_tf(x)$ for $z \in B_\eta(x)$, since
$B_\eta(x)$ lies in a bounded number of homogeneity strips.  Then, using the fact that
the unstable foliation is absolutely continuous, we disintegrate as follows,
\begin{equation}
\label{eq:E part}
\int_{E_\eta(x)} \eta^{-2} \rho(\tfrac{d(x,z)}{\eta}) (\cL_tf(x) - \cL_tf(z)) d^2z \\
\le C \cL_tf(x) \eta^{-1} |S_\eta \cap B_\eta(x)| \, .
\end{equation}

Next, we estimate the integral over $A_\eta(x)$.  Since each point $y \in A_\eta(x) \cap W_\eta^-$ 
has an unstable manifold $U_y$ extending a length at least $\eta$ on either side of $W$, 
we set $\varrho = \eta^{1+\frac{1}{2q}}$ and
denote by $A_\eta'(x)$ those points contained in sets $U'_y \subset U_y$ satisfying 
Lemma~\ref{lem:smooth}.  It follows from that lemma and the absolute continuity of the unstable
foliation that 
\begin{equation}
\label{eq:bad A}
\int_{A_\eta(x) \setminus A_\eta'(x)} \eta^{-2} \rho(\tfrac{d(x,z)}{\eta}) (\cL_tf(x) - \cL_tf(z)) d^2z
\le C \cL_tf(x) \eta^{\frac{1}{2q}} \, ,
\end{equation}
where we have again used the bound $\cL_tf(z) \le C \cL_tf(x)$ on $B_\eta(x)$. 
 
For $z \in A_\eta'(x)$, we bound the difference $\cL_tf(x) - \cL_tf(z)$ as follows.
Let $y = [x,z]$ denote the point of intersection between the stable manifold of $x$ (which is $W$)
and the unstable manifold of $z$, which is $U_y$.  By definition, $z \in U_y'$ and it is always the
case that $y \in U_y'$ since the stable manifold of $y$, $W$, has length at least $\eta > \varrho$.
Thus,
\begin{align*}
&|\cL_tf(x)  - \cL_tf(z)|  \\
&\qquad\le \left| \frac{f ( T^{-1}x)}{|J^sT|^{1-t} ( T^{-1}x)}
- \frac{f ( T^{-1}y)}{|J^sT|^{1-t} ( T^{-1}y)} \right|
+ \left| \frac{f ( T^{-1}y)}{|J^sT|^{1-t} ( T^{-1}y)}
- \frac{f ( T^{-1}z)}{|J^sT|^{1-t} ( T^{-1}z)} \right| \\
& \qquad \le  \cL_t 1(x)  [  |f|_{C^1} d(T^{-1}x,T^{-1}y) + |f|_{C^0} C d(T^{-1}x, T^{-1}y)^{1/(q+1)}  \\
& \qquad \quad+  |f|_{C^1} d(T^{-1}y, T^{-1}z) + |f|_{C^0} C_s (\eta^{-\frac{2q+1}{2q+2}} d(T^{-1}y, T^{-1}z)
+ d(T^{-1}y, T^{-1}z)^{1/(q+1)})  ] \, ,
\end{align*}
where we have used Lemma~\ref{lem:distortion} along $W$ and Lemma~\ref{lem:smooth}
along $U_y$ with $\varrho = \eta^{\frac{2q+1}{2q}}$.  Next, $d(T^{-1}y, T^{-1}z) \le C d(y,z)
\le C \eta$, while for $x \in T\bH_k$,
\[
d(T^{-1}x, T^{-1}y) \le C k^q d(x,y) \le C \eta^{\frac{q+1}{2q+1}} \,  
\]
since $k \le \eta^{-1/(2q+1)}$.  Putting these estimates together we obtain,
\[
|\cL_tf(x) - \cL_tf(z)| \le |f|_{C^1} \cL_t1(x) C \eta^{\frac{1}{2q+2}} \qquad \mbox{for $z \in A_\eta'(x)$},
\]
and combining this with \eqref{eq:E part} and \eqref{eq:bad A} in \eqref{eq:good bad} yields,
\begin{equation}
\label{eq:A and E}
|\cL_tf(x) - g_\eta(x)| 
\le C |f|_{C^1} \cL_t1(x) \eta^{\frac{1}{2q+2}} + C |f|_{C^0} \cL_t1(x) \eta^{-1} |S_\eta \cap B_\eta(x)| \, .
\end{equation}

We must integrate this bound over 
$W_\eta^- \cap (\cup_{k \le \eta^{-1/(2q+1)}}T\bH_k)$.  We estimate the integral of the first term 
in \eqref{eq:A and E} simply using \eqref{eq:size bound},
\begin{equation}
\label{eq:good one}
C|f|_{C^1} \eta^{\frac{1}{2q+2}} \int_{W_\eta^- \cap (\cup_{k \le \eta^{-1/(2q+1)}}T\bH_k)}
\cL_t1 \, \psi \, dm_W \le C |f|_{C^1} \eta^{\frac{1}{2q+2}} \, .
\end{equation}
Finally, to bound the second term in \eqref{eq:A and E}, we write $I_\eta(x) = B_\eta(x) \cap W$ and 
\[
|S_\eta \cap B_\eta(x)| = \int_{I_\eta(x)} 1_{S_\eta}(z) dm_W(z)
= \int_{-\eta}^\eta 1_{S_\eta}(x+G_W(x;r)) JG_W(x;r) dr \, ,
\]
where $G_W(x;r)$ denotes the (local) graph of the function defining $W$ in a neighborhood of $x$, 
as in \eqref{eq:match}, and we
have 
centered the local $r$-interval at $r=0$.
Then,
\begin{align}
\nonumber
 &\int_{W_\eta^- \cap (\cup_{k \le \eta^{-1/(2q+1)}}T\bH_k)}
\cL_tf(x) \frac{\psi(x) }{\eta} \int_{-\eta}^\eta 1_{S_\eta}(x+G_W(x;r)) JG_W(x;r) \, dr \, dm_W(x) \\
\nonumber &  \qquad\qquad\le |f|_{C^0} \frac{|W|^{-1/p} }{\eta} \int_{-\eta}^\eta \int_{W_\eta^-} \cL_t1(x) 1_{S_\eta}(x+G_W(x;r)) JG_W(x;r) \, dm_W(x) \, dr \\
& \label{eq:bad one} \qquad\qquad\le C|f|_{C^0} \frac{|W|^{-1/p}}{ \eta} \int_{-\eta}^\eta |S_\eta|^{(1+t)/2} dr
\; \le \; C|f|_{C^0} \eta^{\frac{1}{3} + \frac t2} \, ,
\end{align}
where we have used \eqref{eq:JG} and the fact that translations of $W_\eta^-$ up to length 
$\eta$ are 
subsets of $W$ in order to apply \eqref{eq:size bound} for the second inequality,
and Lemma~\ref{lem:lots}, with $|W| \ge \eta$ and $p>9$ for the final inequality.

Finally, using \eqref{eq:good one} and \eqref{eq:bad one} in \eqref{eq:A and E},
and adding the contributions from \eqref{eq:ends} and \eqref{eq:high index} in addition
to \eqref{eq:short W} yields,
\begin{equation}
\label{eq:final approx}
\int_W (\cL_tf - g_\eta) \, \psi \, dm_W \le C|f|_{C^1} \eta^{\frac{1}{2q+2}} \, ,
\end{equation}
for some $C>0$ independent of $W$, 
since $\min \{ \frac{t}{2}+\frac{3}{10}, \frac{1}{2q+2} \} = \frac{1}{2q+2}$ whenever
$q>1$ and $t>0$.  
Taking the appropriate
suprema over $\psi$ and $W$ yields the required estimate
$\| \cL_tf - g_\eta \|_s \le C|f|_{C^1} \eta^{\frac{1}{2q+2}}$.

\medskip
\noindent
{\em Approximating the unstable norm.}
Let $\ve \le \ve_0$ and $W_1, W_2 \in \cW^s$ with $d_{\cW^s}(W_1, W_2) \le \ve$.
Let $\psi_i \in C^\alpha(W_i)$ with $|\psi_i|_{C^\alpha(W_i)} \le 1$, $i=1,2$, and
$d(\psi_1, \psi_2) = 0$.  We must estimate,
\[
\int_{W_1} ( \cL_t f - g_\eta) \psi_1\, dm_{W_1} - \int_{W_2} (\cL_tf - g_\eta) \psi_2 \, dm_{W_2} \, .
\]
We consider two cases.

\smallskip
\noindent
{\em Case 1: $\eta^{\frac{1}{2q+2}} < \ve^{2\gamma}$.}  We apply \eqref{eq:final approx} to each term separately
and obtain
\[
\ve^{-\gamma} \left| \int_{W_1} ( \cL_t f - g_\eta) \psi_1\, dm_{W_1} - \int_{W_2} (\cL_tf - g_\eta) \psi_2 \, dm_{W_2} 
\right| \le C|f|_{C^1} \eta^{\frac{1}{4q+4}} \, .
\]

\smallskip
\noindent
{\em Case 2: $\eta^{\frac{1}{2q+2}} \ge \ve^{2\gamma}$.}  In this case, we write
\begin{equation}
\label{eq:split fg}
\begin{split}
\int_{W_1} & ( \cL_t f - g_\eta) \psi_1\, dm_{W_1} - \int_{W_2} (\cL_tf - g_\eta) \psi_2 \, dm_{W_2} \\
& = \int_{W_1} \cL_t f \, \psi_1\, dm_{W_1} - \int_{W_2} \cL_tf \, \psi_2 \, dm_{W_2} 
 + \int_{W_2} g_\eta \, \psi_2\, dm_{W_2} - \int_{W_1} g_\eta \, \psi_1 \, dm_{W_1} \, .
\end{split}
\end{equation}
We estimate the difference involving $\cL_tf$ using the estimates in Section~\ref{sec:unstable},
but using the fact that $f \in C^1(M)$ to obtain stronger bounds.  In particular,
the integral over unmatched pieces from \eqref{eq:unmatched} is bounded
by $C|f|_{C^0} \ve$.  The bound on the first term of \eqref{eq:match split} remains the same, but
the bound on the second term from \eqref{eq:f B} is improved to $C |f|_{C^1}  \ve$.
Putting these estimates together as in \eqref{eq:final unstable} and 
dividing\footnote{We use here the strict inequality $\gamma<\alpha-\beta$.} by
$\ve^\gamma$ implies,
\begin{equation}
\label{eq:f approx}
\ve^{-\gamma} \left| \int_{W_1} \cL_t f \, \psi_1\, dm_{W_1} - \int_{W_2} \cL_tf \, \psi_2 \, dm_{W_2}
\right| \le C|f|_{C^1} \ve^{\alpha-\beta-\gamma} 
\le C|f|_{C^1} \eta^{\frac{\alpha - \beta - \gamma}{4\gamma(q+1)}} \, .
\end{equation}
Next, we turn to the second difference in \eqref{eq:split fg}.  Using
the notation of Section~\ref{sec:unstable}, we split the integrals up into one integral over
the common $r$-interval $I_1 \cap I_2$ and at most two integrals over $I_1 \bigtriangleup I_2$.
The (at most two) curves $V^\ell_i \subset W_\ell$ corresponding to intervals in $I_1 \bigtriangleup I_2$ have length
bounded by $C\ve$ by definition of $d_{\cW^s}(W_1,W_2)$.  Thus using
\eqref{eq:geta bound}, we have
\begin{equation}\label{needl}
\int_{V^\ell_i} g_\eta \, \psi_i \, dm_{W_\ell} \le C|f|_{C^0} \eta^{\frac{t}{2} - \frac 12} \ve
\le C |f|_{C^0} \eta^{\frac{t}{2} - \frac 12 + \frac{1-\gamma}{4\gamma(q+1)}}  \ve^\gamma \, .
\end{equation}
On the curves $U_1, U_2$, which are the graphs of the functions $\vf_{U_1}, \vf_{U_2}$ over
$I_1 \cap I_2$, we have,
\begin{equation}\label{needl1}
\int_{U^1} g_\eta \, \psi_1 \, dm_{W_1} - \int_{U_2} g_\eta \, \psi_2 \, dm_{W_2}
\le | JG_{U_1} (g_\eta \psi_1) \circ G_{U_1} - JG_{U_2} (g_\eta \psi_2) \circ G_{U_2} |_{C^0(I_1 \cap I_2)} \, ,
\end{equation}
where $G_{U_\ell}(r) = (r, \vf_{U_1}(r))$.
Then estimating as in the proof of Sublemma~\ref{lem:matching}, we have
\begin{equation}\label{needl2}
| JG_{U_1} (g_\eta \psi_1) \circ G_{U_1} - JG_{U_2} (g_\eta \psi_2) \circ G_{U_2} |_{C^0(I_1 \cap I_2)}
\le C |g_\eta|_{C^1(M)} \ve \le C |f|_\infty \eta^{\frac{t}{2} - \frac 32 + \frac{1-\gamma}{4\gamma(q+1)}} \ve^\gamma \, ,
\end{equation}
where we have used the fact that $d(\psi_1, \psi_2) =0$ and $|\vf_{U_1}' - \vf_{U_2}'| \le \ve$.
Putting these estimates together with \eqref{eq:f approx} in \eqref{eq:split fg} yields,
\begin{align}
\label{lastd}
\ve^{-\gamma} & \left| \int_{W_1} ( \cL_t f - g_\eta) \psi_1\, dm_{W_1} - \int_{W_2} (\cL_tf - g_\eta) \psi_2 \, dm_{W_2} \right| \\
\nonumber & \qquad \qquad \le C|f|_{C^1} \eta^{\frac{\alpha - \beta - \gamma}{4\gamma(q+1)}} + C |f|_{C^0} \eta^{\frac{t}{2} - \frac 32 + \frac{1-\gamma}{4\gamma(q+1)}} \, ,
\end{align}
and we use $\gamma \le \frac{1}{6q+7}$ from \eqref{eq:all} to deduce that $- \frac 32 + \frac{1-\gamma}{4\gamma(q+1)} \ge 0$.
This completes Case~2, which, together with Case 1, implies the required bound
$\| \cL_tf - g_\eta\|_u \le |f|_{C^1(M)} \eta^\delta$, for some $\delta>0$, ending
the proof of Lemma~\ref{lem:image}.

\begin{remark}[Adapting the proof of Lemma~\ref{lem:image} to the case $t=0$]
\label{t=0} 
Homogeneity strips are not used in \cite{max}, so one requires a nonhomogeneous
version of Lemma~\ref{lem:lots}, but
it is not hard to show directly that there exists $C>0$ such that $m_W(W \setminus W_u(\eta)) \le C\sqrt{\eta}$ for  
any $W \in \cW^s$ and $\eta > 0$,
and this weaker bound suffices (see discussion of \eqref{eq:bad one} below). Lemma~\ref{lem:smooth} can be kept unchanged as it is only needed on unstable manifolds  contained in a single homogeneity strip.

We show how to adapt the proof of Lemma~\ref{lem:image}
 to the norm  from \cite[\S 4.1]{max} with $q=2$, and 
parameters  $\beta$, $\gamma$, and $\varsigma$:
Eq \eqref{eq:short W} and \eqref{eq:ends}  get better since the test function satisfies 
$|\psi| \le |\log |W||^\gamma$, so we find $\eta^{1/2} |\log \eta|^\gamma$.
Similarly, \eqref{eq:high index} has the bound $\eta^{3/10} |\log \eta|^\gamma$.
Eq \eqref{eq:good bad}--\eqref{eq:good one} remain as written.
Eq \eqref{eq:bad one} proceeds as above until the last line, at which point we use
$|S_\eta| \le C\sqrt{\eta}$, so that the final bound becomes
$C|f|_\infty \eta^{1/4} |\log \eta|^\gamma$.
Thus we arrive at  \eqref{eq:final approx} with a bound $C |f|_{C^1} \eta^{1/6}$.
The factor $|\log \eta|^\gamma$ can be absorbed by the various  exponents, all being greater than $1/6$.  So there is no extra restriction the parameter $\gamma$ from \cite{max}
from the stable norm estimate.

For the unstable norm estimate, one 
distinguishes between the case $\eta^{1/6} <  |\log \ve|^{-2 \varsigma}$, which yields a bound 
with $\eta^{1/12}$, and the case
 $\eta^{1/6} \ge |\log \ve|^{-2 \varsigma}$, which  implies that $\ve \le \exp(-\eta^{-\frac{\alpha-\beta}{12\varsigma}})$, which is superexponentially small in $\eta$, so that
 \eqref{eq:split fg} remains the same, while  \eqref{eq:f approx} is bounded by
$\ve^{\alpha - \beta} |\log \ve|^\varsigma \le \exp(-\eta^{-\frac{\alpha-\beta}{24\varsigma}})$.
Similarly, \eqref{needl}
 is  bounded by $|\log\ve|^{-\varsigma}$ times a factor superexponentially small in $\eta$.  (We have a power of $\ve$ which is factored into $|\log \ve|^{-\varsigma}$ times $\ve^{1-\delta}$, for any $\delta$.)  The same is true of 
\eqref{needl1}--\eqref{needl2}.
Finally, in \eqref{lastd}, we end up with 
$\exp(-\eta^{-\frac{\alpha-\beta}{24\varsigma}})$ plus 
$\eta^{-3/2}\exp(- \eta^{-\frac{1}{24 \varsigma}})$, and this goes to $0$ as $\eta$ goes to $0$, for any $\varsigma >0$ 
(in particular,  there is no extra condition on $\varsigma$ from this estimate).
\end{remark}

\subsection{Spectral Gap for $\cL_t$. Constructing $\mu_t$ (Proof of Theorem~\ref{thm:spectral})}
\label{4.4}

We harvest the results from the previous subsections to show
Theorem~\ref{thm:spectral} at the end of this section.
Our first result follows from Proposition~\ref{prop:ly}
and the exact growth
for $Q_n(t)$ (Propositions~\ref{prop:exact} and \ref{prop:summary}).

\begin{proposition}[Quasi-compactness]
\label{prop:radius}
Let $t_0\in (0,1)$ and $t_1\in (1,t_*)$. Then we can
choose  parameters for $\cB$ such that for any $t\in  [t_0,t_1]$,
the operator $\cL_t$ acting on $\cB$ is quasi-compact: its
spectral radius is $e^{P_*(t)}$ and its essential spectral radius is at most $\sigma e^{P_*(t)}$, where
$$\sigma  := \max \{ \Lambda^{-\beta + 1/p}, \theta^{t-1/p} e^{-P_*(t)}, \Lambda^{-\gamma} \} <1
\, . $$
Moreover, the peripheral spectrum of $\cL_t$ contains no Jordan blocks.
\end{proposition}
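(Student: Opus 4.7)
The plan is to apply Hennion's theorem using the Lasota--Yorke inequality \eqref{eq:LY} of Proposition~\ref{prop:ly} together with the compact inclusion $\cB\hookrightarrow \cB_w$ of Proposition~\ref{embeds}. The choice of $p$ provided by Lemma~\ref{lem:t ok} ensures $\sigma<1$, so Hennion yields at once quasi-compactness, the essential spectral radius bound $r_{\mathrm{ess}}(\cL_t)\le \sigma\, e^{P_*(t)}$, and the spectral radius upper bound $r(\cL_t)\le e^{P_*(t)}$. Iterating \eqref{eq:LY} with $\cL_t^n f$ in place of $f$, and using the weak-norm bound $|\cL_t^n f|_w \le C\, e^{nP_*(t)}|f|_w$ obtained from \eqref{eq:weak} and Proposition~\ref{prop:summary}(b), the quantity $a_n:=e^{-nP_*(t)}\|\cL_t^n f\|_\cB$ satisfies a recursion of the form $a_{n+N}\le C\sigma^N a_n + K_N|f|_w$; choosing $N$ so that $C\sigma^N<1/2$ closes the recursion and yields the \emph{uniform} operator bound $\|\cL_t^n\|_\cB\le C_*\, e^{nP_*(t)}$ for all $n\ge 1$.

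Two tasks remain: (i) the matching lower bound $r(\cL_t)\ge e^{P_*(t)}$, and (ii) the absence of Jordan blocks on the peripheral circle $\{|\lambda|=e^{P_*(t)}\}$. For (i), I would test $\cL_t^n 1$ against the constant test function on some fixed $W\in\cW^s_H$ with $|W|\ge\delta_0/3$; a change of variables gives
\begin{equation*}
|\cL_t^n 1|_w \;\ge\; \int_W \cL_t^n 1\, dm_W \;=\; \sum_{W_i\in \cG_n^{\delta_1}(W)}\int_{W_i}|J^sT^n|^t\, dm_{W_i}\, .
\end{equation*}
Restricting the sum to long pieces $W_i\in L_n^{\delta_1}(W)$, for which $|W_i|\ge \delta_1/3$, bounded distortion (Lemma~\ref{lem:distortion}) yields $\int_{W_i}|J^sT^n|^t\,dm_{W_i}\ge c\,\delta_1\,|J^sT^n|^t_{C^0(W_i)}$. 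The growth estimate \eqref{eq:delta111} ensures that the long pieces carry at least $3/4$ of the weighted sum over $\cG_n^{\delta_1}(W)$, while a routine refinement/distortion comparison between the $\delta_0$- and $\delta_1$-scale partitions, combined with Proposition~\ref{prop:summary}(a)--(b), bounds the latter below by $c_1 Q_n(t)\ge c_1\, e^{nP_*(t)}$. Hence $|\cL_t^n 1|_w \ge c'\, e^{nP_*(t)}$; since the inclusion $\cB\hookrightarrow \cB_w$ is continuous and $1\in\cB$, this gives $r(\cL_t)\ge e^{P_*(t)}$.

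For (ii), assume to the contrary that some peripheral eigenvalue $\lambda$ with $|\lambda|=e^{P_*(t)}$ admits a Jordan block of size $k\ge 2$. Since $r_{\mathrm{ess}}(\cL_t)<|\lambda|$, the associated generalized eigenspace is finite-dimensional, and on it $\cL_t$ acts as $\lambda I + N$ with $N\neq 0$ nilpotent, so that $\cL_t^n = \lambda^n I + n\lambda^{n-1}N + \cdots$. For a suitable $f$ in this subspace one would then have $\|\cL_t^n f\|_\cB \ge c\, n^{k-1}e^{nP_*(t)}\|f\|_\cB$, in direct contradiction with the uniform bound $\|\cL_t^n\|_\cB\le C_*\, e^{nP_*(t)}$ derived in the first paragraph. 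Therefore every peripheral eigenvalue is semisimple.

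The main obstacle I anticipate is step (i): the pointwise growth of Proposition~\ref{prop:summary}(a) sums $|J^sT^n|^t_{C^0(W_i)}$ \emph{without} any $|W_i|$ weight, whereas $|\cL_t^n 1|_w$ integrates along $W$ and inherently carries such a weight. Bridging this mismatch requires isolating pieces with $|W_i|$ bounded below by a positive constant, which is precisely what the dichotomy \eqref{eq:delta111} between long and short pieces delivers; only at that price does the surviving sum still retain a definite positive fraction of the total weighted growth.
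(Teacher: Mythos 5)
Your proposal is correct and follows essentially the same route as the paper: Hennion's theorem applied to the Lasota--Yorke inequality \eqref{eq:LY} with the compact embedding of Proposition~\ref{embeds} (and $p$ from Lemma~\ref{lem:t ok}) for the upper bounds, and the lower bound $r(\cL_t)\ge e^{P_*(t)}$ obtained by integrating $\cL_t^n 1$ over a long stable curve, keeping only the long pieces via \eqref{eq:delta111} and bounded distortion, and invoking Proposition~\ref{prop:summary}, exactly as in \eqref{eq:lower evol}. The only (harmless) deviation is the Jordan-block step: you exclude blocks by first closing the Lasota--Yorke recursion to get the uniform bound $\|\cL_t^n\|_{\cB}\le C_* e^{nP_*(t)}$, while the paper argues directly from the weak-norm growth bound on $\cL_t^n f_1$ together with the injectivity of $\cB\hookrightarrow\cB_w$; both arguments are valid.
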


\begin{proof} Since $t_0>0$
and $t_1<t_*$,  we can choose $p>1$ 
such that $p> 2/t_0\ge 2/t$ and (by Lemma~\ref{lem:t ok}) $\theta^{(t-1/p)}e^{- P_*(t)} < 1$ 
for any $t\in [t_0, t_1]$.
Then \eqref{eq:weak} and Proposition~\ref{prop:summary}(b)  imply
that the spectral radius of $\cL_t$  on $\cB_w$ is at most $e^{P_*(t)}$. Combining \eqref{eq:LY} from Proposition~\ref{prop:ly} with
Hennion's theorem and compactness of the unit ball of 
$\cB$ in $\cB_w$ from Proposition~\ref{embeds}, the essential
spectral radius of $\cL_t$ on $\cB$ is at most $\sigma e^{P_*(t)} < e^{P_*(t)}$.
Hence the spectral radius of $\cL_t$ on $\cB$ is at most
 $e^{P_*(t)}$.

Next, notice that by Lemma~\ref{lem:distortion} and
our choice of $\delta_1$ in \eqref{eq:delta111},  
we have for $W \in \cW^s$ with $|W| \ge \delta_1/3$,
\begin{equation}
\label{eq:lower evol}
\begin{split}
\int_W \cL_t^n 1 \, dm_W & = \sum_{W_i \in \cG_n^{\delta_1}(W)} \int_{W_i} |J^sT^n|^t \, dm_{W_i}
\ge \sum_{W_i \in L_n^{\delta_1}(W)} \tfrac 13 \delta_1 2^{-t} |J^sT^n|^t_{C^0(W_i)} \\
& \ge \tfrac 14 \delta_1 2^{-t} \sum_{W_i \in \cG_n^{\delta_1}(W)} |J^sT^n|^t_{C^0(W_i)} 
\ge \tfrac 14 \delta_1 2^{-t} c_1 Q_n(t) \, ,
\end{split}
\end{equation}
where for the final inequality we have applied Proposition~\ref{prop:summary}(b).  Then,
since $Q_n(t) \ge e^{n P_*(t)}$ by the lower bound
in Proposition~\ref{prop:summary}(b), we conclude
\[
\| \cL_t^n \|_{\cB} \ge \| \cL_t^n 1\|_{\cB} (\| 1 \|_{\cB})^{-1} \ge (\| 1 \|_{\cB})^{-1}C \delta_1 e^{P_*(t) n} \quad \Longrightarrow \quad 
\lim_{n \to \infty} \| \cL_t^n \|_{\cB}^{1/n} \ge e^{P_*(t)} \, .
\]
Thus the spectral radius of $\cL_t$ on $\cB$ is in fact $e^{P_*(t)}$ and $\cL_t$ is quasi-compact on $\cB$.

 Finally, to prove there are no Jordan blocks in the peripheral spectrum, assume
to the contrary that there exist $f_0, f_1 \in \cB$, $f_0 \neq 0$, and $\lambda \in \mathbb{C}$, 
$|\lambda| = e^{P_*(t)}$, such that 
$\cL_t f_0 = \lambda f_0$ and $\cL_t f_1 = \lambda f_1 + f_0$.  Then
$\cL_t^n f_1 = \lambda^n f_1 + n \lambda^{n-1} f_0$, so that
\[
n | f_0 |_w \le e^{P_*(t)} | f_1 |_w + e^{-(n-1)P_*(t)} | \cL_t^n f_1 |_w  \, ,
\]
and dividing by $n$, letting $n \to \infty$ and applying \eqref{eq:weak} and
Proposition~\ref{prop:summary}(b) yields $|f_0|_w = 0$.  The injectivity of $\cB_w$ into $\cB$
given by Proposition~\ref{embeds} implies $f_0=0$ in $\cB$, a contradiction.
\end{proof}

For $\varpi \in [0, 1)$, let $\bV_\varpi$ denote the eigenspace of $\cL_t$ on $\cB$ corresponding
to the eigenvalue $e^{P_*(t)} e^{2\pi i \varpi}$.  Due to Proposition~\ref{prop:radius}, we
have the following decomposition of $\cL_t$  on $\cB$,
\begin{equation}
\label{eq:decomp}
\cL_t = \sum_{\varpi} e^{P_*(t) + 2\pi i \varpi} \Pi_\varpi + R_t \, ,
\end{equation}
where the sum is over finitely many $\varpi$ due to the quasi-compactness of $\cL_t$, and
$\Pi_\varpi^2 = \Pi_\varpi$, $\Pi_\varpi \Pi_{\varpi'} = R_t \Pi_\varpi = \Pi_\varpi R_t = 0$
for $\varpi \neq \varpi'$ (mod $2\pi$), and the spectral radius of $R_t$ is strictly less than
$e^{P_*(t)}$.

\begin{lemma}
\label{lem:peripheral}
Define $\displaystyle  \nu_t  = \lim_{n \to \infty} \frac 1n \sum_{k=0}^{n-1} e^{- P_*(t) k} \cL_t^k 1$.  
\begin{itemize}
  \item[a)]  Then $\nu_t \neq 0$ is a nonnegative Radon measure, and $e^{P_*(t)}$ is in the spectrum
  of $\cL_t$.
  \item[b)] All elements of $\bV = \oplus_\varpi \bV_\varpi$ are 
complex measures, absolutely
  continuous with respect to $\nu_t$.
\end{itemize}
\end{lemma}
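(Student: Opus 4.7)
The plan is to use the spectral decomposition \eqref{eq:decomp} together with the fact that $\cL_t$ preserves positivity on functions and that $C^1(M)$ is identified with measures via integration against $\musrb$.

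For part (a), I would first verify that the Cesàro limit defining $\nu_t$ exists in $\cB$. Applying \eqref{eq:decomp} gives
\[
\tfrac{1}{n}\sum_{k=0}^{n-1} e^{-P_*(t)k}\cL_t^k 1 = \sum_\varpi \Bigl(\tfrac{1}{n}\sum_{k=0}^{n-1} e^{2\pi i k\varpi}\Bigr)\Pi_\varpi 1 + \tfrac{1}{n}\sum_{k=0}^{n-1} e^{-P_*(t)k} R_t^k 1 \, ,
\]
so, since the inner Cesàro sum of $e^{2\pi i k\varpi}$ tends to $\mathbf{1}_{\{\varpi=0\}}$ and $\|e^{-P_*(t)}R_t\| <1$, the limit in $\cB$ equals $\Pi_0 1$ if $0\in\{\varpi\}$ and is $0$ otherwise. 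To rule out the latter, I would apply \eqref{eq:lower evol}: for any $W\in\cW^s_H$ with $|W|\ge\delta_1/3$, taking the admissible weak-norm test function $\psi\equiv 1$ yields $|\cL_t^n 1|_w \ge \int_W \cL_t^n 1\,dm_W \ge c\, e^{n P_*(t)}$. Consequently the Cesàro averages have weak norm bounded below by $c>0$, and the injectivity $\cB\hookrightarrow\cB_w$ from Proposition~\ref{embeds} forces $\nu_t\neq 0$ in $\cB$. Hence $0$ is among the peripheral eigenvalues, $e^{P_*(t)}$ lies in the spectrum, and $\nu_t=\Pi_0 1$.

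For the measure claim, each $\cL_t^k 1$ is a nonnegative continuous function outside the singularity set, and its image in $\cB_w\subset (C^1(M))^*$ is the positive distribution $\psi\mapsto \int\psi\cdot\cL_t^k 1\,d\musrb$. Convergence in $\cB_w$ implies convergence in $(C^1(M))^*$, so the limit $\nu_t$ is a positive linear functional on $C^1(M)$. Plugging in $\psi\equiv 1$ gives the finite mass $\nu_t(1)<\infty$, and then $|\nu_t(\psi)|\le \nu_t(1)\,|\psi|_{C^0}$ for every real $\psi\in C^1$ by writing $|\psi|_{C^0}\pm \psi\ge 0$. By density of $C^1(M)$ in $C^0(M)$, $\nu_t$ extends to a positive bounded linear functional on $C^0(M)$, hence a nonnegative Radon measure by the Riesz representation theorem.

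For part (b), since $\Pi_\varpi:\cB\to\bV_\varpi$ is a bounded projector and $C^1(M)$ is dense in $\cB$, each $f\in\bV_\varpi$ can be written as $f=\Pi_\varpi g$ for some $g\in C^1(M)$, with the analogous Cesàro representation
\[
f = \lim_{n\to\infty} \tfrac{1}{n}\sum_{k=0}^{n-1} e^{-k(P_*(t)+2\pi i\varpi)}\cL_t^k g \quad \text{in } \cB.
\]
For $\psi\in C^1(M)$ with $\psi\ge 0$, the pointwise inequality $|\cL_t^k g|\le \cL_t^k |g|\le |g|_{C^0}\,\cL_t^k 1$ (valid because $\cL_t$ is a positive operator on continuous functions) yields
\[
\Bigl|\int \psi \cdot \tfrac{1}{n}\sum_{k=0}^{n-1} e^{-k(P_*(t)+2\pi i\varpi)}\cL_t^k g\,d\musrb\Bigr| \le |g|_{C^0}\int\psi\cdot \tfrac{1}{n}\sum_{k=0}^{n-1} e^{-kP_*(t)}\cL_t^k 1\,d\musrb.
\]
Letting $n\to\infty$ and using part (a), the right-hand side converges to $|g|_{C^0}\int\psi\,d\nu_t$. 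Taking $f$-dependent linear combinations of positive test functions ($\psi=\psi_+-\psi_-$) and applying the extension/Riesz argument from (a) to the real and imaginary parts of $f$ separately shows that $f$ extends to a complex Radon measure on $M$ with total variation satisfying $|f|\le |g|_{C^0}\,\nu_t$ as measures. In particular $|f|\ll\nu_t$.

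The main obstacle is the last step: transferring the pointwise domination $|\cL_t^k g|\le|g|_{C^0}\cL_t^k 1$, which holds as an inequality between functions, into an inequality between the complex distributions defined by the Cesàro limits, in a form strong enough to conclude that $f$ is an actual complex measure (not merely a distribution) with $|f|$ dominated by a scalar multiple of $\nu_t$. This is resolved by working throughout in the weak-topology dual pairing with $C^1(M)$, where the pointwise bound passes to the limit directly, and then invoking the Riesz representation argument used in (a) to upgrade the resulting bounded linear functional into a Radon measure.
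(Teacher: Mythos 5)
Your argument is correct and is essentially the paper's proof: both rest on the Ces\`aro representation of the spectral projectors, the density of $C^1(M)$ in $\cB$ together with finite-dimensionality of each $\bV_\varpi$ (so every peripheral eigenvector is $\Pi_\varpi g$ for some $g\in C^1$), and the positivity bound $|\cL_t^k g|\le |g|_{C^0}\,\cL_t^k 1$ passed to the limit in the dual pairing with test functions, which yields both the Radon-measure property and the domination $|f|\le |g|_{C^0}\,\nu_t$, hence absolute continuity with bounded density. The only real divergence is logical order and how $\nu_t\neq 0$ is obtained: you prove (a) first and get non-vanishing directly from the growth lower bound \eqref{eq:lower evol} applied to a long stable manifold (forcing $\varpi=0$ to occur in \eqref{eq:decomp}), whereas the paper proves (b) first and then argues by contradiction that $\nu_t=0$ would annihilate the whole peripheral eigenspace, contradicting the spectral radius $e^{P_*(t)}$ from Proposition~\ref{prop:radius}; since \eqref{eq:lower evol} is the same estimate underlying Proposition~\ref{prop:radius}, the two routes use the same ingredients and your version is, if anything, slightly more direct. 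Your explicit invocation of the Riesz representation theorem spells out what the paper leaves implicit, and the "main obstacle" you flag is resolved exactly as in the paper, by working in the pairing with $C^1$ observables against $\musrb$.
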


 Lemma~\ref{lem:peripheral} is  standard, adapting what has been done  in the SRB case.  We give a proof for completeness.

\begin{proof}
(b) The lack of Jordan blocks enables us to define  spectral projectors by
\begin{equation}
\label{eq:spec}
\Pi_\varpi : \cB \to \bV_\varpi\, ,
\quad \Pi_\varpi f = \lim_{n \to \infty} \frac 1n \sum_{k=0}^{n-1} e^{- P_*(t) k - 2\pi i \varpi k} \cL_t^k f \, ,
\end{equation}
where convergence in the $\cB$ norm is guaranteed by Propositions~\ref{prop:ly}
and \ref{prop:summary}(b).  Moreover, since $C^1(M)$ is dense in $\cB$ and $\bV_\varpi$ is 
finite-dimensional, for each $\nu \in \bV_\varpi$, there exists $\bar f_\nu \in C^1(M)$ such that
$\Pi_\varpi \bar f_\nu = \nu$.

Taking $\nu \in \bV_\varpi$ and $\psi \in C^\alpha(M)$,  using \eqref{eq:spec} and recalling our
identification of $\bar f_\nu \in C^1$ with the measure $\bar f_\nu d\musrb$, we have
\[
|\nu(\psi)| \le \lim_{n \to \infty}
\frac 1n \sum_{k=0}^{n-1} e^{- P_*(t) k} |\cL_t^k \bar f_\nu(\psi)|
\le |\bar f_\nu|_\infty \Pi_0 1(|\psi|)
\le |\bar f_\nu|_\infty |\psi|_\infty \Pi_01(1) \, .
\] 
The last two inequalities show respectively that $\nu$ is a complex Radon measure, and 
is absolutely continuous with respect
to $\nu_t$, with density $f_\nu \in L^\infty(\nu_t)$.  It may be that
$f_\nu \neq \bar f_\nu$.

\medskip
\noindent
(a)  Item (b) implies also that $\nu_t$ is a nonnegative Radon measure
since $\bar f_{\nu_t} = 1$ and $\Pi_0$ is nonnegative.  Also, if $\nu_t=0$, then all elements
of $\bV_\varpi$ are $0$, contradicting the fact that the spectral radius of $\cL_t$ is $e^{P_*(t)}$.
Thus $\nu_t \ne 0$ and $e^{P_*(t)}$ is in the spectrum of $\cL_t$ since
$\cL_t \nu_t = e^{P_*(t)} \nu_t$.
\end{proof}

The dual operator $\cL_t^*$ acting on $\cB^*$ has the
same spectrum as $\cL_t$ on $\cB$.   Define 
\begin{equation}
\label{eq:dual}
\tnu_t:= \lim_{n \to \infty} \frac 1n \sum_{k=0}^{n-1} e^{-P_*(t) k} (\cL_t^*)^k d\musrb \, ,
\end{equation}
which converges in the dual norm due to the absence of Jordan blocks.  By 
the analogous
arguments to item (b) of Lemma~\ref{lem:peripheral}, the distribution $\tnu_t \neq 0$ is a nonnegative 
Radon measure, and every other
eigenvector corresponding to the peripheral spectrum is a Radon measure, absolutely 
continuous with respect to $\tnu_t$, with bounded density.

With $\tnu_t$, we will define our candidate $\mu_t$ for the equilibrium state 
in Proposition~\ref{prop:mut def}. For this  (and in \eqref{forCLT2}), we shall  use 
the following lemma (proved exactly as in \cite[Lemma~4.4]{max}) which gives 
more precise information about the inclusion
 $\cB_w\subset (C^1(M))^*$
in Proposition~\ref{embeds}.
Recalling \eqref{eq:holder def}, 
let $H^\alpha_{\cW^s_{\bH}}(\psi) = \sup_{W \in \cW^s_{\bH}} H^\alpha_W(\psi)$.

\begin{lemma}
\label{lem:distrib}
There exists $C>0$ such that for all $f \in \cB_w$ and $\psi \in C^\alpha(\cW^s_{\bH})$,
\[
|f(\psi)| \le C |f|_w \big( |\psi|_\infty + H^\alpha_{\cW^s_{\bH}}(\psi) \big) \, .
\]
\end{lemma}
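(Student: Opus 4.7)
The plan is to prove the inequality first for $f \in C^1(M)$ in its identification with the measure $f\, d\musrb$, and then extend to all of $\cB_w$ by density: a $|\cdot|_w$-Cauchy sequence $(f_n) \subset C^1(M)$ will give a Cauchy sequence of complex numbers $\int f_n \psi \, d\musrb$ whose limit $f(\psi)$ inherits the asserted bound and does not depend on the chosen approximating sequence. Thus the task reduces to establishing, for $f \in C^1(M)$ and $\psi \in C^\alpha(\cW^s_{\bH})$ bounded on $M$, that
\[
\Bigl| \int_M f \psi \, d\musrb \Bigr| \le C |f|_w \bigl( |\psi|_\infty + H^\alpha_{\cW^s_{\bH}}(\psi) \bigr),
\]
with a constant $C$ independent of $f$ and $\psi$.

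The mechanism is a disintegration of $\musrb$ along weakly homogeneous stable manifolds. Cover $M$ by a finite collection of open neighbourhoods $U_1, \ldots, U_N$ small enough that, inside each $U_i$, $\musrb$-a.e.\ point has a local stable manifold that lies in a single homogeneity strip and has length between $\delta_0/3$ and $\delta_0$, hence belongs to $\cW^s_{\bH}$. One then disintegrates
\[
\int_{U_i} F \, d\musrb \;=\; \int_{\Xi_i} \int_W F \, \rho_W \, dm_W \, d\xi_i(W),
\]
where $\Xi_i$ indexes the leaves $W$ of the lamination inside $U_i$, $\xi_i$ is the transverse factor measure, and the conditional densities $\rho_W$ are uniformly bounded in $C^\alpha(W)$. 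This last uniform bound follows from the smoothness of the density $\cos\vf/(2|\partial Q|)$ of $\musrb$ together with the (dynamical) H\"older continuity of stable holonomies on a Cantor rectangle, which is standard in the billiard setting (see e.g.\ \cite[Sect.~5.8--5.9]{chernov book} and its use in the construction of the Sinai decomposition).

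Applying the disintegration to $F = f \psi$ and invoking the very definition of $|f|_w$ on each leaf,
\[
\Bigl| \int_W f \psi \rho_W \, dm_W \Bigr| \le |f|_w \cdot |\psi \rho_W|_{C^\alpha(W)} \le C' |f|_w \bigl( |\psi|_{C^0(W)} + H^\alpha_W(\psi) \bigr),
\]
where $C'$ absorbs the uniform $C^\alpha$-bound on $\rho_W$. Since $|\psi|_{C^0(W)} \le |\psi|_\infty$ and $H^\alpha_W(\psi) \le H^\alpha_{\cW^s_{\bH}}(\psi)$ by the pointwise definitions of these quantities, integrating against $d\xi_i(W)$ (whose total mass is bounded) and summing over the finitely many $U_i$ yields the claim on $C^1(M)$, and the density argument of the first paragraph completes the proof.

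The main obstacle will be arranging the foliation inside each $U_i$ so that (i) almost every leaf belongs to $\cW^s_{\bH}$ and (ii) the conditional densities are uniformly $C^\alpha$ along $W$. Because the stable lamination is only measurable and its leaves can become arbitrarily short near the iterated singularity set, one cannot expect a single smooth foliation to work globally. The standard remedy, which was carried out in \cite[Lemma~4.4]{max} in the $t=0$ setting and which I expect to import here essentially verbatim, is to replace the naive foliation by a union of locally maximal Cantor rectangles of high density (as used in \S 3.5), on each of which the holonomy is uniformly H\"older and the leafwise densities are consequently controllable; the complementary set has small $\musrb$-measure and can be absorbed into the constant $C$.
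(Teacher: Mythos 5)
There is a genuine gap, and it sits exactly at the one point where this lemma is delicate: the arbitrarily short stable leaves. Your first reduction asserts that $M$ can be covered by finitely many open sets $U_i$ in which $\musrb$-a.e.\ point has a local stable manifold of length between $\delta_0/3$ and $\delta_0$ lying in a single homogeneity strip, "hence" in $\cW^s_{\bH}$. Neither half of this is true: in every open set the points whose stable manifold is shorter than $\ve$ form a set of positive measure (of order $\ve$, cf.\ Lemma~\ref{lem:lots}), so no uniform lower bound on leaf length is available; and a curve contained in a single homogeneity strip is only weakly homogeneous ($\cW^s_H$), whereas $\cW^s_{\bH}$ requires $T^nW$ to lie in a single strip for \emph{all} $n\ge 0$ — this strong homogeneity is precisely what makes the conditional densities of $\musrb$ uniformly H\"older (\cite[Cor.~5.30]{chernov book}), as the footnote in Section~2.1 emphasizes. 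Your disintegration formula also hides the normalization: once the conditional measures are written as probability measures $|W_\xi|^{-1}\rho_\xi\,dm_{W_\xi}$, applying the weak norm leafwise produces the factor $\int_\Xi |W_\xi|^{-1}\,d\hatmusrb(\xi)$, and the finiteness of this integral (the standard-family bound, \cite[Exercise~7.22]{chernov book}) is the real content that replaces your false uniform-length claim. This is how \cite[Lemma~4.4]{max} — and the proof of Lemma~\ref{lem:Lt distr} in this paper — actually proceeds: a single measurable disintegration over the full-measure set of maximal homogeneous stable manifolds, with \cite[Cor.~5.30]{chernov book} for the densities and the standard-family integrability for the short leaves; no Cantor rectangles enter.

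Your proposed fallback does not repair this. Discarding the complement of a union of high-density Cantor rectangles and "absorbing" it into $C$ is not legitimate: the quantity $|f|_w$ only controls integrals of $f$ along stable curves against $C^\alpha$ test functions, so for a positive-measure set $E$ there is no bound of the form $|\int_E f\psi\,d\musrb|\le C|f|_w|\psi|_\infty$; the trivial estimate uses $|f|_\infty$, which is not dominated by $|f|_w$ for $f\in C^1(M)$. (Cantor rectangles are used elsewhere in the paper — e.g.\ in Proposition~\ref{prop:lower bound} and Lemma~\ref{lem:full} — but not for this lemma.) The density-in-$\cB_w$ reduction in your first paragraph and the leafwise application of the weak norm are fine and agree with the paper; what is missing is the correct treatment of short leaves, i.e.\ the strong homogeneity of the foliation and the bound $\int_\Xi |W_\xi|^{-1}\,d\hatmusrb(\xi)<\infty$.
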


In addition, we shall use the following extension of the above lemma, which uses that
$\musrb$ has smooth conditional measures on {\em stable} manifolds.
\begin{lemma}
\label{lem:Lt distr}
There exists $C>0$ such that for all $f \in \cB_w$, $\psi \in C^\alpha(\cW^s_{\bH})$, and $n \ge 0$,
\[
|\cL_t^n(f\psi)(1)| \le C Q_n(t) |f|_w |\psi|_{C^\alpha(\cW^s_{\bH})} \, .
\]
\end{lemma}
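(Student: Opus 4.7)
The plan is to first establish the bound for $f \in C^1(M)$ (under the identification $f \leftrightarrow f\,d\musrb$) and then extend to all $f \in \cB_w$ by density. For $f \in C^1(M)$,
\begin{equation*}
\cL_t^n(f\psi)(1) = \int_M \cL_t^n(f\psi)\, d\musrb.
\end{equation*}
I will disintegrate $\musrb$ along a measurable partition of $M$ into homogeneous stable manifolds $W \in \cW^s_\bH$:
\begin{equation*}
d\musrb = \rho^W(y)\, dm_W(y)\, d\bar\mu(W).
\end{equation*}
The crucial input, standard for Sinai billiards (see \cite[Ch.~5,~7]{chernov book}), is that the conditional densities $\rho^W$ are $\alpha$-H\"older on each $W \in \cW^s_\bH$ with norm $|\rho^W|_{C^\alpha(W)} \le C_\rho$ bounded uniformly in $W$. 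This uniform bound is precisely the reason for requiring the stronger homogeneity of $\cW^s_\bH$: the asymptotic product of stable-Jacobian ratios defining $\rho^W$ stays controlled exactly when every forward iterate $T^kW$ lies in a single homogeneity strip.

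Next, applying on each leaf the change-of-variables formula for $\cL_t^n$ with test function $\rho^W$ (as derived in \S\ref{sec:weak norm}),
\begin{equation*}
\int_W \cL_t^n(f\psi)\,\rho^W\, dm_W = \sum_{W_i \in \cG_n(W)} \int_{W_i} f \cdot \bigl(\psi \cdot (\rho^W \circ T^n) \cdot |J^sT^n|^t\bigr)\, dm_{W_i}.
\end{equation*}
Since $W \in \cW^s_\bH$, each $W_i \in \cG_n(W)$ is itself in $\cW^s_\bH \subset \cW^s_H$, so the weak norm bounds each term of the sum by $|f|_w$ times the $C^\alpha(W_i)$ norm of the bracketed test function. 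By the H\"older product rule this norm is controlled through three factors: (i) $|\psi|_{C^\alpha(W_i)} \le |\psi|_{C^\alpha(\cW^s_\bH)}$ directly; (ii) $|\rho^W \circ T^n|_{C^\alpha(W_i)} \le C\cdot C_\rho$ uniformly, by the stable-manifold contraction estimate used in \eqref{eq:test contract} applied to $\rho^W$; (iii) $||J^sT^n|^t|_{C^\alpha(W_i)} \le (1+2^t C_d)\,|J^sT^n|^t_{C^0(W_i)}$ by \eqref{eq:JC}. Combining these yields, with $C$ uniform in $n$, $W$, $W_i$, and $t \in [t_0, t_1]$,
\begin{equation*}
\int_{W_i} f \cdot \bigl(\psi \cdot (\rho^W \circ T^n) \cdot |J^sT^n|^t\bigr)\, dm_{W_i} \le C\,|f|_w\,|\psi|_{C^\alpha(\cW^s_\bH)}\,|J^sT^n|^t_{C^0(W_i)}.
\end{equation*}

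Summing over $W_i \in \cG_n(W)$ and invoking Lemma~\ref{lem:extra growth} with $\varsigma = 0$, $g = 0$ gives $\sum_{W_i} |J^sT^n|^t_{C^0(W_i)} \le C_2\,Q_n(t)$, and integrating over the probability measure $d\bar\mu(W)$ completes the proof for $f \in C^1(M)$. For general $f \in \cB_w$, the bound above shows that the linear functional $g \mapsto \cL_t^n(g\psi)(1)$ on $C^1(M)$ is $|\cdot|_w$-continuous with constant $C\,Q_n(t)\,|\psi|_{C^\alpha(\cW^s_\bH)}$, and therefore extends by continuity to $\cB_w$; this extension agrees with the canonical interpretation obtained by combining Proposition~\ref{prop:ly} (which makes $\cL_t^n(f\psi) \in \cB_w$) with Lemma~\ref{lem:distrib} (which controls evaluation on the test function $1$), and the bound passes to the limit.

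The main obstacle in this plan is the uniform $\alpha$-H\"older regularity of the conditional densities $\rho^W$ on $W \in \cW^s_\bH$. It is precisely this property, which fails on the weaker class $\cW^s_H$, that dictates the use of $\cW^s_\bH$ in the hypothesis on $\psi$ and ties the statement naturally to the disintegration of $\musrb$ employed here.
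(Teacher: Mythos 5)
Your argument reproduces the paper's own proof almost step for step: reduce to $f\in C^1$ by density, disintegrate $\musrb$ over a foliation of homogeneous stable manifolds with H\"older conditional densities, change variables under $\cL_t^n$ leaf by leaf, bound each piece by $|f|_w$ times the $C^\alpha$ norm of $\psi\cdot(\rho\circ T^n)\cdot|J^sT^n|^t$ using \eqref{eq:test contract} and \eqref{eq:JC}, and sum with Lemma~\ref{lem:extra growth} ($\varsigma=0$) to produce $Q_n(t)$. All of that is correct and is exactly the paper's route.

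There is, however, one genuine gap, at the final integration over the leaf space. You assert simultaneously that the conditional densities $\rho^W$ are uniformly bounded (in $C^\alpha$, hence in $C^0$) and that the factor measure $\bar\mu$ is a probability measure; these two normalizations are incompatible. The uniform bound of \cite[Cor.~5.30]{chernov book} holds for the densities normalized as in \eqref{eq:rho}, i.e.\ the conditional \emph{probability} measure on $W_\xi$ is $|W_\xi|^{-1}\rho_\xi\,dm_{W_\xi}$ with $c_\rho\le\rho_\xi\le C_\rho$; the canonical conditional probability densities with respect to arclength are therefore of size $|W_\xi|^{-1}$, which blows up on short leaves. Whichever normalization you adopt, after applying the (leaf-uniform) estimate on each $W_\xi$ the transverse integration produces the factor $\int_\Xi |W_\xi|^{-1}\,d\hatmusrb(\xi)$, and the finiteness of this integral is a nontrivial fact about the scarcity of short stable manifolds: it is exactly the standard-family estimate invoked in the paper via \cite[Exercise~7.22]{chernov book}. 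Your write-up hides this by calling $\bar\mu$ a probability measure while keeping $|\rho^W|_{C^\alpha}\le C_\rho$, so as stated the last step does not go through; it is repaired by using the normalization \eqref{eq:rho} and supplying the finiteness of $\int_\Xi |W_\xi|^{-1}\,d\hatmusrb(\xi)$, after which the constant $C$ in the lemma acquires precisely this extra factor, as in the paper.
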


\begin{proof}
We proceed similarly to the proof of \cite[Lemma~4.4]{max}.  By density of $C^1$ in $\cB_w$, it suffices
to prove the inequality for $f \in C^1$.  Then according to our convention, for $\psi \in C^\alpha(\cW^s_{\bH})$,
\[
\cL_t^n(f\psi)(1) = \int_M \cL_t^n(f\psi) \, d\musrb \, .
\]
To estimate the integral, we disintegrate $\musrb$ into a family of conditional
probability measures on stable manifolds as follows.
Fix a foliation of $\cF = \{ W_\xi \}_{\xi \in \Xi} \subset \cW^s_{\bH}$ of maximal,  homogeneous local stable manifolds
belonging to $\cW^s_{\bH}$.  The conditional measures are defined by
$\ximusrb = |W_\xi|^{-1} \rho_\xi dm_{W_\xi}$, where $\rho_\xi$ satisfies
\cite[Cor 5.30]{chernov book},
\begin{equation}
\label{eq:rho}
0 < c_\rho \le \inf_{\xi \in \Xi} \inf_{W_\xi} \rho_\xi \le \sup_{\xi \in \Xi} |\rho_\xi|_{C^\alpha(W_\xi)}
\le C_\rho < \infty \, .
\end{equation}
We denote the factor measure on the index set $\Xi$ 
by $\hatmusrb$.
Then,
\[
\begin{split}
\left| \int_M \cL_t^n(f\psi) \, d\musrb \right| & = \left| \int_{\Xi} \int_{W_\xi} \cL_t^n(f\psi) \, \rho_\xi \, dm_{W_\xi} |W_\xi|^{-1} d\hatmusrb(\xi) \right| \\
& =  \left| \int_{\Xi} \sum_{W_i^\xi \in \cG_n(W_\xi)} \int_{W_i} f \psi \, \rho_\xi \circ T^n \, |J^sT^n|^t \, dm_{W_i^\xi} |W_\xi|^{-1} d\hatmusrb(\xi) \right| \\
& \le C C_\rho |f|_w |\psi|_{C^\alpha(\cW^s_{\bH})} \left| \int_{\Xi} \sum_{W_i^\xi \in \cG_n(W_\xi)} |J^sT^n|^t_{C^0(W_i^\xi)}  |W_\xi|^{-1} d\hatmusrb(\xi) \right| \\
& \le C C_\rho C_2 Q_n(t) |f|_w |\psi|_{C^\alpha(\cW^s_{\bH})} \int_{\Xi} |W_\xi|^{-1} d\hatmusrb(\xi) \, ,
\end{split}
\]
where in the last line we have used Lemma~\ref{lem:extra growth} with $\varsigma=0$.  The remaining integral
is finite by \cite[Exercise~7.22]{chernov book} since the family $(W_\xi, d\ximusrb, \hatmusrb)_{\xi \in \Xi}$ is
a standard family.
\end{proof}

\begin{proposition}[Constructing $\mu_t$]\label{prop:mut def}
For $\nu \in \cB$ and $\tilde \nu \in \cB^*$ we set $\langle \nu, \tilde \nu \rangle :=\tilde \nu(\nu)$. 
  \begin{itemize}
    \item[a)] The measure $\tnu_t \in \cB^*$ is in fact an element of $\cB_w^*$.
 \item[b)] We have $\langle \nu_t, \tnu_t \rangle \neq 0$, and the distribution $\mu_t$ defined 
  for $\psi \in C^\alpha(\cW^s_{\bH})$ by
$$\mu_t(\psi) := \frac{ \langle \psi \nu_t,  \tnu_t \rangle}{\langle \nu_t , \tnu_t \rangle}$$
is a $T$-invariant probability measure.
   \end{itemize}
\end{proposition}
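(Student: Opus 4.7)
My plan is to establish (a) by transferring a weak-norm bound on $\musrb$ through the dynamics, and to deduce the properties of $\mu_t$ in (b) from the eigenvector equations combined with the fact that $\cL_t$ preserves nonnegativity. For (a), specializing the disintegration argument in the proof of Lemma~\ref{lem:Lt distr} to $n=0$ and $\psi=1$ gives $|\int f \, d\musrb| \le C|f|_w$ for $f \in C^1$, so $\musrb \in \cB_w^*$. Since \eqref{eq:weak} extends $\cL_t$ to a bounded operator on $\cB_w$ with norm controlled by $Q_n(t)$, the adjoint $(\cL_t^*)^k$ satisfies $|(\cL_t^*)^k d\musrb|_{\cB_w^*} \le C Q_k(t)$, and the sharp upper bound $Q_k(t) \le (2/c_2) e^{P_*(t) k}$ from Proposition~\ref{prop:summary}(b) makes the Cesaro averages defining $\tnu_t$ uniformly bounded in $\cB_w^*$. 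Since $\tnu_t$ already exists in $\cB^* \supset \cB_w^*$ by \eqref{eq:dual}, I conclude $\tnu_t \in \cB_w^*$.

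For the non-vanishing of the pairing in (b), I observe that the eigenvector equation $\cL_t^k \nu_t = e^{P_*(t) k} \nu_t$ causes the Cesaro averages to telescope: $\tnu_t(\nu_t) = d\musrb(\nu_t) = \nu_t(1) = 1$ by the normalization in Theorem~\ref{thm:spectral}. That this normalization is achievable rests on the lower bound $\int \cL_t^k 1 \, d\musrb \ge CQ_k(t) \ge C e^{P_*(t) k}$, which I obtain by disintegrating $\musrb$ along a standard family of homogeneous stable manifolds and restricting to the positive-measure subset of $W_\xi$ with $|W_\xi| \ge \delta_1/3$, where \eqref{eq:lower evol} applies. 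The normalization $\mu_t(1)=1$ is then immediate from the definition.

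For positivity on $\psi \in C^\alpha(\cW^s_\bH)$ with $\psi \ge 0$, I rely on the fact that multiplication by $\psi$ is bounded on $\cB_w$ with norm at most $|\psi|_{C^\alpha(\cW^s_\bH)}$ (immediate from the definition of the weak norm). Writing $\nu_t^n = \frac 1n \sum_{k=0}^{n-1} e^{-P_*(t)k} \cL_t^k 1$ and $\tnu_t^m$ for the analogous dual Cesaro averages, each $\nu_t^n$ is a nonnegative function (since $\cL_t$ preserves nonnegativity) and each $\tnu_t^m$ is a nonnegative Radon measure. The pairings $\tnu_t^m(\psi \nu_t^n) = m^{-1}\sum_{j=0}^{m-1} e^{-P_*(t)j} \int \cL_t^j(\psi \nu_t^n) \, d\musrb$ are therefore nonnegative, and passing to the double limit via $\cB_w$-convergence $\psi \nu_t^n \to \psi \nu_t$ and weak-$*$ convergence $\tnu_t^m \to \tnu_t$ in $\cB_w^*$ (the latter from uniform boundedness plus pointwise convergence on the dense subspace $C^1$) preserves nonnegativity.

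Finally, $T$-invariance reduces to the pointwise identity $\cL_t((\psi \circ T) f) = \psi \cL_t f$, immediate from the formula \eqref{eq:L}. Combined with the eigenvector equations for $\nu_t$ and $\tnu_t$, this gives
\begin{equation*}
\tnu_t(\psi \nu_t) = e^{-P_*(t)} \tnu_t(\psi \cL_t \nu_t) = e^{-P_*(t)} \tnu_t(\cL_t((\psi \circ T) \nu_t)) = e^{-P_*(t)} (\cL_t^* \tnu_t)((\psi \circ T) \nu_t) = \tnu_t((\psi \circ T) \nu_t),
\end{equation*}
so $\mu_t(\psi \circ T) = \mu_t(\psi)$. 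The main technical obstacle throughout is justifying these pairings and limits when $\psi$ is only H\"older and $\nu_t$ is only an abstract element of $\cB$; this is handled by the continuity of multiplication by $C^\alpha(\cW^s_\bH)$ on $\cB_w$ combined with approximation through Cesaro averages.
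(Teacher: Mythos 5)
Your part (a) and your computation of the pairing are fine: the uniform bound $|\langle f, (\cL_t^*)^k d\musrb\rangle| \le C e^{P_*(t)k}|f|_w$ plus density of $\cB$ in $\cB_w$ is exactly the paper's argument, and your telescoping identity $\langle \nu_t,\tnu_t\rangle = d\musrb(\nu_t)=\nu_t(1)$, together with the lower bound on $\int \cL_t^k 1\, d\musrb$ via the stable disintegration and \eqref{eq:lower evol}, is a clean (even sharper) alternative to the paper's proof by contradiction that the pairing is nonzero.

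The genuine gap is your claim that ``multiplication by $\psi$ is bounded on $\cB_w$ with norm at most $|\psi|_{C^\alpha(\cW^s_\bH)}$ (immediate from the definition of the weak norm)'', on which your well-definedness of $\langle \psi\nu_t,\tnu_t\rangle$, your positivity argument (the $\cB_w$-convergence $\psi\nu_t^n\to\psi\nu_t$), and your invariance chain (applying $\cL_t$ to $(\psi\circ T)\nu_t$ and passing to $\cL_t^*$) all rely. This is not immediate, and the paper explicitly refuses to claim it: in its proof it states ``We do not claim or need that $\psi f\in\cB_w$''. Two obstructions: first, the test functions in the weak norm are taken in $C^\alpha(W)$ defined as the \emph{closure} of $C^1(W)$ in the H\"older norm, so for a general $\psi\in C^\alpha(\cW^s_\bH)$ the product $\psi\psi'$ restricted to $W$ need not be an admissible test function, and the inequality $|\psi f|_w\le |\psi|_{C^\alpha(\cW^s_\bH)}|f|_w$ does not follow from the definition; second, even granting such a bound for $f\in C^1$, elements of $C^\alpha(\cW^s_\bH)$ are only H\"older along stable leaves and may be discontinuous transversally, so $\psi f$ need not be approximable by $C^1$ functions in $|\cdot|_w$, i.e.\ need not belong to the completion $\cB_w$ at all, and ``multiplication by $\psi$'' is then not defined on abstract elements such as $\nu_t$. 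The paper circumvents precisely this by working on the dual side: Lemma~\ref{lem:Lt distr} (which crucially uses the smooth conditional measures of $\musrb$ on stable manifolds) shows $|\cL_t^n(f\psi)(1)|\le CQ_n(t)|f|_w|\psi|_{C^\alpha(\cW^s_\bH)}$, whence $\psi\tnu_t\in\cB_w^*$, and the pairing is \emph{defined} as $\langle\psi\nu_t,\tnu_t\rangle:=\langle\nu_t,\psi\tnu_t\rangle$; positivity and invariance are then run through this dual-side object. Your Cesaro-average scheme for positivity and your coboundary-style identity $\cL_t((\psi\circ T)f)=\psi\cL_t f$ for invariance are the right ideas, but to close the proof you must reroute them through $\psi\tnu_t\in\cB_w^*$ (or restrict $\psi$ to $C^\alpha(M)$ and invoke \cite[Lemma~5.3]{dz2}, which does not cover the class $C^\alpha(\cW^s_\bH)$ in the statement), rather than through multiplication on $\cB_w$.
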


\begin{proof}
a) Let $g_n = n^{-1} \sum_{k=0}^{n-1} e^{-P_*(t) k} (\cL_t^*)^k d\musrb$.  By definition,
$\| g_n - \tnu_t  \|_{\cB^*} \to 0 $ as $n \to \infty$.  Thus for $f \in \cB$, we have
\[
| \langle f , \tnu_t \rangle | \le | \langle f, \tnu_t - g_n \rangle | + | \langle f, g_n \rangle | 
\le | \langle f, \tnu_t - g_n \rangle |  + C|f|_w \, ,
\] 
where for the last inequality, we used the bound,
\[
|\langle f, (\cL_t^k)^* d\musrb \rangle| = |\langle \cL_t^k f, d\musrb \rangle|
\le C |\cL_t^k f|_w \le C' e^{P_*(t) k} |f|_w \, ,
\]
by Lemma~\ref{lem:distrib}
and Proposition~\ref{prop:ly}.
Taking $n \to \infty$ yields the bound $|\langle f , \tnu_t \rangle| \le C |f|_w$ for all
$f \in \cB$ and since $\cB$ is dense in $\cB_w$, the distribution $\tnu_t$ extends to a bounded
linear operator on $\cB_w$, as required.

\smallskip
\noindent b) 
First we show the expression  $\langle \psi \nu_t, \tnu_t \rangle$ is well-defined for 
$\psi \in C^\alpha(\cW^s_{\bH})$.   According to our convention, for $f \in C^1(M)$, 
we define for $n \ge 0$,
\[
\langle f, \psi (\cL_t^n)^*d\musrb \rangle = \int \cL_t^n(f\psi) \, d\musrb
\le C Q_n(t) |f|_w |\psi|_{C^\alpha(\cW^s_{\bH})} \, ,
\]
by Lemma~\ref{lem:Lt distr}. 
Thus $\psi (\cL_t^n)^*d\musrb$ extends to
a bounded linear functional on $\cB_w$.  Applying Proposition~\ref{prop:summary}(b)  and
\eqref{eq:dual}, we obtain
\begin{equation}
\label{eq:weak dual}
\psi \tnu_t \in \cB_w^* \; \mbox{ with } \; | \langle f, \psi \tnu_t \rangle | 
\le C' |f|_w |\psi|_{C^\alpha(\cW^s_{\bH})},
\; \; \forall f \in \cB_w \, .
\end{equation}
(We do not claim or need that $\psi f \in \cB_w$, i.e.
that $\psi f$ can be approached by a sequence of 
$C^1$ functions in the weak norm.) 
Thus $\langle \psi \nu_t , \tnu_t \rangle := \langle \nu_t, \psi \tnu_t \rangle$ is well-defined.
Remark that the above argument also shows that $\mu_t(f \psi) = \langle f \nu_t, \psi \tnu_t \rangle$
for all $f \in C^1(M)$, $\psi \in C^\alpha(\cW^s_{\bH})$.

 Next, suppose $\langle \nu_t, \tnu_t \rangle = 0$.  Then for any $f \in C^1(M)$, and $n \ge 1$,
using \eqref{eq:spec},
\begin{equation}
\label{eq:zero}
\begin{split}
\langle f, \tnu_t\rangle & = \frac 1n \sum_{k=0}^{n-1} e^{-P_*(t) k}  \langle f, (\cL_t^*)^k \tnu_t \rangle
= \frac 1n \sum_{k=0}^{n-1} e^{- P_*(t) k } \langle \cL_t^k f, \tnu_t \rangle \\
& \xrightarrow[n\to\infty]{}  \langle \Pi_0(f), \tnu_t \rangle 
= c_t(f) \langle \nu_t , \tnu_t \rangle = 0 \, .
\end{split}
\end{equation}
By density of $C^1(M)$ in $\cB$, this implies that $\tnu_t = 0$ as an element of $\cB^*$,
a contradiction.  Thus $\langle \nu_t, \tnu_t \rangle \neq 0$, and indeed
$c_t(f) = \frac{\langle f, \tnu_t \rangle}{\langle \nu_t, \tnu_t \rangle}$, so that 
$\mu_t$ is a well-defined
element of $\cB^* \subset (C^1(M))^*$.
It is then easy to see that $\mu_t$ is a nonnegative
distribution and thus a Radon measure. The fact that $\mu_t$ is $T$-invariant
is an  exercise, using that $\nu_t$ and $\tnu_t$ are eigenvectors of $\cL_t$
and $\cL_t^*$.
\end{proof}

Following \cite[Definition~7.5]{max}, we remark that elements of $\cB$ and $\cB_w$
can be viewed both as distributions on $M$, as well as families
of leafwise distributions on stable manifolds.  In particular, for $f \in C^1(M)$, 
$W \in \cW^s$, the map defined by 
\[
\cD_{W, f}(\psi) := \int_W f \, \psi \, dm_W \, , \quad \psi \in C^\alpha(W) \, ,
\]
can be viewed as a distribution of order $\alpha$ on $W$.  Since
$|\cD_{W, f}(\psi)| \le |f|_w |\psi|_{C^\alpha(W)}$, the map $\cD_{W, \cdot}$ can be extended
to all $f \in \cB_w$.  We will use the notation $\int_W \psi \, f$ for this extension and
call the associated family of distributions the leafwise distributions $(f, W)_{W \in \cW^s}$
corresponding to $f$.  If $f$ satisfies $\int \psi \, f \ge 0$ for all $\psi \ge 0$, then 
the leafwise distribution is a leafwise measure.

Recalling the disintegration of the measure $\musrb$ from the proof of Lemma~\ref{lem:Lt distr},
we state  an analogue of \cite[Lemma~7.7]{max}:

\begin{lemma}[$\nu_t$ as a leafwise measure]
\label{lem:equivalent}
Let $\nu_t^\xi$ and $\hat{\nu}_t$ denote the conditional measure on $W_\xi$ and the
factor measure on $\Xi$, respectively, obtained by disintegrating $\nu_t$ on the foliation
of stable manifolds $\cF$.  For all $\psi \in C^\alpha(M)$,
\[
\int_{W_\xi} \psi \, d\nu_t^\xi = \frac{\int_{W_\xi} \psi \, \rho_\xi \, \nu_t}{\int_{W_\xi} \rho_\xi \, \nu_t}
\quad \forall \xi \in \Xi, \mbox{ and } \quad d\hat\nu_t(\xi) = |W_\xi|^{-1} 
\left( \int_{W_\xi} \rho_\xi \, \nu_t \right) d\hatmusrb(\xi) \, .
\]
Moreover, viewed as a leafwise measure, $\nu_t(W)>0$ for all $W \in \cW^s_{\bH}$.
\end{lemma}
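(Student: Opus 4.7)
The plan is to mirror the proof of \cite[Lemma~7.7]{max}, adjusted here for the weighted transfer operator $\cL_t$. First, by density, I choose a sequence $(f_n) \subset C^1(M)$ with $\|f_n - \nu_t\|_{\cB} \to 0$. For each $n$, a direct Fubini computation using the smooth conditional densities $\rho_\xi$ of $\musrb$ from \eqref{eq:rho} yields, for every $\psi \in C^\alpha(M)$,
\[
\int_M \psi f_n \, d\musrb = \int_\Xi |W_\xi|^{-1} \left( \int_{W_\xi} \psi \rho_\xi f_n \, dm_{W_\xi} \right) d\hatmusrb(\xi).
\]
The left side converges to $\int_M \psi \, d\nu_t$ by Lemma~\ref{lem:distrib} and the continuous inclusions of Proposition~\ref{embeds}. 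The goal is then to pass the $\cB_w$-limit through the right-hand integrand at each fixed $\xi$ and then apply dominated convergence in $\xi$.

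For each $\xi \in \Xi$, since $W_\xi \in \cW^s_{\bH} \subset \cW^s_H$ and $|\psi \rho_\xi|_{C^\alpha(W_\xi)} \le C|\psi|_{C^\alpha(M)}$ uniformly in $\xi$ by \eqref{eq:rho}, the definition of the weak norm yields
\[
\left| \int_{W_\xi} \psi \rho_\xi (f_n - \nu_t) \right| \le |f_n - \nu_t|_w \, |\psi \rho_\xi|_{C^\alpha(W_\xi)} \to 0,
\]
where $\int_{W_\xi} \psi \rho_\xi \, \nu_t$ denotes the leafwise distribution extended from $\cB_w$. To pass under the outer integral I will use the uniform bound $C|W_\xi|^{-1} \sup_n |f_n|_w$ together with the standard-family estimate $\int_\Xi |W_\xi|^{-1} d\hatmusrb(\xi) < \infty$ from \cite[Exercise~7.22]{chernov book}. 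Dominated convergence then gives, for every $\psi \in C^\alpha(M)$,
\[
\int_M \psi \, d\nu_t = \int_\Xi |W_\xi|^{-1} \left( \int_{W_\xi} \psi \rho_\xi \, \nu_t \right) d\hatmusrb(\xi).
\]
Reading this as a disintegration of the nonnegative Radon measure $\nu_t$ along $\cF$, uniqueness in Rokhlin's theorem forces $d\hat\nu_t(\xi) = |W_\xi|^{-1} (\int_{W_\xi} \rho_\xi \, \nu_t) \, d\hatmusrb(\xi)$ and $d\nu_t^\xi = \rho_\xi \, \nu_t|_{W_\xi} / \int_{W_\xi} \rho_\xi \, \nu_t$, with the denominator positive by the final assertion below.

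For the positivity $\nu_t(W) > 0$ on $W \in \cW^s_{\bH}$: when $|W| \ge \delta_1/3$, I combine the Cesaro representation $\nu_t = \lim_n n^{-1} \sum_{k=0}^{n-1} e^{-P_*(t) k} \cL_t^k 1$ from Lemma~\ref{lem:peripheral}(a) with the lower bound $\int_W \cL_t^k 1 \, dm_W \ge C e^{P_*(t) k}$ from \eqref{eq:lower evol} and Proposition~\ref{prop:summary}(b) to obtain $\int_W \nu_t \ge C > 0$. For an arbitrary $W \in \cW^s_{\bH}$, I apply the leafwise form of the eigenequation $\cL_t \nu_t = e^{P_*(t)} \nu_t$ iterated $n$ times,
\[
e^{n P_*(t)} \int_W \nu_t = \sum_{W_i \in \cG_n(W)} \int_{W_i} |J^sT^n|^t \, \nu_t,
\]
and invoke the backward expansion together with the growth lemmas of Section~\ref{GrL} to find $n$ such that at least one $W_{i^*} \in \cG_n(W)$ has $|W_{i^*}| \ge \delta_1/3$; the long-manifold case then gives a strictly positive contribution to the sum.

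The main technical obstacle will be justifying the dominated convergence step: the pointwise-in-$\xi$ convergence of the inner integrals is driven by the $\cB_w$ norm, whereas the integrable dominator comes from the distinct standard-family estimate, and marrying these two ingredients requires careful bookkeeping of the $\xi$-dependence already absorbed into $\rho_\xi$. The positivity argument then rests on the classical fact that under the one-step expansion of Lemma~\ref{lem:one step}, the family $(\cG_n(W))_{n \ge 0}$ cannot remain uniformly short in $n$ for any $W$ of positive length.
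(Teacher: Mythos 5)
Your proposal is correct and follows essentially the same skeleton as the paper's proof: positivity on long leaves via \eqref{eq:lower evol} and Proposition~\ref{prop:summary}(b), positivity on short leaves by locating a long descendant in $\cG_n(W)$ via the growth lemmas, and the disintegration identity obtained by approximating $\nu_t$ in $\cB$ and passing to the limit leafwise, exactly the mechanism the paper outsources to \cite[Lemma~7.7]{max} (your dominated-convergence bookkeeping with the standard-family bound $\int_\Xi |W_\xi|^{-1}d\hatmusrb<\infty$ is precisely what that argument uses). The one substantive difference is in the short-leaf case: you argue qualitatively through the leafwise eigenequation, concluding only that $\nu_t(W)>0$ once some $W_{i^*}\in\cG_n(W)$ is long, whereas the paper quantifies the waiting time as $n_W=\mathcal{O}(\log|W|)$ (by playing Lemma~\ref{lem:short growth} against \eqref{eq:grow}) and then bounds the Cesaro averages directly, producing the explicit lower bound \eqref{eq:lower weight W} depending only on $|W|$. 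Your version fully suffices for the lemma as stated, but the quantitative bound is what gets reused later (via the function $\zeta(\ell_R)$ in the proof of Lemma~\ref{lem:full}), so the paper's extra precision is not gratuitous; if you intend your proof to feed into the full-support argument you would need to upgrade your long-descendant step to the uniform-in-$W$ estimate.
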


\begin{proof}
We begin by showing that $\nu_t(W)>0$ for all $W \in \cW^s_{\bH}$.
If $|W| \ge \delta_1/3$ (recalling that our choice of $\delta_1$ 
in \eqref{eq:delta11} is uniform for $t\in [t_0,t_1]$), then the positivity follows immediately from the uniform lower bound
\eqref{eq:lower evol}.  So assume $W \in \cW^s_{\bH}$ with $|W| < \delta_1/3$.

First, we claim that there exists $n_W = \mathcal{O}(\log |W|)$ such that 
at least one element of $\cG_{n_W}^{\delta_1}(W)$ has length at least $\delta_1/3$.
For any $n \ge 1$, if no elements $W_i \in \cG_n^{\delta_1}(W)$ have length at 
least $\delta_1/3$, then
$\cG_n^{\delta_1}(W) = \cI_n^{\delta_1}(W)$ so that by 
Lemma~\ref{lem:short growth} with $\varsigma=0$,
$\sum_{W_i \in \cG_n^{\delta_1}(W)} |J^sT^n|_{C^0(W_i)} \le C_0 \theta^n$, while by \eqref{eq:grow},
$\sum_{W_i \in \cG_n^{\delta_1}(W)} |J^sT^n|_{C^0(W_i)} \ge C_1 |W| \delta_1^{-1}$.  This can continue
only so long as
\[
C_1 |W| \delta_1^{-1} \le C_0 \theta^n \implies n \le \frac{ \log \big( \frac{C_1|W|}{C_0\delta_1} \big)}{\log \theta} =: n_W \, .
\]

Next, letting $V \in \cG_{n_W}^{\delta_1}(W)$ be such that $|V| \ge \delta_1/3$, we estimate as in 
\eqref{eq:lower}, using the fact that since $V$ and $T^{n_W}V$ are both homogeneous,
$|V| \le C |T^{n_W}V|^{\big( \frac{q+1}{2q+1} \big)^{n_W} }$ for some $C \ge 1$, so that
\[
|J^sT^{n_W}|^t_{C^0(V)} \ge e^{-t C_d} \frac{|T^{n_W}V|^t}{|V|^t} \ge e^{-t C_d}
(\delta_1/(3C))^{t \big( \frac{2q+1}{q+1} \big)^{n_W} } \, .
\]
Finally, recalling our choice of $n_1$ from \eqref{eq:delta111},  and using the fact that $|V| \ge \delta_1/3$, we estimate,\footnote{In fact, estimating more carefully for $t \le 1$, one 
can obtain the more precise lower bound $C' \delta_1^{1-t} |W|^{C'' P_*(t)} |W|^t$ for some $C', C'' >0$,
but we will not need this here.}
\begin{equation}
\label{eq:lower weight W}
\begin{split}
\frac 1n \sum_{k=0}^{n-1} e^{- P_*(t)k} &  \int_W  \cL_t^k 1 \, dm_W
 \ge e^{- P_*(t) n_W} \frac 1n \sum_{k = n_1 + n_W}^{n-1}  e^{- P_*(t) (k-n_W)} \int_V \cL_t^{k-n_W} 1 \, |J^sT^{n_W}|^t \, dm_V \\
& \ge e^{- P_*(t) n_W} e^{-t C_d}
\big( \tfrac{\delta_1}{3C} \big)^{t \big( \frac{2q+1}{q+1} \big)^{n_W} }
\frac 1n \sum_{k = n_1 + n_W}^{n-1}  e^{- P_*(t) (k-n_W)} \int_V \cL_t^{k-n_W} 1  \, dm_V \\
& \ge  e^{- P_*(t) n_W} e^{-t C_d}
\big( \tfrac{\delta_1}{3C} \big)^{t \big( \frac{2q+1}{q+1} \big)^{n_W} }
\tfrac{n-1-n_1-n_W}{n} \tfrac 14 \delta_1 2^{-t} c_1 \, ,
\end{split}
\end{equation}
where in the last line we have applied \eqref{eq:lower evol} and Proposition~\ref{prop:summary}(b).

These lower bounds  depend only on $|W|$ and carry over to
$\nu_t(W)$ since they are uniform in $n$.

With the lower bounds established, the remainder of the proof follows precisely as in
\cite[Lemma~7.7]{max}, disintegrating the measure 
$\left( \frac 1n \sum_{k=0}^{n-1} e^{-kP_*(t) } \cL_t^k 1 \right) d\musrb$ 
on the foliation of stable manifolds $\cF$ from \eqref{eq:rho},  using  that convergence in $\cB$ 
to $\nu_t$
implies convergence of the integral on each $W_\xi \in \cF$.  The
lower bounds on $\nu_t(W)$ imply that the ratio 
$\frac{\int_{W_\xi} \psi \, \rho_\xi \, \nu_t}{\int_{W_\xi} \rho_\xi \, \nu_t}$
is well-defined for each $W_\xi \in \cF$.
\end{proof}

In view of \eqref{eq:twist}
in the proof of Lemma~\ref{lem:gap} below (and also \eqref{forCLT4}), it is convenient to
define $\cL_t$ acting explicitly on distributions.
For any point $x \in M$ that has a stable manifold of zero length,
we define $W^s(x) = \{ x \}$, and extend $\cW^s$ to a larger collection $\wW^s$ 
including these singletons.  
For $\alpha \le 1$,
let 
\[
C^\alpha(\wW^s):=\{ 
\psi \mbox{ bounded and measurable }\mid | \psi |_{C^\alpha(\wW^s)} := \sup_{W \in \wW^s} |\psi|_{C^\alpha(W)} < \infty\}  \, .
\]
Let $C^\alpha_{\cos}(\wW^s)$ denote the
set of measurable functions $\psi$ such that $\psi \cos \vf \in C^\alpha(\wW^s)$.
It follows from the uniform hyperbolicity
of $T$ that if $\psi \in C^\alpha(\wW^s)$, then $\psi \circ T \in C^\alpha(\wW^s)$ 
(see \eqref{eq:test contract}).
Also, as in the proof of Lemma~\ref{lem:comparable}, by \cite[eq. (5.14)]{chernov book}, we have
$J^sT(x) \approx \cos \vf(x)$ for $x \in M'$.   We extend $J^sT$ to all $x \in M$
by defining it to be 1 on $M \setminus M'$.  Then using \eqref{eq:distortion}, we have
$\psi \circ T/|J^sT|^{1-t} \in C^\alpha_{\cos}(\wW^s)$
whenever $\psi \in C^\alpha(\wW^s)$ and $\alpha \le 1/(q+1)$.
Using these facts, for a distribution $\mu \in (C^\alpha_{\cos}(\wW^s))^*$, define 
$\cL_t : (C^\alpha_{\cos}(\wW^s))^* \to (C^\alpha(\wW^s))^*$ by
\begin{equation}\label{distr0}
\cL_t\mu(\psi) = \mu \left( \frac{\psi \circ T}{|J^sT|^{1-t}} \right) \, , \quad
\mbox{for all } \psi \in C^\alpha(\wW^s).
\end{equation}
To reconcile this definition with \eqref{eq:L}, for $f \in C^\alpha(\cW^s)$, we identify
$f$ with the measure $f d\musrb$. Such a measure belongs to 
$(C^\alpha_{\cos}(\wW^s))^*$ since $1/\cos \vf \in L^1(\musrb)$. 
With this convention, the measure $\cL_t f$ has density
with respect to $\musrb$ given by \eqref{eq:L}.
Finally, 
note that $\cB\subset (C^\alpha_{\cos}(\wW^s))^*$,
due to Lemma~\ref{lem:distrib} and Remark~\ref{LebB}.

\smallskip

 We are finally ready to prove that $\cL_t$ enjoys a spectral gap,
using Lemma \ref{lem:equivalent} (which exploited that $\musrb$ has smooth \emph{stable} conditional densities, a very nongeneric property
in the setting of hyperbolic dynamics).

\begin{lemma}[Spectral Gap]
\label{lem:gap}
$\cL_t$ has a spectral gap on $\cB$, i.e., $e^{P_*(t)}$ is a simple eigenvalue and all other 
eigenvalues of $\cL_t$ have modulus strictly less than $e^{P_*(t)}$.
\end{lemma}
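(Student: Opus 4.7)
The plan is to reduce both simplicity and uniqueness of the peripheral eigenvalue to a measurable cohomological equation, and then to rule that equation out via a hyperbolic Hopf-style argument based on the positivity of $\nu_t$ on $\cW^s_{\bH}$ (Lemma~\ref{lem:equivalent}) together with the topological mixing of $T$ already used in the proof of Proposition~\ref{prop:lower bound}.

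First I would observe that for any eigenvector $\nu\in\bV_\varpi$ with eigenvalue $\lambda = e^{P_*(t)+2\pi i\varpi}$, Lemma~\ref{lem:peripheral}(b) supplies $h\in L^\infty(\nu_t)$ with $\nu = h\nu_t$. The identity $\cL_t\nu_t = e^{P_*(t)}\nu_t$ read via \eqref{distr0} is the quasi-invariance $T_*\nu_t = e^{P_*(t)}(|J^sT|^{1-t}\circ T^{-1})\nu_t$, which by a direct change of variables yields the twist identity
\begin{equation*}
\cL_t(h\nu_t) \;=\; e^{P_*(t)}\,(h\circ T^{-1})\,\nu_t\, ,
\end{equation*}
so that the eigenvalue equation reduces to $h\circ T^{-1} = e^{2\pi i\varpi}h$ ($\nu_t$-a.e.). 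Taking moduli gives $|h|\circ T = |h|$ and $|h|\nu_t\in\bV_0$.

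For the \emph{simplicity} of $e^{P_*(t)}$, I take any nonnegative $\nu' = h'\nu_t\in\bV_0$, so that $h'\ge 0$ is bounded and $T$-invariant $\nu_t$-a.e., and aim to show $h'$ is $\nu_t$-a.e.~constant. By Lemma~\ref{lem:equivalent} the conditional densities of $\nu_t$ along homogeneous stable manifolds are uniformly positive and bounded, so the $T$-invariance of $h'$ combined with the uniform contraction \eqref{eq:hyp} forces $h'$ to be constant along each $W\in\cW^s_{\bH}$ $\nu_t$-a.e. Running the whole spectral construction of \S\ref{4.4} for the dual operator $\cL_t^*$ produces an analogous family of uniformly positive leafwise conditionals for $\tilde\nu_t$ on homogeneous unstable manifolds, giving constancy of $h'$ along each unstable leaf $\mu_t$-a.e. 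Finally, the finite family of locally maximal homogeneous Cantor rectangles $R_1,\ldots,R_k$ of Proposition~\ref{prop:lower bound}, connected under iterates of $T$ by \cite[Lemma~7.90]{chernov book}, propagates constancy across the stable-unstable product structure and forces $h'\equiv c$ on all of $M$. By linearity over $\mathbb{C}$ using Lemma~\ref{lem:peripheral}(b), this gives $\bV_0 = \mathbb{C}\cdot\nu_t$. To exclude \emph{other peripheral eigenvalues}, suppose $\varpi\ne 0$; applying simplicity to $|h|\nu_t\in\bV_0$ yields $|h|$ constant, and after normalisation $h = e^{2\pi i\phi}$ for a measurable $\phi$, so the twist identity becomes the measurable cohomological equation $\phi\circ T^{-1}-\phi\equiv\varpi\pmod 1$. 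The same Hopf-style argument applied to the unimodular $h$ forces $h$ constant along stable and along unstable leaves $\mu_t$-a.e., which combined with topological mixing yields $\phi$ constant, contradicting $\varpi\neq 0$.

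The main obstacle is making the Hopf-style argument fully rigorous within the leafwise-measure framework of $\cB$: the $T$-invariance of $h'$ holds only $\nu_t$-a.e., and transversal regularity is available only in one direction (stable or unstable) at a time, depending on whether one works with $\nu_t$ or $\tilde\nu_t$. The key ingredients for transfer across these two structures are the distortion bound Lemma~\ref{lem:comparable}, the positivity on all homogeneous local (un)stable manifolds from Lemma~\ref{lem:equivalent} (and its dual version for $\tilde\nu_t$), and the Cantor-rectangle connectivity from \S\ref{sec:complexity}. An alternative route, which I would also consider, is a Krein--Rutman argument: by Proposition~\ref{prop:lower bound} the operator $e^{-P_*(t)}\cL_t$ is positivity-improving on the cone of leafwise-nonnegative elements of $\cB$ after the mixing time $N_1$ of its proof, and by Lemma~\ref{lem:equivalent} $\nu_t$ is an interior point of that cone; one-dimensionality of the maximal eigenspace and absence of other peripheral eigenvalues then follow from standard cone theory combined with the absence of Jordan blocks already known from Proposition~\ref{prop:radius}.
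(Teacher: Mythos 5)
There is a genuine gap, and it sits exactly where you flag ``the main obstacle.'' Your reduction via the twist identity to $h\circ T^{-1}=e^{2\pi i\varpi}h$ ($\nu_t$-a.e.) coincides with the paper's \eqref{eq:twist}, but from that point the paper does \emph{not} run a two-sided Hopf argument. It uses Lemma~\ref{lem:equivalent} in a different way: the factor measure $\hat\nu_t$ on the transversal index set $\Xi$ is \emph{equivalent} to $\hatmusrb$, so once $f_\nu$ is seen to be constant on the stable leaves of the fixed foliation $\cF$, the relation $f_\nu\circ T=f_\nu$ transfers to a $\musrb$-a.e.\ statement, and the \emph{ergodicity of the SRB measure} (which is already known, and whose smooth stable conditionals are precisely what makes this transfer possible) forces $f_\nu$ constant; the same equivalence carries the constancy back to $\nu_t$. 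For $\varpi\neq 0$ the paper first proves each peripheral $\varpi$ is rational, by showing $(f_\nu)^k\nu_t\in\bV_{\varpi k}$ (an approximation argument with $g\in C^1$ close to $(f_\nu)^k$ in $L^1(\mu_t)$) and invoking finiteness of the peripheral spectrum, and then applies the simplicity argument to $\cL_t^b$ and the ergodicity of $(T^b,\musrb)$. In other words, all transversal regularity is imported from $\musrb$, never from $\mu_t$, $\nu_t$, or $\tnu_t$.

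Your replacement for this step is not available with the tools at hand. The claim that the dual construction yields uniformly positive leafwise conditionals for $\tnu_t$ on homogeneous \emph{unstable} manifolds is not established anywhere (it would require building a second anisotropic space for $T^{-1}$ and identifying its eigenvector with $\tnu_t$), and even granting it, concluding global constancy of a merely measurable invariant $h'$ from stable- and unstable-leafwise constancy requires absolute continuity of the holonomies, i.e.\ a local product structure for $\mu_t$ (or for the pair $\nu_t,\tnu_t$), which is not proved in the paper and is a substantial result in its own right; topological mixing of $T$ and Cantor-rectangle connectivity do not substitute for it, since measurable eigenfunctions of $(T,\mu_t)$ cannot be ruled out by purely topological information. (Indeed the paper explicitly notes it is \emph{bypassing} absolute-continuity arguments for $\mu_t$.) The Krein--Rutman alternative has the same status: the cone of leafwise-nonnegative elements of $\cB$ has empty interior, and Proposition~\ref{prop:lower bound} gives mass estimates on long stable curves, not a uniform positivity-improving (cone-contraction) property of $e^{-NP_*(t)}\cL_t^N$ on $\cB$; turning it into one is essentially a separate project. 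So the architecture of your proof is reasonable, but the decisive ingredient --- the paper's use of $\hat\nu_t\sim\hatmusrb$ plus ergodicity of $\musrb$ (and of $T^b$ w.r.t.\ $\musrb$), together with the rationality step for $\varpi$ --- is missing and is not replaced by anything proved.
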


\begin{proof}
 \emph{ Step 1: the spectrum of $e^{-P_*(t)} \cL_t$ consists of finitely many cyclic groups;} 
in particular, each $\varpi$ in \eqref{eq:decomp} is rational. 
To prove this, 
suppose $\nu \in \bV_\varpi$, $\nu \neq 0$, and $\psi \in C^\alpha(M)$.  
Then by  Lemma~\ref{lem:peripheral}(b) 
and viewing $\nu$ as a distribution in the sense of \eqref{distr0}
\begin{equation}
\label{eq:twist}
\begin{split}
\int_M \psi \, f_\nu \, d\nu_t 
& = \nu(\psi) = e^{- P_*(t) -2 \pi i \varpi} \cL_t \nu(\psi)
= e^{- P_*(t) - 2\pi i \varpi} \nu\left( \frac{\psi \circ T}{|J^sT|^{1-t}} \right) \\
& = e^{- P_*(t) - 2\pi i \varpi} \nu_t \left( f_\nu \frac{\psi \circ T}{|J^sT|^{1-t}} \right)
= e^{- P_*(t) - 2\pi i \varpi} \cL_t\nu_t \left(\psi \, f_\nu \circ T^{-1} \right) \\
& = e^{- 2\pi i \varpi} \nu_t \left(\psi \, f_\nu \circ T^{-1} \right) \, ,
\end{split}
\end{equation}
so that $f_\nu \circ T^{-1} = e^{2 \pi i \varpi} f_\nu$, $\nu_t$-almost everywhere.

Defining $\nu_{k,t} = (f_\nu)^k \nu_t$, for $k \in \mathbb{N}$,  we claim that
$e^{P_*(t) + 2\pi i \varpi k}$ belongs to the spectrum of $\cL_t$ and $\nu_{k,t} \in \bV_{\varpi k}$.  
The claim completes the proof
of  Step 1 since the peripheral spectrum is finite, forcing $\varpi k = 0$ (mod 1) for some $k \ge 1$,
so that $\varpi$ must be rational.

To prove the claim, set $f_\nu = 0$ outside the support of $\nu_t$, and define
the measure $\langle f_\nu \nu_t, \cdot \, \tnu_t \rangle = \langle \nu, \cdot \, \tnu_t \rangle$.
We claim that this measure is not identically zero.  If it were, 
then for any $\psi \in \cB^*$, making the dual argument to \eqref{eq:zero},
\[
\langle \nu, \psi \rangle = \langle \Pi_\varpi \nu, \psi \rangle
= \langle \nu, \Pi_\varpi^* \psi \rangle = \langle \nu , \tilde f_\varpi \tnu_t \rangle \tilde c_\varpi(\psi) =0 \, ,
\]
where we have used that every eigenvector corresponding to the peripheral spectrum
of $\cL_t^*$ is absolutely continuous with respect to $\tnu_t$, i.e.
$\tnu_\varpi = \tilde f_\varpi \tnu_t$, as explained after \eqref{eq:dual}. Thus
$\nu = 0$, a contradiction.

Since $\langle f_\nu \nu_t, \cdot \, \tnu_t \rangle$ is not identically zero, it follows that
$\langle (f_\nu)^k \nu_t, \cdot \, \tnu_t \rangle$ is not identically zero.  Thus there exists
$\psi \in C^\alpha(M)$ such that $\langle (f_\nu)^k \nu_t, \psi \tnu_t \rangle \neq 0$.

For $\ve >0$, choose $g \in C^1(M)$ such that $\mu_t(|g - (f_\nu)^k|) < \ve$.  Note that
$g \nu_t \in \cB$ { by \cite[Lemma~5.3]{dz2}.}  
We will show that $\Pi_{\varpi k}(g\nu_t) \neq 0$.  For 
$\psi \in C^\alpha(M)$ and each $j \ge 0$,
\begin{align*}
e^{-P_*(t) j - 2 \pi i \varpi k j} \langle \cL_t^j(g \nu_t), \psi \tnu_t \rangle
& = e^{-P_*(t) j - 2 \pi i \varpi k j} \langle g \nu_t, \psi \circ T^j (\cL_t^*)^j \tnu_t \rangle \\
& = e^{- 2 \pi i \varpi k j} \langle \nu_t, \tnu_t \rangle \, \mu_t(g \, \psi \circ T^j ) \, ,
\end{align*}
where we have used $(\cL_t^*)^j \tnu_t = e^{P_*(t) j} \tnu_t$.  Also, due to the invariance
of $\mu_t$,
\[
\langle (f_\nu)^k \nu_t, \psi \tnu_t \rangle
= e^{- 2\pi i \varpi k j} \langle (f_\nu)^k \circ T^{-j} \, \nu_t, \psi \tnu_t \rangle
= e^{- 2\pi i \varpi k j} \langle \nu_t, \tnu_t \rangle \, \mu_t( (f_\nu)^k \, \psi \circ T^j) \, .
\]
Putting these two expressions together, we estimate,
\begin{align*}
& \left| \lim_{n \to \infty} \frac 1n \sum_{j=0}^{n-1} e^{-P_*(t) j - 2 \pi i \varpi k j} \langle \cL_t^j(g \nu_t), \psi \tnu_t \rangle - \langle (f_\nu)^k \nu_t, \psi \tnu_t \rangle \right| \\
& \le \lim_{n \to \infty} \frac 1n \sum_{j=0}^{n-1}  \langle \nu_t, \tnu_t \rangle \,
 \mu_t(|g - (f_\nu)^k|) |\psi|_\infty \le \ve  \langle \nu_t, \tnu_t \rangle |\psi |_\infty \, .
\end{align*}
Since $\langle (f_\nu)^k \nu_t, \cdot \, \tnu_t \rangle \neq 0$ and $\ve>0$ was arbitrary, 
this estimate shows that
(i) $\Pi_{\varpi k} (g \nu_t) \neq 0$, so that $\bV_{\varpi k}$ is not empty, 
and (ii) $\nu_{k,t} = (f_\nu)^k \nu_t$ can be approximated by elements of $\bV_{\varpi k}$, and
so must belong to $\bV_{\varpi k}$, as claimed.

\medskip
\noindent
\emph{Step 2:  $\cL_t$ has a spectral gap.}
It suffices to show that the ergodicity of $(T, \musrb)$ implies that the positive eigenvalue 
$e^{P_*(t)}$ is simple.  
For then applying  Step 1, suppose
$\nu \in \bV_\varpi$ for $\varpi = a/b$.  Then both $\cL_t^b \nu = e^{P_*(t) b} \nu$ and
$\cL_t^b \nu_t = e^{P_*(t) b}\nu_t$, so that $\cL_t^b$ has eigenvalue 
$e^{P_*(t) b}$ of multiplicity 2, and this is also its spectral radius, contradicting the 
fact that $(T^b, \musrb)$ is also ergodic.

Now, suppose $\nu \in \bV_0$.  
By Lemma~\ref{lem:peripheral}(b), there exists $f_\nu \in L^\infty(\nu_t)$
such that $d\nu = f_\nu d\nu_t$. We will show that $f_\nu$ is $\nu_t$-a.e. a constant.

By \eqref{eq:twist} $f_\nu \circ T = f_\nu$, $\nu_t$-a.e. so that
setting
\[
S_nf_\nu = \sum_{j=0}^{n-1} f_\nu \circ T^j \, ,
\]
we see that $\frac 1n S_nf_\nu = f_\nu$ for all $n \ge 1$.  Thus $f_\nu$ is constant on stable
manifolds.   Next, since the factor measure $\hat{\nu}_t$ is equivalent to $\hatmusrb$ on the
index set $\Xi$ by Lemma~\ref{lem:equivalent}, it follows that $f_\nu = f_\nu \circ T$ on 
$\hatmusrb$-a.e. $W_\xi \in \cF$.  So $f_\nu = f_\nu \circ T$, $\musrb$-a.e.  
Since $\musrb$ is ergodic, $f_\nu$ is constant $\musrb$-a.e.  But 
 since $f_\nu$ is constant
on each stable manifold $W_\xi \in \cF$, it follows that there exists $c>0$ such that
$f_\nu = c$ for $\hatmusrb$-a.e. $\xi \in \Xi$, and 
once again using the equivalence of 
$\hatmusrb$ and $\hat{\nu}_t$, we conclude that $f_\nu$ is constant $\nu_t$-a.e.
\end{proof}

\begin{proof}[Proof of Theorem~\ref{thm:spectral}]
All claims except the last sentence of the theorem follow from Propositions~\ref{prop:radius} and ~\ref{prop:mut def},  and  Lemma~\ref{lem:gap}.
Exponential decay of correlations for $C^\alpha$ functions with rate $\upsilon$ satisfying
\eqref{uups} follows from the  classical
spectral decomposition 
$$\cL^k_tf=e^{kP_*(t)}[{ c_t(f) } \cdot \nu_t
+ \cR^k_t(f)]
\, , \, \mbox{where } \exists C <\infty \mbox{ s. t. }\|\cR^k_t f\| < C \upsilon^k \|f\|\, , 
\forall k\ge 0\, , \,\forall f \in\cB\, ,
$$
{ and $c_t(f) = \frac{\langle f, \tnu_t \rangle}{\langle \nu_t, \tnu_t \rangle}$.} 
Indeed,  by \cite[Lemma~5.3]{dz2} 
for $\psi \in C^\alpha(M)$,
\begin{equation}
\label{strong}
\psi \circ T^{ -j} f \in \cB \; \mbox{ and } \; \| \psi \circ T^{ -j} f \|_{\cB} \le C_j |\psi|_{C^\alpha}
\|f\|_{\cB} \quad \mbox{for all { $j \ge 1$}} \, ,
\end{equation}
we find  for $f_1, f_2 \in C^\alpha(M)$ { (using \eqref{strong} with $j=k$)}, 
\begin{align*}
\int (f_1\circ T^k)& f_2 d\mu_t = \frac{\langle  (f_1 \cdot f_2 \circ T^{-k}  \nu_t ,\tnu_t\rangle}{ \langle \nu_t, \tnu_t \rangle} =
e^{-k P_*(t)} \frac{\langle f_1 \cL^k_t(f_2  \nu_t), \tnu_t \rangle}{ \langle \nu_t, \tnu_t \rangle} \\
&=  { c_t(f_2\nu_t)} \frac{\langle f_1 \nu_t , \tnu_t \rangle}{ \langle \nu_t, \tnu_t \rangle}
+ \frac{\langle f_1 \cR^k_t(f_2  \nu_t), \tnu_t \rangle}{ \langle \nu_t, \tnu_t \rangle}
= \int f_1 d\mu_t \int f_2 d\mu_t +
\frac{\langle f_1 \cR^k_t(f_2  \nu_t), \tnu_t \rangle}{ \langle \nu_t, \tnu_t \rangle}\, ,
\end{align*}
and we have, using again \eqref{strong} (with { $j=0$}),
$$
\biggl |\langle f_1 \cR^k_t(f_2  \nu_t), \tnu_t \rangle \biggr |
\le |f_1|_{C^\alpha} \| \cR^k_t(f_2  \nu_t)\|
\le  C |f_1|_{C^\alpha}  \upsilon^k \|f_2  \nu_t\|\le  C  |f_1|_{C^\alpha} |f_2|_{C^\alpha} \upsilon^k \, .
$$
Exponential mixing for H\"older functions of  exponent smaller than $\alpha$ then follows from mollification  (a lower exponent may worsen the rate of mixing).
Finally, mixing is obtained by a standard argument: Since
$\mu$ is a Borel probability measure and $M$ is a compact metric space (and thus a normal topological space), any 
$f\in L^2(\mu)$ can be approximated by a sequence of continuous functions 
in the $L^2(\mu)$ norm, using Urysohn functions. So, by Cauchy--Schwartz, we may
reduce to proving mixing for continuous test functions.
Clearly, Lipschitz functions form a subalgebra of the Banach algebra of continuous functions,
the constant function $\equiv 1$ is Lipschitz, and  for any $x\ne y$ in $M$
there exists a Lipschitz function $\tilde f$ with $\tilde f(x)\ne \tilde f(y)$.
 Since
$M$ is a compact metric space   the Stone--Weierstrass theorem
 implies that any continuous function  on $M$ can be approached  in
the supremum norm
by a sequence of Lipschitz  functions on $M$. 
Since we have proved mixing for Lipschitz functions, the proof is concluded.
\end{proof}


\section{Final Properties of $\mu_t$ 
(Proof of Theorem~\ref{thm:equil} and Theorem~\ref{thm:geo var})}
\label{finall}

In this section we show
Proposition~\ref{nbhdprop}, Corollary~\ref{cor:max},
 Proposition~\ref{zerolim},  Lemma~\ref{lem:full}, and Proposition~\ref{prop:unique},  which, together
with Theorem~\ref{thm:spectral}, give  Theorem~\ref{thm:equil}.

\subsection{Measuring Neighbourhoods of Singularity Sets -- $\mu_t$ is $T$-adapted}

In this section, we show
Proposition~\ref{nbhdprop}, which gives in particular that $\mu_t$ is $T$-adapted.
For any $\ve >0$ and any  $A \subset M$, 
we set $\cN_\ve(A)=\{ x \in M \mid d(x,A) < \epsilon\}$.
The proof will be based on controlling the
measure of small neighbourhoods of singularity sets.

\begin{proposition}\label{nbhdprop}
Let $\mu_t$ be given by  Theorem~\ref{thm:spectral} for $t\in { [t_0,t_1]}$, with  $p>2$ the norm parameter.
\begin{itemize}
  \item[a)]  For any  $C^1$ curve $S$ uniformly transverse to the stable cone,  there exists $C>0$ such that 
$\mu_t(\cN_\ve(S)) \le C  \ve^{1/p}$ for all $\ve > 0$.
\item[b)]
The measure    $\mu_t$ has no atoms. We have $\mu_t(\cS_n)=0$
for any $n \in \mathbb{Z}$, and  $\mu_t(W)=0$
for any local stable or unstable manifold $W$.
\item[c)] The measure $\mu_t$ is adapted, i.e.,
$\int | \log d(x, \cS_{\pm 1})| \, d\mu_t < \infty
$.
\item[d)]
 For any $p'>2p$, $\mu_t$-almost every $x$ and each $n \in \mathbb{Z}$,  there exists $C>0$ such that 
\begin{equation}\label{lower}
d(T^jx, \cS_n) \ge C j^{-p'}\, , \forall j\ge 0\, .
\end{equation}
\item[e)] $\mu_t$-almost every $x \in M$ has stable and unstable
manifolds of positive lengths.
\end{itemize}
\end{proposition}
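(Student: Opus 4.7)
The plan is to prove (a) first and then derive (b)--(e) from it. The central ingredient throughout is the representation $\mu_t(\psi)=\langle \psi\nu_t,\tnu_t\rangle/\langle\nu_t,\tnu_t\rangle$ from Proposition~\ref{prop:mut def}, combined with $\tnu_t\in\cB_w^*$ (Proposition~\ref{prop:mut def}(a)), so that $|\langle f,\tnu_t\rangle|\le C|f|_w$ for every $f\in\cB_w$, and the fact that multiplication by $C^1$ functions preserves $\cB_w$.

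For (a), I would mollify $1_{\cN_\ve(S)}$ into a function $\bar\psi_\ve\in C^1(M)$ with $1_{\cN_\ve(S)}\le\bar\psi_\ve\le 1_{\cN_{2\ve}(S)}$ and $|\bar\psi_\ve|_\infty\le 1$, so that $\mu_t(\cN_\ve(S))\le\mu_t(\bar\psi_\ve)\le C|\bar\psi_\ve\nu_t|_w/\langle\nu_t,\tnu_t\rangle$. The heart of the matter is then the estimate $|\bar\psi_\ve\nu_t|_w\le C'\ve^{1/p}$. To prove it, fix $W\in\cW^s_H$ and $\phi\in C^\alpha(W)$ with $|\phi|_{C^\alpha(W)}\le 1$; by $C^1$-transversality of $S$ to the stable cone, $W\cap\cN_{2\ve}(S)$ decomposes into at most $N_S$ intervals $I_1,\ldots,I_m$ (with $N_S$ depending only on $S$), each of length $\le C_S\ve$ and each itself a weakly homogeneous stable manifold in $\cW^s_H$. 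Applying the strong stable norm of $\nu_t$ separately on each $I_j$ with the constant test function $|I_j|^{-1/p}$, and using Lemma~\ref{lem:equivalent} to view $\nu_t|_W$ as a positive leafwise measure, yields $\nu_t(I_j)\le|I_j|^{1/p}\|\nu_t\|_s\le C\ve^{1/p}\|\nu_t\|_s$. Summing over $j$ and using $|\bar\psi_\ve\phi|_\infty\le 1$ gives $\int_W\bar\psi_\ve\phi\,d\nu_t|_W\le N_S C\ve^{1/p}\|\nu_t\|_s$, whence the claim.

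Parts (b) and (c) would then follow from (a) by standard reductions. Every point lies on a short curve $S_x$ transverse to the stable cone, so $\mu_t(\{x\})\le\mu_t(\cN_\ve(S_x))\to 0$, ruling out atoms. The set $\cS_0$ is a finite union of smooth curves transverse to the stable cone, so $\mu_t(\cS_0)=0$, and $T$-invariance extends this to $\mu_t(\cS_n)=0$ for all $n$. A local unstable manifold is itself transverse to the stable cone, so (a) gives it $\mu_t$-measure zero; for a local stable manifold I would use the time-reversal involution $(r,\vf)\mapsto(r,-\vf)$ conjugating $T$ with $T^{-1}$ to derive the symmetric bound $\mu_t(\cN_\ve(S'))\le C\ve^{1/p}$ for curves $S'$ transverse to the unstable cone. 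Part (c) is a layer-cake consequence: $\int|\log d(\cdot,\cS_{\pm 1})|\,d\mu_t\le\int_0^\infty\mu_t(\cN_{e^{-s}}(\cS_{\pm 1}))\,ds\le C\int_0^\infty e^{-s/p}\,ds<\infty$.

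For (d), $T$-invariance combined with (a) applied to each smooth component of $\cS_n$ (including the symmetric version from the involution for stable-like components) gives $\sum_j\mu_t(\{d(T^jx,\cS_n)<j^{-p'}\})=\sum_j\mu_t(\cN_{j^{-p'}}(\cS_n))\le C_n\sum_j j^{-p'/p}$, which converges for $p'>p$; Borel--Cantelli then produces the almost-sure lower bound. The strengthening to $p'>2p$ in the statement is what feeds into (e): from (d) with $p'>2p$, combined with the uniform hyperbolicity \eqref{eq:hyp} of the cone fields, the Katok--Strelcyn/Pesin construction produces local stable and unstable manifolds of positive length for $\mu_t$-a.e.~$x$. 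The hard part is (a); the trap is that applying the strong stable norm directly to the full curve $W$ fails because $|\bar\psi_\ve|_{C^\beta(W)}$ blows up like $\ve^{-\beta}$, and the saving idea is to regard each short component $I_j\subset W\cap\cN_\ve(S)$ as a valid element of $\cW^s_H$ in its own right, whereupon the weight $|I_j|^{1/p}\le C\ve^{1/p}$ yields the sharp rate. A secondary delicate point is the construction of the symmetric bound via the time-reversal involution, needed both for local stable manifolds in (b) and for the stable-like components of $\cS_n$ in (d).
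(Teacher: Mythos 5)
Your part (a) is essentially the paper's argument: transversality of $S$ to the stable cone splits $W\cap\cN_\ve(S)$ into boundedly many subcurves of length $O(\ve)$, and the weight $|W|^{-1/p}$ in the strong stable norm, together with positivity of $\nu_t$ as a leafwise measure and the bound $|\langle\cdot,\tnu_t\rangle|\le C|\cdot|_w$, yields the rate $\ve^{1/p}$; the paper does this with the indicator $1_{\cN_\ve}\nu_t$ directly (deferring to \cite[Lemma~7.3]{max}) rather than a mollified cutoff, but the mechanism is the same.

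The genuine gap is in your treatment of curves that are \emph{not} transverse to the stable cone: local stable manifolds in (b), and the components $T^{-i}\cS_0$, $i\ge 1$, of $\cS_n$ for $n\ge 1$, which enter (c) and (d) through $\cS_{+1}$ and $\cS_n$. You invoke a ``symmetric bound'' $\mu_t(\cN_\ve(S'))\le C\ve^{1/p}$ for curves $S'$ transverse to the unstable cone via the involution $\iota(r,\vf)=(r,-\vf)$. But that bound requires either $\iota_*\mu_t=\mu_t$ or a parallel anisotropic construction on unstable curves identified with the present one, and neither is available here: the natural route to $\iota_*\mu_t=\mu_t$ is uniqueness of the equilibrium state, which is proved only later (Proposition~\ref{prop:unique}, via Corollary~\ref{cor:max}) and itself relies on part (c) of this very proposition, so the argument would be circular; and no regularity of $\tnu_t$ along unstable curves is established that would let you run the norm estimate on the time-reversed side. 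The paper's fix is different and weaker but sufficient: invariance of $\mu_t$ plus the geometric fact $T(\cN_\ve(\cS_n))\subset\cN_{C\ve^{1/2}}(\cS_{-n})$ gives only $\mu_t(\cN_\ve(\cS_n))\le C_n\,\ve^{1/(2p)}$ for $n\ge 1$. That weaker exponent is precisely why the hypothesis $p'>2p$ appears in (d) and why the series in (c) is summed with exponent $p'/(2p)$ — not, as you suggest, because of (e); your claim that $p'>p$ suffices for the Borel--Cantelli sums is a symptom of relying on the unproven symmetric rate. For the local stable manifold case of (b) the paper avoids neighborhood bounds altogether: if $\mu_t(W)>0$, invariance and $|T^nW|\to 0$ force an atom, contradicting (a); you could adopt that to repair (b), but (c) and (d) still need the invariance/$\ve^{1/2}$ argument (or some genuinely new input) for the stable-like singularity curves.
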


\begin{proof}
We proceed as in \cite[Corollary 7.4]{max}.
The key fact is that  for any $n \in \mathbb{N}$ there exists $C_n <\infty$
such that for all $\ve > 0$ 
\begin{equation}
\label{nbhd}
\mu_t(\cN_\epsilon(\cS_{-n})) <  C_n \ve^{1/p} 
\, , \quad \mu_t(\cN_\epsilon(\cS_{n})) <  C_n \ve^{1/(2p)}  \, .
\end{equation}
 Denoting by $1_{n ,\ve}$
 the indicator function of the set $\cN_{\ve}(\cS_{-n})$,
  Proposition~\ref{prop:mut def}(a) implies
 \[
 \mu_t(\cN_{\ve}(\cS_{-n})) = \langle 1_{n , \ve} \nu_t, \tnu_t \rangle \le C |1_{n, \ve} \nu_t |_w \, ,
 \]
for $n\ge 0$.  The bound
$|1_{n, \ve} f |_w \le A_n \|f\|_s |\ve|^{1/p}$ for all $f \in \cB$ follows
exactly as 
the proof of
\cite[Lemma 7.3]{max}, replacing
the logarithmic modulus of continuity $|\log \ve|^{-\gamma}$ 
in the strong stable norm there
by our H\"older modulus of continuity $\ve^{1/p}$, 
 and using the fact that $\cS_{-n}$ is uniformly transverse to the stable cone.
This proves the first inequality in \eqref{nbhd}.  The second follows from the
invariance of $\mu_t$, together with the fact that 
$T(\cN_\ve(\cS_n)) \subset \cN_{C\ve^{1/2}}(\cS_{-n})$.

Claim a) of the proposition follows  from the proof of \eqref{nbhd}, since the only property required
of $\cS_{-n}$ is that it comprises finitely many smooth curves uniformly transverse to the stable cone.
The bound \eqref{nbhd}  applied to arbitrary stable curves immediately implies that $\mu_t$ has no atoms,
and that $\mu_t(\cS_n)=0$
for any $n \in \mathbb{Z}$.   Next, if we had $\mu_t(W)  >0$ for a local stable manifold, then  $\mu_t(T^nW) >0$
for all $n > 0$.  Since $\mu_t$ is a probability measure and $T^n$ is continuous on stable manifolds, 
$\cup_{n \ge 0} T^nW$ must be the union of  finitely many smooth curves  (indeed, if $\cup_{n \ge 0} T^nW$ comprised
infinitely many smooth curves, then $\mu_t(M) = \infty$ by the invariance of $\mu_t$).  
Since $|T^nW| \to 0$, there is a subsequence $(n_j)$ such that $\cap_{j \ge 0} T^{n_j}W = \{ x \}$.  Thus $\mu_t(\{ x \}) >0$, a contradiction.
  For an unstable manifold $W$, use the fact that $T^{-n}$ is continuous on $W$.
So we have established b).

To show c), choose $p'>2p$.  Then by \eqref{nbhd}
\begin{align*}
\int_{M\setminus \cN_{1}(\cS_1)}  &|\log d(x, \cS_1)| \, d\mu_t 
 = \sum_{j \ge 1} \int_{\cN_{j^{-p'}}(\cS_1)
\setminus \cN_{(j+1)^{-p'}}(\cS_1)} |\log d(x, \cS_1)| \, d\mu_t \\
 & \le p'\sum_{j \ge 1} \log (j+1) \cdot \mu_t(\cN_{j^{-p'}}(\cS_1))
\le p' C_1 \sum_{j \ge 1} 
\log (j+1) \cdot j^{-p'/(2p)} < \infty .
\end{align*}
A similar estimate holds for $\int |\log d(x, \cS_{-1})| \, d\mu_t$.

Next, fix $\eta>0$, $p' > 2 p$ and $n \in \mathbb{Z}_+$.  Since
both sums
\begin{equation}
\label{eq:borel}
\sum_{j \ge 1} \mu_t(\cN_{\eta j^{-p'}}(\cS_{-n})) \le  \tilde C
{ C_{-n} \eta^{\frac 1p}} \sum_{j \ge 1}  j^{-\frac {p'} p}\, ,\, \,\,  
\sum_{j \ge 1} \mu_t(\cN_{ \eta j^{-p'}}(\cS_n)) \le  \tilde C
 { C_n \eta^{\frac {1}{2p}}} \sum_{j \ge 1}  { j^{-\frac {p'} {2p}}} \, , 
\end{equation}
are finite,
the Borel--Cantelli Lemma implies that $\mu_t$-almost every $x \in M$ visits  
$\cN_{\eta j^{-p'}}(\cS_n)$ only
finitely many times. 
This gives \eqref{lower} and thus claim d).
Finally, the existence of nontrivial stable and unstable manifolds 
claimed in e) follows from the Borel--Cantelli estimate
\eqref{eq:borel} by a standard argument, choosing $p'>2p$ and  $\eta \ge 1$
such that $\Lambda^j > \eta^{-1} j^{p'}$ for all $j$ (see \cite[Sect. 4.12]{chernov book}).
\end{proof}


\subsection{
$\mu_t$ is an Equilibrium State. Variational Principle for $P_*(t)$}
\label{BoBa}

For $\epsilon >0$, $x\in M$, and $n\ge 1$ denote by $B_n(x,\epsilon)$ the dynamical (Bowen) ball   for
 $T^{-1}$:
\begin{equation}\label{Bowen}
B_n(x,\epsilon)=\{ y \in M\mid d(T^{-j} (y), T^{-j} (x)) \le \epsilon \, , \, \, \forall \, 0 \le j \le n\} \, .
\end{equation}

\begin{proposition}[Upper Bounds on the Measure of Dynamical Balls]
\label{prop:max}  
Let $t_0\in (0,1)$ and $t_1\in (1,t_*)$.
There exists $A<\infty$ such that for all small enough
$\epsilon >0$, all
$x\in M$, and all $n \ge 1$, the measure 
$\mu_t$ constructed in Theorem~\ref{thm:spectral} for $t\in [t_0,t_1]$
satisfies
\begin{equation}
\label{eq:upper ball}
 \mu_t(\overline{B_n(x,\epsilon) })
\le A e^{-n P_*(t)+t \sum_{k=1}^{ n} \log J^sT( T^{-k} (x))} \, .
\end{equation}
\end{proposition}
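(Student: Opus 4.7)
The plan is to reduce Proposition~\ref{prop:max} to a Gibbs-type upper bound on elements of the partition $\cM_0^{n,\bH}$, using $T$-invariance, bounded distortion, and the transfer-operator representation of $\mu_t$. This mirrors the strategy used for the $t=0$ case in \cite{max}, adapted here to the weighted operator $\cL_t$.

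\medskip

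\noindent\textbf{Step 1 (Reduction via invariance and covering).} Using the $T$-invariance of $\mu_t$ from Proposition~\ref{prop:mut def},
\[
\mu_t\bigl(\overline{B_n(x,\epsilon)}\bigr) = \mu_t\bigl(T^{-n}\overline{B_n(x,\epsilon)}\bigr).
\]
The set $T^{-n}\overline{B_n(x,\epsilon)}$ is a forward dynamical ball at $y_0 := T^{-n}x$ for $T$, having stable diameter $\le C\epsilon$ and unstable diameter $\le C\epsilon /J^uT^n(y_0) \lesssim \epsilon\Lambda^{-n}$ by \eqref{eq:hyp}. Because elements of $\cM_0^{n,\bH}$ have unstable diameter $\sim \Lambda^{-n}$ and stable diameter bounded uniformly, the set $T^{-n}\overline{B_n(x,\epsilon)}$ is covered by a uniformly bounded (in $n$, $x$) number of elements $A \in \cM_0^{n,\bH}$ near $y_0$, provided $\epsilon$ is chosen sufficiently small. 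Bounded distortion of $J^sT^n$ on each such $A$ (Lemma~\ref{lem:distortion} along stable leaves, upgraded via Lemma~\ref{lem:comparable}) yields $\sup_A (J^sT^n)^t \asymp J^sT^n(y_0)^t = \prod_{k=1}^n J^sT(T^{-k}x)^t$. Hence the claim reduces to the Gibbs-type bound
\begin{equation*}
\mu_t(A) \le C\, e^{-nP_*(t)} \sup_A (J^sT^n)^t, \qquad A \in \cM_0^{n,\bH}. \tag{$\star$}
\end{equation*}

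\noindent\textbf{Step 2 (Gibbs bound via $\cL_t$).} For ($\star$), write $\mu_t(A) = \tilde\nu_t(1_A \nu_t)/\tilde\nu_t(\nu_t)$ and use the continuous extension of $\tilde\nu_t$ to $\cB_w$ from Proposition~\ref{prop:mut def}(a), giving $\mu_t(A) \lesssim |1_A \nu_t|_w$. To estimate $|1_A \nu_t|_w$, substitute $\nu_t = e^{-nP_*(t)}\cL_t^n \nu_t$. Interpreting $\cL_t^n$ on leafwise distributions via \eqref{distr0}, for any $W \in \cW^s_H$ and $|\psi|_{C^\alpha(W)} \le 1$,
\[
\int_W \psi\, 1_A\, d\nu_t = e^{-nP_*(t)} \sum_{\substack{W_i \in \cG_n(W) \\ T^nW_i \subset A}} \int_{W_i} \psi\circ T^n \cdot |J^sT^n|^{-(1-t)}\, d\nu_t.
\]
On each $W_i$, bounded distortion (Lemma~\ref{lem:distortion}) makes $|J^sT^n|^{-(1-t)}$ comparable to a constant, and the change-of-variables Jacobian identifying $\nu_t$ on $W_i$ with its image on $T^n W_i \subset W$ contributes one further power of $J^sT^n$. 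Together, the two effects assemble the weight $|J^sT^n|^t_{C^0(W_i)}$, which by $T^nW_i \subset A$ is bounded by $\sup_A(J^sT^n)^t$.

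\medskip

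\noindent\textbf{Step 3 (Summation over matching pieces).} The number of $W_i \in \cG_n(W)$ with $T^n W_i \subset A$ is uniformly bounded: their images form a disjoint cover of $W\cap A$ by weakly homogeneous subcurves of $W$, and the total arclength is controlled by the stable diameter of $A$. Combined with the upper bound in Proposition~\ref{prop:summary}(b) ($Q_n(t) \le 2c_2^{-1}e^{nP_*(t)}$) and the per-piece weight $\lesssim \sup_A(J^sT^n)^t$, taking the supremum over $W$ and $\psi$ yields $|1_A\nu_t|_w \le C e^{-nP_*(t)} \sup_A (J^sT^n)^t$, establishing ($\star$) and thereby \eqref{eq:upper ball} (the factor $\epsilon$ being absorbed into $A$).

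\medskip

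\noindent\textbf{Main obstacle.} The delicate point is Step~2: properly making sense of $\cL_t^n$ acting on the singular leafwise distribution $1_A\nu_t$, and tracking the interplay between (i) the operator weight $|J^sT|^{1-t}$, (ii) the arclength Jacobian of $T^n$ along stable leaves, and (iii) distortion of $J^sT^n$ both along each $W_i$ and across the element $A$. The assembly of these factors must produce exactly the power $t$ (not $2-t$ or $1-t$), matching the Gibbs exponent predicted heuristically by viewing $\mu_t$ as the equilibrium state for $T^{-1}$ with potential $-t\log J^uT^{-1} = t\log J^sT\circ T^{-1}$, so that the resulting sum $t\sum_{k=1}^n \log J^sT(T^{-k}x)$ appears naturally.
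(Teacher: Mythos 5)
There is a genuine gap, in fact two, and both stem from replacing the Bowen ball by full elements of the homogeneous partition. First, in Step 1 the claim that $T^{-n}\overline{B_n(x,\epsilon)}$ is covered by a uniformly bounded number of elements of $\cM_0^{n,\bH}$ is false: the homogeneity strips accumulate at $\cS_0$ and $\cM_0^{n,\bH}$ is cut by $\cS_{n-1}^{\bH}$, so whenever some $T^{-k}x$, $1\le k\le n$, passes within $\epsilon$ of a grazing collision, the ball at that time meets unboundedly many strips and hence unboundedly many partition elements. What is true (and what the paper uses, via the strong finite-horizon property and \cite[Lemma~3.4]{max}) is only that $T^{-n}B_n(x,\epsilon)$ lies in a single element of the non-homogeneous partition $\cM_0^n$; but on such an element $J^sT^n$ has no bounded distortion, so your comparison $\sup_A(J^sT^n)^t\asymp\prod_{k=1}^n J^sT(T^{-k}x)^t$ for all covering elements also fails: elements whose homogeneity itinerary runs through lower-index strips than the orbit of $x$ have $\sup_A J^sT^n$ exceeding $J^sT^n(T^{-n}x)$ by factors unbounded in $n$ and $\epsilon$. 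The orbitwise value appearing in \eqref{eq:upper ball} is exactly the information lost when you pass from the ball to partition elements of stable diameter of order one.

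Second, in Step 3 the assertion that only boundedly many $W_i\in\cG_n(W)$ satisfy $T^nW_i\subset A$ is false: the pieces $T^nW_i$ are the subcurves of $W$ cut by the \emph{backward} singularity set $\cS_{-n}^{\bH}$ (plus artificial $\delta_0$-subdivisions), whereas $A\in\cM_0^{n,\bH}$ is a component of the complement of the \emph{forward} singularity set with stable diameter of order one; generically $W\cap A$ contains exponentially many of the $T^nW_i$. Summing their weights produces a quantity comparable to $Q_n(t)\sim e^{nP_*(t)}$ (indeed you invoke Proposition~\ref{prop:summary}(b) at that point), which cancels the prefactor $e^{-nP_*(t)}$ and leaves no decay, so the Gibbs bound $(\star)$ cannot be closed this way. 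The paper's proof avoids both problems by never discarding the indicator of the ball at the level of sets: since $T^{-n}B_n(x,\epsilon)$ has stable diameter $O(\epsilon)$ and unstable diameter $O(\epsilon\Lambda^{-n})$, at most two elements of $\cG_n(W)$ meet it (\cite[Prop.~7.12]{max}); then positivity of $\nu_t$ as a leafwise measure, the identity \eqref{eq:change var} (where, incidentally, the correct weight is $|J^sT^n|^t$ --- for a leafwise distribution there is no extra arclength Jacobian to combine with the operator weight, contrary to what your displayed formula in Step 2 suggests), Lemma~\ref{lem:distortion} along the two homogeneous pieces, and Lemma~\ref{lem:comparable} to pass to the value at $T^{-n}x$ give $|1^B_{n,\epsilon}\nu_t|_w\le C e^{-nP_*(t)}\,J^sT^n(T^{-n}x)^t\,|\nu_t|_w$, and Proposition~\ref{prop:mut def}(a) converts this into \eqref{eq:upper ball}. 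You would also need to justify that $1_A\nu_t$ (respectively $1^B_{n,\epsilon}\nu_t$) can be paired with $\tnu_t$, i.e.\ lies in $\cB_w$; the paper obtains this from \cite[Prop.~7.12]{max}.
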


\begin{cor}[Equilibrium State for $-t\log J^u$. Variational principle for $P_*(t)$.]
\label{cor:max}
The measure 
$\mu_t$ constructed in Theorem~\ref{thm:spectral} for $t\in (0,t_*)$ 
satisfies 
$P_{\mu_t}(-t\log J^uT) = P_*(t)=P(t)$. 
\end{cor}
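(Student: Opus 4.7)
The plan is to establish the non-trivial inequality $P_{\mu_t}(-t\log J^uT) \geq P_*(t)$; the opposite bound $P_{\mu_t}(-t\log J^uT) \leq P(t) \leq P_*(t)$ is immediate from the variational definition \eqref{abovv} of $P(t)$ together with Proposition~\ref{prop:pressure}. Concretely, I would combine the upper bound on $\mu_t(\overline{B_n(x,\epsilon)})$ from Proposition~\ref{prop:max} with the Shannon--McMillan--Breiman theorem (applied to $T^{-1}$, which has the same entropy and ergodic properties as $T$) to produce the lower bound
\[
h_{\mu_t}(T) \;\geq\; P_*(t) + t \int_M \log J^uT\, d\mu_t,
\]
from which the corollary follows.

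Before running the Bowen-type argument I would record the integrability prerequisites. Since $\mu_t$ is $T$-adapted by Proposition~\ref{nbhdprop}(c), has no atoms and charges neither singularity curves nor $M\setminus M'$ by Proposition~\ref{nbhdprop}(b), Remark~\ref{newremark} gives $\log J^sT \in L^1(\mu_t)$ together with $\int_M \log J^sT\, d\mu_t = -\int_M \log J^uT\, d\mu_t \in \mathbb{R}$. Using that $\mu_t$ is non-atomic, for each $\epsilon>0$ I would then pick a finite measurable partition $\cQ$ of $M$ with $\diam(\cQ) < \epsilon$ and $\mu_t(\partial \cQ)=0$.

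Let $\cQ_n(x)$ denote the atom of $\bigvee_{k=0}^{n-1} T^k \cQ$ containing $x$. Since the points $T^{-k}y$, $T^{-k}x$ lie in a common element of $\cQ$ for $0 \leq k \leq n-1$ whenever $y \in \cQ_n(x)$, we have $\cQ_n(x) \subset \overline{B_n(x,\epsilon)}$ (up to an immaterial shift by one in the index, reflecting the fact that the Bowen ball in \eqref{Bowen} is defined for $T^{-1}$). Proposition~\ref{prop:max} then yields
\[
-\tfrac{1}{n}\log\mu_t(\cQ_n(x)) \;\geq\; P_*(t) - \tfrac{t}{n}\sum_{k=1}^{n}\log J^sT(T^{-k}x) - \tfrac{\log A}{n}.
\]
As $n\to\infty$, the left side converges $\mu_t$-a.e. to $h_{\mu_t}(T^{-1},\cQ)=h_{\mu_t}(T,\cQ)$ by Shannon--McMillan--Breiman, while Birkhoff's theorem, using the ergodicity of $\mu_t$ guaranteed by the mixing property from Theorem~\ref{thm:spectral}, makes the Birkhoff sum on the right side converge $\mu_t$-a.e. to $\int_M \log J^sT\, d\mu_t = -\int_M \log J^uT\, d\mu_t$. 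Taking the supremum over partitions $\cQ$ of arbitrarily small diameter (which is legitimate since we can refine $\cQ$ through a sequence of admissible partitions) yields the desired lower bound on $h_{\mu_t}(T)$.

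The argument is otherwise a standard Bowen-type entropy computation; the only substantive subtlety is ensuring $\log J^sT$ (equivalently $\log\cos\vf$) is $\mu_t$-integrable so that Birkhoff applies, and this is precisely where the $T$-adapted property of $\mu_t$ enters via Remark~\ref{newremark}. Without this property, the Birkhoff average would not have a finite limit and the whole scheme would collapse; once the adaptedness is in hand, no further obstacle arises.
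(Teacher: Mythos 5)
Your proposal is correct, and it proves the same two inequalities as the paper ($P_{\mu_t}\le P(t)\le P_*(t)$ trivially, and the lower bound $h_{\mu_t}(T)\ge P_*(t)+t\int\log J^uT\,d\mu_t$ from Proposition~\ref{prop:max}), but the route to the entropy lower bound is genuinely different. The paper invokes a generalized Brin--Katok local entropy theorem for maps with singularities (\cite[Prop.~3.1]{DWY}, which relies on Ma\~n\'e's lemma and is where the $T$-adapted property enters there), obtaining $\limsup_n-\tfrac1n\log\mu_t(B_n(x,\epsilon))\le h_{\mu_t}(T)$ for $\mu_t$-a.e.\ $x$ and then combining with Proposition~\ref{prop:max} and the Birkhoff/ergodicity argument exactly as you do. You instead observe that only this ``easy'' half of Brin--Katok is needed, and you recover it elementarily: a finite partition $\cQ$ of diameter less than $\epsilon$ has its $T^{-1}$-refined atoms contained in the Bowen balls, so Shannon--McMillan--Breiman (purely measure-theoretic, no continuity or singularity control required) gives $h_{\mu_t}(T)\ge h_{\mu_t}(T^{-1},\cQ)\ge P_*(t)-t\int\log J^sT\,d\mu_t$, with adaptedness used only to guarantee $\log J^sT\in L^1(\mu_t)$ (via Remark~\ref{newremark}) so that Birkhoff applies and $\int\log J^uT\,d\mu_t$ is finite. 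Two cosmetic remarks: the condition $\mu_t(\partial\cQ)=0$ and the final ``supremum over partitions'' are superfluous --- a single partition of small diameter suffices, since $h_{\mu_t}(T)\ge h_{\mu_t}(T,\cQ)$ is automatic --- and the index shift between $\cQ_{n}(x)$ and $B_n(x,\epsilon)$ is, as you say, immaterial. What your approach buys is a self-contained argument avoiding the citation to \cite{DWY}; what the paper's buys is brevity given that reference and the full local-entropy equality, which is not needed here.
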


\begin{proof}[Proof of Corollary~\ref{cor:max}]
By definition we have $P_{\mu_t}(- t \log J^uT) \le P(t)$, and
Proposition~\ref{prop:pressure} gives
$P(t) \le P_*(t)$, so it is enough to show $P_{\mu_t}(-t \log J^uT) \ge P_*(t)$.
We follow \cite[Cor. 7.17]{max}.
Since 
$\int |\log d(x, \cS_{\pm 1})| \, d\mu_t < \infty$ by Proposition~ \ref{nbhdprop}, and $\mu_t$
is ergodic, we may apply \cite[Prop.~3.1]{DWY} (a slight generalization of the Brin--Katok local theorem \cite{brin},
using \cite[Lemma~2]{M}, 
continuity of the map is not used) 
 to $T^{-1}$. This gives that for $\mu_t$-almost every $x \in M$,
$$ \lim_{\epsilon \to 0} \liminf_{n \to \infty}
  - \tfrac 1n \log \mu_t(B_n(x, \epsilon)) =  \lim_{\epsilon \to 0} \limsup_{n \to \infty}
  - \tfrac 1n \log \mu_t(B_n(x, \epsilon)) = h_{\mu_t}(T^{-1})=h_{\mu_t}(T)\, .
  $$
Using  \eqref{eq:upper ball}  it follows that  for any $\epsilon$ sufficiently small,
$$\limsup_{n \to \infty}  - \tfrac 1n \log \mu_t(B_n(x, \epsilon)) 
\ge P_*(t) - \lim_{n \to \infty} \tfrac tn \sum_{k=1}^{ n} \log J^sT( T^{-k}  (x))
\ge P_*(t) - t \int_M \log J^sT \, d\mu_t \, ,$$ 
for all $\mu_t$-typical $x$.
Thus  applying \eqref{eq:switch potential}, we get $P_{\mu_t}( -t \log J^uT)  \ge P_*(t)$.
\end{proof}

\begin{proof}[Proof of Proposition~\ref{prop:max}]
For $x \in M$ and $n \ge 0$, let $1^B_{n,\epsilon}$ denote the indicator function of 
$B_n(x,\epsilon)$.  
Since  $\nu_t$ is attained as the (averaged) limit of $e^{-n P_*(t)}\cL^n_t 1$ in the weak  (and strong) 
norm and  since we have
$\int_W(\cL^n_t 1)\, \psi dm_W \ge 0$ whenever $\psi \ge 0$, it follows that, viewing $\nu_t$
as a  leafwise distribution,
\begin{equation}
\label{eq:pos}
\int_W \psi \, \nu_t \ge 0, \quad \mbox{ for all $\psi \ge 0$.}
\end{equation}
Then the inequality $|\int_W \psi \,\nu_t| \le \int_W |\psi| \, \nu_t$ implies that the supremum
in the weak norm can be obtained by restricting to $\psi \ge 0$. 
{In addition, for each $n \ge 0$,
\begin{equation}
\label{eq:change var}
\begin{split}
\int_W \psi \, & \cL_t^n \nu_t  = \lim_k e^{-k P_*(t)} \int_W \psi \, \cL_t^n (\cL_t^k 1) \, dm_W \\
& = \lim_k e^{-k P_*(t)} \int_{T^{-n}W} \psi \circ T^n \, \cL_t^k 1 \, |J^sT^n|^t \, dm_{T^{-n}W}
=   \int_{T^{-n}W} \psi \circ T^n \, |J^sT^n|^t \, \nu_t \, ,
\end{split}
\end{equation}
for each $W \in \cW^s$ and $\psi \in C^\beta(W)$.
}

Let $W \in \cW^s$ be a curve intersecting $B_n(x, \epsilon)$, and let $\psi \in C^\alpha(W)$
satisfy $\psi \ge 0$ and $|\psi|_{C^\alpha(W)} \le 1$.  
Then, since $\cL_t \nu_t = e^{P_*(t)} \nu_t$, we have
\begin{equation}
\label{eq:unwrap}
\int_W \psi \, 1^B_{n , \epsilon} \, \nu_t = \int_W \psi \, 1^B_{n, \epsilon} \, e^{-n P_*(t)} \cL^n_t \nu_t 
 = e^{-n P_*(t)} \sum_{W_i \in \cG_n(W)} \int_{W_i} (\psi \circ T^n) \, (1^B_{n,\epsilon} \circ T^n) |J^sT^n|^t \, \nu_t \, .
\end{equation}
In the proof of \cite[Prop. 7.12]{max} we showed
that   $1^B_{n,\ve} f \in \cB_w$ (and $\cB$) for each $f \in \cB$ and $n \ge 0$.  
  In the proof of \cite[Lemma~3.4]{max}, we 
found (using our strong notion of
finite horizon) $\tilde \ve>0$ such that there if $x, y$ lie in different elements of
$\cM_0^n$, then  $ \max_{0 \le i \le n} d(T^ix, T^iy)\ge \tilde \ve$.  
Since $B_n(x, \epsilon)$ is defined with respect to $T^{-1}$, we will use
the time reversal counterpart of this property:  If $\epsilon < \tilde \ve$, we conclude that
$B_n(x,\epsilon)$ is contained in a single component of $\cM_{-n}^0$, i.e.,
$B_n(x, \epsilon) \cap \cS_{-n} = \emptyset$, so that $T^{-n}$ is a diffeomorphism of
$B_n(x, \epsilon)$ onto its image.  Note that $T^{-n}(B_n(x, \epsilon))$ is contained in a single component of $\cM_0^n$,  denoted $A_{n, \epsilon}$. Thus,  $W_i \cap A_{n, \epsilon} = W_i$ for each $W_i \in \cG_n(W)$. 
By \eqref{eq:pos}, 
$$\int_{W_i} (\psi \circ T^n )\, 1_{T^{-n}(B_n(x, \epsilon))}|J^sT^n|^t  \, \nu_t \le \int_{W_i} (\psi \circ T^n)|J^sT^n|^t  \, \nu_t\, .
$$
In the proof of \cite[Prop. 7.12]{max} we observed
that there are at most
two $W_i \in \cG_n(W)$  having nonempty intersection with $T^{-n}(B_n(x, \epsilon))$.
Using these facts together with \eqref{eq:test contract}
and \eqref{eq:JC}
(which implies $||J^sT^n|^t|_{C^\alpha(W_i)}\le C||J^sT^n|^t|_{C^0(W_i)}$), we sum over $W_i' \in \cG_n(W)$ such that
$W_i' \cap T^{-n}(B_n(x, \epsilon)) \neq 0$, to obtain
\[
\int_W \psi \, 1^B_{n,\epsilon} \, \nu_t \le e^{-n P_*(t)} \sum_i \int_{W_i'} (\psi \circ T^n)
 \, |J^sT^n|^t  \, \nu_t
\le 2C e^{-n P_*(t)+ t \sum_{k=0}^{n-1}\log  J^sT( T^{k-n} (x)))} |\nu_t|_w \, ,
\]
where we also used the distortion bounds from  Lemma~\ref{lem:comparable} 
to switch to
$J^sT^n( T^{-n}x)$  since $T^{-n}x$ may not belong to $W_i'$.
This yields  $| 1^B_{n, \ve} \nu_t |_w \le 2C e^{-n P_*(t)  + t \log J^sT^n(T^{-n} x)} |\nu_t|_w$.
Applying  Proposition~\ref{prop:mut def}(a) gives 
\eqref{eq:upper ball}.
\end{proof}

\subsection{Definition of $h_*$. Sparse Recurrence. Proof that $\lim_{t\downarrow 0} P(t)=h_*$}
\label{s:limzero}

In  \cite[Lemma 3.3]{max} we showed that  the limit below exists
$$h_*:= \lim_{n \to \infty} \frac 1 n \log \# \cM_0^n\, .
$$
The number  $h_*$ generalises topological entropy, in particular,
$P(0)\le h_*$ \cite[{ Theorem~2.3}]{max}.

Using $h_*$, we can state the sparse recurrence condition: 

\begin{defin}[Sparse Recurrence to Singularities]\label{sparse} For  $\vf<\pi/2$ and $n \in \mathbb{N}$, define
$s_0(\vf,n) \in (0,1]$ to be the smallest number such that 
any orbit of
length $n$ has at most $s_0n$ collisions whose
angles with the
normal are larger than $\vf$ in absolute value.
We say that $T$ satisfies the sparse recurrence condition if 
there exist $\vf_0<\pi/2$ and $n_0 \in \mathbb{N}$  such that 
$h_*> s_0(\vf_0,n_0) \log 2$.
\end{defin}
We refer to \cite[\S2.4]{max} for a discussion of the sparse recurrence condition. We proved in \cite{max}
that sparse recurrence implies $P(0)=h_*$.
The following proposition connects $h_*$
 to $P_*(t)$ for $t>0$, despite the 
use of different partitions,
$\cM_0^n$ and $\cM_0^{n, \bH}$. 

\begin{proposition}\label{zerolim}
If $T$ satisfies sparse
 recurrence    then $\lim_{t\downarrow 0} P_*(t)=\lim_{t\downarrow 0} P(t)=h_*
$,  and $\lim_{t \downarrow 0} h_{\mu_t} = h_*$. 
\end{proposition}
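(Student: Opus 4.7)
The plan is to prove $\lim_{t\downarrow 0}P_*(t)=\lim_{t\downarrow 0}P(t)=h_*$ as matching upper and lower bounds, and then to deduce $\lim_{t\downarrow 0}h_{\mu_t}=h_*$ via the identity $h_{\mu_t}=P(t)-tP'(t)$ (from \eqref{formula0}) together with convexity of $P$. Since $P(t)=P_*(t)$ on $(0,t_*)$ by Theorem~\ref{thm:geo var}, it suffices throughout to argue with $P(t)$.

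For the upper bound $\limsup_{t\downarrow 0} P(t) \le h_*$, I would establish the pointwise inequality $P(t)\le P(0)$ for $t\ge 0$. Indeed, as noted in Section~\ref{sec:intro}, every $T$-invariant probability measure $\mu$ with $\mu(M\setminus M')=0$ satisfies $\int \log J^uT\,d\mu \ge \log\Lambda > 0$ (and $P_\mu(-t\log J^uT)=-\infty$ otherwise when $t>0$). Hence $h_\mu - t\int \log J^uT\,d\mu \le h_\mu$ for every $t\ge 0$, and taking the supremum over $\mu$ yields $P(t)\le P(0)$. Sparse recurrence together with \cite{max} gives $P(0)=h_*$, so $\limsup_{t\downarrow 0} P(t) \le h_*$.

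For the lower bound $\liminf_{t\downarrow 0} P(t) \ge h_*$, I would plug the measure of maximal entropy $\mu_0$ constructed in \cite{max} into the variational principle. Since $\mu_0$ is $T$-adapted, Remark~\ref{newremark} gives that $C:=\int \log J^uT\,d\mu_0$ is finite, so
$$
P(t)\;\ge\; h_{\mu_0} - t\int \log J^uT\,d\mu_0 \;=\; h_* - Ct\, ,
$$
which tends to $h_*$ as $t\downarrow 0$.

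Finally, for the entropy limit, $\mu_t$ is $T$-adapted by Theorem~\ref{thm:equil}, so the identity $h_{\mu_t}=P(t)+t\int \log J^uT\,d\mu_t = P(t)-tP'(t)$ (valid on $(0,t_*)$ via \eqref{formula0}) reduces matters to showing $t|P'(t)|\to 0$. By convexity of $P$ on $(0,t_*)$ (see Theorem~\ref{strconv}; or Proposition~\ref{prop:Ptilde} for $P_*$), the graph of $P$ lies above each of its tangents: for $0<s<t<t_*$,
$$
P(s)\;\ge\;P(t)+P'(t)(s-t)\, ,\qquad \text{i.e.,}\qquad |P'(t)|\,(t-s)\;\le\;P(s)-P(t)\, .
$$
Letting $s\downarrow 0$ and using the two bounds just proved yields $t|P'(t)|\le h_*-P(t)\to 0$, completing the argument. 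The only non-soft input is the identification $P(0)=h_*$ under sparse recurrence, supplied by \cite{max}; granted this, the remainder is a short convex-analytic exercise together with two variational comparisons.
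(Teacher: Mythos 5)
Your proposal is correct, and the two bounds on $P(t)$ are essentially the paper's argument: the lower bound (plugging the $T$-adapted measure of maximal entropy $\mu_0$ from \cite{max} into the variational principle) is identical, and your upper bound $P(t)\le P(0)=h_*$ is a minor variant of the paper's $P(t)\le h_*-t\log\Lambda$ (the paper uses $h_\mu\le h_*$ from \cite[Theorem~2.3]{max} directly, which has the small advantage of giving $\lim_{t\downarrow 0}P(t)\le h_*$ without sparse recurrence, but under the hypothesis of the proposition this is immaterial). Where you genuinely diverge is the last claim: the paper disposes of $\lim_{t\downarrow 0}h_{\mu_t}=h_*$ in one line by squeezing, $P(t)\le h_{\mu_t}\le h_*$, using only positivity of $\int\log J^uT\,d\mu_t$ and again $h_{\mu_t}\le h_*$; you instead invoke $h_{\mu_t}=P(t)-tP'(t)$ via \eqref{formula0} and control $t|P'(t)|\le h_*-P(t)$ by the tangent-line inequality for the convex function $P$. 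Your route is valid (convexity and differentiability on $(0,t_*)$ hold, and there is no circularity since Theorem~\ref{strconv} is proved independently of this proposition), and it has the small virtue of not re-citing $h_{\mu_t}\le h_*$; its cost is that it imports the analyticity/derivative formula of Theorem~\ref{strconv}, which is much heavier machinery than the paper needs for this step.
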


Assuming the sparse
recurrence condition  \cite[Theorem 2.4]{max} we have  $P(0)=h_*$. So in this case
the function $P(t)$ is continuous on $[0,t_*)$. 
In the general case, we cannot exclude $P(0)<h_*$ even if we can show $\lim_{t\downarrow 0} P(t)={ P(0)}$.

\begin{proof}
Recall that $P(t)=P_*(t)$ for $t\in (0,t_*)$ (using Proposition~\ref{prop:pressure} and Corollary~ \ref{cor:max}).

Showing\footnote{ Note that the limit exists since $P(t) = P_*(t)$ is monotonic.} ${\lim}_{t \downarrow 0} P(t) \le h_*$ does not require the sparse recurrence condition:
Any invariant probability measure
$\mu$ satisfies $\int_M \log J^uT \, d\mu \ge \log \Lambda$ due to \eqref{eq:hyp}.
Also, $h_\mu(T) \le h_*$ by \cite[Theorem~2.3]{max}.  Thus for $t>0$, we have
$
P(t) \le h_* - t \log \Lambda \, ,
$
so that, ${\lim}_{t \downarrow 0} P(t) \le h_*$. 

To prove the lower  bound, assume the sparse recurrence condition,  and
let $\mu_0$ denote the measure of maximal entropy for $T$ constructed in \cite[Theorem~2.4]{max}
(called $\mu_*$ in that paper).  Since $\mu_0$ is T-adapted \cite[Theorem~2.6]{max}, 
the Jacobian $J^uT$ is defined $\mu_0$-almost
everywhere and $\int \log J^uT \, d\mu_0 = \chi^+_{\mu_0} < \infty$.  Thus for $t>0$,
\[
P(t) \ge P_{\mu_0}(-t \log J^uT) = h_{\mu_0} - t \int_M \log J^uT \, d\mu_0 = h_* - t \chi^+_{\mu_0} \, ,
\]
and ${ \lim}_{t \downarrow 0} P(t) \ge h_*$. 

Finally, since $P(t) \le h_{\mu_t} \le h_*$, we must have $\lim_{t \downarrow 0} h_{\mu_t} = h_*$ as well. 
\end{proof}


\subsection{Full Support of $\mu_t$}
\label{sec:full}

It follows from Lemma~\ref{lem:equivalent} that the measure $\nu_t$ is fully supported on $M$.
In this section, we will prove the analogous property for $\mu_t$
combining mixing of the SRB measure and  a direct use of Cantor rectangles, 
bypassing the absolute continuity argument which was used in
\cite[Section~7.3]{max} to show full support of the measure of maximal entropy there. 
Recall the definition of maximal Cantor rectangle $R = R(D)$ comprising the intersection of 
all homogeneous
stable and unstable manifolds {completely}
crossing a solid rectangle $D$ as described in the 
proof of Proposition~\ref{prop:lower bound}.  The boundary of the solid rectangle $D$ comprises
two stable and unstable manifolds which also belong to $R$.
Let $\Xi_R \subset \cW^s$ denote the family of stable manifolds corresponding to $R$ (i.e.~the set
of homogeneous stable manifolds that {completely} cross $D$). 

\begin{lemma}
\label{lem:full}
For any maximal Cantor rectangle $R$, if
$\musrb(\cup_{W \in \Xi_R} W) > 0$ then we also have $\mu_t(\cup_{W \in \Xi_R} W) > 0$.
Consequently, for any nonempty open set $O \subset M$, we have $\mu_t(O) > 0$.
\end{lemma}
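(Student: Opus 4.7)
The plan is to prove the first claim in two stages and then deduce the second as a simple corollary. The main idea is to combine the leafwise positivity of $\nu_t$ on stable manifolds (Lemma~\ref{lem:equivalent}) with the Cantor-rectangle transitivity underlying Proposition~\ref{prop:lower bound}, which itself rests on mixing of $\musrb$ via \cite[Lemma~7.90]{chernov book}.

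Set $A := \cup_{W\in\Xi_R}W$. First I would establish $\nu_t(A)>0$: choose a foliation $\cF\subset\cW^s_\bH$ of $M$ by maximal homogeneous stable manifolds refining $\Xi_R$ (possible since $\Xi_R\subset\cW^s_\bH$) and apply Lemma~\ref{lem:equivalent} to disintegrate $\nu_t = \int\nu_t^\xi\,d\hat\nu_t(\xi)$ on $\cF$. The explicit formula given there shows $\hat\nu_t$ and $\hatmusrb$ are mutually absolutely continuous, so $\musrb(A)>0\Rightarrow\hatmusrb(\Xi_R)>0\Rightarrow\hat\nu_t(\Xi_R)>0$; combined with $\nu_t^\xi(W_\xi)=1$ this yields $\nu_t(A)>0$.

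The hard part will be to upgrade $\nu_t(A)>0$ to $\mu_t(A)>0$. Starting from $\mu_t(A) = \langle 1_A\nu_t,\tnu_t\rangle/\langle \nu_t,\tnu_t\rangle$, using the spectral-gap representation $e^{-kP_*(t)}(\cL_t^*)^k\musrb\to C_0\,\tnu_t$ in $\cB^*$ for some $C_0>0$, together with \eqref{distr0}, I would derive the formula
\[
\mu_t(A) \;=\; \frac{1}{\nu_t(1)}\lim_{k\to\infty} e^{-kP_*(t)}\int_A |J^sT^k|^{-(1-t)}\,d\nu_t.
\]
Since the eigenvalue identity $\cL_t^k\nu_t=e^{kP_*(t)}\nu_t$ forces $e^{-kP_*(t)}\int_M |J^sT^k|^{-(1-t)}\,d\nu_t = \nu_t(1)$ for every $k$, showing the limit on $A$ is strictly positive amounts to proving $A$ captures a uniform positive fraction of the total. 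For this I would adapt the lower-bound scheme of Proposition~\ref{prop:lower bound}: some finite backward iterate $T^{-N}A$ contains a homogeneous stable manifold $W_0$ of length $\ge\delta_2/3$ (since $\musrb(A)>0$ and $T^{-1}$ expands stable curves), and \cite[Lemma~7.90]{chernov book} guarantees that further backward iterates of $W_0$ properly cross every member of the family $\cR(\delta_2)$. The growth estimate \eqref{eq:lower bound up}, combined with $T$-invariance of $\mu_t$ to return the resulting weighted mass from $T^{-N-k}A$ back to $A$, would then produce the desired uniform lower bound, completing the proof that $\mu_t(A)>0$.

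For the second claim, let $O\subset M$ be any nonempty open set and pick a $\musrb$-typical point $x\in O$; by classical billiard theory $x$ has nontrivial local stable and unstable manifolds, and trimming short arcs of each bounds a small solid rectangle $D\subset O$ whose maximal Cantor rectangle $R(D)$ satisfies \eqref{eq:dense R} and has $\musrb(R(D))>0$, hence $\musrb(\cup\Xi_{R(D)})>0$. Since $\cup\Xi_{R(D)}\subset D\subset O$, the first claim gives $\mu_t(O)\ge\mu_t(\cup\Xi_{R(D)})>0$.
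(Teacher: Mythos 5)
Your Stage~1 (positivity of the leafwise/disintegrated $\nu_t$ on $A=\cup_{W\in\Xi_R}W$, via Lemma~\ref{lem:equivalent}) and your reduction of the open-set claim to the first claim are fine and match the paper. The gap is in Stage~2, which is the actual content of the lemma: showing that the \emph{transverse} eigenvector $\tnu_t$ charges the Cantor set of leaves $\Xi_R$. First, your starting identity $\mu_t(A)=\langle 1_A\nu_t,\tnu_t\rangle/\langle\nu_t,\tnu_t\rangle$ and the pairing of $1_A\nu_t$ against the limit $e^{-kP_*(t)}(\cL_t^*)^k d\musrb$ are not available off the shelf: $1_A$ restricted to a stable manifold is the indicator of a subinterval, so $1_A\notin C^\alpha(\wW^s)$, and nothing shows $1_A\nu_t\in\cB$ (or that the $\cB^*$-convergence to $\tnu_t$ can be tested on it); the paper sidesteps this by testing against $C^1$ functions $\psi\ge 0$ with $\psi\equiv 1$ on $A$ and producing a lower bound independent of $\psi$. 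More seriously, the mechanism you propose for positivity of $\lim_k e^{-kP_*(t)}\int_A|J^sT^k|^{t-1}\,d\nu_t$ does not connect to that quantity: pushing a long stable manifold $W_0\subset T^{-N}A$ further backwards until it crosses the rectangles of $\cR(\delta_2)$ gives information about where $A$'s preimages go, not about the mass that $(\cL_t^*)^kd\musrb$ (equivalently $\tnu_t$) assigns to $A$; and invoking ``$T$-invariance of $\mu_t$ to return the weighted mass to $A$'' is circular, since positive $\mu_t$-mass of $A$ (or of the rectangles $R_i$) is exactly what is being proved.

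The crossing argument has to run in the opposite direction, inside the pairing: disintegrate $\musrb$ (whose stable conditional densities $\rho_\xi$ are explicit and bounded) along a foliation $\cF=\{W_\xi\}$, rewrite $e^{-nP_*(t)}\langle\cL_t^n(\psi\nu_t),d\musrb\rangle$ as leafwise sums over $\cG_n(W_\xi)$, and use mixing of $\musrb$ via \cite[Lemma~7.90]{chernov book} to guarantee that, for a fixed $n_*$, backward images of \emph{every} long leaf $W_\xi$ contain a piece properly crossing $R$. It is here --- not in producing $W_0$, which exists anyway because elements of $\Xi_R$ already have length $\ge\ell_R$ and grow under $T^{-1}$ --- that the hypothesis $\musrb(\cup_{W\in\Xi_R}W)>0$ is actually needed. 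On the crossing pieces one needs not merely $\nu_t(A)>0$ but the quantitative leafwise bound $\nu_t(W)\ge\zeta(\ell_R)$ coming from \eqref{eq:lower weight W}, and then Propositions~\ref{prop:lower bound} and \ref{prop:exact} (exact exponential growth $Q_n(t)\asymp e^{nP_*(t)}$) make the resulting lower bound uniform in $n$, which is what yields $\mu_t(\psi)>0$. None of these steps appear in your sketch, so as written Stage~2 does not go through.
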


\begin{proof}
 Let $\psi \in C^1(M)$ such that $\psi \ge 0$ and
$\psi \equiv 1$ on  $\cup_{W \in \Xi_R}W$.  Due to the spectral decomposition of $\cL_t^*$,
setting $c = \langle \nu_t, \tnu_t \rangle^{-1}$, we have
\begin{equation}
\label{eq:mu_t one}
\mu_t(\psi) = c \lim_{n \to \infty} e^{-n P_*(t)} \langle \psi \nu_t, (\cL_t^*)^n d\musrb \rangle
= c \lim_{n \to \infty} e^{-n P_*(t)} \langle \cL_t^n(\psi \nu_t), d\musrb \rangle \, .
\end{equation}
Then, using the disintegration of $\musrb$, introduced before Lemma~\ref{lem:equivalent}, 
into conditional measures 
on a fixed foliation
$\cF = \{ W_\xi \}_{\xi \in \Xi}$ of stable manifolds, and a transverse measure $\hatmusrb$ on the index
set $\Xi$,
and recalling \eqref{eq:change var}, we estimate for $n \ge 0$,
\begin{align*}
 \langle \cL_t^n(\psi \nu_t), d\musrb \rangle 
& = 
\int_{\Xi} |W_\xi|^{-1} d\hatmusrb(\xi) \int_{W_\xi} \cL_t^n(\psi \nu_t) \, \rho_\xi \\
& =
\int_{\Xi} |W_\xi|^{-1} d\hatmusrb(\xi) 
\sum_{W_i \in \cG_n(W_\xi)} \int_{W_i} \psi \nu_t |J^sT^n|^t \rho_\xi \circ T^n \\
& \ge C 
\int_{\Xi} |W_\xi|^{-1} d\hatmusrb(\xi) 
\sum_{W_i \in \cG_n(W_\xi)} |J^sT^n|^t_{C^0(W_i)} \int_{W_i} \psi \nu_t \, ,
\end{align*}
where in the last line we have used \eqref{eq:rho}, bounded distortion for $J^sT$ and the positivity of
$\nu_t$.  Next, note that if $W_i \in \cG_n(W_\xi)$ properly crosses\footnote{See the proof
of Proposition~\ref{prop:lower bound} for the definition of proper crossing.} $R$, then using again
the positivity of $\nu_t$,  we have
\begin{equation}
\int_{W_i} \psi \, \nu_t \ge   \zeta (\ell_R)  \, ,
\end{equation} 
where $\ell_R$ is the minimum length of a stable manifold in $\Xi_R$ and $\zeta$ is 
a function depending only on $t$ (uniform in $[t_0,t_1]$) and $\delta_1(t_0,t_1)$
from \eqref{eq:delta111} via \eqref{eq:lower weight W}.
Thus letting $\cG_n^R(W_\xi)$ denote those elements of $\cG_n(W)$
that properly cross $R$, we have
\begin{equation}
\label{eq:proper cross}
\langle \cL_t^n(\psi \nu_t), d\musrb \rangle 
\ge C'  \zeta(\ell_R) 
\int_{\Xi} |W_\xi|^{-1} d\hatmusrb(\xi) 
\sum_{W_i \in \cG^R_n(W_\xi)} |J^sT^n|^t_{C^0(W_i)} \, .
\end{equation}

As in the proof of Proposition~\ref{prop:lower bound}, by \cite[Lemma~7.87]{chernov book}, we
choose a finite number of locally maximal homogeneous Cantor rectangles
$\cR(\delta_1) = \{ R_1, \ldots, R_k \}$ 
such that  
there exists $n_* = n_*(\delta_1, R)$ such that  $T^{-n_*}(D(R_i))$ contains 
a homogeneous connected component that properly crosses $R$
for all $i = 1, \ldots, k$.  Therefore, if
$V \in \cW^s$ has $|V| \ge \delta_1/3$, then at least one element of $\cG_{n_*}(V)$ 
properly crosses $R$.
Thus, if $|W_\xi| \ge \delta_1/3$ and $n - n_* \ge n_1$, then using \eqref{eq:delta11}, 
and letting $\delta_1'$ denote the minimum length of a stable manifold belonging
to any of the $R_i$,
\begin{equation}
\label{eq:lower long xi}
\begin{split}
\sum_{W_i \in \cG_n^R(W_\xi)} |J^sT^n|^t_{C^0(W_\xi)}
& \ge e^{-t C_d}  \sum_{W_j \in L^{\delta_1}_{n-n_*}(W_\xi)} |J^sT^{n-n_*}|^t_{C^0(W_j)}
|J^sT^{n_*}|^t_{C^0(W_i)} \\
& \ge \tfrac 34 e^{-t C_d} C (\delta_1')^{t  \big( \frac{2q+1}{q+1} \big)^{n_*} } 
\sum_{W_j \in \cG_{n-n_*}^{\delta_1}(W_\xi)}  |J^sT^{n-n_*}|^t_{C^0(W_j)} \\
& \ge \tfrac 34 e^{-t C_d} C (\delta_1')^{t  \big( \frac{2q+1}{q+1} \big)^{n_*} }  c_1 e^{(n-n_*)P_*(t)} \, ,
\end{split}
\end{equation}
where in the second line we have estimated $J^sT^{n_*}$ from below on $W_i$ as in \eqref{eq:lower}
using the fact that $|W_i| \ge \delta_1'$, and in the third line we have applied 
Propositions~\ref{prop:lower bound} and \ref{prop:exact}.

Substituting \eqref{eq:lower long xi} into \eqref{eq:proper cross} and letting $\Xi^{\delta_1}$ denote
those elements $W_\xi \in \cF$ with $|W_\xi| \ge \delta_1/3$, 
\[
e^{-n P_*(t)} \langle \cL_t^n (\psi \nu_t), d\musrb \rangle \ge C''  \zeta( \ell_R )  \delta_1^{-1} 
(\delta_1')^{t { \big( \frac{2q+1}{q+1} \big)^{n_*} } } e^{-n_*P_*(t)}
\hatmusrb(\Xi^{\delta_1}) \, .
\]
Since this lower bound is independent of $n$, by \eqref{eq:mu_t one} we have
$\mu_t(\psi) >0$, and since this holds for all $\psi \in C^1(M)$ with $\psi \equiv 1$ on 
$\cup_{W \in \Xi_R}W$, the  first statement of the  lemma is proved.
Then  the second statement of the lemma follows from the fact that any nonempty open set $O \subset M$ has a locally
maximal Cantor set $R$ such that $D(R) \subset O$ and $\musrb(R)>0$. 
\end{proof}


\subsection{Uniqueness of Equilibrium State. (Strong) Variational Principle for $P_*(t,g)$}
\label{sec:unique}
In this section, we prove the following uniqueness result:

\begin{proposition}\label{prop:unique}
For any $0<t<t_*$, the measure $\mu_t$ from Theorem~\ref{thm:spectral}
is the unique equilibrium state for $-t \log J^uT$. 
\end{proposition}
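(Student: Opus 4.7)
The plan is to characterise any equilibrium state through the differential of a perturbed pressure at the zero perturbation. Concretely, I would build the spectral theory for a family of perturbed operators $\cL_{t,g}f = e^g \cL_t f$ indexed by $g\in C^1(M)$ of small norm, prove analyticity of the leading eigenvalue, and then argue that the derivative in $g$ at $g=0$ coincides with the expectation against \emph{any} equilibrium state, forcing all such states to equal $\mu_t$.

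The first step is to extend the spectral picture to $\cL_{t,g}$ when $|g|_{C^0}$ and $|\nabla g|_{C^0}$ are small enough to meet the hypotheses \eqref{eq:up one}, \eqref{eq:up two}, \eqref{eq:up three} of Proposition~\ref{prop:summary}. The growth and complexity lemmas in Section~\ref{GrL} were already proved with the weight $e^{S_n g}$, so the exact exponential growth $Q_n(t,g)\sim e^{nP_*(t,g)}$ holds uniformly for $g$ in a $C^1$-neighbourhood of $0$. Multiplication by $e^g$ is bounded on $\cB$ (via $|e^g f|_w\le |e^g|_{C^1}|f|_w$ and analogous bounds on $\|\cdot\|_s$, $\|\cdot\|_u$ adapted from \cite[Lemma~5.3]{dz2}), so the Lasota--Yorke inequality \eqref{eq:LY} carries over with the same essential-spectral-radius bound. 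Hence $\cL_{t,g}$ is quasi-compact on $\cB$ with a simple leading eigenvalue $e^{P_*(t,g)}$ and a spectral gap, and the associated invariant probability measure $\mu_{t,g}$ satisfies $P_{\mu_{t,g}}(-t\log J^uT+g)=P_*(t,g)$. In particular, combining this with Corollary~\ref{cor:max} gives $P(t,g)=P_*(t,g)$, which is the content of Theorem~\ref{thm:variational}.

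The second step is analytic dependence. Since $g\mapsto \cL_{t,g}$ is real-analytic as a map from the $C^1$-ball into bounded operators on $\cB$ (the map $g\mapsto e^g$ is analytic into $C^1(M)$, and multiplication is bounded on $\cB$), standard perturbation theory for isolated simple eigenvalues yields analyticity of $g\mapsto P_*(t,g)$ on a $C^1$-neighbourhood of $0$. Differentiating the eigenvalue equation $\cL_{t,sg}\nu_{t,sg}=e^{P_*(t,sg)}\nu_{t,sg}$ at $s=0$ and pairing with $\tilde\nu_t$ gives, after normalisation,
\[
\frac{d}{ds}\bigg|_{s=0} P_*(t,sg)=\mu_t(g) \, , \qquad \forall\, g\in C^1(M) \, ,
\]
by the same computation used in Section~\ref{analytic} to derive \eqref{formula0}.

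For the uniqueness conclusion, let $\mu'$ be any $T$-invariant Borel probability measure with $P_{\mu'}(-t\log J^uT)=P(t)$. Then $\int \log J^uT\, d\mu'<\infty$, so $\mu'$ charges $M'$ fully and $\mu'(g)$ is defined for every $g\in C^1$. The variational inequality
\[
P_*(t,sg)=P(t,sg)\ge P_{\mu'}(-t\log J^uT+sg)=P(t)+s\mu'(g)=P_*(t)+s\mu'(g)
\]
holds for all $s$ sufficiently small (depending on $|g|_{C^1}$, which we may rescale to meet the smallness conditions). Dividing by $s>0$ and letting $s\downarrow 0$, then dividing by $s<0$ and letting $s\uparrow 0$, and using differentiability at $s=0$, we conclude $\mu'(g)=\mu_t(g)$ for every $g\in C^1(M)$. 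Density of $C^1(M)$ in $C^0(M)$ combined with the Riesz representation theorem forces $\mu'=\mu_t$. The main obstacle is the first step: verifying that the Banach-space and Lasota--Yorke machinery of Sections~\ref{GrL}--\ref{sec:spec} carries over with estimates uniform in small $g\in C^1$, so that the perturbed leading eigenvalue is isolated, simple, and analytic in $g$.
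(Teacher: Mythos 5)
Your plan is essentially the paper's own proof: it perturbs the operator by a small $C^1$ weight, uses analytic perturbation of the simple leading eigenvalue to identify the derivative of the perturbed pressure at zero with $\mu_t(g)$, and concludes via the tangent-functional inequality for an arbitrary equilibrium state and density of $C^1$ in $C^0$. The one step you compress — that the leading eigenvalue equals $e^{P(t,g)}$ and that the perturbed eigenmeasure attains this pressure, which requires the Bowen-ball upper bound of Proposition~\ref{prop:max} and the Brin--Katok argument of Corollary~\ref{cor:max} for the weighted operator, not only the Lasota--Yorke machinery — is precisely what the paper carries out in Proposition~\ref{prop:spec rad}.
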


The proof of the proposition will give a more general statement (shown at the end of this section):

\begin{theorem}[Strong Variational Principle for
$P_*(t,g)$] \label{thm:variational}
For any $[t_0,t_1]\subset (0, t_*)$ there exists $\upsilon_0>0$
such that for any $C^1$ function $g:M\to \bR$ with 
$|g|_{C^1}\le \upsilon_0$ we have
$$
P_*(t,g)=P(t,g)=\max \{ h_\mu+\int (-t\log J^uT+g)  \, d\mu : \mu \mbox{ a
$T$-invariant probability measure } \}\, , 
$$
and the equilibrium state for $-t\log J^u+g$ is unique.
\end{theorem}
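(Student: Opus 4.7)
The plan is to repeat the entire machinery of Sections~\ref{sec:spec}--\ref{finall} for the perturbed transfer operator
$$\cL_{t,g} f := e^{g \circ T^{-1}} \cdot \cL_t f = \frac{e^{g \circ T^{-1}}\, f \circ T^{-1}}{|J^sT|^{1-t} \circ T^{-1}} \, ,$$
parametrized by a $C^1$ function $g$ with $|g|_{C^1}$ small enough (depending on $[t_0,t_1]$) that conditions \eqref{eq:up one}, \eqref{eq:up two}, \eqref{eq:up three} and the inductive analogues hold throughout the bootstrap in Section~\ref{sec:boot}. The growth lemmas and Proposition~\ref{prop:summary} already incorporate $g$, so $Q_n(t,g)$ exhibits exact exponential growth at rate $e^{P_*(t,g)}$ and obeys a supermultiplicativity bound uniformly in $g$. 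The spectral analysis proceeds verbatim: the Lasota--Yorke estimates in Proposition~\ref{prop:ly} extend by carrying the factor $e^{S_n g}$ through \eqref{eq:weak start}, \eqref{eq:stable split}, \eqref{eq:unstable decomp}, using the distortion bound \eqref{eq:phi dist} and \eqref{eq:phi A} to control the variation of $e^{S_n g}$ on elements of $\cG_n(W)$. The proof of Lemma~\ref{lem:image} adapts since multiplication by the $C^1$ function $e^g$ preserves $\cB$.

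Applying Hennion's theorem together with Proposition~\ref{embeds} then yields quasi-compactness of $\cL_{t,g}$ on $\cB$ with spectral radius $e^{P_*(t,g)}$ and an essential spectral radius strictly smaller. Following \S\ref{4.4}, the absence of Jordan blocks on the peripheral spectrum gives maximal eigenvectors $\nu_{t,g} \in \cB$ and $\tilde \nu_{t,g} \in \cB^*$, from which I define
$$\mu_{t,g}(\psi) := \frac{\langle \psi\, \nu_{t,g}, \tilde\nu_{t,g}\rangle}{\langle \nu_{t,g}, \tilde\nu_{t,g}\rangle} \, ,$$
a $T$-invariant Borel probability measure. The spectral gap of $\cL_{t,g}$ (Lemma~\ref{lem:gap}) carries over: the mixing property of $\musrb$ is still the key input, and the leafwise positivity bound of Lemma~\ref{lem:equivalent} persists since the weight $e^{g\circ T^{-1}}$ is bounded away from $0$. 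Consequently $\mu_{t,g}$ is mixing, has full support (Lemma~\ref{lem:full}), is non-atomic, and is $T$-adapted by the argument of Proposition~\ref{nbhdprop}.

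To establish $P_{\mu_{t,g}}(-t\log J^uT + g) = P_*(t,g)$, I adapt Proposition~\ref{prop:max}: by retracing \eqref{eq:unwrap} with the extra multiplicative weight and invoking distortion control \eqref{eq:phi A} for $S_n g$, one obtains
$$\mu_{t,g}(\overline{B_n(x,\epsilon)}) \le A\, e^{-nP_*(t,g) + t \sum_{k=1}^n \log J^sT(T^{-k}x) + \sum_{k=1}^n g(T^{-k}x)} \, .$$
Since $\mu_{t,g}$ is $T$-adapted and ergodic, Brin--Katok (in the form used in the proof of Corollary~\ref{cor:max}) applied to $T^{-1}$ gives $h_{\mu_{t,g}} \ge P_*(t,g) + t\int \log J^uT\, d\mu_{t,g} - \int g\, d\mu_{t,g}$, i.e.\ $P_{\mu_{t,g}}(-t\log J^uT + g) \ge P_*(t,g)$. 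Combined with Proposition~\ref{prop:pressure} (which holds for all $g \in C^1$), this closes the variational principle with $\mu_{t,g}$ as an equilibrium state.

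The main obstacle is uniqueness. Here the plan is to exploit differentiability of the pressure. Analytic perturbation theory applied to the family $s \mapsto \cL_{t, g + s\psi}$, for arbitrary $\psi \in C^1(M)$, shows (as in Section~\ref{analytic}) that $s \mapsto P_*(t, g + s\psi)$ is real-analytic on a neighborhood of $0$ with derivative at $s=0$ equal to $\mu_{t,g}(\psi)$. On the other hand, if $\mu$ is any $T$-invariant probability measure achieving $P_\mu(-t\log J^uT + g) = P_*(t,g)$, then convexity of $s \mapsto P_*(t, g+s\psi)$ (obtained via the same H\"older inequality as in Proposition~\ref{prop:Ptilde}) together with the lower bound $P_*(t, g + s\psi) \ge P_\mu(-t\log J^uT + g) + s\mu(\psi)$ forces the left and right derivatives at $s=0$ to equal $\mu(\psi)$, whence $\mu(\psi) = \mu_{t,g}(\psi)$. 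Since this holds for every $\psi \in C^1(M)$ with $|\psi|_{C^1}$ sufficiently small, and hence by homogeneity for all $\psi \in C^1(M)$, we conclude $\mu = \mu_{t,g}$, completing the proof of Theorem~\ref{thm:variational} and in particular of Proposition~\ref{prop:unique} at $g=0$.
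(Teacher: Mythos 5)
Your proposal is correct and follows essentially the same route as the paper: the equilibrium state for $-t\log J^uT+g$ is produced from the maximal eigenvectors of the weighted operator, the identity $P(t,g)=P_*(t,g)$ is obtained from the Bowen-ball upper bound plus the Brin--Katok argument of Corollary~\ref{cor:max} together with Proposition~\ref{prop:pressure}, and uniqueness comes from the tangent-functional/differentiability argument of Proposition~\ref{prop:unique} applied to the family $s\mapsto \cL_{t,g+s\psi}$ (this is Step~1 and Step~2 of Proposition~\ref{prop:spec rad} with $\upsilon\phi$ replaced by $g$, exactly the adaptation the paper invokes). The only cosmetic difference is that you propose to re-derive the full Lasota--Yorke and spectral-gap machinery for $\cL_{t,g}$ directly, whereas the paper gets the spectral gap of the weighted operator more cheaply by analytic perturbation from $\cL_t$ (via \cite{dz2} and \cite{dz1} as in Section~\ref{sec:unique}), needing only the weak-norm bound \eqref{eq:weak up} and the weighted Bowen-ball estimate; also, the analyticity in the potential parameter that your uniqueness step uses is the mechanism of Section~\ref{sec:unique} (Kato applied to $\upsilon\mapsto\cL_{t,\upsilon}$), not of Section~\ref{analytic}, but this does not affect the argument.
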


(We restrict to $C^1$
functions $g$ for simplicity. The result also holds H\"older $g$ of suitable exponent.)

Fix $0<t_0<t_1<t_*$.
 For  $\phi \in C^1(M)$, $t \in [t_0,t_1]$, and $\upsilon \in \mathbb{R}$, define the transfer operator 
 $\cL_{t,\upsilon} =\cL_{t,\upsilon,\phi}$ by
\[
\cL_{t,\upsilon}f = \frac{f \circ T^{-1}}{|J^sT|^{1-t} \circ T^{-1}} e^{\upsilon \phi \circ T^{-1}} \, ,
\quad \mbox{for all $f \in C^1(M)$.}
\]
Since $\cL_{t, \upsilon}f = e^{\upsilon \phi \circ T^{-1}} \cL_t f$ and the discontinuities of
$\phi \circ T^{-1}$ are uniformly transverse to the stable cone, \cite[Lemma~5.3]{dz2}
implies that $\cL_{t, \upsilon} f \in \cB$ 
(with $\cB=\cB(t_0,t_1)$ the space for
$\cL_{t}$) and $\| \cL_{t, \upsilon} f \|_{\cB}
\le C \| f \|_{\cB} |e^{\upsilon \phi}|_{C^1}$, so that $\cL_{t, \upsilon}$ defines a
bounded linear operator on $\cB$.  By
\cite[Lemma~6.1]{dz1} 
the map $\upsilon \mapsto \cL_{t,\upsilon}$ is analytic.  Thus 
since $\cL_t = \cL_{t,0}$ has a spectral gap, so does $\cL_{t, \upsilon}$ 
for $| \upsilon |$ sufficiently small, and the leading
eigenvalue $\lambda_{t, \upsilon}$ 
varies analytically in $\upsilon$  \cite[VII, Thm 1.8, II.1.8]{kato};
moreover, $\lambda_{t, 0} = e^{P(t)}$ and,
with $\mu_t$ from Theorem~\ref{thm:spectral}, we have \cite[II.2.1, (2.1), (2.33)]{kato}
\begin{equation}
\label{eq:deriv lambda}
\left. \frac{d}{d\upsilon} \lambda_{t, \upsilon} \right|_{\upsilon = 0} = e^{P(t)} \int \phi \, d\mu_t \, , \, \forall t \in [t_0, t_1]\, .
\end{equation} 

Recalling the definition $P(t,\upsilon \phi)$
 in \eqref{abovv}, the following result will give
Proposition~\ref{prop:unique}:

\begin{proposition}
\label{prop:spec rad}
Fix $0<t_0<t_1<t_*$.
For $\phi \in C^1(M)$, 
$t \in [t_0,t_1]$, and $\upsilon \in \mathbb{R}$, with $|\upsilon|$ sufficiently small,  
the spectral radius of $\cL_{t, \upsilon}$ on $\cB(t_0,t_1)$ is 
$\lambda_{t, \upsilon\phi} = e^{P(t, \upsilon \phi)}$.
\end{proposition}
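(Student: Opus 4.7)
The plan is to adapt the entire spectral analysis of Sections~\ref{GrL}--\ref{finall} to the perturbed operator $\cL_{t,\upsilon}$, by interpreting it as the operator associated with the potential $-t\log J^uT+g$ for $g=\upsilon\phi$. The crucial point is that the growth lemmas of Section~\ref{GrL} and the Lasota--Yorke bounds of Proposition~\ref{prop:ly} were already established for a general $C^1$ potential $g$, under smallness conditions on $|g|_{C^0}$ and $|\nabla g|_{C^0}$ (namely \eqref{eq:up one}, \eqref{eq:up two}, and their inductive analogues in Section~\ref{sec:boot}). These conditions are satisfied by $g=\upsilon\phi$ as soon as $|\upsilon|$ is sufficiently small, depending only on $\phi$ and on $[t_0,t_1]$, so Proposition~\ref{prop:summary} yields
\[
e^{nP_*(t,\upsilon\phi)} \le Q_n(t,\upsilon\phi) \le C\,e^{nP_*(t,\upsilon\phi)}\, ,\qquad n\ge 1\, ,
\]
uniformly for $t\in[t_0,t_1]$.

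Next I would replay Sections~\ref{sec:weak norm}--\ref{sec:unstable} for $\cL_{t,\upsilon}$, simply inserting the factor $e^{\upsilon S_n\phi}$ in each chain of estimates. The distortion bound \eqref{eq:phi dist} controls $e^{\upsilon S_n\phi}$ along any single stable curve, while \eqref{eq:phi A} (together with Lemma~\ref{lem:comparable}) lets one bring $e^{\upsilon S_n\phi}$ into the definition of $Q_n(t,\upsilon\phi)$. The outcome is a weak-norm bound
\[
|\cL_{t,\upsilon}^n f|_w \le C\,Q_n(t,\upsilon\phi)\,|f|_w\, ,
\]
together with the analogue of \eqref{eq:LY} with contraction rate $\sigma\,e^{P_*(t,\upsilon\phi)}$ for some $\sigma<1$ (the same $\sigma$ as for $\cL_t$, up to slight enlargement). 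Combined with the compact embedding of Proposition~\ref{embeds}, this gives quasi-compactness of $\cL_{t,\upsilon}$ on $\cB(t_0,t_1)$ with essential spectral radius strictly smaller than $e^{P_*(t,\upsilon\phi)}$, and spectral radius at most $e^{P_*(t,\upsilon\phi)}$.

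For the matching lower bound on the spectral radius, I would adapt \eqref{eq:lower evol}: for $W\in\cW^s$ with $|W|\ge\delta_1/3$,
\[
\int_W \cL_{t,\upsilon}^n 1\,dm_W \;\ge\; C\!\!\sum_{W_i\in\cG_n^{\delta_1}(W)} |J^sT^n|^t_{C^0(W_i)}\,|e^{\upsilon S_n\phi}|_{C^0(W_i)} \;\ge\; c_1\,Q_n(t,\upsilon\phi)\, ,
\]
using \eqref{eq:delta111} and Proposition~\ref{prop:summary}(a) applied with $g=\upsilon\phi$. Since $Q_n(t,\upsilon\phi)\ge e^{nP_*(t,\upsilon\phi)}$, this forces the spectral radius of $\cL_{t,\upsilon}$ on $\cB$ to equal $e^{P_*(t,\upsilon\phi)}$. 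To identify this with $e^{P(t,\upsilon\phi)}$, I would repeat the constructions of Section~\ref{4.4}: from the maximal eigenvectors $\nu_{t,\upsilon}\in\cB$ and $\tnu_{t,\upsilon}\in\cB^*$ build a $T$-invariant probability measure $\mu_{t,\upsilon\phi}$, show it is $T$-adapted as in Proposition~\ref{nbhdprop}, and prove the analogue of the Bowen-ball estimate \eqref{eq:upper ball} with an additional factor $e^{\upsilon S_n\phi(T^{-n}x)}$. The Brin--Katok argument of Corollary~\ref{cor:max} then yields $h_{\mu_{t,\upsilon\phi}}+\int(-t\log J^uT+\upsilon\phi)\,d\mu_{t,\upsilon\phi}\ge P_*(t,\upsilon\phi)$, while Proposition~\ref{prop:pressure} (already stated for general $g\in C^1$) gives the reverse inequality. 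This proves $P(t,\upsilon\phi)=P_*(t,\upsilon\phi)$, so by the discussion preceding the proposition $\lambda_{t,\upsilon}=e^{P_*(t,\upsilon\phi)}=e^{P(t,\upsilon\phi)}$, and Theorem~\ref{thm:variational} drops out of the same argument.

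The main obstacle is verifying that the \emph{uniform} smallness requirements imposed on $g$ in Sections~\ref{sec:t>1}--\ref{sec:boot} can all be met simultaneously by one single threshold $\upsilon_0>0$. Each inductive step in Section~\ref{sec:boot} introduces a new constant $\kappa_k$ and a new bound of the form \eqref{eq:up three}, and the number of such steps grows as $t_1\uparrow t_*$. One must therefore check that only finitely many interpolations are needed for fixed $t_1<t_*$, and that the resulting finite intersection of constraints on $|\upsilon\phi|_{C^0}$ and $|\nabla(\upsilon\phi)|_{C^0}$ is nonempty; this is essentially already done in the statement of Proposition~\ref{prop:summary} but will need to be spelled out, after which Proposition~\ref{prop:spec rad} (and Proposition~\ref{prop:unique} via \eqref{eq:deriv lambda} and the differentiability of pressure) follows by bookkeeping from the arguments already developed for the unperturbed operator.
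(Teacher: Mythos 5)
Your proposal is correct and follows essentially the same route as the paper: treat $g=\upsilon\phi$ as a small $C^1$ potential so that the growth lemmas and Proposition~\ref{prop:summary} apply (the uniformity over the finitely many interpolations of Section~\ref{sec:boot} is exactly the smallness condition imposed on $|\upsilon|$ in the paper's proof), obtain the spectral radius $e^{P_*(t,\upsilon\phi)}$, and then identify it with $e^{P(t,\upsilon\phi)}$ by constructing $\mu_{t,\upsilon}$, proving the perturbed Bowen-ball bound \eqref{eq:upper ball up}, and invoking the Brin--Katok argument of Corollary~\ref{cor:max} together with Proposition~\ref{prop:pressure}. The one place where you do more work than necessary is quasi-compactness: since $\cL_{t,\upsilon}f=e^{\upsilon\phi\circ T^{-1}}\cL_t f$ and multiplication by $e^{\upsilon\phi}$ is bounded on $\cB$ (\cite[Lemma~5.3]{dz2}), the map $\upsilon\mapsto\cL_{t,\upsilon}$ is analytic, so the spectral gap of $\cL_t$ persists for small $|\upsilon|$ by perturbation theory; the paper therefore only needs the weak-norm bound \eqref{eq:weak up} (plus the lower bound as in Proposition~\ref{prop:radius}) rather than a full re-derivation of the stable and unstable Lasota--Yorke estimates with the factor $e^{\upsilon S_n\phi}$, though your more laborious route would also succeed.
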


\begin{proof}[Proof of Proposition~\ref{prop:unique}]
We use  tangent measures, inspired by the proof of 
\cite[Theorem~16]{bruin notes}:
 If $\mu$ is an equilibrium state for 
$-t \log J^uT$ then
  $\mu$ is a $C^1$-tangent measure at $t$ (see e.g.\footnote{The standard definitions use
$C^0$ rather than $C^1$ in \eqref{eq:tangent}  
For our purposes, $C^1$ will suffice.} \cite[Theorem~9.14]{walters}) in the sense that,
\begin{equation}
\label{eq:tangent}
P(t , \phi) \ge P(t,0) + \int \phi \, d\mu \qquad
\mbox{for all $\phi \in C^1(M)$.}
\end{equation}
Thus, Proposition~\ref{prop:spec rad} together with 
\eqref{eq:deriv lambda} imply that
\begin{align*}
\int \phi \, d\mu_t & = \lim_{\upsilon \downarrow 0} \frac{P(t , \upsilon \phi) - P(t, 0)}{\upsilon} \ge \int \phi \, d\mu \, \quad \mbox{and} \\
\int \phi \, d\mu_t & = \lim_{\upsilon \uparrow 0} \frac{P(t,  \upsilon \phi) - P(t,0)}{\upsilon} \le \int \phi \, d\mu \, .
\end{align*}
Thus $\int \phi \, d\mu_t = \int \phi \, d\mu$ for all $\phi \in C^1(M)$.  Since $M$ is a compact
metric space, $C^1(M)$ is
dense in $C^0(M)$ and so  $\mu = \mu_t$ showing the uniqueness claim in the  proposition. 
\end{proof}

\begin{proof}[Proof of Proposition~\ref{prop:spec rad}]
Let $|\upsilon|$  be small enough such that $g:=\upsilon\phi$ satisfies \eqref{eq:up one}, 
\eqref{eq:up two},  \eqref{eq:up three} and their analogues for the number of interpolations needed
to reach $t_1<t_*$ in Section~\ref{sec:boot}. 
 The constants $n_1$, $n_2$, $\delta_1$, $\delta_2$, $C_2$, $c_0$, $c_1$, $c_2$, and $C_\kappa$
from Section~\ref{GrL} then hold for all
$g=\upsilon' \phi$ with $|\upsilon' |< |\upsilon|$
and all $t \in [t_0, t_1]$.
In particular the constants
$c_1(\upsilon')>0$ from Proposition~\ref{prop:lower bound}  and Proposition~\ref{prop:summary}(a)  and $c_2(\upsilon')>0$  in  
Proposition~\ref{prop:exact}  and Proposition~\ref{prop:summary}(b)  are uniform in $|\upsilon' |< |\upsilon|$
and  $t \in [t_0, t_1]$.

\medskip
\noindent
{\em Step 1. The Spectral Radius $\lambda_{t,\upsilon}$ of $\cL_{t, \upsilon}$ on $\cB$ is $e^{P_*(t, \upsilon\phi)}$.}
Possibly reducing $|\upsilon|$ further, 
$\cL_{t, \upsilon}$ has a spectral gap on $\cB$, as observed above.
The upper bound on  $\lambda_{t,\upsilon}\le e^{P_*(t, \upsilon\phi)}$
can thus be proved as in Proposition~\ref{prop:radius}, once we
know that the spectral radius of $\cL_{t, \upsilon}$
on $\cB_w$ is at most  $e^{P_*(t, \upsilon\phi)}$.
For this, by the  upper bound in 
 Proposition~\ref{prop:summary}(b),  it suffices  to find
$C  < \infty$  such that 
\begin{equation}
\label{eq:weak up}
| \cL_{t, \upsilon}^n f |_w   \le   C Q_n(t, \upsilon\phi) |f|_w\, ,\,\,
\forall  f \in C^1  \, . 
\end{equation}

To prove \eqref{eq:weak up}, note that due to \eqref{eq:phi dist}, we have for $W \in \cW^s$ and $W_i \in \cG_n(W)$,
\begin{equation}
\label{eq:phi alpha}
|e^{\upsilon S_n \phi}|_{C^\alpha(W_i)} \le (1+ C_* |\nabla \phi|_{C^0}\cdot \delta_0^{1-\alpha}) |e^{\upsilon S_n\phi}|_{C^0(W_i)} \, ,
\end{equation}
then, for $W \in \cW^s$ and $\psi \in C^\alpha(W)$ with $|\psi|_{C^\alpha(W)} \le 1$, we
follow \eqref{eq:weak start} and apply \eqref{eq:test contract}, \eqref{eq:JC},
Lemma~\ref{lem:extra growth}, and \eqref{eq:phi alpha}
to write,
\begin{align*}
\int_W & \cL_{t, \upsilon}^n f \, \psi \, dm_W 
 \le \sum_{W_i \in \cG_n(W)} |f|_w |\psi \circ T^n|_{C^\alpha(W)} ||J^sT^n|^t e^{\upsilon S_n \phi}|_{C^\alpha(W_i)} \\
& \le |f|_w C_1^{-1} (1+ 2^t C_d)(1+ C_* |\nabla \phi|_{C^0}) \sum_{W_i \in \cG_n(W)} |J^sT^n|^t_{C^0(W_i)}
|e^{\upsilon S_n \phi}|_{C^0(W_i)} \le C |f|_w Q_n(t, \upsilon\phi) \, .
\end{align*}

The lower bound $\lambda_{t,\upsilon}\ge e^{P_*(t, \upsilon\phi)}$
on the spectral radius follows as in the proof of Proposition~\ref{prop:radius}.

\smallskip
\noindent
{\em Step 2.  $P_*(t, \upsilon\phi)= P(-t \log J^uT+\upsilon \phi)$.}
Denoting by $\nu_{t, \upsilon}$ the eigenmeasure associated to $e^{P_*(t, \upsilon\phi)}$
and by $\tnu_{t, \upsilon}$ the  eigenmeasure of the dual operator
$\cL_{t, \upsilon}^*$, defined as in Lemma~\ref{lem:peripheral} and \eqref{eq:dual},
we construct an invariant probability measure $\mu_{t, \upsilon}$ as in
Proposition~\ref{prop:mut def}.

We claim  the following analogue of  Proposition~\ref{prop:max}:
There exists $A<\infty$ such that for all sufficiently small $|\upsilon|$,
all $\epsilon>0$ sufficiently small, all $x \in M$ and $n \ge 1$,
\begin{equation}
\label{eq:upper ball up}
\mu_{t, \upsilon}(B_n(x , \epsilon)) \le A e^{- n P_*(t, \upsilon\phi) + t  \log J^sT^n(T^{-n} x)  + \upsilon S_n\phi( T^{-n}  x)} \, ,
\end{equation} 
where $B_n(x, \epsilon)$ is the Bowen ball defined in \eqref{Bowen}.
Using \eqref{eq:upper ball up}, the proof of Corollary~\ref{cor:max} yields that 
$P_{\mu_{t, \upsilon}}(-t \log J^uT + \upsilon \phi) \ge P_*(t, \upsilon\phi)$, and this, together
with Proposition~\ref{prop:pressure} yields $P(-t \log J^uT + \upsilon \phi) = P_*(t, \upsilon\phi)$.  
By Step 1, this ends the proof of
Proposition~\ref{prop:spec rad}.  
(In addition, we have established that $\mu_{t, \upsilon}$ is an equilibrium state for $-t \log J^uT + \upsilon \phi$.)

Finally, \eqref{eq:upper ball up}  follows easily from the proof of 
Proposition~\ref{prop:max}.  The property in \eqref{eq:pos} extends to $\nu_{t, \upsilon}$
due to its definition as a limit of $e^{- n P_*(t, \upsilon\phi)} \cL_{t, \upsilon}^n 1$.  The analogue
of \eqref{eq:change var} holds for the same reason, so that the modification of
\eqref{eq:unwrap} yields,
\[
\int_W \psi \, 1^B_{n \epsilon} \, \nu_{t, \upsilon} = e^{-n P_*(t, \upsilon\phi)}
\sum_{W_i \in \cG_n(W)} \int_{W_i} (\psi \circ T^n) (1^B_{n, \epsilon} \circ T^n) |J^sT^n|^t e^{\upsilon S_n \phi}
\, \nu_{t, \upsilon} \, , 
\] 
where $1^B_{n, \epsilon}$ denotes the indicator function of $B_n(x, \epsilon)$.  The
subsequent estimates in the proof of Proposition~\ref{prop:max} go through with the obvious
changes,
so that
\[
|1^B_{n, \epsilon} \nu_{t, \upsilon}|_w \le C' e^{-n P_*(t, \upsilon\phi) + t \log J^sT^n( T^{-n}  x) + \upsilon S_n\phi( T^{-n}  x)} \, ,
\] 
where the only additional factor needed is the distortion constant $C_* |\nabla \phi|_{C^0}$ from
\eqref{eq:phi dist}.  Applying the analogue of Proposition~\ref{prop:mut def}(a)
completes the proof of \eqref{eq:upper ball up}.
\end{proof}

\begin{proof}[Proof of Theorem~\ref{thm:variational}]
The upper bound  $P(t,g)\le P_*(t,g)$  is the content of Proposition~\ref{prop:pressure}.
Taking $\upsilon \phi=g$, the equilibrium state for $-t\log J^u+g$
is $\mu_{t,\upsilon}$ constructed in Step 2 of  the proof of Proposition~\ref{prop:spec rad}.
The proof of uniqueness can be obtained by a straightforward
adaptation of the argument proving
uniqueness of the equilibrium state for $-t\log J^u$, up to taking small enough
$|g|_{C^1}$.
\end{proof}


\section{Analyticity,  Derivatives of $P(t)$,
and Strict Convexity (Proof of Theorem~\ref{strconv})}
\label{analytic}

This section contains the proof of Theorem~\ref{strconv}
and   Corollaries~\ref{contin eq},~\ref{unifrate}, and~\ref{CLT}.

The maximal eigenvalue of $\cL_t^n$ is $\exp(nP(t))$. Showing that
$nP(t)$ is analytic for some integer $n\ge 1$ is equivalent to showing that $P(t)$ is analytic.
Recall the one-step expansion factor $\theta^{-1}>1$  from Lemma~\ref{lem:one step}. In the remainder of this section\footnote{The value $1/2$ below is for convenience, giving the number $-\log 2$ in Lemma~\ref{distlog}; what
is important is  $C_0\theta^n<1$.}:
$$
\mbox{Fix }
n_0\ge 1 \mbox{ such that } |J^sT^{n_0}|<C_0\theta^{n_0}\le \frac{1}{2} 
\, ,
\mbox{ and set } \cT:=T^{n_0}\, .$$
By standard results on analytic perturbations of simple isolated eigenvalues \cite{kato}, 
analyticity of $P(t)=P_*(t)$ will be an immediate consequence of the following
result:

\begin{proposition}[Analyticity of $t\mapsto \cL^{n_0}_t$]\label{opan}
Fix $0<t_0<t_1<t_*$.
Then the map $t\mapsto \cL^{n_0}_t$ is analytic from $(t_0,t_1)$
to the space of bounded operators from $\cB$ to $\cB$, with
\begin{equation}
\label{derivative}
\partial^j_t \cL^{n_0}_t(f) |_{t=w}=  \cL^{n_0}_w\bigl( (\log J^s \cT)^j f \bigr)\,,\,\,\forall j\ge 1\, ,\,\,\forall w\in  (t_0,t_1)\, , \,\,
\forall f \in \cB\, .
\end{equation}
\end{proposition}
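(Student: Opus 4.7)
The plan is to establish the proposition by a direct power-series expansion of $\cL_t^{n_0}$ around an arbitrary $w\in(t_0,t_1)$, reducing everything to uniform operator-norm bounds on the building blocks $f\mapsto \cL_w^{n_0}((\log J^s\cT)^j f)$. First I would record the trivial pointwise identity
\[
\cL_t^{n_0}f(x) \;=\; e^{(t-w)\,\log J^s\cT(\cT^{-1}x)}\cdot \cL_w^{n_0}f(x)\, ,
\]
obtained from $|J^s\cT|^{t-1} = e^{(t-w)\log J^s\cT}\,|J^s\cT|^{w-1}$, together with the pointwise identity $(\log J^s\cT\circ\cT^{-1})^j\cdot\cL_w^{n_0}f=\cL_w^{n_0}((\log J^s\cT)^j f)$. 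Expanding the exponential gives the formal Taylor series
\[
\cL_t^{n_0}f \;=\; \sum_{j\ge 0}\frac{(t-w)^j}{j!}\,\cL_w^{n_0}\!\bigl((\log J^s\cT)^j f\bigr)\, ,
\]
whose term-by-term derivatives at $t=w$ reproduce \eqref{derivative}. Hence it suffices to show that for some $\epsilon_0>0$ and $C_{\epsilon_0}<\infty$,
\begin{equation*}
\bigl\|\cL_w^{n_0}\!\bigl((\log J^s\cT)^j f\bigr)\bigr\|_{\cB}\;\le\; C_{\epsilon_0}\,j!\,\epsilon_0^{-j}\,\|f\|_{\cB}\,, \qquad \forall j\ge 0,\ \forall f\in\cB\,,
\end{equation*}
for then the Weierstrass $M$-test yields operator-norm convergence on $|t-w|<\epsilon_0$.

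The essential analytic input is the elementary inequality
\[
(-\log x)^j \;\le\; j!\,\epsilon^{-j}\,x^{-\epsilon}\,, \qquad \forall x\in (0,1],\ \forall \epsilon\in(0,1),
\]
obtained by maximising $y^je^{-\epsilon y}$ and using Stirling. Since $J^s\cT\le 1/2$ by our choice of $n_0$, this implies the pointwise bound $|(\log J^s\cT)^j|\cdot|J^s\cT|^w \le j!\,\epsilon^{-j}\,|J^s\cT|^{w-\epsilon}$, so that in the integration formula
\[
\int_W \cL_w^{n_0}\!\bigl((\log J^s\cT)^j f\bigr)\psi\,dm_W \;=\; \sum_{W_i\in\cG_{n_0}(W)} \int_{W_i} f\cdot(\log J^s\cT)^j (\psi\circ\cT)\,|J^s\cT|^w\,dm_{W_i}
\]
the unbounded multiplier $(\log J^s\cT)^j$ is absorbed into the Jacobian weight at the cost of shrinking the exponent from $w$ to $w-\epsilon$ and paying a factor $j!\,\epsilon^{-j}$. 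Choosing $\epsilon_0$ with $w-\epsilon_0\in(t_0,t_1)$, the weak-norm estimate of Section~\ref{sec:weak norm} then applies verbatim with $t$ replaced by $w-\epsilon$, yielding a sum controlled by $Q_{n_0}(w-\epsilon)$ via Lemma~\ref{lem:extra growth} and proving the claim for the weak norm.

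For the strong stable and strong unstable norms one must supplement the pointwise bound with H\"older control of $(\log J^s\cT)^j$ on each $W_i\in\cG_{n_0}(W)$. The distortion estimate $|\log(J^s\cT(x)/J^s\cT(y))|\le C_d\,d(x,y)^{1/(q+1)}$ from \eqref{eq:distortion} gives $H^\alpha_{W_i}(\log J^s\cT)\le C_d$ for $\alpha\le 1/(q+1)$, so by a telescoping sum
\[
H^\alpha_{W_i}\!\bigl((\log J^s\cT)^j\bigr)\;\le\; C_d\,j\,\bigl||\log J^s\cT|\bigr|^{j-1}_{C^0(W_i)}\, .
\]
Combined with the $C^0$ bound, this yields $|(\log J^s\cT)^j|_{C^\alpha(W_i)}\le (1+Cj)\,j!\,\epsilon^{-j}\,|J^s\cT|^{-\epsilon}_{C^0(W_i)}$, and the nuisance factor $(1+Cj)$ is absorbed by a further slight decrease of $\epsilon_0$. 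With this in hand, the proofs of the stable norm bound \eqref{eq:stable} and the unstable norm bound \eqref{eq:unstable} from Sections~\ref{sec:stable norm}--\ref{sec:unstable} go through with the exponent $t$ replaced by $w-\epsilon$ and an overall prefactor $Cj!\,\epsilon_0^{-j}$.

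The main obstacle I expect is this last step for the strong unstable norm, where the matched-pieces decomposition of Section~\ref{sec:unstable} must be run with the enlarged test-function family $(\log J^s\cT)^j(\psi\circ\cT)|J^s\cT|^w$: the transversal H\"older comparison of Sublemma~\ref{lem:matching}(b), currently stated for $|J^sT^n|^t$, must be upgraded to this richer integrand, and one must verify that the constant in the analogue of Sublemma~\ref{lem:matching}(b) grows at most polynomially in $j$ -- which, again via the pointwise bound and the product rule for H\"older norms, is possible after shrinking $\epsilon_0$. Finally, the claim extends from $f\in C^1(M)$ to $f\in\cB$ by density together with Lemma~\ref{lem:image} applied to $\cL_w^{n_0}$ and the identity $\cL_w^{n_0}(\phi^j f) = (\log J^s\cT\circ\cT^{-1})^j\cdot\cL_w^{n_0}f$, completing the proof of analyticity together with formula~\eqref{derivative}.
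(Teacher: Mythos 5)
Your proposal follows essentially the same route as the paper's proof: reduce analyticity to the factorial-type bound $\|\cL_w^{n_0}((\log J^s\cT)^j f)\|_{\cB}\le C^j\, j!\,\|f\|_{\cB}$ (the paper's Proposition~\ref{prop:analyticboundprop}), absorb the powers of $\log J^s\cT$ into a small loss of Jacobian exponent via the elementary inequality $|\log x|^j x^{\epsilon}\le j!\,\epsilon^{-j}$ (the paper's Lemma~\ref{distlog}), control the H\"older seminorms with a constant growing only linearly in $j$, and rerun the weak/stable/unstable Lasota--Yorke estimates with exponent $t-\epsilon$, including the matched-piece comparison of Sublemma~\ref{lem:matching}(b) for the enriched integrand, exactly as in \eqref{supinf}--\eqref{logmatch split}. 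The only step whose justification differs (and, as written, would not go through) is the final passage from $f\in C^1(M)$ to $f\in\cB$: multiplying $\cL_w^{n_0}f\in\cB$ by the unbounded, discontinuous function $(\log J^s\cT\circ\cT^{-1})^j$ is not a legitimate operation inside $\cB$ (the closure of $C^1$ in the strong norm), and the paper instead reruns the mollification argument of Lemma~\ref{lem:image} for the weight $(\log|J^s\cT|)^j|J^s\cT|^t$, using Lemma~\ref{lem:smooth} via Lemma~\ref{distlog}; see Remark~\ref{rem:Mj}.
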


\begin{proof}
We claim that it suffices to prove that, for any
 $0< t_0 <t_1<t_*$,  we have
\begin{align}
\label{analyticbound}
&\mbox{ there exists $C<\infty$ such that }\|\cL^{n_0}_w\bigl( (\log J^s \cT)^j f \bigr)\|_\cB\le { j } ( C j)^j\,  \|f\|_\cB\, , \,\,
\forall f \in \cB\, ,
\end{align}
for all  $w \in (t_0,t_1)$ and all $j\ge 0$.
(The bound \eqref{analyticbound} is the content of Proposition~\ref{prop:analyticboundprop}.)

Indeed, by the Stirling formula, \eqref{analyticbound} implies
\begin{equation}
\label{eq:power0}
\frac{\| \cL_w^{n_0}(f (\log J^s\cT)^j) \|_{\cB} }{j!} \le j C^j C_{Stirling}^j 
\|f\|_{\cB} \, .
\end{equation}
Now, for $w \in (t_0, t_1)$ and $t \in \mathbb{C}$, first write
\begin{align*}
\cL_t^{n_0} f & = \frac{f \circ \cT^{-1}}{|J^s\cT|^{1-w} \circ \cT^{-1}} e^{(w-t) \log J^s\cT \circ \cT^{-1}}
= \frac{f \circ \cT^{-1}}{|J^s\cT|^{1-w} \circ \cT^{-1}} \sum_{j=0}^\infty \frac{(w-t)^j}{j!} (\log J^s\cT)^j \circ \cT^{-1}
\end{align*} 
where  \eqref{eq:power0}, with $w=1$ and $f\equiv 1$, 
gives that the series converges in norm for $|w-t| < { (C C_{Stirling})^{-1} } $.
Then note that
\begin{equation}
\label{eq:power}
\frac{f \circ \cT^{-1}}{|J^s\cT|^{1-w} \circ \cT^{-1}} \sum_{j=0}^\infty \frac{(w-t)^j}{j!} (\log J^s\cT)^j \circ \cT^{-1}  = \sum_{j=0}^\infty \frac{(w-t)^j}{j!} \cL_w^{n_0} (f (\log J^s\cT)^j) \, ,
\end{equation}
where the sum commutes with $\cL_w^{n_0}$ due to \eqref{eq:power0}, with $w$ and $f$,
so that this series also converges in norm for $|w-t| < { ( C C_{Stirling} )^{-1} }$.  
The  radius of convergence is independent
of $w \in (t_0,t_1)$, giving the claimed analyticity there.  The power series representation 
\eqref{eq:power} immediately implies \eqref{derivative}.
\end{proof}

\medskip

The key to the analyticity result in this section is the following elementary lemma
which extends the distortion 
estimate Lemma~\ref{lem:distortion} to expressions of the type $(\log |J^s\cT|)^j|J^s \cT|^t$:

\begin{lemma}[Distortion for $\exp(\Psi) (\Psi)^j$]\label{distlog}
Fix  $I\subset \bR$ a compact interval and let
 $\Psi:I\to \bR_-$. Then, for any $\upsilon>0$, there exists $C_\upsilon<\infty$ such that
\begin{equation}\label{firststep}
|\exp(\upsilon \Psi )|\Psi|^j|_{C^0(I)} \le (C_\upsilon j)^j  \, ,\,\,\forall j\ge 1\, .
\end{equation}
In addition, if  $|\sup \Psi|=\inf|\Psi| \ge\log 2$
and there exist $\alpha\in (0,1)$ and $C_\Psi<\infty$ such that\footnote{The bound \eqref{distbd} is equivalent to
$\exp(-C_{\Psi} |x-y|^\alpha)\le \exp(\Psi(x))/\exp(\Psi(y))\le \exp(C_{\Psi} |x-y|^\alpha)$ or,
for small enough $|x-y|$, to
$|1-\exp(\Psi(x))/\exp(\Psi(y))|\le C_{d,\Psi} |x-y|^\alpha$.}
\begin{equation}
\label{distbd}
|\Psi(x)-\Psi(y)|\le C_\Psi |x-y|^\alpha\, , \, \,\forall x, y \in I\, ,
\end{equation}
then
\begin{equation}\label{distconcl0}
|\log |\Psi(x)|-\log |\Psi(y)||
\le  4 C_\Psi |x-y|^\alpha
\, ,\quad \forall x, y \in I\, ,
\end{equation}
and, for any $t>0$,
\begin{equation}\label{distconcl}
|\exp(t\Psi ) |\Psi|^j|_{C^\alpha(I)}
\le   (1 +  e C_\Psi (4j +t ) ) |\exp(t\Psi )|\Psi|^j|_{C^0(I)}
\, ,\quad \forall j\ge 0 \, .
\end{equation}
\end{lemma}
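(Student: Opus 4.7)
The plan is to handle the three estimates by elementary calculus, in order, since \eqref{distconcl} will build on \eqref{distconcl0} and the basic shape of \eqref{firststep}. First I would prove \eqref{firststep} by optimization. Since $\Psi \le 0$ on $I$, the value $\exp(\upsilon\Psi(x))\,|\Psi(x)|^j$ equals $e^{-\upsilon u}u^j$ with $u := -\Psi(x) \ge 0$. Differentiating $u \mapsto e^{-\upsilon u} u^j$ shows the critical point is $u = j/\upsilon$ with maximum value $(j/(\upsilon e))^j$; hence \eqref{firststep} holds with $C_\upsilon := 1/(e\upsilon)$.

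Next, for \eqref{distconcl0}, I would use the standard inequality $|\log a - \log b| \le |a-b|/\min(a,b)$ (valid for $a,b>0$, via $\log a - \log b = \int_b^a s^{-1}\,ds$). Since $\Psi(x),\Psi(y)\le 0$, one has $||\Psi(x)|-|\Psi(y)||=|\Psi(x)-\Psi(y)|\le C_\Psi |x-y|^\alpha$ by \eqref{distbd}, and $\min\{|\Psi(x)|,|\Psi(y)|\}\ge \log 2$ by the standing hypothesis. Therefore
\[
|\log|\Psi(x)|-\log|\Psi(y)||\ \le\ \frac{C_\Psi|x-y|^\alpha}{\log 2}\ \le\ 2\,C_\Psi|x-y|^\alpha\ \le\ 4\,C_\Psi|x-y|^\alpha,
\]
which yields \eqref{distconcl0} (with room to spare).

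For \eqref{distconcl} I would write $f(x) := \exp(t\Psi(x))\,|\Psi(x)|^j = \exp(t\Psi(x) + j\log|\Psi(x)|)$, which is legitimate because $|\Psi|\ge \log 2>0$. Combining \eqref{distbd} and \eqref{distconcl0}, the quantity $\xi(x,y) := t(\Psi(x)-\Psi(y)) + j(\log|\Psi(x)|-\log|\Psi(y)|)$ satisfies $|\xi(x,y)|\le (t+4j)C_\Psi|x-y|^\alpha$ and $f(x)/f(y) = e^{\xi(x,y)}$. I would then split into two cases according to the size of $\eta := (t+4j)C_\Psi |x-y|^\alpha$: if $\eta\le 1$, I use $|e^\xi-1|\le e|\xi|$ to get $|f(x)-f(y)|\le e\,(t+4j)C_\Psi|f|_{C^0(I)}|x-y|^\alpha$; if $\eta>1$, then $|x-y|^{-\alpha}\le (t+4j)C_\Psi$ and the trivial bound $|f(x)-f(y)|\le 2|f|_{C^0(I)}\le e|f|_{C^0(I)}$ yields the same estimate. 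In either case $H^\alpha_I(f)\le e\,(t+4j)C_\Psi|f|_{C^0(I)}$, so that $|f|_{C^\alpha(I)}\le (1+e(4j+t)C_\Psi)|f|_{C^0(I)}$, which is \eqref{distconcl}.

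The proof is essentially mechanical; there is no serious obstacle. The only subtle point worth noting is the case split in the third step, needed because the assumption \eqref{distbd} (and the derived bound on $\xi$) only gives useful information when $|x-y|$ is small relative to $1/((t+4j)C_\Psi)$, while the H\"older seminorm must be controlled over all pairs $x,y\in I$ --- handling the complementary range via the crude bound $|f(x)-f(y)|\le 2|f|_{C^0(I)}$ is what produces the clean multiplicative constant in \eqref{distconcl}.
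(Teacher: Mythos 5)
Your proposal is correct and follows essentially the same route as the paper: the calculus optimization giving $C_\upsilon=(e\upsilon)^{-1}$ for \eqref{firststep}, the mean-value/log inequality with the lower bound $\log 2$ for \eqref{distconcl0}, and the bound on the logarithm of the ratio $\exp(t\Psi(x))|\Psi(x)|^j/\exp(t\Psi(y))|\Psi(y)|^j$ combined with a split according to whether $(4j+t)C_\Psi|x-y|^\alpha$ is small (the paper's ``other pairs are trivial to handle'') to control the H\"older seminorm. No gaps.
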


\noindent(The lemma will be applied to $\Psi=\log| J^s\cT|$, with $\alpha \le { 1/(q+1)}$,
and $I$ an interval giving an arc length parametrisation of
a weakly homogeneous stable manifold.)

\begin{proof}
The proof of \eqref{firststep} is a straightforward exercise in calculus (with
$C_\upsilon=(e\cdot \upsilon)^{-1}$): It suffices to show that 
$\sup_{X\in [0,1]}|\log X |^j X^\upsilon \le (\frac{j}{ e  \cdot \upsilon})^j$.

\medskip

Next, for any $x,y \in I$, the Mean Value Theorem applied to the logarithm yields,
for some $Z$ between $|\Psi(x)|$ and  $|\Psi(y)|$,
\begin{equation}
\label{eq:both}
|\log |\Psi(x)| - \log |\Psi(y)||
\le \frac{1}{Z} |\Psi(x) - \Psi(y)| \le \frac{C_\Psi |x-y|^\alpha}{\log 2} \le 4 C_{\Psi} |x-y|^\alpha \, . 
\end{equation}

 From \eqref{eq:both}  we get \eqref{distconcl0} and also,  for any $x,y\in I$,
\begin{align*}
\biggl |\log \frac{\exp( t\Psi(x)) |\Psi(x)|^j}
{\exp (t\Psi(y)) |\Psi(y)|^j}
\biggr |
&\le j |\log |\Psi (x)|- \log |\Psi(y)||+ t|\Psi(x) - \Psi(y)|\\
&\le ( j  4 C_\Psi  + tC_\Psi) |x-y|^\alpha \, .
\end{align*}
This implies
\begin{equation}\label{shorter}
\exp (- C_\Psi (4j+t)|x-y|^\alpha) \le \frac {\exp (  t \Psi(x)) |\Psi(x)|^j}{\exp ( t \Psi(y)) |\Psi(y)|^j}\le \exp( C_\Psi (4j + t)|x-y|^\alpha)\, .
\end{equation}
For $|x-y|^\alpha <  (4j C_\Psi+tC_\Psi)^{-1}$ (other pairs $(x,y)$ are trivial
to handle), \eqref{shorter} implies
$$
\biggl|1- \frac {\exp ( t \Psi(x)) |\Psi(x)|^j}{\exp ( t \Psi(y)) |\Psi(y)|^j}
\biggr | \le    eC_\Psi(4j+t) |x-y|^\alpha \, .
$$
Multiplying both sides  above
by $\exp (t\Psi(y)) |\Psi(y)|^j\le |\exp(t\Psi )|\Psi|^j |_{C^0(I)}$,
proves \eqref{distconcl}.
\end{proof}

\medskip

Recalling that $\cT=T^{n_0}$ for fixed ${n_0}$, we define, for all integers $j\ge 0$,
\begin{equation}\label{defMj}
\cM^{(j)}_t f:= \cL^{n_0}_t\bigl( (\log |J^s \cT|)^j f )\, ,
\end{equation}
acting on measurable functions.
We first
prove \eqref{analyticbound}:

\begin{proposition}\label{prop:analyticboundprop}
For any
 $0<t_0<t_1<t_*$, there exists $C<\infty$ such that
\begin{equation}
\label{analyticboundprop}
\|\cM^{(j)}_t f\|_\cB\le  C^j\, j^{j+1} \,  \|f\|_\cB\, , \,\,
\forall j\ge 1\, , \forall f \in \cB \, ,\, \forall t \in [t_0,t_1] \, .
\end{equation}
\end{proposition}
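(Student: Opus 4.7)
The plan is to establish \eqref{analyticboundprop} by adapting the single-iterate ($n = n_0$) versions of the bounds \eqref{eq:weak}, \eqref{eq:stable}, \eqref{eq:unstable} in Proposition~\ref{prop:ly} to the modified operator $\cM^{(j)}_t$, which may be viewed as the transfer operator associated to $\cT = T^{n_0}$ with weight $(\log|J^s\cT|\circ \cT^{-1})^j / |J^s\cT|^{1-t}\circ \cT^{-1}$. Because only a single application of $\cT^{-1}$ is involved, there is no Lasota--Yorke-style absorption of the strong norm to perform: it is enough to derive the direct bound $\|\cM^{(j)}_t f\|_\cB \le C^j j^{j+1} \|f\|_\cB$ for $f$ in the dense subspace $C^1(M) \subset \cB$. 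I would first check the analogue of Lemma~\ref{lem:image}, namely $\cM^{(j)}_t(C^1) \subset \cB$, via an essentially identical mollification argument, since the weight $(\log|J^s\cT|)^j|J^s\cT|^t$ remains pointwise bounded (by \eqref{firststep} below) and its singular locus is still a subset of $\cT\cS_0$.

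The key tool is Lemma~\ref{distlog} applied to $\Psi := \log|J^s\cT|$ on each weakly homogeneous curve $W_i \in \cG_{n_0}(W)$. By the choice of $n_0$, one has $|\Psi| \ge \log 2$, and by the distortion bound Lemma~\ref{lem:distortion}, $\Psi$ satisfies \eqref{distbd} with exponent $1/(q+1)$ and a uniform constant $C_\Psi$. Choosing $\upsilon = t - t_0/2 \ge t_0/2$ in \eqref{firststep} yields the pointwise estimate
\[
|(\log|J^s\cT|)^j\, |J^s\cT|^t|_{C^0(W_i)}
\le (C_{t_0/2}\, j)^j\, |J^s\cT|_{C^0(W_i)}^{t_0/2} \, ,
\]
and \eqref{distconcl} upgrades this to the $C^\alpha(W_i)$ norm at the cost of an additional factor $1 + e C_\Psi(4j+t) \le C' j$. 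Thus, uniformly in $t \in [t_0, t_1]$, the new weight is dominated by $C^{j+1} j^{j+1}$ times the standard weight $|J^s\cT|^{t_0/2}$, whose sum over $\cG_{n_0}(W)$ is a fixed finite quantity controlled by $Q_{n_0}(t_0/2)$.

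Substituting this estimate into the arguments of \S\ref{sec:weak norm} and \S\ref{sec:stable norm} (specialized to $n = n_0$) produces the weak and stable norm bounds $|\cM^{(j)}_t f|_w \le C^j j^{j+1} |f|_w$ and $\|\cM^{(j)}_t f\|_s \le C^j j^{j+1}(\|f\|_s + |f|_w)$, the various sums over short, long and first-long-ancestor pieces all collapsing to finite expressions because $n_0$ is fixed. The main obstacle is the unstable norm, where one needs the analogue of Sublemma~\ref{lem:matching}(b): on matched pieces $U^1_j, U^2_j$ with $d_{\cW^s}(U^1_j, U^2_j) \le \ve$, setting $J_\ell = J^s\cT \circ G_{U^\ell_j}$ and $X_\ell = \log J_\ell \le -\log 2$, the bound \eqref{eq:Cv} gives $|X_1 - X_2|_{C^0(I_j)} \le C\,\ve^{1/(q+1)}$. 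Factoring
\[
J_1^t |X_1|^j - J_2^t |X_2|^j = J_1^t |X_1|^j \bigl( 1 - e^{t(X_2-X_1)} (X_2/X_1)^j \bigr)
\]
and expanding the parenthesis, using $|X_\ell| \ge \log 2$ to bound $|j\log(X_2/X_1)|$ linearly in $|X_2 - X_1|$ while reverting to the trivial bound in the region where $(j+t)\ve^{1/(q+1)}$ exceeds $1$, gives
\[
|J_1^t |X_1|^j - J_2^t |X_2|^j|_{C^0(I_j)} \le C(j+t)\,(C_{t_0/2} j)^j\, |J^s\cT|^{t_0/2}_{C^0(U^1_j)}\,\ve^{1/(q+1)} \, .
\]
Upgrading from $C^0(I_j)$ to $C^\beta(I_j)$ by the same $\min$-interpolation used after \eqref{eq:Cv} in the proof of Sublemma~\ref{lem:matching}(b) (together with \eqref{distconcl0} to control the Hölder seminorms of $\log|X_\ell|$) yields the required weighted analogue of Sublemma~\ref{lem:matching}(b) with constant $C^j j^{j+1} \ve^{\alpha-\beta}$; the remainder of \S\ref{sec:unstable} then goes through verbatim and produces \eqref{analyticboundprop}.
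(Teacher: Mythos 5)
Your proposal is correct and follows essentially the same route as the paper's proof: apply Lemma~\ref{distlog} (both \eqref{firststep} and \eqref{distconcl}--\eqref{distconcl0}, using $|\log|J^s\cT||\ge\log 2$) to dominate the weight $(\log|J^s\cT|)^j|J^s\cT|^t$ by $j(Cj)^j|J^s\cT|^{t-\upsilon}$ in $C^0$ and $C^\alpha$, then run the single-iterate ($n=n_0$) stable and unstable norm estimates of Section~\ref{sec:spec}, with the matched-piece step handled by a weighted analogue of Sublemma~\ref{lem:matching}(b) built on \eqref{eq:Cv} — exactly what the paper does in \eqref{supinf}, \eqref{logmatchedb} and the surrounding computation. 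The only differences are cosmetic (the paper bounds the stable norm directly via Lemma~\ref{lem:extra growth} with $\varsigma=1/p$ rather than redoing the full \S\ref{sec:stable norm} decomposition, and your constant should read $C_{t-t_0/2}$ rather than $C_{t_0/2}$, which is harmless since $C_\upsilon$ is decreasing in $\upsilon$).
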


\begin{remark}
\label{rem:Mj}
A modification of the proof of Lemma~\ref{lem:image} shows that
for any $f \in C^1(M)$, $\cM_t^{(j)}f$ can be approximated by $C^1(M)$ functions
in the $\cB$ norm, using the fact that Lemma~\ref{lem:smooth} holds
for the function $(\log |J^s\cT|)^j |J^s\cT|^t$ by Lemma~\ref{distlog}.
By density of $C^1(M)$ in $\cB$, this, together with Proposition~\ref{prop:analyticboundprop},
implies $\cM_t^{(j)}f \in \cB$ for all $f \in \cB$ and $j \ge 0$.   
\end{remark}

\begin{proof}[Proof of Proposition~\ref{prop:analyticboundprop}]
It is enough to consider $f\in C^1(M)$.
We first bound the  stable norm.
Fix  $W \in \cW^s$, and $\psi \in C^\beta(W)$ such that  $|\psi|_{C^\beta(W)} \le |W|^{-1/p}$.  
For  $t >0$, we have
\begin{equation}
\label{eq:strTaylor start}
\begin{split}
\int_W \cM_t^{(j)} f \, \psi \, dm_W & = \sum_{W_i \in \cG_{ n_0}(W)} \int_{W_i} f (\psi \circ \cT) |J^s\cT|^t (\log |J^s \cT|)^j \, dm_{W_i} \\
& \le \sum_{W_i \in \cG_{ n_0}(W)} \|f\|_s |\psi \circ \cT|_{C^\beta(W)}
\frac{|W_i|^{1/p} }{|W|^{1/p}}||J^s\cT|^t (\log |J^s \cT|)^j|_{C^\beta(W)} \, .
\end{split}
\end{equation}
On the one hand, we have seen in \S\ref{sec:weak norm} that
$|\psi \circ \cT|_{C^\beta(W_i)} \le \tilde C |\psi|_{C^\beta(W)}$.
On the other hand, recalling that $\sup_{W, W_i}|J^s\cT |_{C^0(W_i)} <1$,
and using \eqref{firststep} from Lemma~\ref{distlog},
for any $\upsilon>0$, there exists $C_\upsilon$ such that for any $W_i\in \cG_n(W)$, all $t \in [t_0,t_1]$,
and all $j\ge 1$,
\begin{align}
\label{supinf}\sup_{W_i}(|\log |J^s\cT|^j||J^s\cT|^t)
\le (j C_\upsilon)^j \, 
\sup_{W_i}|J^s\cT|^{t-\upsilon}
 \, .
\end{align}
Therefore, since $\beta <\alpha$,  choosing\footnote{ This is always possible since 
$p > q+1$ and $t_0 q \ge 4$ from Definition~\ref{def:k0d0}.} $\upsilon < t_0/2 - 1/p$ and applying Lemma~\ref{distlog}, we deduce from \eqref{eq:strTaylor start}
and \eqref{supinf}  that  for all $j\ge 1$ and $f\in C^1$,
taking $C_d' = { 1+} e C_\Psi (4+t)$  from \eqref{distconcl},
\begin{align*}
\int_W \cM^{(j)}_t f \, \psi \, dm_W & \le C_\upsilon^j j^j \, j C_d'
\sum_{W_i \in \cG_{n_0}(W)} \|f\|_s  { \frac{|W_i|^{1/p}}{|W|^{1/p}} } |J^s\cT|^{t-\upsilon}_{C^0(W_i)} \\
& \le   C_d' C_2[0] \,  j (j C_\upsilon)^j \|f\|_s Q_{n_0}(t-\upsilon { - 1/p} ) \, ,
\end{align*}
where
$C_\Psi = C_d$ by \eqref{eq:distortion}, and we  used Lemma~\ref{lem:extra growth} with $\varsigma = 1/p$.
Taking the suprema over
$\psi \in C^\beta(W)$ with 
$|\psi|_{C^\beta(W)} \le |W|^{-1/p}$ and $W \in \cW^s$ yields $C_s<\infty$ such that
\begin{equation}
\label{stlog}
\|\cM^{(j)}_t( f )\|_s\le {j} \frac{(j C_s)^j}{2}  \,  \|f\|_s\, , \,\,
\forall j\ge 1\, , \,\, \forall f \in C^1\, ,\, \forall t \in [t_0,t_1] \, .
\end{equation}

For the unstable norm, let  $\ve < \ve_0$ and let $W^1, W^2 \in \cW^s$ with 
$d_{\cW^s}(W^1, W^2) \le \ve$.  For  $\ell=1,2$, we partition
$\cT^{-1}W^\ell$ into matched pieces $U^\ell_k$ and unmatched pieces $V^\ell_i$ as
in \S\ref{sec:unstable}, and we find, for any $\psi_\ell \in C^\alpha(W^\ell)$ with $|\psi_\ell|_{C^\alpha(W^\ell)} \le 1$ and $d(\psi_1, \psi_2)=0$,
\begin{align}
\label{eq:unstable decompl}
&\left| \int_{W^1} \cM_t^{(j)} (f )\, \psi_1 - \int_{W^2} \cM_t^{(j)} (f )\, \psi_2 \right|\\
\nonumber &\qquad \le \sum_k \left| \int_{U^1_k} f \,( \psi_1 \circ \cT) \, |J^s\cT|^t 
 (\log |J^s \cT|)^j- \int_{U^2_k} f \, (\psi_2 \circ \cT) \, |J^s\cT|^t (\log |J^s \cT|)^j\right| \\
\nonumber& \qquad\qquad\qquad\qquad\qquad + \sum_{\ell, i} \left| \int_{V^\ell_i} f \, (\psi_\ell \circ \cT )\, |J^s\cT|^t (\log |J^s \cT|)^j \right|\, .
\end{align}
For the unmatched pieces, adapting \eqref{eq:unmatched}, 
by using Lemma~\ref{distlog} combined with
Lemma~\ref{lem:distortion}
and \eqref{supinf},  we find, for $\ell=1,2$ { (choosing again $\upsilon < t_0/2-1/p$ so that
$t - \upsilon - 1/p > t_0/2$),}
\begin{align}
\label{eq:unmatchedl}
&\sum_{\ell, i} \left| \int_{V^\ell_i} f \, (\psi_\ell \circ \cT) \, |J^s\cT|^t (\log |J^s \cT|)^j\right|\\
\nonumber &\qquad \le \| f \|_s C_1^{-1} j C'_d(j C_\upsilon)^j\sum_{\ell,i} |\cT V^\ell_i|^{1/p}
|  |J^s\cT|^{t-\upsilon-1/p}|_{C^0(V^\ell_i)} \\
\nonumber &\qquad \le  \| f \|_s 4  C_2[0] C_1^{-1} j C'_d ( j C_\upsilon)^j \ve^{1/p} Q_{n_0}(t-\upsilon-1/p)\,  ,\, \forall t \in [t_0,t_1]\, ,\,
\forall j\ge 1\, ,
\end{align}
using Lemma~\ref{lem:extra growth} with $\varsigma = 0$.
Next,  we consider matched pieces. 
Recalling \eqref{eq:match},
we   define 
\[
(\widetilde \log J^s\cT)^j(x)
 := (\log J^s\cT)^j \circ G_{U^2_k} \circ G_{U^1_k}^{-1}(x) \, ,\,\,
\forall x \in U^1_k\,,\,_,
\forall j\ge 1\, .
\]
Now,  using $\tpsi_2$ and $\tJ^s\cT=\tJ^sT^{n_0}$ as defined above  \eqref{eq:match split}, and injecting
Lemma~\ref{distlog} in the proof of Sublemma~\ref{lem:matching}(b) gives, for $\upsilon$ as above,
\begin{align}
 \nonumber|(\psi_1 \circ \cT ) (\log J^s\cT)^j|J^s\cT|^t -& \tpsi_2 (\widetilde \log J^s\cT)^j\tJ^s\cT|^t |_{C^\beta(U^1_k)}\\
\label{logmatchedb}&\qquad 
\le C ( j C_\upsilon)^j  j C'_d 2^t |J^sT^{n_0}|^{t-\upsilon}_{C^0(U^1_k)} \ve^{\alpha-\beta}\, ,
\,\,\forall k\, ,\,\,\forall j\,,\,\forall
t\in[t_0,t_1]\, .
\end{align}
Then  we split 
\begin{align}
\nonumber
&\left| \int_{U^1_k} f \, (\psi_1 \circ \cT) \,( \log J^s\cT)^j |J^s\cT|^t \right.  - \left. \int_{U^2_k} f \, (\psi_2 \circ \cT )\,( \log J^s\cT)^j |J^s\cT|^t \right|\\
\label{eq:match splitl} &\qquad\qquad\le \left| \int_{U^1_k} f \, \left( (\psi_1 \circ \cT )\,( \log J^s\cT)^j |J^s\cT|^t - \tpsi_2  (\widetilde \log J^s\cT)^j|\tJ^s\cT|^t \right) \right| \\
\label{logmatch split}& \qquad\qquad\qquad + \left|   \int_{U^1_k} f \, \tpsi_2 (\widetilde \log J^s\cT)^j|\tJ^s\cT|^t
-   \int_{U^2_k} f \, (\psi_2 \circ \cT) \,( \log J^s\cT)^j |J^s\cT|^t  \right| \, .
\end{align}
We estimate \eqref{eq:match splitl} for all 
$t\in [t_0,t_1]$ and $j\ge 1$ using \eqref{logmatchedb},
\[
\left| \int_{U^1_k} f \, \left( (\psi_1 \circ \cT)  (\log J^s\cT)^j\, |J^s\cT|^t - \tpsi_2 (\widetilde \log J^s\cT)^j|\tJ^s \cT|^t \right ) \right|
 \le \| f \|_s \delta_0^{1/p}  j C_d' (C_\upsilon j )^j 2^t |J^s\cT|^{t-\upsilon}_{C^0(I_k)}  \ve^{\alpha - \beta} \, .
\]
Then, noting that $d(\psi_1 \circ \cT \,  (\log J^s\cT)^j |J^s\cT|^t, \tpsi_2  (\widetilde \log J^s\cT)^j |\tJ^s\cT|^t) = 0$ by definition, and 
that the $C^\alpha$ norms of both 
test functions are bounded by $C (j C_\upsilon )^jj C'_d |J^s\cT|^{t-\upsilon}_{C^0(I_k)}$, using 
Lemma~\ref{distlog}, we estimate   \eqref{logmatch split}
for all $f\in C^1$ and
$t\in [t_0,t_1]$ as follows
\begin{align*}
\left| \int_{U^1_k} f \, ((\psi_2 \circ \cT) (\log J^s\cT)^j\, |J^s\cT|^t - \tpsi_2 (\widetilde \log J^s\cT)^j|\tJ^s\cT|^t  )\right|
& \le \| f \|_u d_{\cW^s}(U^1_k, U^2_k)^\gamma  j C'_d (j C_\upsilon)^j C |J^s\cT|^{t-\upsilon}_{C^0(I_k)} \\
& \le C'( j C_\upsilon)^j j C'_d\| f \|_u {n_0}^\gamma \Lambda^{-{n_0} \gamma} \ve^\gamma |J^s\cT|^{t-\upsilon}_{C^0(I_k)} \, ,
\end{align*}
where we used Lemma~\ref{lem:matching}(a) in the second inequality.  Putting these estimates into \eqref{eq:match splitl},
combining with \eqref{eq:unmatchedl} in \eqref{eq:unstable decompl}, and summing over $k$ 
gives, for all
$t\in [t_0,t_1]$ and $j \ge 1$,
\begin{align*}
&\left| \int_{W_1} \cM_t^{(j)} f \, \psi_1 - \int_{W_2} \cM_t^{(j)} f \, \psi_2 \right|\\
&\quad\le   j (j\bar C)^j \, \bigl( \| f \|_u {n_0}^\gamma \Lambda^{-{n_0} \gamma} \ve^\gamma Q_{n_0}(t-\upsilon) + 
\| f \|_s (\ve^{1/p} Q_{n_0}(t-1/p-\upsilon) + \ve^{\alpha- \beta} Q_{n_0}(t-\upsilon)) \bigr)  .
\end{align*}
Finally, since $\alpha-\beta \le \gamma$ and $1/p\le \gamma$, while ${n_0}$ is fixed,
we have found $C_u<\infty$ such that
$$ \|\cM_t^{(j)} f \|_u
\le {j } \frac{(jC_u)^j}{2} (\|f\|_s + \|f\|_u)\,,\,\,\forall f\in C^1\,,\,\,\forall
t\in [t_0,t_1] \, ,\, \forall j\ge 1\, .
$$
With \eqref{stlog}, taking $C=\max\{C_s,C_u\}$, this concludes the proof of Proposition \ref{prop:analyticboundprop}.
\end{proof}

\begin{proof}[Proof of Theorem~\ref{strconv}] 
Since $\exp(n_0P(t))>0$ is a simple isolated eigenvalue of $\cL^{n_0}_t$,
analyticity
of $\exp (n_0P(t))$ is an immediate consequence of Proposition~\ref{opan}
and \cite[VII, Theorem 1.8, II.1.8]{kato}.
Since 
$\inf_{[t_0,t_1]}\exp(n_0P(t))>0$, the function $P(t)$ is also analytic.
The formulas 
$$n_0P'(t)\exp (n_0P(t))\, ,\,\, \quad n_0P''(t)\exp (n_0P(t))+{n_0}^2P'(t)^2 \exp (n_0P(t))
$$ 
 can be read off
\cite[II.2.2, (2.1), (2.33) p.79]{kato} (taking $m=1$ there). It is then easy to 
extract the claimed formula \eqref{formula0}
 for $P'(t)$. In order to\footnote{Formulas \eqref{formula0}--\eqref{formula} are  classical in 
smooth hyperbolic dynamics, see \cite[Chap. 5, ex. 5b]{Ru} for $P''(t)$.} establish  \eqref{formula} for $P''(t)$, use \eqref{formula0},
and note that, recalling 
$\chi_t=P'(t)=\int \log J^s T \, d \mu_t$,
\begin{align*}
\sum_{k\ge 0}
\biggl [ \int  (\log |J^s T| \circ T^k) & \log |J^s T|\, d \mu_t -\chi_t^2\biggr ]\\
&=\langle  \log |J^s T |\bigl (1- e^{-P_*(t)} \cL_t\bigr )^{-1}\biggl ( (\log |J^s T|-\chi_t) \nu_t  \biggr ),
\tnu_t
\rangle  \, .
\end{align*}

\smallskip

If there exists $f\in L^2(\mu_t)$ such that
$\log |J^sT|-\chi_t=f-f\circ T$ then it is easy to see that $P''(t)=0$.
For the converse statement, we will use a martingale CLT  result \`a la Gordin (see e.g. Viana \cite[Theorem E.11]{BDV})
as in \cite{DRZ}:
Let $\cA_0$  be the sigma-algebra generated by the ($\mu_t$-mod $0$)
partition of $M$ into maximal connected, strongly homogeneous local stable manifolds for
$T$ (this partition is measurable since it has a countable generator, see e.g. \cite[\S5.1]{chernov book}). 
Then $\cA_n=T^{-n}\cA_0$, for $n\in \bZ$, is a decreasing sequence of sigma algebras.  
Therefore, if $P''(t)=0$, to obtain $f\in L^2(\mu_t)$
such that $\log| J^sT|=\chi_t+f -f\circ T$
from Gordin's Theorem (\cite[Theorem E.11]{BDV}
or \cite[Theorem 5.1]{DRZ}), we only need to check the following two conditions: 
\begin{align}\label{CLT1}
&\sum_{n=0}^\infty \|\log |J^s T|- E((\log |J^s T|-\chi_t)|\cA_{-n})\|_{L^2(\mu_t)}<\infty\, ,
\\
\label{CLT2}
&\sum_{n=0}^\infty
 \| E((\log |J^s T|-\chi_t)|\cA_{n})\|_{L^2(\mu_t)}<\infty
\, .
\end{align}
We first discuss \eqref{CLT1}.
If $n\ge 0$, then  the elements of $\cA_{-n}$ 
are of the form $T^n(V^s(x))$ where $V^s(x)$ is the maximal  connected, strongly homogeneous
stable manifold  of (almost
every) 
$x$. From  Lemma~\ref{distlog},
the function $\log |J^s T|$
is (H\"older)  continuous on $T^n(V^s(x))$ 
for any $n\ge 1$, so,
letting $\cA_{-n}(x)$ be 
the element of $\cA_{-n}$ containing $x$,  we have
$$
E(\log |J^sT| |\cA_{-n})(x)=\log |J^sT|(y)
\, ,
$$  
for some $y\in  \cA_{-n}(x)$. Thus  (see the proof of \cite[(5.3)]{DRZ}),
\eqref{distconcl0} with $\alpha = 1/(q+1)$ gives
\begin{align*}
&\|\log |J^s T|- E((\log |J^s T|-\chi_t)|\cA_{-n})\|_{L^2(\mu_t)}\\
&\qquad \le
\|\log |J^s T|- E((\log |J^s T|-\chi_t)|\cA_{-n})\|_{L^\infty(\mu_t)}\le C \Lambda^{-n  /(q+1)} \, , \,\forall n\ge 1\, ,\,\,\forall
t \in [t_0,t_1]\, .
\end{align*}
(The length of any element $T^n(V^s(x))$ in $\cA_{-n}$ 
 is bounded by $C_0\Lambda^{-n}$.)
This proves \eqref{CLT1}.

To establish \eqref{CLT2}, we also adapt the argument in \cite{DRZ}, starting from
\begin{align*}
\sum_{n=0}^\infty
& \| E((\log |J^s T|-\chi_t)|\cA_{n})\|_{L^2(\mu_t)}\\
&=\sum_{n=0}^\infty
\sup \Big\{
\int (\log |J^s T|-\chi_t) \cdot(\psi \circ T^n) \, d\mu_t \mid \psi \in L^2(\cA_0,\mu_t) \mbox{ with }
\|\psi\|_{L^2(\mu_t)=1} \Big\} \, .
\end{align*}
The key new ingredient is the fact that,\footnote{Since  $f\equiv 1\in \cB$,
  Remark~\ref{rem:Mj}  also implies
$\log |J^s T^{n_0}| \circ T^{-n_0}=\log |J^s \cT| \circ \cT^{-1}=\cM^{(1)}_1 (1)\in \cB$.}
since { $\nu_t = e^{-n_0 P(t)} \cL_t^{n_0}(\nu_t)$,} 
with $\nu_t\in \cB$, we get from
Proposition~\ref{prop:analyticboundprop}  and Remark~\ref{rem:Mj}
that
\begin{equation}
\label{forCLT2}
(\log |J^s T^{n_0}| \circ T^{-n_0})\nu_t =  e^{-n_0P(t)} \cM^{(1)}_t(\nu_t)
\in \cB \subset \cB_w\, .
\end{equation}
By definition, any $\cA_0$-measurable function $\psi$ is constant on each curve
in $\cA_0$.
 If in addition $\psi$ is bounded, then for any $k \ge 0$, 
$\psi \circ T^k \in C^\alpha(\cW^s_{\bH})$ and 
$|\psi \circ T^k |_{C^\alpha(\cW^s_{\bH})} = |\psi|_{C^0(\cW^s_{\bH})} =: |\psi|_\infty$.
Thus by Lemma~\ref{lem:distrib} and \eqref{eq:weak dual}, 
\begin{equation}
\label{forCLT2b}
\psi  \circ T^k \tnu_t \in \cB^*_w \, \mbox{ and } \,
|\langle f , \psi \circ T^k \tnu_t \rangle |\le 
C'   |f|_w |\psi|_\infty\, ,
\forall f\in \cB_w \, .
\end{equation}
Then, recalling that $\mu_t(f)=\langle f \nu_t , \tilde \nu_t\rangle / \langle \nu_t, \tnu_t \rangle$ 
for suitable $f$, following \cite{DRZ}, we write
 for $n\ge n_0$,
and any bounded $\cA_0$-measurable function $\psi$,
\begin{align}
\nonumber \int (\log |J^s T|-\chi_t) & \cdot(\psi \circ T^n) \, d\mu_t
=\frac 1 {n_0} \int (\log |J^s T^{n_0}|\circ T^{-{n_0}}-n_0\chi_t) \cdot(\psi \circ T^{n-n_0}) \, d\mu_t\\
\nonumber &=\frac 1 {n_0} \langle 
(\log |J^s T^{n_0}|\circ T^{-{n_0}}-n_0\chi_t)\nu_t \,  , \,  (\psi \circ T^{n-n_0}) \tilde \nu_t\rangle 
 / \langle \nu_t, \tnu_t \rangle   \\
\label{forCLT4}&=\frac 1 {n_0} \langle 
e^{(n-n_0)P(t)}\LL_t^{n-n_0}\bigl ((\log |J^s T^{n_0}|\circ T^{-{n_0}}-n_0\chi_t) \nu_t \bigr )\,  , \,    \psi \tilde \nu_t\rangle   / \langle \nu_t, \tnu_t \rangle  \, .
\end{align}
(The expressions in the first line are well defined and coincide
because $(\log |J^s T|-\chi_t)\in L^1(d\mu_t)$ and
$\psi$ is bounded. 
The expression in the second line is
well defined by \eqref{forCLT2} and \eqref{forCLT2b}. 
 Therefore, the second equality  holds due to the definition of $\mu_t$ in 
Proposition~\ref{prop:mut def}(b). 
The last equality is clear.) 
Clearly $\tilde \nu_t\bigl (\nu_t(\log |J^s T^{n_0}|\circ T^{-{n_0}})\bigr ) =n_0\chi_t$, so that Corollary~\ref{unifrate}
and \eqref{forCLT2}
give constants $\rho<1$ and $C'_1, C'_2<\infty$ such that
for all $n\ge n_0$ and all $t\in [t_0,t_1]$
\begin{align}
\nonumber
|e^{(n-n_0)P(t)}\LL_t^{n-n_0}&\bigl (\nu_t(\log |J^s T^{n_0}|\circ T^{-{n_0}}-n_0\chi_t)\bigr )|_w\\
\nonumber &\le \|e^{(n-n_0)P(t)}\LL_t^{n-n_0}\bigl (\nu_t(\log |J^s T^{n_0}|\circ T^{-{n_0}})-n_0\chi_t)\bigr )\|_\cB\\
\label{CLTlast}&\le C_1'\rho^{n-n_0} \|\nu_t(\log |J^s T^{n_0}|\circ T^{-{n_0}})\|_\cB\le  C'_2 \rho^n 
\, .
\end{align}
Next,  \eqref{forCLT4} together with   the bounds \eqref{forCLT2b} and \eqref{CLTlast}, gives  $C<\infty$ such that,
for any bounded function $\psi$ which is $\cA_0$-measurable,
\begin{equation}
\label{forCLT3}
|\int (\log |J^s T|-\chi_t) \cdot(\psi \circ T^n) \, d\mu_t|\le 
C\rho^{n} |\psi|_{L^\infty(M)}\,,\,\, \forall n\ge 1\, ,\,\, \forall
t \in [t_0,t_1]\, .
\end{equation}
Then  the proof of Lemma~\ref{nbhdprop}(c) (the $T$-adapted property of $\mu_t$) not only implies that $\log |J^sT|(x)\le C \log(d(x,\cS_1))$ is in $L^1(d\mu_t)$  but  also in
$L^\ell(d\mu_t)$ for all $\ell\ge 1$ (use that 
$\sum_{j\ge 1}(\log (j+1))^\ell j^{-p'/(2p)}<\infty$ for all $\ell$
if $p'>2p$).
It follows that \cite[Lemma~5.2]{DRZ} holds for $\bar s = (\log |J^sT|) - \chi_t$,
bootstrapping the $L^\infty$ bound \eqref{forCLT3} to the required $L^2$ control \eqref{CLT2}.  
The only
change required in the proof (since the observable $\bar s$ in \cite[Lemma~5.2]{DRZ} is 
bounded while ours is not), is to replace the second term on the right-hand side of 
\cite[eq.~(5.7)]{DRZ} by the H\"older bound
$(\int |\bar s|^3 d\mu_t)^{1/3}( \int |\psi - \psi_L|^{3/2} d\mu_t)^{2/3}$
(where $\psi_L(x)=\psi(x)$ if $|\psi(x)|\le L$ and $\psi_L(x)=0$ otherwise).
Then using the fact that $\psi \in L^2(\mu_t)$,
\[
\int |\psi - \psi_L|^{3/2} d\mu_t = \int 1_{|\psi| > L} \cdot |\psi|^{3/2} \, d\mu_t \le 
L^{-1/2} |\psi|^2_{L^2} \, ,
\]
using the Markov bound $\mu(|\psi| > L) \le L^{-2} |\psi|^2_{L^2}$.
Setting $L = \rho^{-3n/4}$ instead of $L=\rho^{-n/2}$ in \cite[eq.~(5.8)]{DRZ} completes the
proof of \cite[Lemma~5.2]{DRZ} with modified rate $\rho^{n/4}$ for
our observable $\bar s$.  This verifies \eqref{CLT2} and concludes
the proof
that $\log|J^sT|$ is cohomologous in $L^2(\mu_t)$  to the constant $\chi_t<0$
if $P''(t)=0$.

\smallskip
Finally, 
$P''(t) \ge 0$ implies that $P'(t)$ is increasing so that $\int \log J^uT \, d\mu_t = - P'(t)$ is decreasing, while
 $h_{\mu_t} = P(t) - tP'(t)$ is decreasing since  $P(t)$ and $-tP'(t)$ are decreasing.
\end{proof}

\begin{proof}[Proof of Corollary~\ref{contin eq}]
The bounds on $\cM_t^{(j)}$ from Proposition~\ref{prop:analyticboundprop} apply also to the dual operator
$(\cM_t^{(j)})^*$ acting on $\cB^*$.  Thus, by Proposition~\ref{opan}, $(\cL_t^{n_0})^*$ has the analogous 
representation as a power series and is analytic.
It follows that both $\nu_t$ and $\tnu_t$ are analytic for $t \in (0, t_*)$.

By \cite[Lemma~5.3]{dz2}, if $\psi \in C^\alpha(M)$, then $\psi \nu_t \in \cB$ and 
$\| \psi \nu_t \|_{\cB} \le \bar C \| \nu_t \|_{\cB} |\psi|_{C^\alpha(M)}$, for some $\bar C >0$ 
independent of $\psi$ and $\nu_t$.  So for each $\psi \in C^\alpha(M)$, $\mu_t(\psi)$ is analytic on
$(0, t_*)$.

To prove continuity of $\mu_t$ on $C^0$ functions, we shall use the following bound.
Fix $[t_0, t_1] \subset (0, t_*)$, and define
\[
C_\nu = \sup_{t \in [t_0, t_1]} \max \{ \| \nu_t \|_{\cB}, \| \tnu_t \|_{\cB^*} \} , 
\quad C'_{\nu} = \sup_{t \in [t_0, t_1]} \max \{ \| \nu'_t \|_{\cB}, \| \tnu'_t \|_{\cB^*} \} \, , 
\]
where $\nu'_t$ and $\tnu'_t$ denote the derivatives of $\nu_t$ and $\tnu_t$ as operators on $\cB$ and $\cB^*$, respectively.
Then for $\psi \in C^\alpha(M)$,
\begin{equation}
\label{eq:uni close}
\begin{split}
\mu_t(\psi) - \mu_s(\psi) & = \langle \psi \nu_t, \tnu_t - \tnu_s \rangle + \langle \psi (\nu_t - \nu_s), \tnu_s \rangle \\
& \le \bar C |\psi|_{C^\alpha(M)} \| \nu_t \|_{\cB} \| \tnu_t - \tnu_s \|_{\cB^*} + \bar C | \psi |_{C^\alpha(M)} \| \nu_t - \nu_s \|_{\cB} \| \tnu_s \|_{\cB^*} \\
& \le C_\star |t-s| |\psi|_{C^\alpha(M)} \, ,
\end{split}
\end{equation}
where $C_\star = 2 \bar C C_\nu C'_\nu$. 

Next, let $\rho: \mathbb{R}^2 \to [0, \infty)$ denote a $C^\infty$ bump function, supported on 
the unit disk and with $\int \rho \, dm = 1$.  For $\ve >0$, let $\rho_\ve(z) = \ve^{-2} \rho(z/\ve)$,
and define $M^\ve$ to be the extension of the phase space $M$ by a strip of width $\ve$ along each component
of $\cS_0$. 

For $\psi \in C^0(M)$, extend $\psi$ to $M^\ve$ by making $\psi$ constant on vertical lines outside $M$.  Then
$\psi \in C^0(M^\ve)$.  For $x \in M$, define the convolution
\[
\psi_\ve(x) = \int \rho_\ve(x-y) \psi(y) dm(y) \, .
\]
Note that $|\psi_\ve|_{C^\alpha} \le C \ve^{-\alpha}$.  Since $M^\ve$ is compact, $\psi$ is uniformly continuous,
so there exists a decreasing function $\eta: \mathbb{R}^+ \to \mathbb{R}^+$,
$\lim_{\ve \to 0} \eta(\ve) = 0$, such that for all $\ve>0$,
$| \psi_\ve - \psi |_{C^0(M)} \le \eta(\ve)$.

Now fix $t \in (t_0, t_1)$.  For $\delta>0$, choose $\ve>0$ such that $\eta(\ve) < \delta/3$ and
choose $s_0 \in (t_0, t_1)$ such that $C_\star |s_0 - t| C \ve^{-\alpha} \le \delta/3$.  Then using \eqref{eq:uni close},
if $|s-t| < |s_0 - t|$,
\[
| \mu_t(\psi) - \mu_s(\psi)| \le | \mu_t(\psi - \psi_\ve) | + |\mu_t(\psi_\ve) - \mu_s(\psi_\ve)| + |\mu_s(\psi_\ve - \psi)| 
\le 2 \eta(\ve) + C_\star |s-t| |\psi_{\ve}|_{C^\alpha} < \delta \, .
\]
Thus $\mu_s(\psi) \to \mu_t(\psi)$ as $s \to t$.
\end{proof}

\begin{proof}[Proof of Corollary~\ref{unifrate}]
For  a compact subinterval $I$ of $(0,t_*)$ 
the bound
$\sigma(t)$ in Proposition~\ref{prop:radius} 
satisfies $\sigma_I:=\sup_{t \in I} \sigma(t)<1$.
If each $\cL_t$, for $t \in I$,  has its spectrum on $\cB$ contained in 
$e^{P_*(t)}\cup \{|z|< \sigma_I \cdot e^{P_*(t)}\}$, the corollary follows.
Otherwise,  use  Proposition~\ref{opan} and 
continuity \cite[\S IV.3.5]{kato} of any (finite) set of eigenvalues of 
finite multiplicities of bounded operators. 
\end{proof}

\begin{proof}[Proof of Corollary~\ref{CLT}]
The
corollary follows from \eqref{CLT1} and \eqref{CLT2}, using Gordin's Theorem (\cite[Theorem E.11]{BDV}
or \cite[Theorem 5.1]{DRZ}).
\end{proof}


\end{document}